\newtheorem{thm}[equation]{Theorem}
\newtheorem*{thm*}{Theorem}
\newtheorem{cor}[equation]{Corollary}
\newtheorem{lem}[equation]{Lemma}
\newtheorem{prop}[equation]{Proposition}
\theoremstyle{definition}
\newtheorem{defn}[equation]{Definition}
\newtheorem{cons}[equation]{Construction}
\newtheorem{conv}[equation]{Convention}
\newtheorem{notn}[equation]{Notation}
\theoremstyle{remark}
\newtheorem{rem}[equation]{Remark}
\newtheorem{example}[equation]{Example}
\numberwithin{section}{chapter}
\numberwithin{equation}{section}
\newcommand{\THM}{N^{\cy}}
\newcommand{\TB}{B}
\newcommand{\Cell}{\mathrm{Cell}}
\newcommand{\ssdot}{\bullet}
\newcommand{\supdot}{^\ssdot}
\newcommand{\subdot}{_\ssdot}
\newcommand{\dsubdot}{_{\ssdot\ssdot}}
\newcommand{\simpD}{\mathfrak{S}_{*}^{\aD}}
\newcommand{\simpDob}{\mathfrak{S}_{*}^{\Ob \aD}}
\newcommand{\simps}{\mathfrak{S}_{*}}
\newcommand{\wt}[1]{\widetilde{#1}\strut}
\newcommand{\trc}{\mathrm{trc}}
\newcommand{\cofiber}[2]{C}
\newcommand{\Spdot}[1][\ssdot]{S'_{#1}}
\newcommand{\Sdot}[1][\ssdot]{S_{#1}}
\newcommand{\Spndot}[1][\ssdot,\dotsc,\ssdot]{S^{\prime(n)}_{#1}}
\newcommand{\Sndot}[1][\ssdot,\dotsc,\ssdot]{S^{(n)}_{#1}}
\newcommand{\SpMdot}[1][\ssdot]{S^{\prime\, M}_{#1}}
\newcommand{\SpnMdot}[1][\ssdot,\dotsc,\ssdot]{S^{\prime(n)\,M}_{#1}}
\newcommand{\SMdot}[1][\ssdot]{S^{M}_{#1}}
\newcommand{\Fdot}[1][\ssdot]{F_{#1}}
\newcommand{\Ffdot}[1][\ssdot]{F^{f}_{#1}}
\mathchardef\varDelta="7101
\newcommand{\Wi}{\mathbf{W^{-1}}}
\newcommand{\C}{\mathbf{C}}
\newcommand{\csv}[1][{\aC}]{SF({L#1})}
\newcommand{\csvop}[1][{\aC}]{(SF({L#1}))^{\op}}
\newcommand{\wrcsv}[1][{\aC}]{\widetilde{#1}}
\newcommand{\swrcsv}[1][{\aA}]{\breve{#1}}
\newcommand{\wri}{\tilde\imath}
\newcommand{\swri}{\breve\imath}
\newcommand{\swrj}{\breve\jmath}
\newcommand{\Mod}[1][{\aC}]{\aM_{{#1}^{\op}}}
\newcommand{\fincell}{\aF_{\aC^{\op}}}
\newcommand{\wrfincell}{\wrcsv[\aF]_{\aC^{\op}}}
\def\myop#1{\mathop{\textstyle #1}\limits}
\let\iso\cong
\let\sma\wedge
\newcommand{\htp}{\simeq}
\renewcommand{\to}{\mathchoice{\longrightarrow}{\rightarrow}{\rightarrow}{\rightarrow}}
\newcommand{\from}{\mathchoice{\longleftarrow}{\leftarrow}{\leftarrow}{\leftarrow}}
\newcommand{\sto}{\rightarrow}
\newcommand{\overto}[1]{\xrightarrow{\,#1\,}}
\newcommand{\overfrom}[1]{\xleftarrow{\,#1\,}}
\newcommand{\phat}{^{\scriptscriptstyle\wedge}_{p}}
\newcommand{\abA}{\mathfrak{A}}
\newcommand{\abE}{\mathfrak{E}}
\let\catsymbfont\mathcal
\newcommand{\aA}{{\catsymbfont{A}}}
\newcommand{\aB}{{\catsymbfont{B}}}
\newcommand{\aC}{{\catsymbfont{C}}}
\newcommand{\aD}{{\catsymbfont{D}}}
\newcommand{\aE}{{\catsymbfont{E}}}
\newcommand{\aF}{{\catsymbfont{F}}}
\newcommand{\aG}{{\catsymbfont{G}}}
\newcommand{\aI}{{\catsymbfont{I}}}
\newcommand{\aL}{{\catsymbfont{L}}}
\newcommand{\aM}{{\catsymbfont{M}}}
\newcommand{\aN}{{\catsymbfont{N}}}
\newcommand{\aP}{{\catsymbfont{P}}}
\newcommand{\aQ}{{\catsymbfont{Q}}}
\newcommand{\aS}{{\catsymbfont{S}}}
\newcommand{\aX}{{\catsymbfont{X}}}
\newcommand{\aY}{{\catsymbfont{Y}}}
\newcommand{\fL}{\aL}
\newcommand{\fM}{\aM}
\newcommand{\fN}{\aN}
\newcommand{\fP}{\aP}
\newcommand{\fQ}{\aQ}
\newcommand{\fLL}[1][{}]{\fL^{\aB_{#1}}_{\aA_{#1}}}
\newcommand{\bF}{{\mathbb{F}}}
\newcommand{\bM}{{\mathbb{M}}}
\newcommand{\bR}{{\mathbb{R}}}
\newcommand{\bW}{{\mathbb{W}}}
\newcommand{\bZ}{{\mathbb{Z}}}
\newcommand{\bn}{\mathbf{n}}
\newcommand{\bz}{\mathbf{0}}
\def\quickop#1{\expandafter\DeclareMathOperator\csname
#1\endcsname{#1}}
\DeclareMathOperator*\lcolim{colim}
\newcommand{\on}{\mathbin{\mathrm{on}}}
\newcommand{\procite}[2][{}]{%
\relax\textup{(\cite[#1]{#2})}}
\newcommand{\textin}[1]{#1\index{#1}}
\newcommand{\noindexterm}[1]{\textit{#1}}
\newcommand{\indexterm}[2]{\noindexterm{#1}\index{#2}}
\newcommand{\term}[1]{\indexterm{#1}{#1}}
\newcommand{\specialterm}[2]{\noindexterm{#1}\index{#2@#1}}
\let\oldinput\input
\def\input{\@ifnextchar\bgroup{\newinput}{\oldinput }}
\def\newinput#1{\message{>>TP:OPEN>>}\oldinput #1
\message{>>TP:CLOSE>>}}
\let\oldinclude\include
\def\include#1{\message{>>TP:OPEN>>}\oldinclude{#1}
\message{>>TP:CLOSE>>}}
\begin{document}

\frontmatter

\title[$THH$ and $TC$ of {W}aldhausen categories]%
{Localization for $THH(ku)$ and the topological {H}ochschild and cyclic
homology of {W}aldhausen categories}

\author{Andrew J. Blumberg}
\address{Department of Mathematics, The University of Texas,
Austin, TX \ 78712}
\email{blumberg@math.utexas.edu}
\thanks{The first author was supported in part by an NSF postdoctoral
fellowship, a Clay Liftoff fellowship, and NSF grants DMS-0111298,
DMS-0906105}

\author{Michael A. Mandell}
\address{Department of Mathematics, Indiana University,
Bloomington, IN \ 47405}
\email{mmandell@indiana.edu}
\thanks{The second author was supported in part by NSF grants
DMS-0504069, DMS-0804272, DMS-1105255}

\date{\today}

\subjclass[2010]{Primary 19D55; Secondary 55P43,19L41,19D10}

\keywords{Topological Hochschild homology, topological cyclic
homology, Waldhausen category, localization sequence, $K$-theory,
d\'evissage theorem}


\begin{abstract}
We prove a conjecture of Hesselholt and Ausoni-Rognes, establishing
localization cofiber sequences of spectra
\[
THH(H\bZ) \to THH(ku) \to THH(ku|KU) \to \Sigma THH(H\bZ)
\]
and
\[
TC(H\bZ) \to TC(ku) \to TC(ku|KU) \to \Sigma TC(H\bZ)
\]
for the topological Hochschild and cyclic homology ($THH$ and $TC$) of
topological $K$-theory.  These sequences support Hesselholt's view of
the map $\ell \to ku$ as a ``tamely ramified'' extension of ring
spectra, and validate the hypotheses necessary for Ausoni's
simplified computation of $V(1)_* K(KU)$.

In order to make sense of the relative term $THH(ku|KU)$ and prove
these results, we develop a theory of $THH$ and $TC$ of Waldhausen
categories and prove the analogues of Waldhausen's theorems for
$K$-theory.  We resolve the longstanding confusion about 
localization sequences in $THH$ and $TC$, and establish a specialized
d\'{e}vissage theorem.
\end{abstract}

\maketitle

\tableofcontents

\addtocontents{toc}{\protect\setcounter{tocdepth}{1}}


%
%

\chapter*{Introduction}

Algebraic $K$-theory provides a high-level invariant of the homotopy
theory of categories with a notion of extension and equivalence.  The
component group, $K_{0}$, is the universal target for Euler
characteristics, and higher algebraic $K$-theory captures subtle
information intricately tied to number theory and geometry.  For the
algebraic $K$-theory of rings, trace methods using topological
Hochschild homology ($THH$) and topological cyclic homology ($TC$)
have proved remarkably successful at making $K$-theory computations
tractable via the methods of equivariant stable homotopy theory.

At first glance $K$-theory and $THH$ take very different inputs and
have very different formal properties.  For algebraic $K$-theory, the
input is typically a Waldhausen category: a category with
subcategories of cofibrations and weak equivalences.  For $THH$, the
basic input is a spectral category: a category enriched in spectra.
While $THH$ shares $K$-theory's additivity properties, $THH$ seems to
lack $K$-theory's approximation and localization
properties \cite{Dundas}.  A specific example of this failure was
studied at great length in the paper \cite{HM3}.  From the perspective
of the algebraic $K$-theory of rings and connective ring spectra,
where $THH$ is the stabilization of $K$-theory, this discrepancy is in
some ways surprising, as one might expect $THH$ to inherit the
fundamental properties of $K$-theory.

In this paper, we construct $THH$ for a general class of Waldhausen
categories, and show that much of the apparent mismatch of formal
properties is a consequence of the former mismatch of input data.  We
obtain an analogue of Waldhausen's \textin{Approximation Theorem}
\cite[1.6.7]{WaldhausenKT} for $THH$.  On the other hand, we observe that
$THH$ has two different analogues of the localization sequence in
Waldhausen $K$-theory (the ``\textin{Fibration Theorem}''
\cite[1.6.4]{WaldhausenKT}).
One of the localization sequences for $THH$ was developed in our
companion paper on localization in $THH$ of spectral categories
\cite[7.1]{BlumbergMandellTHHLoc} (see Theorem~\ref{thmgenone}
below); when applied to the $K$-theory of
schemes, this sequence produces an analogue of the localization
sequence of Thomason-Trobaugh \cite{ThomasonTrobaugh}.  The other
localization sequence generalizes the localization sequence of
Hesselholt-Madsen \cite{HM3}.  One of the principal contributions
of this paper is to provide a conceptual explanation of the
two localization sequences of $THH$ in relation to the localization
sequence of $K$-theory.

As we explain in Sections~\ref{secspec} and~\ref{futuresec}, a
Waldhausen category that admits factorizations has two spectral
categories associated to it, a connective and a non-connective
variant.  The non-connective theory is ``correct'' from the
perspective of abstract homotopy theory and satisfies localization for
cofiber sequences of spectral categories \cite[7.1]{BlumbergMandellTHHLoc},
but the connective theory is more closely related to $K$-theory.  We
show that the two theories agree under connectivity hypotheses that we
make explicit in Section~\ref{secspheretheorem}; in particular, for rings
and connective ring spectra both spectral categories produce the
expected $THH$.  For exact categories, the connective version agrees
with the $THH$ of exact categories defined by Dundas-McCarthy
\cite{DundasMcCarthy}.  For categories of complexes, the
non-connective version agrees
with the $THH$ of the spectral derived category studied in
\cite{BlumbergMandellTHHLoc}.  Working with the non-connective theory
gives the Thomason-Trobaugh style localization sequences, and working
with the connective theory gives the Hesselholt-Madsen style
localization sequences.

As a main application of this theory, we prove the localization 
sequence associated to the transfer map from $H\bZ$ to $ku$ that was
conjectured by Hesselholt and Ausoni~\cite{AusoniTHH,
AusoniK}. Specifically, we construct 
naturally out of the category of $ku$-modules a simplicial spectral
category $W^{\Gamma}(ku|KU)$ and cofiber sequences in the stable
category
\begin{gather*}
THH(\mathbb{Z})\to THH(ku)\to THH(ku|KU) \to \Sigma THH(\mathbb{Z})\\
\intertext{and}
TC(\mathbb{Z})\to TC(ku)\to TC(ku|KU) \to \Sigma TC(\mathbb{Z}),
\end{gather*}
compatible via a trace map with the localization cofiber sequence in 
$K$-theory established in \cite{BlumbergMandell}.  Corresponding
results hold for the Adams summand in the $p$-local and $p$-complete
cases; see Theorem~\ref{thmkuloc} below for details.  These 
localization sequences were conjectured by Hesselholt and
Ausoni-Rognes to explain the
relationship of the computations of $K(\ell)$ and $K(ku)$; they
support the perspective that $\ell \to ku$ should be an example of a 
``tamely ramified'' extension of ring spectra.  Furthermore, using
these localization sequences, one can dramatically
simplify Ausoni's computation of $K(ku)$ \cite[8.4]{AusoniK} by
mimicking the de Rham-Witt 
arguments in Hesselholt-Madsen \cite{HM3}.  These localization
sequences 
provide the chromatic level $1$ analogues of the chromatic level $0$
sequence of Hesselholt and Madsen \cite{HM3}.  
Another application of these localization sequences is to compute
$K(KU)$.  One would like to use Ausoni's computations of $K(ku)$ along
with the localization cofiber sequence 
\[
K(\mathbb{Z}) \to K(ku) \to K(KU) \to \Sigma K(\mathbb{Z})
\]
to evaluate $K(KU)$.  The transfer map in this sequence is controlled
by the behavior of the transfer map in the associated sequences in
$THH$ and $TC$, where it is easier to understand.  Following
Hesselholt, Ausoni  \cite[8.3]{AusoniK} observes that in light of his calculations, the
existence of the localization cofiber sequence in $THH$ along with an
algebraic fact would permit the complete identification of
$V(1)_*K(KU)$.

For higher chromatic levels, Rognes has conjectured $K$-theory
localization sequences of the form
\[
K(BP\langle n-1 \rangle\phat) \to K(BP\langle n \rangle\phat) 
\to K(E(n)\phat) \to \Sigma K(BP \langle n-1 \rangle\phat)
\]
as part of an ambitious program to provide a conceptual understanding
of Waldhausen's $A$-theory of a point.  Such sequences are attractive
because they would relate the algebraic $K$-theory of the
nonconnective ring spectrum $E(n)$, to which trace methods do not
apply directly, to the algebraic $K$-theory of connective ring spectra
$BP\langle n \rangle$, to which trace methods do apply.  The
corresponding conjectural localization sequences for $THH$ and $TC$
would then optimistically provide tools for organizing the trace method
computations.

So far, these sequences in both algebraic $K$-theory and $TC$ remain
conjectural for $n>1$, and
there is some reason to be suspicious about the existence of these
sequences.  However, our methods both
in~\cite{BlumbergMandell} and in this paper do establish the existence
of the variant localization sequences
\[
K(\bW\bF_{p^{n}}[\![u_{1},\ldots, u_{n-1}]\!])\to K(BP_{n})\to K(E_{n})\to \Sigma K(\bW\bF_{p^{n}}[\![u_{1},\ldots, u_{n-1}]\!])
\]
and
\[
TC(\bW\bF_{p^{n}}[\![u_{1},\ldots, u_{n-1}]\!])\to TC(BP_{n})\to
TC(BP_{n} | E_{n})\to \Sigma TC(\bW\bF_{p^{n}}[\![u_{1},\ldots, u_{n-1}]\!])
\]
for all $n$, where $\bW$ denotes the $p$-typical Witt ring and
$BP_{n}$ is the connective cover of the Lubin-Tate spectrum $E_{n}$.
This gives a new approach to the continuation of the Rognes program,
using current technology.  This approach has three main advantages
over the program as laid out in \cite{AusoniRognes}:
\begin{enumerate}
\item The localization sequences for $K$-theory, $TC$, and $THH$
relating the spectra
$H\bW\bF[\![u_{1},\ldots,u_{n-1}]\!]$, $BP_{n}$, and $E_{n}$ are known to
exist (as mentioned above) in contrast to the sequences relating $BP\langle
n-1\rangle$, $BP\langle n\rangle$, and $E(n)$,  which are not (for $n>1$).
\item The relevant spectrum in the next step of Rognes' program for
understanding $A_{*}$, the $K$-theory of the sphere, is $K(E_{n})$
rather than $K(E(n))$.
\item The spectra $BP_{n}$ are known to be $E_{\infty}$ ring spectra,
whereas $BP\langle n\rangle$ is currently only known to be
$A_{\infty}$.  (The Ausoni-Rognes computations require more than an
$A_{\infty}$ structure on $BP\langle n\rangle$; the papers 
are written in terms of an $E_{\infty}$ structure, though somewhat
less will suffice).
\end{enumerate}
These localization sequences give the opportunity to continue the
Rognes program, with attention focused on the computation of
$TC(BP_{n})$ and evaluation of the transfer map.


One of the interesting aspects in the construction of the localization
sequences is the construction of the relative terms such as $THH(ku|KU)$ and
$TC(ku|KU)$: these relative terms ``mix'' the weak equivalences in the
category of $ku$-modules with the weak equivalences in the category of
$KU$-modules, in a way which does not arise in algebraic $K$-theory.
This mixing is the reason why there are two different localization
sequences.  In order to explain these sorts of relative terms,
Rognes~\cite{RognesLog} has developed a theory of log ring spectra
motivated by the appearance of log rings in the work of Hesselholt and
Madsen \cite{HM3}.  We expect that our relative terms agree with the
log $THH$ and $TC$ defined by Rognes.

Because our primary interest is the construction and explanation of
the localization sequences above, we have taken a technical shortcut that
drastically simplifies the theory.  In Section~\ref{secdefwald}, we
introduce the concept of a simplicially enriched Waldhausen category in
which the Waldhausen structure and the simplicial mapping spaces
satisfy strong consistency hypotheses.  The motivating example of such
a category is a subcategory of the cofibrant objects in a simplicial
model category with all objects fibrant; the model structure on the
module categories of \cite{EKMM} satisfy this condition.  For the
majority of the paper we work only with simplicially enriched
Waldhausen categories.  In Section~\ref{futuresec}, we argue that
simplicially enriched Waldhausen categories are not unduly restrictive
by showing that a closed Waldhausen subcategory of a Waldhausen
category that admits factorization is equivalent to a simplicially
enriched Waldhausen category (in fact, a simplicial model category
where every object is fibrant).  This equivalence is functorial up to
a zigzag of natural weak equivalences.

Although we have taken Waldhausen categories for the basic input to $THH$ and
$K$-theory in this paper,  alternatively, one could take
quasi-categories as the basic input.  At this stage, the
quasi-category approach would require serious background treatment of
the $THH$ of quasi-categories, which is not yet
formalized in the literature.  On the other hand, since our first step
is to replace a general Waldhausen category with a stable simplicial
model category, such a background treatment would be essentially
independent of the main work in this paper.  

In this paper, whenever we work with topological spaces, the reader
should understand that we are working in the category of compactly
generated weak Hausdorff spaces.  We use the words ``topological'' or
``topological space'' to highlight when we are using
topological spaces rather than simplicial sets; these words should not be
construed to imply the use of general topological spaces rather than
compactly generated weak Hausdorff spaces.

\mainmatter


%
%

\chapter{Review of $THH$, $TR$, and $TC$}
\label{tw-thhmv}

In this chapter we review the construction and basic properties of
$THH$, $TR$, and $TC$ of spectral categories.  We begin in
Section~\ref{secspecc} by reviewing the definition of spectral
categories (in symmetric spectra) and setting some conventions for the
rest of the paper.  In Section~\ref{secthh}, we review the
construction of $THH$ of spectral categories along the lines first
described by B\"okstedt \cite{Bokstedt} and the construction of $TR$
and $TC$ from $THH$.  In Sections~\ref{secthhprop}--\ref{secdwm}, we review the
fundamental invariance properties of the $THH$ of spectral categories,
including invariance under DK-equivalences, thick closure, and Morita
equivalence.  

None of the material in this chapter is new; it has previously
appeared in substantially similar form in the authors' previous paper
on $THH$, $TR$, and $TC$ of spectral categories
\cite{BlumbergMandellTHHLoc} and is reviewed here for easy reference.
Specifically, Section~\ref{secspecc} streamlines and rewrites
\cite[\S2]{BlumbergMandellTHHLoc} for symmetric spectra of topological
spaces.  Section~\ref{secthh} is based on and closely follows
\cite[\S3]{BlumbergMandellTHHLoc}, while
Sections~\ref{secthhprop}--\ref{secdwm} review the main results of
\cite[\S5--7]{BlumbergMandellTHHLoc} with most proofs omitted.

\section{Review of spectral categories}\label{secspecc}

This section reviews the definition of and sets conventions for
spectral categories that we use throughout the remainder of the
paper. Although our most common constructions naturally live in the
context of symmetric spectra of simplicial sets, we occasionally need
symmetric spectra of topological spaces.

\begin{defn}
A \term{spectral category} is a category enriched over symmetric
spectra (of topological spaces).  Specifically, a spectral category $\aC$
consists of: 
\begin{enumerate}
\item A collection of objects $\ob\aC$ (which need not be a small set),
\item A symmetric spectrum $\aC(a,b)$ for each pair of objects $a,b\in \ob\aC$,
\item A unit map $S\to \aC(a,a)$ for each object $a \in \ob\aC$, and
\item A composition map $\aC(b,c)\sma \aC(a,b)\to \aC(a,c)$ for each
triple of objects $a,b,c\in \ob\aC$,
\end{enumerate}
satisfying the usual associativity and unit properties.  We say that a
spectral category is small when the objects $\ob \aC$ form a set.
\end{defn}

The previous definition makes
perfect sense also in the context of symmetric spectra of simplicial
sets (indeed that was the convention in
\cite{BlumbergMandellTHHLoc}); the geometric realization/singular
simplicial set adjunction that converts back and forth between
symmetric spectra of simplicial sets and symmetric spectra of
topological spaces is a (symmetric) monoidal functor and so converts
back and forth between spectral categories in the simplicial and
topological context by application to the mapping spectra.

The definition of spectral functor between spectral categories is the
usual definition of an enriched functor:

\begin{defn}
Let $\aC$ and $\aD$ be spectral categories.  A \term{spectral functor}
$F\colon \aC\to \aD$ is an enriched functor.  Specifically, a spectral
functor consists of:
\begin{enumerate}
\item A function on objects $F\colon \ob\aC\to \ob\aD$, and
\item A map of symmetric spectra $F_{a,b}\colon \aC(a,b)\to
\aD(Fa,Fb)$ for each pair of objects $a,b\in \ob\aC$,
\end{enumerate}
which is compatible with the units and the compositions in the obvious sense.
\end{defn}

We then have the following elementary notion of weak equivalence of spectral
categories.  (The more useful definition of \term{DK-equivalence} of spectral
categories is Definition~\ref{defdkequiv} below.) 

\begin{defn}\label{defwkequiv}
A \term{weak equivalence} of spectral categories is 
spectral functor that is a bijection on objects and a weak equivalence
(stable equivalence of symmetric spectra) on all mapping spectra.
\end{defn}

Small spectral categories generalize ring symmetric spectra and can be
viewed as \term{rings with many objects}.  From that perspective, we
have the following evident concepts of modules and bimodules over
spectral categories: 

\begin{defn}
Let $\aC$ and $\aD$ be spectral categories.  A left
$\aC$-module\index{left module}\index{module}
is a spectral functor from $\aC$ to symmetric spectra.  A right
$\aD$-module\index{right module}
is a spectral functor from $\aD^{\op}$ to symmetric
spectra.  A $(\aD,\aC)$-\textin{bimodule} is a spectral functor from
$\aD^{\op}\sma \aC$ to symmetric spectra; a $\aC$-bimodule is a
$(\aC,\aC)$-bimodule.
\end{defn}

Here $\aD^{\op}$ denotes the spectral category with the same objects
and mapping spectra as $\aD$ but the opposite composition map.  The
spectral category $\aD^{\op}\sma \aC$ has as its objects the cartesian
product of the objects, 
\[
\ob (\aD^{\op}\sma \aC)=\ob\aD^{\op}\times
\ob\aC=\ob\aD\times \ob\aC,
\]
and as its mapping spectra the smash product of the mapping
spectra
\[
(\aD^{\op}\sma \aC)((d,c),(d',c'))=\aD^{op}(d,d')\sma\aC(c,c'),
\]
with unit maps the smash product of the unit maps and composition maps
the smash product of the composition maps for $\aD^{\op}$ and $\aC$.
Explicitly, a $(\aD,\aC)$-bimodule $\fM$ consists of a choice of
symmetric spectrum $\fM(d,c)$ for each $d$ in $\ob \aD$ and $c$ in
$\ob \aC$, together with maps
\[
\aC(c, c') \sma \fM(d,c) \sma \aD(d',d) \to \fM(d',c')
\]
for each $d'$ in $\ob \aD$ and $c'$ in
$\ob \aC$, making the obvious unit and associativity diagrams
commute.  In particular, for any spectral category $\aC$, the mapping
spectra $\aC(-,-)$ define a $\aC$-bimodule. (This example
motivates the convention of listing the right module structure first.)

The work of \cite{SSMonoidalEq} provides the category of
$(\aD,\aC)$-bimodules with a closed model structure.

\begin{prop}\label{propSSME61}\procite[6.1]{SSMonoidalEq}
The category of $(\aD,\aC)$-bimodules forms a closed model category
where the fibrations are the objectwise fibrations and the weak
equivalences are the objectwise weak equivalences in the stable model
structure on symmetric spectra.
\end{prop}

The remainder of this section records some technical observations.  Because we
are working with spaces rather than simplicial sets, we will often
need to assume that base points are non-degenerate (include as
Hurewicz cofibrations) to avoid pathologies. For spectral categories,
modules, and bimodules, we use
the following terminology.

\begin{defn}\label{defnondegen}
A spectral category $\aC$ is \term{non-degenerately based} if each
space $\aC(a,b)(n)$ is non-degenerately based (for all objects $a$,
$b$, and all $n$) and each unit map $S^{0}\to \aC(a,a)(0)$ is a
Hurewicz cofibration (for all objects $a$); otherwise, we say that
$\aC$ is \term{degenerately based}.  A $\aC$-module or $(\aD,\aC)$-bimodule
$\fM$ is \term{non-degenerately based} if each space $\fM(c)(n)$ or
$\fM(d,c)(n)$ is non-degenerately based (for all objects $c,d$ and all
$n$); otherwise, we say that $\fM$ is \term{degenerately based}.
\end{defn}

The geometric realization of a spectral category or module in the
simplicial context is always non-degenerately based.  For an arbitrary
spectral category, we can find a weakly equivalent non-degenerately
based spectral category by taking the geometric realization of the
singular simplicial set functor applied to its mapping spectra,
\[
|\Sing \aC|(a,b):=|\Sing \aC(a,b)|.
\]
When $\fM$ is a $\aC$-bimodule, $|\Sing \fM|$ is a $|\Sing
\aC|$-bimodule.  More generally, for an arbitrary bimodule over a
non-degenerately based spectral category, we can find a weakly
equivalent non-degenerately based replacement by applying the
cofibrant replacement functor of Proposition~\ref{propSSME61}.

Another technical point arises when considering the homotopy groups of
symmetric spectra.  In general the object in the stable category
represented by a symmetric spectrum may not agree with the object
represented by its underlying prespectrum.  This happens for example
for the desuspension spectrum $F_{1}S^{0}$.  In such circumstances,
the only sensible convention is to regard the underlying prespectrum
as being incorrect.  Thus, throughout this paper, we
use the following convention. 

\begin{conv}
The \term{homotopy groups} of a symmetric spectrum $X$ always means the
homotopy groups of $X$ as an object of 
the stable category, i.e., the abelian groups of maps in the stable
category from $S^q$ to $X$ (for $q\in \bZ$), and we will denote these
as $\pi_{q}X$. 
A \term{weak equivalence} of symmetric spectra always means a
weak equivalence in the stable model structure.  
A weak equivalence is then precisely a map that
induces an isomorphism on homotopy groups.
\end{conv}

In practice, in many cases the underlying prespectrum does represent
the correct object in the stable category. We use the following
terminology for this.

\begin{defn}\label{defsemistable}
A symmetric spectrum is \term{semistable} when fibrant
approximation in the stable model structure is a weak equivalence of
underlying prespectra.
\end{defn}

When needed, we can replace an arbitrary small spectral category with
a weakly equivalent spectral category that has the same objects but
has mapping spectra that are $\Omega$-spectra.  For example, we can do
this using \cite[\S 6]{SSMonoidalEq} which constructs a cofibrantly
generated Quillen model category structure on the category of small
enriched categories with a fixed set of objects: The maps in this
category are the spectral functors that are the identity on object
sets, the fibrations are the maps $\aC\to \aD$ that restrict to
fibrations of symmetric spectra $\aC(x,y)\to \aD(x,y)$ for all $x,y$
and the weak equivalences are the maps that restrict to weak
equivalences $\aC(x,y)\to \aD(x,y)$ for all $x,y$.  Following the
terminology of \cite[\S 6]{SSMonoidalEq}: 

\begin{defn}
A small spectral category $\aC$ is
\indexterm{pointwise fibrant}{fibrant, pointwise}
if $\aC(x,y)$ is a fibrant symmetric spectrum (in the stable model
structure) for every pair of objects $x,y$.  Likewise, $\aC$ is said
to be \indexterm{pointwise cofibrant}{cofibrant, pointwise} 
if $\aC(x,y)$ is a cofibrant
symmetric spectrum for every pair of objects $x,y$.  For a spectral
functor of small spectral categories $F\colon \aC\to \aD$ that is the
identity on the object sets, we say that $F$ is a 
\indexterm{pointwise weak equivalence}{weak equivalence, pointwise} or 
\indexterm{pointwise level equivalence}{level equivalence, pointwise} 
if for every pair
of objects $x,y$, the map $F\colon \aC(x,y)\to \aD(x,y)$ is a weak
equivalence or level equivalence, respectively, of symmetric spectra.
\end{defn}

The fibrant replacement functors of \cite[\S 6]{SSMonoidalEq}, though
constructed in the context of a fixed object set still behave well with
respect to spectral functors that are not the identity on object
sets.  We then get the following proposition.

\begin{prop}\label{propfibrep}\procite[6.3]{SSMonoidalEq}
Given a small spectral category $\aC$, there exists a small spectral
category $\aC^{\Omega}$ and a spectral functor $R\colon \aC\to \aC^{\Omega}$
such that:
\begin{enumerate}
\item $\aC^{\Omega}$ has the same objects as $\aC$ and $R$ is the
identity map on objects,
\item $\aC^{\Omega}$ is pointwise fibrant, and
\item $R$ is a pointwise weak equivalence.
\end{enumerate}
Moreover, $(-)^{\Omega}$ and $R$ may be constructed as an endofunctor and
natural transformation on the category of small spectral categories.
\end{prop}

Applying cofibrant replacement in the model structure of \cite[\S
6]{SSMonoidalEq}, we obtain the following complementary proposition.

\begin{prop}\label{propcofrep}\procite[6.3]{SSMonoidalEq}
Given a small spectral category $\aC$, there exists a small spectral
category $\aC^{\Cell}$ and a spectral functor $Q\colon \aC^{\Cell}\to \aC$
such that:
\begin{enumerate}
\item $\aC^{\Cell}$ has the same objects as $\aC$ and $Q$ is the
identity map on objects,
\item $\aC^{\Cell}(x,y)$ is pointwise cofibrant, and 
\item $Q$ is a pointwise level equivalence.
\end{enumerate}
Moreover, $(-)^{\Cell}$ and $Q$ may be constructed as an endofunctor and
natural transformation on the category of small spectral categories.
\end{prop}

The analogous proposition in the setting of bimodules is also helpful.

\begin{prop}\label{propmodcof}
Assume that $\aC$ and $\aD$ are pointwise cofibrant small spectral categories.
If $\fM$ is a cofibrant $(\aD,\aC)$-bimodule, then $\fM$ is objectwise
cofibrant, i.e., 
$\fM(d,c)$ is a cofibrant symmetric spectrum for every $(d,c)$ in
$\aD^{\op}\sma \aC$.
\end{prop}

\section{Review of the construction of $THH$, $TR$, and $TC$}\label{secthh}

In this section, we review the definition of $THH$, $TR$, and $TC$ of
small spectral categories.  We begin with a review of the cyclic bar
construction for small spectral categories and the variant defined by
B\"okstedt \cite{Bokstedt} and Dundas-McCarthy \cite{DundasMcCarthy}
necessary for the construction of 
$TC$.  We finish with a brief review of the definition of cyclotomic
spectra and the construction of $TR$ and $TC$.

Let $\aI$ be the category with objects the finite sets
$\bn=\{1,\ldots,n\}$ (including $\bz=\{\}$), and with morphisms the
injective maps.  For a symmetric spectrum $A$, write $A_{n}$ for the
$n$-th space.  The association $\bn\mapsto \Omega^{n}A_{n}$ extends
to a functor from $\aI$ to spaces.  More generally, given symmetric spectra
$A^{0},\dotsc,A^{q}$ and a space $X$, we obtain a functor from
$\aI^{q+1}$ to spaces that sends $\vec\bn=(\bn_{0},\dotsc,\bn_{q})$ to
\[
\Omega^{n_{0}+\dotsb+n_{q}}(A^{q}_{n_{q}} \sma \dotsb \sma
A^{0}_{n_{0}}\sma X),
\]
which is also natural in $X$.  Restricting to the case when $X$ is a
sphere $S^{n}$, we form this into a symmetric spectrum as follows.

\begin{defn}\procite[4.2.1]{ShipleyD}\label{defD}
Let \specialterm{$D(A^{q},\dotsc,A^{0})$}{D(A} be the symmetric
spectrum with $n$-th space 
\[
D(A^{q},\dotsc,A^{0})(n)=\hocolim_{\vec \bn\in\aI^{q+1}}
\Omega^{n_{0}+\dotsb+n_{q}}(|A^{q}_{n_{q}} \sma \dotsb \sma
A^{0}_{n_{0}}|\sma S^{n}),
\]
and the evident structure maps.
\end{defn}

The following is the main lemma of \cite{ShipleyD}.

\begin{prop}\procite[4.2.3]{ShipleyD}
$D(A^{q},\dotsc,A^{0})$ is canonically isomorphic in the stable
category to the derived smash product of the $A^{i}$.
\end{prop}

This motivates the following definition, Dundas-McCarthy's
Hochschild-Mitchell version of B\"okstedt's variant of the cyclic bar
construction. 

\begin{defn}\label{defn:123}\index{THH (construction)@$THH$ (construction)}
Given a small spectral category $\aC$, a $\aC$-bimodule $\fM$,
and a space $X$, let
\specialterm{$\aG(\aC;\fM;X)_{\vec\protect\bn}$}{G(C;M;X)n} be the functor
from 
$\aI^{q+1}$ to spaces defined on $\vec\bn=(\bn_{0},\dotsc,\bn_{q})$ by
\[
\aG(\aC;\fM;X)_{\vec\bn}=
\Omega^{n_{0}+\dotsb+n_{q}}
(\bigvee \aC(c_{q-1},c_{q})_{n_{q}} \sma \dotsb \sma
\aC(c_{0},c_{1})_{n_{1}}\sma \fM(c_{q},c_{0})_{n_{0}}\sma X),
\]
where the wedge is over the $(q+1)$-tuples $(c_{0},\ldots,c_{q})$ of
objects of $\aC$.  Let 
\[
THH_{q}(\aC;\fM)(X)=\hocolim_{\vec\bn\in\aI^{q+1}} \aG(\aC;\fM;X)_{\vec\bn}.
\]
This assembles into a simplicial space, functorially in $X$, as
follows.  The degeneracy maps are induced by
the unit maps $S^{0}\to \aC(c_{i},c_{i})_{0}$ and the functor 
\[
(\bn_{0},\dotsc,\bn_{q})\mapsto (\bn_{0},\dotsc,\bz,\dotsc,\bn_{q})
\]
from $I^{q+1}$ to $I^{q+2}$.  The face maps are induced by the
two action maps on
$\fM$ (for $d_{0}$ and $d_{q}$) and the composition maps in $\aC$ 
(for $d_{1},\dotsc,d_{q-1}$) together with a functor $\aI^{q+1}\to
\aI^{q}$ induced by the appropriate disjoint union isomorphism
$(\bn_{i},\bn_{i+1})\mapsto \bn$ or $(\bn_{q},\bn_{0})\mapsto \bn$ for
$n=n_{i}+n_{i+1}$ or $n=n_{q}+n_{0}$. We write $THH(\aC;\fM)(X)$ for
the geometric realization.
\end{defn}

$THH(\aC;\fM)(X)$ is a continuous functor in the variable $X$, and so
by restriction to the spheres $S^{n}$ specifies a symmetric spectrum
which we denote $THH(\aC;\fM)$ or $THH(\aC)$ for $\fM=\aC$.  The fact
that the symmetric spectrum $THH$ is the restriction of a continuous
functor implies that it is semistable \cite[8.7]{MMSS} and so the
object that it represents in the stable category agrees with its
underlying prespectrum.  With additional hypotheses of ``convergence''
and ``connectivity'', $THH$ is often an $\Omega$-spectrum; see, for
example, Proposition~2.4 of~\cite{HM2}.

For most homotopical statements about $THH$, we will need to assume
that $\aC$ and $\fM$ are non-degenerately based.  When the unit maps
$S^{0}\to \aC(c,c)(0)$ are cofibrations, the simplicial spaces
$THH\subdot(\aC;\fM)(X)$ are ``\textin{proper}'', meaning that the degeneracy
maps are cofibrations, which is a sufficient for geometric realization
to preserve level weak equivalences.  The following proposition is
then clear since smash products of non-degenerately based spaces
preserve weak equivalences. It allows us to convert statements in
\cite{BlumbergMandellTHHLoc} (which works with spectral categories of
symmetric spectra in the context of simplicial sets) to the current
context of topological spaces.

\begin{prop}\label{propoldpaper}
If $\aC$ is a small non-degenerately based spectral category and $\fM$
is a non-degenerately based $\aC$-bimodule, then the canonical map 
\[
THH(|\Sing \aC|,|\Sing \fM|)(X)\to THH(\aC;\fM)(X)
\]
is a weak equivalence for all $X$.
\end{prop}

As immediate corollaries, we obtain the following the basic properties
of $THH$.

\begin{prop}\procite[3.6]{BlumbergMandellTHHLoc}\label{propTHHwe}
Let $F\colon \aC\to \aC'$ be a weak equivalence of small spectral
categories, $\fM'$ a $\aC'$-bimodule, $F^{*}\fM'$ the
$\aC$-bimodule obtained by restriction of scalars, and
$\fM\to F^{*}\fM'$ a weak equivalence of $\aC$-bimodules.  Then
the induced map $THH(\aC;\fM)\to THH(\aC';\fM')$ is a weak equivalence.
\end{prop}

\begin{prop}\procite[3.7]{BlumbergMandellTHHLoc}\label{propnibus}
Let $\aC$ be a small non-degenerately based spectral category.
\begin{enumerate}
\item A weak equivalence of non-degenerately based $\aC$-bimodules
$\fM\to \fM'$ induces a weak 
equivalence  $THH(\aC;\fM)\to THH(\aC;\fM')$.
\item A cofibration sequence of non-degenerately based $\aC$-bimodules
$\fM\to \fM'\to 
\fM''\to \Sigma \fM$ induces a homotopy cofiber sequence on $THH$.
\item A fibration sequence of non-degenerately based
$\aC$-bimodules $\Omega 
\fM''\to \fM\to \fM'\to \fM''$ induces a homotopy fibration sequence
on $THH$. 
\end{enumerate}
\end{prop}

Additionally, we observe the following two results that are useful in
arguments and applications in later chapters.

\begin{prop}\label{propthhcolim}
Let $\aC_{0}\to \aC_{1}\to \cdots$ be a sequence of spectrally
enriched functors of non-degenerately based spectral categories and
assume that either the functors are closed inclusions on
mapping spectra or are induced by geometric realization from spectral
functors of spectral categories enriched in symmetric spectra of
simplicial sets.  Let $\aC=\colim \aC_{n}$ and let $\fM$ be a
non-degenerately based $\aC$ bifunctor.  Then the induced map
\[
\hocolim THH(\aC_{n};\fM)\to THH(\aC;\fM)
\]
is a weak equivalence.
\end{prop}

\begin{proof}
The map $\hocolim \aG(\aC_{n};\fM;X)_{\vec m}\to \aG(\aC;\fM;X)_{\vec
m}$ is a weak equivalence for every $X$, $\vec m$.
\end{proof}

\begin{prop}\label{propthhreal}
Let $\aC\subdot$ be a simplicial object in non-degenerately based
spectral categories in which all the faces and degeneracies are the
identity on objects and are Hurewicz cofibrations on each space of
each mapping spectrum.  Then the canonical map $|THH(\aC\subdot)|\to
THH(|\aC\subdot|)$ is a weak equivalence.
\end{prop}

\begin{proof}
For each $\vec n$ and spectral category $\aC$, consider the symmetric spectrum
\[
\aG(\aC)_{\vec\bn}=
\Omega^{n_{0}+\dotsb+n_{q}}
(\bigvee \aC(c_{q-1},c_{q})_{n_{q}} \sma \dotsb \sma
\aC(c_{0},c_{1})_{n_{1}}\sma \aC(c_{q},c_{0})_{n_{0}}\sma S),
\]
the symmetric spectrum obtained from assembling the spaces
$\aG(\aC,\aC,S^{n})$ of Definition~\ref{defn:123}.
Then 
\[
THH(\aC)\iso |\hocolim_{\aI\supdot} \aG(\aC)_{\vec \bn}|.
\]
For any proper simplicial non-degenerately based space $X\subdot$, the
canonical map 
\[
|\Omega^{n}(X\subdot \sma S)|\to \Omega^{n}|X\subdot \sma S|
\]
is a weak equivalence, indeed a level equivalence after level $n$
\cite[12.3]{GILS}, and it follows that $|THH(\aC\subdot)|\to
THH(|\aC\subdot|)$ is a weak equivalence.
\end{proof}

We now give a minimal review of the definition of $TR$ and $TC$; we
refer the reader interested in more details to the excellent
discussions of $TR$ and $TC$ in \cite{HM2,HM3}.  For an $S^{1}$-space
$X$, the space $THH(\aC)(X)$ has two $S^{1}$-actions, one coming from
$X$ and the other coming from the cyclic structure.  Using the
diagonal action and restricting to representation spheres $S^{V}$
makes $THH(\aC)(-)$ into an equivariant orthogonal spectrum \cite[\S
II.2]{MMSS}; however, $THH(\aC)$ has even more structure, that of a
\term{cyclotomic spectrum} \cite[\S1.1]{HM3}, \cite[Def.~2.2]{HM2}.
We refer the reader to \cite[\S4]{BlumbergMandellTHHLoc} or
\cite[\S4]{BlumbergMandellCyclo} for a precise definition of the
category of cyclotomic spectra, but in brief the structure on $THH$
derives from the fundamental fixed point map
\[
(THH(\aC)(X))^{H} \to THH(\aC)(X^{H})
\]
for $S^{1}$-spaces $X$ and finite subgroups $H$ of $S^{1}$.
This induces maps in the equivariant stable category
\[
r_{H}\colon \rho^{\#}_{H}\Phi^{H} THH(\aC)\to THH(\aC)
\]
that are non-equivariant weak equivalences.  Here $\Phi^{H}$ denotes
the (derived) geometric fixed point spectrum, and when $H$ is the
subgroup with $n$ elements, $\rho_{H}$ is the $n$-th root isomorphism
$S^{1}\iso S^{1}/H$; $\rho^{\#}_{H}$ converts the $S^{1}/H$-spectrum
$\Phi^{H}THH(\aC)$ back to an $S^{1}$-spectrum via the isomorphism
$\rho$.  Essentially, a cyclotomic spectrum consists of an
$S^{1}$-equivariant spectrum indexed on a complete universe together
with weak equivalences $r_{H}$ of the form above, called
\term{cyclotomic structure maps}, satisfying certain coherence
properties \cite[Def.~2.2]{HM2}, \cite[\S1.1]{HM3}.  By
\cite[4.9]{BlumbergMandellTHHLoc} (and the obvious equivariant
refinement of Proposition~\ref{propoldpaper}), $THH$ defines a functor
from small non-degenerately based spectral categories to the point-set
category of cyclotomic spectra.

For a fixed prime $p$ and each $n$, let $C_{p^{n}}\subset S^{1}$
denote the cyclic subgroup of order $p^{n}$.  We then have maps in the
(non-equivariant) stable category
\[
F,R\colon THH(\aC)^{C_{p^{n}}}\to THH(\aC)^{C_{p^{n-1}}}
\]
where $F$ is the inclusion of the fixed points and $R$ is the map
induced by the composite of the map from the fixed point spectrum to
the geometric fixed point spectrum $THH(\aC)^{C_{p}}\to
\Phi^{C_{p}}THH(\aC)$ and the cyclotomic structure map
$r_{C_{p}}\colon \Phi^{C_{p}}THH(\aC)\to THH(\aC)$; see
\cite[\S1.1]{HM3}, \cite[\S2.2]{HM2}, or
\cite[\S4]{BlumbergMandellTHHLoc}.  We need functorial point-set
versions of these maps to construct $TC$ as a functor on small
spectral categories.  In \cite{HM3}, the connectivity and convergence
hypotheses used there imply that $THH(\aC)$ is an equivariant
$\Omega$-spectrum relative to the family of finite subsets of $S^{1}$;
the point-set maps $F,R$ in \cite{HM3} are then constructed using the
point-set fixed point spectra as models for the derived fixed point
spectra.  In our context, we need to use an $\Omega$-spectrum
replacement functor in the category of cyclotomic spectra: For such a
functor $Q$, we get appropriate point-set maps 
\[
F,R\colon Q(T)^{C_{p^{n}}}\to Q(T)^{C_{p^{n-1}}}.
\]
which are functorial in the cyclotomic spectrum $T$.

\begin{defn}\label{defTC}\index{TC (construction)@$TC$
(construction)}\index{TR (construction)@$TR$ (construction)}
Let $Q$ be an $\Omega$-spectrum replacement functor in the category of
cyclotomic spectra and write $T(\aC)$ for $Q(THH(\aC))$. Then 
$TR\supdot(\aC)$ is the pro-spectrum $\{T(\aC)^{C_{p^{n}}}\}$ under the
maps $R$, and $TR(\aC)$ is the homotopy limit.  $TC(\aC)$ and
$TC\supdot(\aC)$ are the spectrum and 
pro-spectrum obtained from $TR(\aC)$ and $TR\supdot(\aC)$ as the homotopy
equalizer of the maps $F$ and $R$. 
\end{defn}

Note that a map in the $S^{1}$-equivariant stable category induces a
(non-equivariant) weak equivalence on fixed point spectra for all
finite subgroups of $S^{1}$ if and only if it induces a
(non-equivariant) weak equivalence on geometric fixed point spectra
for all finite subgroups \cite[XVI.6.4]{MayAlaska}.  It follows that a
cyclotomic map of cyclotomic spectra induces a weak equivalence of
fixed point spectra for all finite subgroups of $S^{1}$ if and only if
it is a non-equivariant weak equivalence.  In particular, we obtain
the following proposition.

\begin{prop}\label{propTCequiv}
A spectral functor of small non-degenerately based spectral categories
$\aC\to \aD$ that induces a weak 
equivalence on $THH$ induces a weak equivalence on $TR$ and $TC$.
\end{prop}

Likewise, using the same principle on the cofiber of a map of
cyclotomic spectra, we obtain the following proposition.  Applying
this proposition in examples when $THH(\aC)$ is contractible,
localization cofibration sequences on $TR$ and $TC$ follow from ones on
$THH$.

\begin{prop}\label{proplazy}
For a strictly commuting square of small non-degenerately based
spectral categories 
\[
\xymatrix{%
\aA\ar[r]\ar[d]&\aB\ar[d]\\
\aC\ar[r]&\aD,
}
\]
if the induced square on $THH$ is homotopy cocartesian, then so are the
induced squares on $TR$ and $TC$.
\end{prop}

\section{Review of the invariance properties of $THH$}\label{secthhprop}

In this section, we review the invariance properties of $THH$. This
includes invariance under Dwyer-Kan equivalence, cofinal embeddings,
and thick closure. We review the Localization Theorem of
\cite[7.1]{BlumbergMandellTHHLoc} and the closely related theorem
\cite[7.2]{BlumbergMandellTHHLoc} on triangulated quotients formed
from ``localization pairs''.  We review only definitions and
statements in this section and defer to \cite{BlumbergMandellTHHLoc}
for proofs.

\begin{defn}\label{defdkequiv}
Let $F\colon \aC\to \aD$ be a spectral functor.  We say that $F$ is a
\indexterm{Dwyer-Kan embedding}{Dwyer-Kan embedding|seeonly{DK-embedding}} or \term{DK-embedding} when for every
$a,b\in \ob\aC$, the map $\aC(a,b)\to \aD(Fa,Fb)$ is a weak
equivalence.

We say that $F$ is a \indexterm{Dwyer-Kan equivalence}{Dwyer-Kan
equivalence|seeonly{DK-equivalence}} or
\term{DK-equivalence} when $F$ is a DK-embedding and for every $d\in
\ob\aD$, there exists a $c\in\ob\aC$ such that $\aD(-,d)$ and
$\aD(-,Fc)$ represent naturally isomorphic enriched functors from
$\aD^{\op}$ to the stable category.
\end{defn}

We can rephrase this definition in terms of homotopy categories.

\begin{defn}\label{defhtycat}
For a spectral category $\aC$, the \indexterm{homotopy
category}{homotopy category (of a spectral category)}
$\pi_{0}\aC$ is the Ab-category with the same objects, with morphism
abelian groups $\pi_{0}\aC(a,b)$, and with units and composition
induced by the unit and composition maps of $\aC$.  
The \term{graded homotopy category} is the $\mathrm{Ab}_{*}$-category
with objects $\ob\aC$ and morphisms $\pi_{*}\aC(a,b)$.
\end{defn}

The following proposition gives an equivalent formulation of
DK-equivalence in terms of homotopy categories.

\begin{prop}\label{propdkequiv}
A spectral functor $\aC\to \aD$ is a Dwyer-Kan equivalence if and only
if it induces an equivalence of graded homotopy categories
$\pi_{*}\aC\to\pi_{*}\aD$.
\end{prop}

We then have the following invariance property for DK-equivalences.

\begin{thm}\label{thmdkequiv}\procite[5.9]{BlumbergMandellTHHLoc}
A DK-equivalence of small non-degenerately based spectral categories
$\aC\to \aD$ induces a weak equivalence $THH(\aC)\to THH(\aD)$. 
\end{thm}

We also have the following more general theorem for bimodule
coefficients.  In the statement, the $\aC$-bimodule $F^{*}\fN$ is the
bimodule obtained by restriction of scalars; it is the spectral
functor from $\aC^{\op}\sma \aC$ to symmetric spectra defined by first 
applying $F$ to each variable and then applying $\fN$.

\begin{thm}\label{thmbimodequiv}\procite[5.10]{BlumbergMandellTHHLoc}
Let $F\colon \aC\to \aD$ be a DK-equivalence of small non-degenerately based spectral categories, $\fM$ a
$\aC$-bimodule and $\fN$ a $\aD$-bimodule.  A weak
equivalence $\fM\to F^{*}\fN$ induces a weak equivalence
$THH(\aC;\fM)\to THH(\aD;\fN)$.
\end{thm}

The next theorem generalizes from DK-equivalences to cofinal
DK-embeddings. For objects $a$ and $c$ of $\aD$, say that $c$ is a
\term{homotopy factor} of $a$ if it is a factor in the graded homotopy
category $\pi_{*}\aD$, i.e., if there exists an object $b$ in $\aD$
and a natural isomorphism $\pi_{*}\aD(-,c)\iso \pi_{*}\aD(-,a) \times
\pi_{*}\aD(-,b)$ of contravariant functors from $\pi_{*}\aD$ to the
category of graded abelian groups.  We say that a spectral functor
$F\colon \aC\to \aD$ is \term{homotopy cofinal} if it induces weak
equivalences on mapping spaces and each object of $\aD$ is a homotopy
factor of the image of some object in $\aC$.  The following is the
most basic Morita invariance result for $THH$.

\begin{thm}\label{thmfactor}\procite[5.11]{BlumbergMandellTHHLoc}
A homotopy cofinal spectral functor $\aC\to \aD$ of small
non-degenerately based spectral categories induces a weak
equivalence $THH(\aC)\to THH(\aD)$.
\end{thm}

The previous theorem generalizes further to the ``thick closure''.
This is easiest to state and to explain in the context of
pretriangulated spectral categories, which we now review.

\begin{defn}\label{deftriang}\procite[5.4]{BlumbergMandellTHHLoc}
A spectral category $\aC$ is \term{pretriangulated} means:
\begin{enumerate}
\item There is an object $0$ in $\aC$ such that the right $\aC$-module
$\aC(-,0)$ is homotopically trivial (weakly 
equivalent to the constant functor with value the one-point symmetric
spectrum $*$).
\item Whenever a right $\aC$-module $\fM$ has the property that $\Sigma\fM$
is weakly equivalent to a representable $\aC$-module $\aC(-,c)$ (for
some object $c$ in $\aC$), then
$\fM$ is weakly equivalent to a representable $\aC$-module
$\aC(-,d)$ for some object $d$ in $\aC$.
\item Whenever the right $\aC$-modules $\fM$ and $\fN$ are weakly equivalent to
representable $\aC$-modules $\aC(-,a)$ and $\aC(-,b)$ respectively, then the
homotopy cofiber of any map of right $\aC$-modules $\fM\to \fN$ is weakly
equivalent to a representable $\aC$-module.
\end{enumerate}
\end{defn}

The first condition ensures the existence of a zero object in the
homotopy category $\pi_{0}\aC$: the usual argument shows that the left
module $\aC(0,-)$ is also homotopically trivial (in $\pi_{0}\aC$, the
identity map of $0$ is the same as the zero map).  The second
condition gives a desuspension functor on $\pi_{0}\aC$ and the third
condition in particular produces a suspension functor on $\pi_{0}\aC$:
We choose $\Sigma^{-1}a$ and $\Sigma a$ representing
$\Sigma^{-1}\aC(-,a)$ and $\Sigma \aC(-,a)$, respectively, in the
derived category of right $\aC$-modules.  Then $\Sigma^{-1}a$ and
$\Sigma a$ in particular represent the functors $\pi_{1}\aC(-,a)$ and
$\pi_{-1}\aC(-,a)$, respectively, from $\pi_{0}\aC$ to sets, and so
are unique up to unique isomorphism in $\pi_{0}\aC$. See
\cite[5.4ff]{BlumbergMandellTHHLoc} for more discussion.

The terminology ``pretriangulated'' derives from the fact that the
homotopy category is triangulated.  The third condition above indicates
how to form triangles.

\begin{defn}\label{defnPuppe}
In a pretriangulated spectral category $\aC$, we say that a sequence  
\[
a\to b\to c \to \Sigma a
\]
in $\pi_{0}\aC$ is a 
\indexterm{four term Puppe sequence}{Puppe sequence} 
if there exists
right $\aC$-modules $\aM$ and $\aN$ and a map of right $\aC$-modules
$f\colon \aM\to \aN$ such that the four term Puppe sequence of $f$
\[
\aM\to \aN\to Cf\to \Sigma \aM
\]
in the category of right $\aC$-modules
is isomorphic in the derived category of right $\aC$-modules to the
sequence  
\[
\aC(-,a)\to \aC(-,b)\to \aC(-,c) \to \aC(-,\Sigma a)
\]
such that the isomorphism $\Sigma \aM\to \aC(-,\Sigma a)\iso \Sigma
\aC(-,a)$ is the suspension of the isomorphism $\aM\to \aC(-,a)$.
\end{defn}

\begin{thm}\label{thmtriang}\procite[5.6]{BlumbergMandellTHHLoc}
If the spectral category $\aC$ is pretriangulated, then its homotopy
category is triangulated with distinguished triangles the four term
Puppe sequences.
A spectral functor between pretriangulated spectral
categories induces a triangulated functor on homotopy categories.
\end{thm}

\begin{cor}\label{cortriang}\procite[5.7]{BlumbergMandellTHHLoc}
A spectral functor $\aC\to \aD$ between pretriangulated spectral
categories is a Dwyer-Kan equivalence if and only
if it induces an equivalence of homotopy categories
$\pi_{0}\aC\to\pi_{0}\aD$. 
\end{cor}

The following theorem indicates that there is no loss of generality in
considering spectral subcategories of pretriangulated spectral categories.

\begin{thm}\label{thmtriangenv}\procite[5.5]{BlumbergMandellTHHLoc}
Any small spectral category $\aC$ DK-embeds in a small pretriangulated
spectral category $\tilde \aC$.
\end{thm}

Given a
set $C$ of objects in a pretriangulated spectral category
$\aD$, the \term{thick closure} of $C$ is the set of objects in
the thick subcategory of $\pi_{0}\aD$ generated by $C$.  In terms of
the spectral category $\aD$, the thick closure of $C$ is the smallest
set $\bar C$ of objects of $\aD$ containing $C$ and satisfying:
\begin{enumerate}
\item If $a$ is a homotopy factor of an object of $\bar C$, then $a$ is
in $\bar C$.
\item If the right $\aD$-module $\Sigma \aD(-,a)$ is weakly equivalent
to $\aD(-,c)$ for some $c$ in $\bar C$, then $a$ is in $\bar C$. 
\item If the right $\aD$-module $\aD(-,a)$ is weakly equivalent to the
cofiber of a map of right $\aD$-modules $\fM\to \fM'$ with $\fM$, $\fM'$
weakly equivalent to $\aD(-,c)$, $\aD(-,c')$ for $c,c'$
in $\bar C$, then $a$ is in $\bar C$.
\end{enumerate}
A set is \term{thick} if it is its own thick closure. 

\begin{thm}\label{thmthick}\procite[5.12]{BlumbergMandellTHHLoc}
Let $\aD$ be a pretriangulated spectral category.  Let $C$ be a set of
objects of $\aD$, $\bar C$ its thick closure, and $C'$ a set
containing $C$ and contained in $\bar C$. Let $\aC$ and $\aC'$ be the
full spectral subcategories of $\aD$ on the objects in $C$ and $C'$
respectively.  If $\aC$ and $\aC'$ are non-degenerately based, then
the inclusion $\aC\to \aC'$ induces a weak  
equivalence $THH(\aC) \to THH(\aC')$.
\end{thm}

The next theorem is the Localization Theorem of
\cite[7.1]{BlumbergMandellTHHLoc}.

\begin{thm}[Localization Theorem~{\cite[7.1]{BlumbergMandellTHHLoc}}]
\label{thmgenone}
Let $F\colon \aB\to \aC$ be a spectral functor between small
pretriangulated spectral categories, and let $\aA$ be the full spectral
subcategory of $\aB$ consisting of the objects $a$ such that $F(a)$ is
isomorphic to zero in the homotopy category $\pi_{0}\aC$. If the
induced map from the triangulated quotient $\pi_{0}\aB/\pi_{0}\aA$ to
$\pi_{0}\aC$ is cofinal, then
$THH(\aC)$ is weakly equivalent the homotopy cofiber of $THH(\aA)\to
THH(\aB)$. 
\end{thm}

There is also a version for triangulated quotients.
A \term{localization pair} $(\aB,\aA)$ consists of a
pretriangulated spectral category $\aB$ and a full spectral
subcategory $\aA$ such that $\pi_{0}\aA$ is thick in $\pi_{0}\aB$; we
say that the localization pair is small when the spectral category
$\aB$ is small and non-degenerately based when $\aB$ is
non-degenerately based.  (The requirement that $\aA$ be thick is for
convenience rather than necessity by Theorem~\ref{thmthick}.)  The
following theorem says that the cofiber of $THH$ is an
invariant of a localization pair.

\begin{thm}\procite[7.2]{BlumbergMandellTHHLoc}\label{thmgentwo}
Let $F\colon (\aB_{1},\aA_{1})\to (\aB_{2},\aA_{2})$ be a map of
small non-degenerately based localization pairs.  If the induced
map of triangulated quotients 
\[
\pi_{0}\aB_{1}/\pi_{0}\aA_{1} \to \pi_{0}\aB_{2}/\pi_{0}\aA_{2}
\]
is cofinal, then the induced map
\[
C(THH(\aA_{1})\rightarrow THH(\aB_{1})) \quad \to \quad
C(THH(\aA_{2})\rightarrow THH(\aB_{2}))
\]
is a weak equivalence.
\end{thm}

\section{The Dennis-Waldhausen Morita Argument}\label{secdwm}

The main tool in the proof of $THH$ invariance results is a trick due
to Dennis and Waldhausen \cite[p.~391]{WaldhausenA2} that we review in
this section.  We need it in the proof of the Sphere Theorem in
Section~\ref{secpfsphere}.  The argument is based on an explicit
bisimplicial construction, which uses the Hochschild-Mitchell complex
in place of $THH$.

\begin{defn}\index{Hochschild-Mitchell complex}\index{Ncy(C;M)@$\THM(\aC;\fM)$}
For a small spectral category $\aC$ and $\aC$-bimodule $\fM$,
let
\[
\THM_{q}(\aC;\fM)=\bigvee \aC(c_{q-1},c_{q}) \sma \dotsb \sma
\aC(c_{0},c_{1}) \sma \fM(c_{q},c_{0}),
\]
where the sum is over the $(q+1)$-tuples $(c_{0},\dotsc,c_{q})$ of
objects of $\aC$.  This becomes a simplicial object in symmetric
spectra using the usual
cyclic bar construction face and degeneracy maps: The unit
maps of $\aC$ induce the degeneracy maps, and the two action maps on
$\fM$ (for $d_{0}$ and $d_{q}$) and the composition maps in $\aC$ (for
$d_{1},\dotsc,d_{q-1}$) induce the face maps.  We denote the 
geometric realization symmetric spectrum as $\THM(\aC;\fM)$ and
write $\THM(\aC)$ for 
$\THM(\aC;\aC)$. 
\end{defn}

The following proposition, which is essentially the ``many objects''
version of \cite[4.2.8-9]{ShipleyD}, follows from
Proposition~\ref{propoldpaper} and the
theory developed in \cite{ShipleyD}.  It allows us to sometimes
substitute the Hochschild-Mitchell complex for $THH$

\begin{prop}\label{propTHHvsTHM}\procite[3.5]{BlumbergMandellTHHLoc}
There is a natural map in the stable category from $THH(\aC;\fM)$ to
$\THM(\aC;\fM)$ that is an isomorphism when $\aC$ is pointwise
cofibrant.
\end{prop}

In addition to the Hochschild-Mitchell complex, we also need the
two-sided bar construction. 

\begin{defn}\label{defn:tsbc}
Let $\aC$ be a small spectral category, $\fM$ a right $\aC$-module, and $\fN$
a left $\aC$-module.  The \term{two-sided bar construction}
$\TB(\fM;\aC;\fN)$ is the geometric realization of the simplicial
symmetric spectrum $\TB\subdot(\fM;\aC;\fN)$, where
\[
\TB_{q}(\fM;\aC;\fN)
=\bigvee \fM(c_{q})\sma \aC(c_{q-1},c_{q}) \sma \dotsb \sma
\aC(c_{0},c_{1}) \sma \fN(c_{0}),
\]
where the sum is over the $(q+1)$-tuples $(c_{0},\dotsc,c_{q})$ of
objects of $\aC$.  We make this a simplicial object with the usual
two-sided bar construction face and degeneracy maps: the zeroth
face map is induced by the action of $\aC$ on $\aN$, the last face map
is induced by the action of $\aC$ on $\aM$, and the remaining face maps
are induced by the composition in $\aC$. The degeneracy maps are induced
by the unit maps $S\to \aC(c_{i},c_{i})$.
\end{defn}

The following proposition is the \indexterm{Dennis-Waldhausen 
Morita Argument}{Dennis-Waldhausen Morita Argument|(}.  
In the statement (and elsewhere when necessary for
clarity), we write 
\[
\TB(\fM(x); x,y \in \aC; \fN(y))
\qquad \text{and}\qquad 
\THM(x,y\in \aC;\fP(x,y))
\]
for $\TB(\fM;\aC;\fN)$ and $\THM(\aC;\fP)$, especially when $\fM$,
$\fN$, and/or $\fP$ depend on other variables.   

\begin{prop}[Dennis-Waldhausen Morita Argument 
{\cite[6.2]{BlumbergMandellTHHLoc}}]\label{propcoremorita}
Let $\aC$ and $\aD$ be small spectral categories. Let $\fP$ be
a $(\aD,\aC)$-bimodule and $\fQ$ a $(\aC,\aD)$-bimodule.  Then there
is a natural isomorphism of symmetric spectra 
\[
\THM(\aC,\TB(\fP,\aD,\fQ))\iso \THM(\aD;\TB(\fQ,\aC,\fP)),
\]
that is, 
\begin{multline*}
\THM(x,y\in \aC;\TB(\fP(w,y);w,z\in \aD;\fQ(x,z)))\\
\iso
\THM(w,z\in \aD;\TB(\fQ(x,z);x,y\in \aC;\fP(w,y))).
\end{multline*}
\end{prop}

As the proof is easy, we repeat it here.

\begin{proof}
We can identify both symmetric spectra
\[
\THM(\aC;B(\fP;\aD;\fQ))\qquad \text{and}\qquad
\THM(\aD;B(\fQ;\aC;\fP))
\]
as the diagonal of the bisimplicial spectrum
with $(q,r)$-simplices as pictured.
\[
\xymatrix@=1pc{%
&\aC(c_{q-1},x)\sma \dotsb \sma \aC(y,c_{1})\\
\fQ(x,z)
\ar@{{}{}{}}[dr]_(.3){\relax\textstyle\sma}
\ar@{{}{}{}}[ur]^(.3){\relax\textstyle\sma}
&&\fP(w,y)
\ar@{{}{}{}}[dl]^(.3){\relax\textstyle\sma}
\ar@{{}{}{}}[ul]_(.3){\relax\textstyle\sma}
\\
&\aD(z,d_{1})\sma \dotsb \sma \aD(d_{r-1},w)
}
\]
These two constructions are therefore canonically isomorphic in the
point-set category of symmetric spectra.
\end{proof}
\index{Dennis-Waldhausen Morita Argument|)}

The following lemma complements Proposition~\ref{propcoremorita} in
the applications.  Its proof is the usual simplicial contraction (see
for example \cite[9.8]{GILS}) and requires no cofibrancy or
non-degenerate base point hypotheses.

\begin{lem}[Two-Sided Bar Lemma]\label{lemtwobar}
Let $\aC$ be a small spectral category, let $\aM$ be a right
$\aC$-module, and let $\aN$ be a left $\aC$-module. For any object $c$
in $\aC$, the composition maps
\[
\TB\subdot(\fM;\aC;\aC(c,-))\to \fM(c)
\qquad\text{and}\qquad 
\TB\subdot(\aC(-,c);\aC;\fN)\to \fN(c)
\]
are simplicial homotopy equivalences.
\end{lem}

The applications we need are the following.

\begin{thm}\label{thmdwmapp}
Let $F\colon \aC\to \aD$ be a spectral functor between pointwise cofibrant
spectral categories and let $\fL$ be the $\aD$-bimodule
\[
\fL(a,b)=B(\aD(F(-),b),\aC,\aD(a,F(-))).
\]
Then $THH(\aD;\fL)$ is weakly equivalent to $THH(\aC;F^{*}\aD)$.
\end{thm}

\begin{proof}
The proof is essentially the same as the proof of
\cite[7.6]{BlumbergMandellTHHLoc}. 
It suffices to produce a weak equivalence between $\THM(\aD;\fL)$ and
$\THM(\aC;F^{*}\aD)$.  For this we apply Proposition~\ref{propcoremorita} with $\aP=\aC$, and $\aQ=\aC$ to obtain a natural isomorphism
\[
\THM(\aD;\fLL)=
\THM(\aD;\TB(\aD;\aC;\aD))
\iso 
\THM(\aC;F^{*}\TB(\aD;\aD;\aD)).
\]
The natural map
\[
THH(\aC;F^{*}\TB(\aD;\aD;\aD))\to THH(\aC;F^{*}\aD)
\]
is a weak equivalence 
by the Two-Sided Bar Lemma~\ref{lemtwobar}.
\end{proof}

\begin{thm}\label{thmcompcrit}\procite[6.4]{BlumbergMandellTHHLoc}
Let $\aC$ and $\aD$ be small spectral categories and let
$F\colon \aC \to \aD$ be a spectral functor. Let $\fM$ be a
$\aC$-bimodule, $\fN$ a $\aD$-bimodule and $\fM\to
F^{*}\fN$ a weak equivalence.  Assume that $\aC$ and $\aD$ are
pointwise cofibrant and that $\fM$ and $\fN$ are
non-degenerately based.  If the map of symmetric spectra
\[
\TB(\aD(F-,z);\aC;\fN(w,F-))\to 
\TB(\aD(-,z);\aD;\fN(w,-))
\]
is a weak equivalence for each fixed $w$,$z$ in $\aD$.  
Then the map
\[
THH(\aC;\fM)\to THH(\aD;\fN)
\]
is a weak equivalence.
\end{thm}

As the proof is identical to the proof
of~\cite[6.4]{BlumbergMandellTHHLoc}, we omit it here.

\chapter{$THH$ and $TC$ of simplicially enriched {W}aldhausen categories}
\label{tw-wald}

A \term{Waldhausen category} consists of a category $\aC$ together
with a (chosen) zero
object $*$, a
subcategory of \noindexterm{cofibrations} $co\aC$, and a subcategory of
\noindexterm{weak equivalences} that satisfy the following properties
\cite[\S1.1--1.2]{WaldhausenKT}: 
\begin{enumerate}
\item (Cof 1, Weq 1) $co\aC$ and $w\aC$ contain all the isomorphisms.
\item (Cof 2) For every object $a$, the map $*\to a$ is a cofibration.
\item (Cof 3) Cofibrations admit cobase change: If $a\to b$ is a
cofibration, and $a\to c$ is any map, then $b\cup_{a}c$ exists and
$c\to b\cup_{a}c$ is a cofibration.
\item (Weq 2) \term{Gluing Axiom}. Given a commutative diagram
\[
\xymatrix@-1pc{%
b\ar[d]_{\sim}&a\ar@{ >->}[l]\ar[r]\ar[d]_{\sim}&c\ar[d]_{\sim}\\
b'&a'\ar@{ >->}[l]\ar[r]&c'
}
\]
where the leftward arrows are cofibrations and the vertical arrows are
weak equivalences, the induced map 
\[
b\cup_{a}c\to b'\cup_{a'}c'
\]
is a weak equivalence.
\end{enumerate}
Waldhausen~\cite[\S1.3]{WaldhausenKT} constructs the algebraic
$K$-theory spectrum associated to a Waldhausen category using the
$\Sdot$ construction (which we review in Section~\ref{sec:moore1}
below).  The purpose of this chapter is to construct $THH$ and $TC$
for Waldhausen categories that have an additional compatible
simplicial enrichment.
(We extend this definition to Waldhausen categories much more broadly
in Chapter~\ref{tw-gen}.)

The contents of the chapter are as follows.
Section~\ref{secdefwald} defines simplicially enriched, enhanced
simplicially enriched, and simplicially tensored Waldhausen
categories, giving some examples.  Section~\ref{secspec} constructs
spectral categories from simplicially enriched Waldhausen categories.
Section~\ref{sec:moore1} reviews the $\Sdot$ construction and
introduces the \noindexterm{Moore nerve} construction, which is a
version of the nerve construction that behaves better homotopically on
enriched categories.  Section~\ref{sec:spMdot} introduces the
Moore $\Spdot$ construction and iterated $\Spdot$, which generalizes
the iterated $\Sdot$ construction and is needed for the construction
of the cyclotomic trace.  Section~\ref{secdefthh} constructs
$THH$, $TR$, and $TC$ for simplicially enriched Waldhausen categories
and the cyclotomic trace from $K$-theory to $TC$.

\section{Simplicially enriched {W}aldhausen categories}\label{secdefwald}

In this section we introduce the structure of a simplicially enriched
Waldhausen category.  This structure compatibly combines a simplicial
enrichment with a Waldhausen structure in a way that we make precise
in Definition~\ref{defsimpwaldcat}.  Although this structure suffices
for us to define an associated spectral category in the next section,
more conditions are necessary to ensure that the homotopy theory of
the enrichment matches up with the intrinsic homotopy theory of the
Waldhausen category; we make these conditions precise in the
definition of DK-compatible enrichment in
Definition~\ref{defdkcompat}.  In practice,
and as we explain in Section~\ref{futuresec}, without much loss of
generality, we typically have the stronger structures that we describe
in Definitions~\ref{deftenswaldcat} and~\ref{defusuwaldcat}.   We
begin with the most basic structure in the following definition. 

\begin{defn}\label{defsimpwaldcat}
A \term{simplicially enriched Waldhausen category} consists of a
category $\aC=\aC\subdot$ enriched in simplicial sets together with a
Waldhausen category structure on $\aC_{0}$ such that:
\begin{enumerate}
\item The zero object $*$ in $\aC_{0}$ is a zero object for $\aC$,
\item Pushouts over cofibrations in $\aC_{0}$ are pushouts in $\aC$,
\item Cofibrations $x\to y$ induce Kan fibrations
$\aC(y,z)\to\aC(x,z)$ for all objects $z$, and
\item A map $x\to y$ is a weak equivalence if and only if $\aC(y,z)\to
\aC(x,z)$ is a weak equivalence for all objects $z$ if and only if
$\aC(z,x)\to \aC(z,y)$ is a weak equivalence for all objects $z$.
\end{enumerate}
An \term{enriched exact functor} between such categories is a simplicial
functor $\phi \colon \aC\to \aD$ that 
restricts to an exact functor of Waldhausen categories $\aC_{0}\to
\aD_{0}$. 
\end{defn}

Since the initial map $* \to x$ is always a cofibration in a
Waldhausen category, Definition~\ref{defsimpwaldcat} implies that all
the mapping spaces $\aC(x,y)$ are Kan complexes.  The fact that weak
equivalences are detected on the simplicial mapping spaces implies
that weak equivalences in $\aC_{0}$ are closed under retracts and
satisfy the two out of three property.

As explained by Dwyer and Kan, any category with a
subcategory of weak equivalences has an intrinsic homotopy theory
in terms of a functorial simplicially enriched category called the
Dwyer-Kan simplicial localization \cite{DKSimpLoc}.  Technically, we
will use exclusively the variant called the hammock localization
\cite{DKHammock}, which we will denote by $L$. Then for a simplicial
Waldhausen category $\aC$, the Dwyer-Kan simplicial localization of
the underlying category with weak equivalences, denoted $L\aC_{0}$, provides a
second simplicially enriched category expanding $\aC_{0}$.  In
general, we see no reason why these two simplicial enrichments should
be equivalent; we therefore introduce the following
terminology.

\begin{defn}\label{defdkcompat}
Let $\aC$ be a simplicially enriched Waldhausen category.  We say
that $\aC$ is \term{DK-compatible} if for all objects $x,y$ in $\aC$,
the maps
\[
\aC(x,y)\to \diag L\aC\subdot(x,y)\from L\aC_{0}(x,y)
\]
are weak equivalences of simplicial sets. Here we regard $\aC_{n}$ as
a category with weak equivalences by declaring a map in
$\aC_{n}$ to be a weak equivalence if and only if some (or,
equivalently, every) iterated face map takes it to a weak equivalence
in $\aC_{0}$.
\end{defn}

As in Definition~\ref{defdkequiv}, for categories enriched in
simplicial sets, spaces, or spectra, an 
enriched functor $\phi \colon \aC\to \aD$ is called a
\term{DK-embedding} when it induces a weak equivalence $\aC(x,y)\to
\aD(\phi(x),\phi(y))$ for all objects $x$, $y$.  A DK-embedding is a
\term{DK-equivalence} when it induces an equivalence $\pi_{0}\aC\to
\pi_{0}\aD$ on categories of components.  On the other hand, for
discrete categories $\aC_{0}$ and $\aD_{0}$ with subcategories of weak
equivalences, a functor $\aC_{0}\to\aD_{0}$ that preserves weak
equivalences is called a \term{DK-embedding} or \term{DK-equivalence}
when it induces one on the Dwyer-Kan simplicial localizations.
The main purpose of the previous definition is the following easy
observation.

\begin{prop}\label{propequiv}
Let $\aC$ and $\aD$ be simplicially enriched Waldhausen categories and
$\phi \colon \aC\to \aD$ a simplicial functor (not necessarily
exact).  Then:
\begin{enumerate}
\item $\phi_{0}\colon \aC_{0}\to \aD_{0}$ preserves weak equivalences.
\item Assume furthermore that $\aC$ and $\aD$ are both DK-compatible.
Then $\phi$ is a DK-embedding or DK-equivalence of simplicially
enriched categories if and only if
$\phi_{0}$ is a DK-embedding or DK-equivalence (respectively) of
categories with weak equivalences.
\end{enumerate}
\end{prop}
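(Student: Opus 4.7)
The plan is to deduce (i) directly from condition~(iv) of Definition~\ref{defsimpwaldcat}, and to deduce (ii) by fitting the two notions of DK-embedding into a naturality square built from the DK-compatibility zigzag.

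For part~(i), suppose $f\colon x\to y$ is a weak equivalence in $\aC_{0}$. Specializing the two clauses of condition~(iv) with $z=x$ and $z=y$, the maps $f^{*}\colon \aC(y,x)\to \aC(x,x)$ and $f_{*}\colon \aC(y,x)\to \aC(y,y)$ are weak equivalences of Kan complexes. Lifting $[\id_{x}]$ and $[\id_{y}]$ respectively produces $g,h\colon y\to x$ with $gf\simeq \id_{x}$ and $fh\simeq \id_{y}$; the standard argument ($g\simeq gfh\simeq h$, assembled using that the mapping spaces are Kan complexes as noted after Definition~\ref{defsimpwaldcat}) promotes $g$ to a two-sided simplicial homotopy inverse of $f$. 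Applying the simplicial functor $\phi$ produces a simplicial homotopy inverse $\phi(g)$ for $\phi(f)$, so for any object $w$ of $\aD$ the maps $\aD(\phi y,w)\to \aD(\phi x,w)$ and $\aD(w,\phi x)\to \aD(w,\phi y)$ are simplicial homotopy equivalences of Kan complexes, hence weak equivalences, and $\phi_{0}(f)$ is a weak equivalence in $\aD_{0}$ by condition~(iv) for $\aD$.

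For part~(ii), the essential input is the commutative diagram
\[
\xymatrix@C=1em{
\aC(x,y)\ar[r]\ar[d]_{\phi_{*}}&\diag L\aC\subdot(x,y)\ar[d]&L\aC_{0}(x,y)\ar[l]\ar[d]^{(\phi_{0})_{*}}\\
\aD(\phi x,\phi y)\ar[r]&\diag L\aD\subdot(\phi x,\phi y)&L\aD_{0}(\phi x,\phi y)\ar[l]
}
\]
whose rows are the zigzags of Definition~\ref{defdkcompat} and whose commutativity is naturality of the hammock localization in each simplicial degree. To make sense of the middle vertical map one uses that $\phi_{n}\colon \aC_{n}\to \aD_{n}$ preserves weak equivalences, which follows from part~(i) applied level-wise together with the convention of Definition~\ref{defdkcompat} that declares weak equivalences in $\aC_{n}$ via the face maps. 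Under the DK-compatibility hypothesis the four horizontal arrows are weak equivalences, so the left vertical map is a weak equivalence for all $x,y$ if and only if the right one is; this is the DK-embedding equivalence. The DK-equivalence case adds essential surjectivity of $\pi_{0}\phi$, but $\pi_{0}$ of the hom sets on either side of the diagram computes $\Ho(\aC_{0})(x,y)$ (respectively $\Ho(\aD_{0})(\phi x,\phi y)$) by DK-compatibility, and the object sets literally coincide, so the two essential-surjectivity conditions are identical.

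The only step requiring real care is the upgrade from one-sided to two-sided homotopy inverses in part~(i); this is standard for categories enriched in Kan complexes and presents no genuine obstacle. The remainder is a formal combination of condition~(iv), the DK-compatibility zigzag, and naturality of hammock localization.
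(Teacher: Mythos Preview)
Your argument is correct. The paper does not give a proof of this proposition; it is introduced as ``the following easy observation'' immediately after Definition~\ref{defdkcompat}, so there is no paper proof to compare against. Your write-up is the natural expansion of what the authors leave implicit: part~(i) from the characterization of weak equivalences in condition~(iv) of Definition~\ref{defsimpwaldcat} via simplicial homotopy inverses (using that all mapping spaces are Kan), and part~(ii) from the naturality square built on the DK-compatibility zigzag of Definition~\ref{defdkcompat}. The one subtlety you handle explicitly---that $\phi_{n}$ preserves the levelwise weak equivalences so that the middle vertical map is defined---is exactly the point that needs checking, and your reduction to part~(i) via face maps is correct.
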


The following is an easy but important class of examples of
DK-compatible simplicially enriched Waldhausen categories.

\begin{example}\label{exexact}
An exact category, or more generally, a Waldhausen category whose weak
equivalences are the isomorphisms 
becomes a DK-compatible simplicially enriched Waldhausen category by
regarding its mapping sets as discrete simplicial sets.
\end{example}

We also have the following less trivial examples.

\begin{example}\label{exofinterest}
Let $\aC$ be a Waldhausen subcategory of cofibrant objects in
simplicial closed model category $\aM$ in which all objects are
fibrant.  Then $\aC$ is a simplicially enriched Waldhausen category
with its natural simplicial mapping spaces and Waldhausen structure
inherited from $\aM$.  If $\aC$ is closed under tensors with finite
simplicial sets, then $\aC$ is a DK-compatible (see 
Theorem~\ref{thmdkdiag} below).  Examples of this type include: 
\begin{enumerate}
\item Finite cell $R$-modules for an EKMM $S$-algebra $R$,
or (for $R$ connective with $\pi_{0}$ noetherian) cell $R$-modules
that have finite stage finitely generated Postnikov towers as in
\cite{BlumbergMandell}.
\item The category of finite cell modules over a
simplicial ring $A$, or the category of finite cell modules built out
of finitely generated projective $A$-modules.
\item The category of simplicial objects on an abelian category with
the ``split-exact'' model structure (where the
cofibrations are the levelwise split 
mono\-morph\-isms and the weak equivalences are the simplicial homotopy
equivalences). 
\item The category of levelwise projectives in the category
of simplicial objects on an abelian category with enough projectives
(with the standard projective model structure).  Likewise, the
opposite category of the levelwise injectives in the category of
cosimplicial objects on an abelian category with enough injectives
(with the standard injective model structure).
\end{enumerate}
\end{example}

In addition to being DK-compatible, the previous class of examples
has an additional structure that we employ to construct non-connective
spectral enrichments in the next section.  We abstract this structure
in the following definition.

\begin{defn}\label{deftenswaldcat}
A \term{simplicially tensored Waldhausen category} is a simplicially enriched
Waldhausen category in which tensors with finite simplicial sets exist
and satisfy the pushout-product axiom.  A tensored exact functor
between simplicially tensored Waldhausen categories is a enriched
exact functor that preserves tensors with finite simplicial sets.
\end{defn}

In the previous definition, the pushout-product axiom
\cite[2.1]{SSAlgMod} asserts that given a cofibration $x \to y$ in
$\aC_{0}$ and a cofibration $A \to B$ of finite simplicial sets, the map
\[
(x \otimes B) \cup_{x \otimes A} (y \otimes A) \to y \otimes B
\]
is a cofibration in $\aC_{0}$.  This axiom implies that the 
usual mapping cylinder construction endows $\aC_{0}$ with a cylinder
functor satisfying the cylinder axiom (in the sense of
\cite[\S1.6]{WaldhausenKT}). The Kan condition on the mapping spaces
combined with the tensor adjunction implies the following proposition.

\begin{prop}\label{propcswprop}
Let $\aC$ be a simplicially tensored Waldhausen category.
\begin{enumerate}
\item For any object $x$ in $\aC$, the tensor $x\otimes(-)$ preserves weak
equivalences in simplicial sets.
\item For any finite simplicial set $X$, the tensor $(-)\otimes
X$ preserves weak equivalences in $\aC$.
\item For objects $x$ and $y$ in $\aC$, the simplicial set $\aC(x,y)$
is canonically isomorphic to $\aC_{0}(x\otimes \Delta[\cdot],y)$.
\end{enumerate}
\end{prop}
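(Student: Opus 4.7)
The plan is to deduce all three parts from the simplicial tensor-hom adjunction
\[
\aC(x\otimes A,z)\iso \Map(A,\aC(x,z)),
\]
natural in objects $x,z$ of $\aC$ and finite simplicial sets $A$ (where $\Map$
denotes the internal hom of simplicial sets), combined with axiom~(iv) of
Definition~\ref{defsimpwaldcat} (weak equivalences detected by simplicial
mapping spaces) and the fact, noted just after that definition, that each
$\aC(x,y)$ is a Kan complex.

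I would first dispatch part~(iii), which is essentially the defining property
of the tensor. The set-level tensor adjunction reads $\aC_{0}(x\otimes K,y)\iso
\Hom(K,\aC(x,y))$; specializing to $K=\Delta[n]$ yields $\aC_{0}(x\otimes
\Delta[n],y)\iso \aC(x,y)_{n}$, natural in $[n]$, which is precisely the
asserted isomorphism of simplicial sets. Once (iii) is in hand, the displayed
simplicial adjunction follows formally, using the canonical isomorphism
$(x\otimes A)\otimes \Delta[n]\iso x\otimes(A\times \Delta[n])$ to identify the
$n$-simplices of both sides.

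Next I would handle~(i) and~(ii) in parallel, since both rely on the same
mechanism. For~(i), given a weak equivalence $A\to B$ of finite simplicial sets,
axiom~(iv) reduces the claim to showing that $\aC(x\otimes B,z)\to \aC(x\otimes
A,z)$ is a weak equivalence for every object $z$. Via the simplicial adjunction
this map becomes $\Map(B,\aC(x,z))\to \Map(A,\aC(x,z))$; since $\aC(x,z)$ is a
Kan complex, $\Map(-,\aC(x,z))$ carries the weak equivalence $A\to B$ between
cofibrant simplicial sets to a weak equivalence by Ken Brown's lemma. Part~(ii)
is dual: for a weak equivalence $x\to y$ in $\aC$ and a finite simplicial set
$X$, axiom~(iv) and the adjunction reduce the claim to showing that
$\Map(X,\aC(y,z))\to \Map(X,\aC(x,z))$ is a weak equivalence; the source map
$\aC(y,z)\to \aC(x,z)$ is a weak equivalence of Kan complexes by axiom~(iv),
and $\Map(X,-)$, being right adjoint to $-\times X$, preserves weak
equivalences between Kan complexes again by Ken Brown's lemma.

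No genuine obstacle appears. The most delicate bookkeeping is the upgrade of
the set-level tensor adjunction to a simplicial adjunction, but this is
formal from part~(iii) and the associativity $x\otimes(A\times B)\iso
(x\otimes A)\otimes B$ of the tensor. Notably, the pushout-product axiom plays
no direct role in this proposition; it enters only later, in the construction
of the mapping cylinder functor alluded to in the paragraph preceding the
statement.
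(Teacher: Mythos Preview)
Your proposal is correct and matches the paper's approach exactly: the paper does not write out a proof but simply remarks that ``the Kan condition on the mapping spaces combined with the tensor adjunction implies the following proposition,'' which is precisely the mechanism you spell out in detail.
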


Definition~\ref{deftenswaldcat} provides the strongest background structure that
we use; in Section~\ref{futuresec}, we see that Waldhausen categories
quite generally admit equivalent models of this type.  In our study of
the $THH$ localization sequence in Chapter~\ref{tw-loc}, however, we require
slightly more 
flexibility.  Using a simplicially tensored Waldhausen category as 
an ambient category, we will sometimes need to restrict to a subcategory.

\begin{defn}\label{defusuwaldcat}
An \term{enhanced simplicially enriched Waldhausen category} is a pair
$\aA\subset \aC$ where $\aC$ is a simplicially tensored Waldhausen category
and $\aA$ is a full subcategory such that $\aA_{0}$ is a closed
Waldhausen subcategory.  For $\aA\subset \aC$ and $\aB\subset \aD$ 
enhanced simplicially enriched
Waldhausen categories, an \term{enhanced exact functor}
$\aA\to \aB$ is a tensored exact functor of simplicially tensored Waldhausen
categories $\aC\to \aD$ that restricts to a functor $\aA\to \aB$. 
\end{defn}

As in \cite[\S 1.2]{WaldhausenKT}, a \term{Waldhausen subcategory}
$\aA$ is a full subcategory of a Waldhausen category $\aC$ that itself
becomes a Waldhausen category by taking a weak equivalence to be a
weak equivalence in $\aC$ between objects of $\aA$ and a cofibration
to be a cofibration in $\aC$ between objects of $\aA$ for which the
cofiber is in $\aA$ (up to isomorphism).  A
\indexterm{closed}{Waldhausen subcategory!closed}%
\index{closed (Waldhausen subcategory)}
Waldhausen
subcategory is a Waldhausen subcategory $\aA\subset \aC$ that contains
every object of $\aC$ that is weakly equivalent to an object of $\aA$.
An enhanced simplicially enriched Waldhausen category inherits tensors with
homotopically trivial finite simplicial sets (but not necessarily
arbitrary finite simplicial sets) as well as properties~(i) and~(iii)
of Proposition~\ref{propcswprop}.  We also have the following
compatibility result.

\begin{thm}\label{thmdkdiag}
An enhanced simplicially enriched Waldhausen category $\aA\subset \aC$
is DK-compatible.  
\end{thm}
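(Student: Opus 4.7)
The plan is to establish the two weak equivalences
\[
\aA(x,y)\to\diag L\aA\subdot(x,y)\from L\aA_{0}(x,y)
\]
of Definition~\ref{defdkcompat} by showing that $\aA$ carries enough of a cosimplicial framing structure to apply a Dwyer-Kan style comparison, even though it is not closed under tensors with arbitrary finite simplicial sets. The key observation is that $\aA$ is closed under tensors with contractible finite simplicial sets: by Proposition~\ref{propcswprop}(ii), $x\otimes K$ is weakly equivalent to $x$ whenever $K$ is contractible, and the closedness of $\aA_{0}$ in $\aC_{0}$ then forces $x\otimes K\in\aA$. In particular the cosimplicial framing $x\otimes\Delta[\cdot]$ lives entirely in $\aA$, and Proposition~\ref{propcswprop}(iii) gives the identification $\aA(x,y)\cong\aA_{0}(x\otimes\Delta[\cdot],y)$.

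With the framing in hand, I would argue as in a simplicial model category where every object is cofibrant and fibrant. The Reedy cofibrancy of $x\otimes\Delta[\cdot]$---verified in the ambient $\aC$ via the pushout-product axiom applied to $\partial\Delta[n]\to\Delta[n]$ and $*\to x$---combined with the Kan condition on all mapping spaces (an immediate consequence of Definition~\ref{defsimpwaldcat}(iii) applied to $*\to y$) provides the input for Dwyer-Kan's comparison \cite{DKSimpLoc, DKHammock}, identifying $\aA_{0}(x\otimes\Delta[\cdot],y)$ with the hammock localization $L\aA_{0}(x,y)$. This yields the right-hand weak equivalence.

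For the left-hand map $\aA(x,y)\to\diag L\aA\subdot(x,y)$, I plan a levelwise Fubini argument. At each simplicial level $n$, the weak equivalences in $\aA_{n}$ are detected by face maps into $\aA_{0}$, and the framing $x\otimes\Delta[\cdot]$ restricts to a compatible cofibrant resolution in $\aA_{n}$ via the simplicial structure on $\aC_{n}$ inherited from $\aC_{0}$. Applying the level-$0$ comparison at each simplicial degree yields a bisimplicial weak equivalence whose diagonal assembles into $\aA(x,y)\simeq\diag L\aA\subdot(x,y)$.

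The main obstacle is twofold. First, Dwyer-Kan's comparison is usually stated for full simplicial model categories with functorial factorizations, whereas here we have only the minimal cosimplicial framing structure; I expect the argument can be distilled from \cite{DKHammock} using just the pushout-product axiom and the Kan condition, but this extraction requires care. Second, the latching objects $x\otimes\partial\Delta[n]$ of the framing live in $\aC$ rather than $\aA$, so the Reedy analysis must be carried out in $\aC$ and transferred to $\aA$ via closedness. A cleaner alternative is to first embed $\aA$ into a simplicial model category with all objects fibrant (as foreshadowed by Section~\ref{futuresec}) and to apply Dwyer-Kan there directly.
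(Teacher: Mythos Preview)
Your outline shares its starting point with the paper (the framing $x\otimes\Delta[\cdot]$ lies in $\aA$ since each $\Delta[n]$ is contractible, and $\aA_n(a,b)=\aA_0(a\otimes\Delta[n],b)$), but where you want to invoke an abstract Dwyer--Kan comparison, the paper supplies the concrete mechanism you are missing: each $\aA_n$ admits a \emph{homotopy calculus of left fractions} (via \cite[6.1]{DKHammock}, arguing as in \cite[5.5]{BlumbergMandellUW}), so $L\aA_n(a,b)$ may be replaced by the nerve $L_n(a,b)$ of the small category of zigzags $a\to x\overset{\sim}{\from}b$. With this reduction both required equivalences are checked by explicit simplicial homotopies: each degeneracy $L_0\to L_n$ is an equivalence using a contraction $\Delta[1]\times\Delta[n]\to\Delta[n]$ applied to the middle term of the zigzag, and $\aA(a,b)\to\diag L\subdot(a,b)$ is handled by rewriting $\diag L\subdot$ as the diagonal of an explicit bisimplicial set and factoring the map through an intermediate bisimplicial object with an extra-degeneracy contraction. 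The homotopy calculus of left fractions is exactly the ``distillation from \cite{DKHammock}'' you were looking for, and it sidesteps any Reedy analysis of latching objects in $\aC$ versus $\aA$.

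Two smaller issues. Your Dwyer--Kan step would yield $\aA(x,y)\simeq L\aA_0(x,y)$, not directly the right-hand map $L\aA_0(x,y)\to\diag L\aA\subdot(x,y)$; you would need the left-hand map first and then two-out-of-three together with a compatibility check, so your two paragraphs are in the wrong logical order. And your ``cleaner alternative'' via Section~\ref{futuresec} is circular as stated: DK-compatibility of the categories in Example~\ref{exofinterest} is itself deduced from the present theorem, and the simplicially enriched functor $\wri'$ of Theorem~\ref{thmwrcsv}(v) is only compared to $\wri$ \emph{after} DK-compatibility is established.
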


\begin{proof}
Fix objects $a,b$.  Regarding $\aA_{n}(a,b)$ as $\aA_{0}(a\otimes
\Delta[n],b)$, each category $\aA_{n}$ admits a homotopy calculus of
left fractions \cite[6.1]{DKHammock} (see, for example, the argument
for \cite[5.5]{BlumbergMandellUW}) and so we can replace
$L\aA_{n}(a,b)$ with the nerve of the 
category of words of the form $\Wi\C$, which we will temporarily
denote as $L_{n}(a,b)$.  An object of this category consists of a zigzag
\[
a\to x\from b
\]
of maps in $\aA_{n}$, where the map $x\from b$ is a weak equivalence; a
map in this category is a map in $\aA_{n}$ of $x$ that is under $a$
and $b$.  We check that both maps
\[
L_{0}(a,b)\to \diag L\subdot(a,b)\from \aA(a,b)
\]
are weak equivalences (i.e., induce weak equivalences on nerves).

For the map $L_{0}(a,b)\to \diag L\subdot(a,b)$, we show that each
iterated degeneracy $s_{0}^{n}\colon L_{0}(a,b)\to L_{n}(a,b)$ is a
weak equivalence.  Iterating the last face map gives a functor
$\tilde\partial^{n}\colon L_{n}(a,b)\to L_{0}(a,b)$ such that the
composite is the identity on $L_{0}(a,b)$.  We need to check that that
the composite $s_{0}^{n}\tilde\partial^{n}$ on $L_{n}(a,b)$ is a weak
equivalence.  Since both inclusions of $a$ in $a\otimes \Delta[1]$ and
both inclusions of $b$ in $b\otimes \Delta[1]$ are weak equivalences,
they induce weak equivalences
\[
I_{0},I_{1}\colon L_{n}(a\otimes \Delta[1],b\otimes \Delta[1])\to L_{n}(a,b).
\]
The contracting homotopy $c\colon \Delta[n]\times
\Delta[1]\to \Delta[n]$ from the identity map to the inclusion of the
last vertex induces a functor $C\colon \aA_{n}\to \aA_{n}$ sending $x$ to 
$x\otimes \Delta[1]$ as follows: For a map $f\colon x\to y$ in
$\aA_{n}$, viewed as a map $\tilde f\colon x\otimes \Delta[n]\to y$ in
$\aA_{0}$, $C(f)$ is represented by the map 
\[
x\otimes \Delta[1]\otimes \Delta[n]
\iso x\otimes (\Delta [n]\times \Delta[1])
\to y\otimes \Delta[1]
\]
in $\aA_{0}$ induced by $\tilde f$, $c$, and the diagonal map on
$\Delta[1]$. We then get a functor 
\[
C\colon L_{n}(a,b) \to L_{n}(a\otimes \Delta[1],b\otimes \Delta[1]).
\]
The composite functor 
\[
I_{0}\circ C\colon L_{n}(a,b)\to L_{n}(a,b)
\]
admits a natural transformation from the identity functor, and so
induces a homotopy equivalence on nerves.  It follows that $C$
is a weak equivalence.  The composite functor
\[
I_{0}\circ C\colon L_{n}(a,b)\to L_{n}(a,b)
\]
is therefore also a weak equivalence.  We have a natural
transformation from $s_{0}^{n}\tilde\partial^{n}$ to $I_{0}\circ C$,
and so the induced maps on nerves are simplicially homotopic.  This
then shows that $s_{0}^{n}\tilde\partial^{n}$ is a weak equivalence.

It remains to see that the map
$\aA(a,b)\to \diag L\subdot(a,b)$ is a weak equivalence.
We can identify $\diag 
L\subdot(a,b)$ as the diagonal of the bisimplicial set whose
simplicial set of $q$-simplices is
\[
\aA(a,x_{0})\times w\aA(b,x_{0}) \times 
w\aA(x_{0},x_{1})\times \dotsb \times w\aA(x_{q-1},x_{q}),
\]
where $w\aA$ denotes the components with (any, or equivalently, all)
vertices in $w\aA_{0}$, the subcategory of weak equivalences of the
Waldhausen category $\aA_{0}$.  The map $\aA(a,b)\to\diag
L\subdot(a,b)$ 
factors through a bisimplicial map from
the bisimplicial set $X\dsubdot$ whose simplicial set of $q$-simplices
$X_{q\ssdot}$ is 
\[
\aA(a,b)\times w\aA(b,x_{0}) \times 
w\aA(x_{0},x_{1})\times \dotsb \times w\aA(x_{q-1},x_{q}).
\]
The inclusion $\aA(a,b)\to \diag X\dsubdot$ is clearly a simplicial
homotopy equivalence, and the bisimplicial map $X\dsubdot \to
L\subdot(a,b)$ is a degreewise weak equivalence.
\end{proof}

\section[Spectral categories of {W}aldhausen categories]%
{Spectral categories associated to simplicially enriched {W}aldhausen categories}
\label{secspec}

In this section we produce for a simplicially enriched Waldhausen
an associated spectral category, which is natural in
enriched exact functors.  The mapping spectra in this category are
prolongations of $\Gamma$-spaces, and as such, are always connective.
For an enhanced simplicially enriched Waldhausen category, we 
associate another spectral category, typically non-connective, using
the suspensions in the ambient simplicially tensored Waldhausen
category; it is natural in enhanced exact functors.  
We also explore the basic properties of these categories.  We begin with
the construction.

\begin{defn}\label{defgammaenrich}
Let $\aC$ be a simplicially enriched Waldhausen category.  
Define $\aC^{\Gamma}$, the $\Gamma$-category associated to $\aC$, to
have objects the objects of $\aC$ and mapping $\Gamma$-spaces
\[
\aC^{\Gamma}_{q}(x,y)=\aC(x,\myop\bigvee_{q}y).
\]
By abuse, we will also write $\aC^{\Gamma}$ for the enrichment in
symmetric spectra obtained by prolongation.
We will refer to $\aC^{\Gamma}$ as the \term{connective spectral
enrichment} of $\aC$ or the connective spectral
category associated to $\aC$.  
\end{defn}

Here the composition
\[
\aC^{\Gamma}_{r}(y,z) \sma \aC^{\Gamma}_{q}(x,y) \to \aC^{\Gamma}_{qr}(x,z).
\]
comes from the $\Sigma_{q}\wr\Sigma_{r}$-equivariant map
\[
\aC(y,\myop\bigvee_{r}z) \to \prod_{q}\aC(y,\myop\bigvee_{r} z) \to \aC(\myop\bigvee_{q}y,\myop\bigvee_{rq}z)
\]
and composition
\[
\aC(\myop\bigvee_{q}y,\myop\bigvee_{rq}z)\sma \aC(x,\myop\bigvee_{q}y) \to \aC(x,\myop\bigvee_{rq}z).
\]
This composition of $\Gamma$-spaces then induces the composition 
on the associated symmetric spectra.  The following proposition is
immediate from the construction. 

\begin{prop}\label{propfunctG}
For simplicially enriched Waldhausen
categories $\aC$ and $\aD$, an enriched exact functor $\phi \colon
\aC\to \aD$ induces a spectral functor $\phi^{\Gamma}\colon
\aC^{\Gamma} \to  \aD^{\Gamma}$.  If $\aC$ and $\aD$ are DK-compatible
and $\phi$ is a DK-embedding or DK-equivalence, then so is
$\phi^{\Gamma}$.
\end{prop}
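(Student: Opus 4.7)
The plan is to construct $\phi^{\Gamma}$ from the underlying $\Gamma$-space data, then to verify the DK-assertions by reducing to levelwise weak equivalences of $\Gamma$-spaces. The key observation for the construction is that any exact functor of Waldhausen categories preserves finite wedges: since $* \to y$ is always a cofibration and $\myop\bigvee_{q} y$ is an iterated pushout of $*$'s along such cofibrations, $\phi_{0}$ preserves the zero object and the relevant pushouts, producing a canonical isomorphism $\phi(\myop\bigvee_{q} y) \iso \myop\bigvee_{q} \phi(y)$. Combined with the simplicial enrichment of $\phi$, this yields maps of simplicial sets
\[
\aC^{\Gamma}_{q}(x, y) = \aC(x, \myop\bigvee_{q} y) \to \aD(\phi x, \phi(\myop\bigvee_{q} y)) \iso \aD(\phi x, \myop\bigvee_{q} \phi y) = \aD^{\Gamma}_{q}(\phi x, \phi y)
\]
that are natural in $q \in \Gamma^{\op}$, and compatibility with the composition of Definition~\ref{defgammaenrich} follows from the naturality of the wedge comparison isomorphisms in both variables. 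Since prolongation from $\Gamma$-spaces to symmetric spectra is functorial, this assembles into a spectral functor $\phi^{\Gamma}$.

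For the DK-embedding portion, the hypothesis gives a weak equivalence $\aC(x, z) \to \aD(\phi x, \phi z)$ for all objects $x$ and $z$ in $\aC$; taking $z = \myop\bigvee_{q} y$ shows that the $\Gamma$-space map $\aC^{\Gamma}(x, y) \to \aD^{\Gamma}(\phi x, \phi y)$ is a levelwise weak equivalence. Levelwise weak equivalences of $\Gamma$-spaces prolong to stable equivalences of the associated symmetric spectra, so $\phi^{\Gamma}$ is a DK-embedding of spectral categories.

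For the DK-equivalence clause, I would observe that $\aC^{\Gamma}$ and $\aC$ share the same set of objects and that at level $q = 1$ the mapping $\Gamma$-space of $\aC^{\Gamma}$ is the original simplicial mapping space of $\aC$. Consequently the canonical map $\pi_{0} \aC \to \pi_{0} \aC^{\Gamma}$ on component categories respects identities and composition and so sends isomorphisms to isomorphisms, and likewise for $\aD$. Hence if $\phi$ is essentially surjective on $\pi_{0}$-categories then so is $\phi^{\Gamma}$, and combined with the full faithfulness at $\pi_{0}$ already supplied by the DK-embedding property, $\phi^{\Gamma}$ is a DK-equivalence. The only nonformal input is the preservation of finite wedges by the exact functor $\phi_{0}$; the rest is routine $\Gamma$-space bookkeeping, and I do not anticipate a substantive obstacle.
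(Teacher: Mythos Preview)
Your proposal is correct and matches the paper's approach; the paper itself offers no proof beyond the remark that the proposition ``is immediate from the construction,'' and you have simply spelled out the details (preservation of finite wedges by an exact functor, levelwise weak equivalence of $\Gamma$-spaces prolonging to a stable equivalence, and transport of essential surjectivity via the functor $\pi_{0}\aD\to\pi_{0}\aD^{\Gamma}$). The DK-compatibility hypothesis plays no role in your argument, which is fine: it is there so that the enriched and Dwyer--Kan notions of DK-embedding/equivalence coincide (Proposition~\ref{propequiv}), and once one works with the enriched notion your argument goes through as written.
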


In general, we can not expect the $\Gamma$-spaces
$\aC^{\Gamma}(x,y)$ to be special or very special.  On the other hand,
as a prolongation of a $\Gamma$-space, the associated symmetric
spectrum is semistable (Definition~\ref{defsemistable}), meaning that
it represents the same object in the stable category as its underlying
spectrum.

\begin{prop}\label{propsemistableG}
The mapping symmetric spectra in $\aC^{\Gamma}$ are \textin{semistable}.
\end{prop}

\begin{example}\label{remexact}
For $\abE$ be an exact category, simplicially enriched as in Example~\ref{exexact},
\[
\abE^{\Gamma}_{q}(x,y) = \abE(x, \bigoplus_{i=1}^{q} y)\iso \prod_{i=1}^{q}\abE(x,y).
\]
Prolonging to symmetric spectra, we get 
\[
\abE^{\Gamma}(x,y)(n) = \abE(x,y) \otimes \tilde{\bZ}[S^n],
\]
where $\tilde{\bZ}[X] = \bZ[X] / \bZ[*]$.  This is 
precisely the spectral category associated to an exact category
studied by Dundas-McCarthy~\cite{DundasMcCarthy} and Hesselholt-Madsen~\cite{HM3}. 
\end{example}

When $\aC$ is a simplicially tensored Waldhausen category, we can
construct another enrichment in symmetric spectra using suspensions:
for an object $x$ in $\aC$, let
$\Sigma x$ be the cofiber of the map 
\[
x\otimes \partial \Delta[1]\to x\otimes \Delta[1].
\]
Suspension defines a tensored
exact functor from $\aC$ to itself.
Commuting colimits and tensors, and applying the associativity
isomorphism for tensors, we can describe the iterated
suspension $\Sigma^{n}x$ as the cofiber of the map
\[
x \otimes \partial(\Delta[1]^{n})\to x\otimes \Delta[1]^{n},
\]
where $\Delta[1]^{n}=\Delta[1]\times \dotsb \times \Delta[1]$.  The $n$-th
suspension inherits from $\Delta[1]^{n}$ an action of the symmetric group
$\Sigma_{n}$. 

\begin{defn}\label{defenrich}
Let $\aA\subset \aC$ be an enhanced simplicially enriched Waldhausen
category.  Define $\aA^{S}$ be the spectral category with objects the
objects of $\aA$ and mapping symmetric spectra
\[
\aA^{S}(x,y)(n)=\aC(x,\Sigma^{n}y).
\]
We will refer to this as the \term{non-connective spectral enrichment} of $\aA$
or the non-connective spectral category associated to $\aA$.
\end{defn}

In the previous definition, we obtain the composition on $\aA^{S}$,
\[
\aA^{S}(y,z) \sma \aA^{S}(x,y) \to \aA^{S}(x,z)
\]
from the $\Sigma_{n}\times \Sigma_{m}$-equivariant maps
\[
\aC(y,\Sigma^{m}z) \sma \aC(x, \Sigma^{n}y) 
\to
\aC(\Sigma^{n}y,\Sigma^{m+n}z) \sma \aC(x, \Sigma^{n} y) 
\to
\aC(x,\Sigma^{m+n}z).
\]
Note that for a enhanced simplicially enriched Waldhausen category
$\aA\subset \aC$, the suspension of an object of $\aA$ is an object of
$\aC$ but need not be an
object in $\aA$. As a
consequence, the non-connective enrichment $\aA^{S}$ depends strongly
on the ambient simplicially tensored Waldhausen category $\aC$.
Recall that an enhanced exact functor has as part of its structure a
tensored exact functor on the ambient simplicially tensored Waldhausen
categories; the following functoriality is immediate from the
construction.

\begin{prop}\label{propfunctS}
An enhanced exact functor $\phi \colon \aA\to \aB$ between enhanced
simplicially enriched Waldhausen categories induces a spectral functor
$\phi^{S}\colon \aA^{S}\to \aB^{S}$.  If $\phi$ is a 
DK-equivalence and a DK-embedding on the ambient simplicially tensored
categories, then $\phi^{S}$ is a DK-equivalence.
\end{prop}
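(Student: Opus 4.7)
The plan is to handle functoriality first and then verify the conclusion in two steps: levelwise equivalence of mapping spectra, and essential surjectivity on $\pi_{0}$.

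For functoriality, the first step is to observe that an enhanced exact functor $\phi$, being tensored exact on the ambient $\aC\to \aD$, preserves the pushouts and tensors with finite simplicial sets that define the iterated suspension. This yields a natural $\Sigma_{n}$-equivariant isomorphism $\phi(\Sigma^{n}y)\iso \Sigma^{n}\phi(y)$ in $\aD$. Postcomposing with $\phi$ on simplicial mapping spaces then produces levelwise maps
\[
\aC(x,\Sigma^{n}y)\to \aD(\phi x,\Sigma^{n}\phi y),
\]
which are $\Sigma_{n}$-equivariant and compatible with the structure maps of the symmetric spectra. Compatibility with composition in $\aA^{S}$ and $\aB^{S}$ is formal, following from the fact that $\phi$ is a simplicial functor and commutes with iterated suspension up to the canonical isomorphisms just described.

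For the level-equivalence step, since $\phi$ is a DK-embedding on the ambient category, each of the above level maps is a weak equivalence of simplicial sets. Hence $\phi^{S}(x,y)$ is a level equivalence of symmetric spectra, and in particular a stable equivalence, so $\phi^{S}$ is a DK-embedding of spectral categories.

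For essential surjectivity of $\pi_{0}\aA^{S}\to \pi_{0}\aB^{S}$, the argument proceeds as follows. Given $b\in \aB$, the hypothesis that $\phi\colon \aA\to \aB$ is a DK-equivalence of simplicially enriched categories supplies some $a\in \aA$ together with an isomorphism in $\pi_{0}\aB$ between $\phi(a)$ and $b$. Such an isomorphism is represented by a map $f\colon \phi(a)\to b$ in $\aB_{0}$, which by axiom~(iv) of Definition~\ref{defsimpwaldcat} is a weak equivalence in $\aB_{0}$, and hence in $\aD_{0}$ because $\aB_{0}\subset \aD_{0}$ is a closed Waldhausen subcategory. By Proposition~\ref{propcswprop}(ii) applied in $\aD$, tensors with finite simplicial sets preserve weak equivalences; combined with the Waldhausen gluing lemma for the pushout that defines $\Sigma^{n}$, this shows that each iterated suspension preserves weak equivalences in $\aD$. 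Axiom~(iv) applied in $\aD$ then implies that for every $x$ the map $\aD(x,\Sigma^{n}\phi(a))\to \aD(x,\Sigma^{n}b)$ is a weak equivalence at each level, so $f$ induces a level equivalence of mapping spectra $\aB^{S}(x,\phi(a))\to \aB^{S}(x,b)$ and, symmetrically, on spectra of maps out of $\phi(a)$ and $b$. Hence $f$ represents an isomorphism $\phi(a)\to b$ in $\pi_{0}\aB^{S}$, establishing essential surjectivity.

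The main technical point to watch is the preservation of weak equivalences by $\Sigma^{n}$ in $\aD$: although $\aD$ is simplicially tensored, this requires invoking the pushout-product axiom together with the Waldhausen gluing lemma for cofibers. The subtler conceptual point is that objects of $\aA$ suspend to objects of $\aC$ (but not necessarily to objects of $\aA$), which is precisely why the DK-embedding hypothesis must be imposed on the ambient functor $\aC\to \aD$ and not merely on the restriction $\aA\to \aB$.
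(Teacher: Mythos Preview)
Your proof is correct. The paper treats this as ``immediate from the construction'' and gives no argument, so there is nothing to compare against beyond the obvious unpacking, which you have carried out correctly: tensored exactness on the ambient functor yields the $\Sigma_{n}$-equivariant isomorphisms $\phi(\Sigma^{n}y)\iso\Sigma^{n}\phi(y)$ and hence the spectral functor; the DK-embedding hypothesis on the ambient functor gives level equivalences of mapping spectra; and the DK-equivalence hypothesis on $\aA\to\aB$ gives essential surjectivity on $\pi_{0}$.

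One minor simplification in the essential surjectivity step: once you have produced a weak equivalence $f\colon\phi(a)\to b$ in $\aB_{0}$, you need not invoke preservation of weak equivalences by $\Sigma^{n}$ and axiom~(iv) in $\aD$. The inclusion of zeroth spaces $\aB(x,y)=\aB^{S}(x,y)(0)$ induces a functor $\pi_{0}\aB\to\pi_{0}\aB^{S}$ that is the identity on objects, and any functor carries isomorphisms to isomorphisms. Your longer route is not wrong, and it does make explicit that $f$ induces level equivalences on mapping spectra in both variables, but the shorter observation suffices.
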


Using Proposition~\ref{propcswprop}.(i) and the Kan condition, we see
that the action of any even permutation on 
$\aA^{S}(x,y)(n)=\aC(x,\Sigma^{n}y)$ is homotopic to the identity.
Then \cite[3.2]{SchwedeSymmHom} gives us the following proposition.

\begin{prop}\label{propsemistableS}
The mapping symmetric spectra in $\aA^{S}$ are semistable.
\end{prop}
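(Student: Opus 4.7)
The approach is to verify the criterion for semistability from \cite[3.2]{SchwedeSymmHom}: a symmetric spectrum $X$ is semistable whenever, for each level $n \geq 0$, the action of the alternating subgroup $A_{n} \subset \Sigma_{n}$ on $X_{n}$ is homotopic to the identity. It therefore suffices to exhibit, for fixed $x,y$ and each even $\sigma \in \Sigma_{n}$, a simplicial homotopy between $\sigma_{\ast}$ and the identity on $\aA^{S}(x,y)(n) = \aC(x,\Sigma^{n}y)$.

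The $\Sigma_{n}$-action on $\aC(x,\Sigma^{n}y)$ is induced from the $\Sigma_{n}$-action on $\Sigma^{n}y$, and that action in turn comes from the permutation of factors in $\Delta[1]^{n}$. Since the tensor $y\otimes(-)$ is a left adjoint in its simplicial variable and hence commutes with colimits, we have $\Sigma^{n}y = y\otimes(\Delta[1]^{n}/\partial\Delta[1]^{n})$, and the $\Sigma_{n}$-action on $\Sigma^{n}y$ is obtained by applying $y\otimes(-)$ to the coordinate-permutation action on this pointed simplicial set. It then suffices to show that the self-map $\sigma_{\ast} \colon \Sigma^{n}y \to \Sigma^{n}y$ lies in the path-component of the identity in the Kan complex $\aC(\Sigma^{n}y,\Sigma^{n}y)$: granting this, any $1$-simplex $h$ realizing such a path yields, via the enriched composition $\aC(\Sigma^{n}y,\Sigma^{n}y) \times \aC(x,\Sigma^{n}y) \to \aC(x,\Sigma^{n}y)$, a simplicial homotopy between $\sigma_{\ast}$ and $\id$ on $\aC(x,\Sigma^{n}y)$ for every $x$.

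For the path-component claim I invoke Proposition~\ref{propcswprop}(i): since $y\otimes(-)$ preserves weak equivalences of simplicial sets, it descends to a functor between the homotopy categories. The permutation $\sigma$ acts on the pointed simplicial set $\Delta[1]^{n}/\partial\Delta[1]^{n}$ (whose realization is $S^{n}$) as a linear map of determinant $+1$, and hence as a degree $+1$ self-map of $S^{n}$, which is homotopic to the identity. Applying $y\otimes(-)$ therefore forces $\sigma_{\ast}$ and $\id$ to coincide as morphisms $\Sigma^{n}y \to \Sigma^{n}y$ in $\Ho\aC$; the Kan condition on $\aC(\Sigma^{n}y,\Sigma^{n}y)$ then translates this homotopy-category equality into the desired path-component statement.

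The main subtlety is not any single step but rather the coordination of the three structural inputs -- the left-adjoint property of the tensor, the weak-equivalence preservation of Proposition~\ref{propcswprop}(i), and the Kan-complex property of the mapping spaces -- which together allow the elementary topological degree-$+1$ fact for $S^{n}$ to be lifted to an honest simplicial homotopy on each level of the mapping spectrum $\aA^{S}(x,y)$.
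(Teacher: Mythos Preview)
Your approach is essentially the paper's (the paper's proof is a single sentence invoking exactly Proposition~\ref{propcswprop}(i), the Kan condition, and \cite[3.2]{SchwedeSymmHom}), and your expansion of that sentence is mostly correct.

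There is one genuine slip, however. You write $\Sigma^{n}y = y\otimes(\Delta[1]^{n}/\partial\Delta[1]^{n})$, justifying it by ``$y\otimes(-)$ is a left adjoint and hence commutes with colimits.'' But the tensor here is the \emph{unbased} simplicial tensor, so $y\otimes * \cong y$, not $*$. Applying $y\otimes(-)$ to the pushout $* \leftarrow \partial\Delta[1]^{n} \to \Delta[1]^{n}$ gives the pushout $y \leftarrow y\otimes\partial\Delta[1]^{n} \to y\otimes\Delta[1]^{n}$, which is not $\Sigma^{n}y$; the latter is the pushout with $*$ in place of $y$. The correct identification is
\[
\Sigma^{n}y \;\cong\; (y\otimes S^{n})/(y\otimes *),
\]
i.e.\ the \emph{based} tensor (compare the functor $F(-)=\aC(a,(b\otimes -)/(b\otimes *))$ used in the proof of Proposition~\ref{prop:conncover}). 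The pushout-product axiom makes $y\otimes * \to y\otimes S^{n}$ a cofibration, so this quotient is a homotopy cofiber and the based tensor $X\mapsto (y\otimes X)/(y\otimes *)$ still preserves weak equivalences by Proposition~\ref{propcswprop}(i). With this correction your homotopy-category argument goes through verbatim: the based tensor descends to $\Ho(\text{based finite sSet})\to \Ho\aC$, the even permutation is the identity on $S^{n}$ there, and the Kan condition converts the resulting equality in $\pi_{0}\aC(\Sigma^{n}y,\Sigma^{n}y)$ into the required simplicial homotopy.
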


\begin{example}\label{exexactnoncon}
Let $\abA$ be an abelian category with enough projectives (e.g., the
opposite category of an abelian category with enough injectives), and
let $\abE\subset \abA$ be an exact category (with exact sequences the
sequences in $\abE$ that are exact in $\abA$).  Let $\aC$ be the simplicially
tensored Waldhausen category of levelwise projectives in the category
of simplicial objects of $\abA$, as in
Example~\ref{exofinterest}.(iv).  Let $\aA\subset \aC$ be the full
subcategory of $\aC$ consisting of those objects $x$ such that
$\pi_{0}x$ is in $\abE$ and $\pi_{n}x=0$ for $n>0$.
Then $\aA\subset \aC$ is an enhanced simplicially enriched Waldhausen
category and $\pi_{0}$ gives a enriched exact functor $\aA\to
\abE$.  This functor induces a DK-equivalence of the connective
spectral enrichments $\aA^{\Gamma}\to \abE^{\Gamma}$.  On the other
hand $\aA$ has a non-connective spectral enrichment $\aA^{S}$, where
$\pi_{n}\aA^{S}(x,y)$ is $0$ for $n>0$ and
$\Ext^{-n}(\pi_{0}x,\pi_{0}y)$ for $n\leq0$.
\end{example}

\begin{example}\label{exexacttwo}
As an example to demonstrate the significance of the ambient
simplicially tensored Waldhausen category, 
let $\aC$ be the Waldhausen category of countable cell EKMM $S$-modules 
and let $\aC'$ the Waldhausen category of countable cell EKMM
$H\bZ$-modules (for some countable cell $S$-algebra model of $H\bZ$).
Let $\aA$
and $\aA'$ be the Waldhausen subcategories of Eilenberg-Mac Lane
spectra with homotopy groups concentrated in degree zero in
$\aC$ and $\aC'$ respectively.  The forgetful functor $\aC'\to
\aC$ is exact and sends $\aA'$ into $\aA$, inducing a DK-equivalence
and hence a DK-equivalence $\aA^{\prime\Gamma}\to \aA^{\Gamma}$  but
not a DK-equivalence $\aA^{\prime S}\to \aA^{S}$.
\end{example}

The next two propositions explore the relationship between
$\aA^{\Gamma}$ and $\aA^{S}$.

\begin{prop}
There is a canonical spectral functor $\aA^{\Gamma}\to \aA^{S}$, natural in
enhanced simplicially enriched Waldhausen categories $\aA\subset \aC$.
\end{prop}

\begin{proof}
The maps of simplicial sets 
\[
\aA(x,y)\otimes \Delta[1]^{n}\to
\aA(x,y\otimes \Delta[1]^{n})\to \aA(x,\Sigma^{n} y)
\]
induce equivariant maps of based simplicial sets $\Sigma^{n} \aA(x,y)\to
\aA(x,\Sigma^{n} y)$, which assemble into the spectral functor
$\aA^{\Gamma}\to \aA^{S}$.
\end{proof}

In Example~\ref{exexacttwo}, and in fact in the examples of
Example~\ref{exofinterest}, the canonical map $\aA^{\Gamma}\to
\aA^{S}$ of the previous proposition makes $\aA^{\Gamma}$ a connective
cover of $\aA^{S}$, i.e., induces an isomorphism on the non-negative
homotopy groups.  The following proposition gives a sufficient general
condition for this to hold.

\begin{prop}\label{prop:conncover}
Let $\aA\subset \aC$ be an enhanced simplicially enriched Waldhausen
category, and assume that for every $a,b\in \aA$ the suspension map
$\aC(a,b)\to \aC(\Sigma a,\Sigma b)$ is a weak equivalence.  Then
$\aA^{\Gamma}(a,b)\to \aA^{S}(a,b)$ is a \textin{connective cover}.
\end{prop}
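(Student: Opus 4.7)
The spectrum $\aA^{\Gamma}(a,b)$ is the prolongation of a $\Gamma$-space and is therefore connective, so $\pi_k \aA^{\Gamma}(a,b) = 0$ for $k < 0$. By semistability (Propositions~\ref{propsemistableG} and~\ref{propsemistableS}), both $\pi_k \aA^{\Gamma}(a,b)$ and $\pi_k \aA^{S}(a,b)$ may be computed as the naive colimits $\colim_n \pi_{k+n}$ of their levels, and it remains to show the comparison induces an isomorphism on $\pi_k$ for $k \geq 0$. The plan is to identify both colimits with $\pi_k \aC(a,b)$.

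The essential input from the hypothesis is the equivalence $\aC(a,b) \simeq \Omega \aC(a, \Sigma b)$, obtained by combining it with the natural equivalence $\aC(\Sigma a, y) \simeq \Omega\aC(a, y)$ valid for any $y \in \aC$. The latter is produced by applying $\aC(-, y)$ to the defining cofibration $a \sqcup a \to a \otimes \Delta[1] \to \Sigma a$, and invoking the Kan fibration condition of Definition~\ref{defsimpwaldcat}(iii) together with the weak equivalence $a \to a \otimes \Delta[1]$ of Proposition~\ref{propcswprop}(i) to recognize the resulting homotopy fiber as the loop space of $\aC(a, y)$. In particular $\pi_0 \aC(a, b) \cong \pi_1 \aC(a, \Sigma b)$ is a group, and the adjoint of the level-$0$ structure map of $\aA^{S}(a,b)$ is a weak equivalence.

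Next I would argue that the $\Gamma$-space $F(\mathbf{q}_+) = \aC(a, \bigvee_q b)$ is very special. Group-likeness of $\pi_0$ is immediate, and the Segal maps $\aC(a, \bigvee_q b) \to \aC(a, b)^q$ are weak equivalences because the hypothesis makes $\aA$ behave as a stable subcategory of $\aC$ from the point of view of mapping spaces from $a \in \aA$: the cofibration $\bigvee_{q-1} b \to \bigvee_q b$ with cofiber $b$ induces a fiber sequence on $\aC(a,-)$ that splits, so by induction $\aC(a,\bigvee_q b) \simeq \aC(a,b)^q$. Segal's theorem then shows that $\aA^{\Gamma}(a,b)$ is an $\Omega$-spectrum with $\pi_k \aA^{\Gamma}(a,b) \cong \pi_k \aC(a,b)$ for $k \geq 0$.

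The hard part will be identifying $\pi_k \aA^{S}(a,b)$ with $\pi_k \aC(a,b)$ for $k \geq 0$: this requires that every structure map $\aC(a, \Sigma^n b) \to \Omega\aC(a, \Sigma^{n+1} b)$ be a weak equivalence, which amounts to iterating the hypothesis to the pair $(a, \Sigma^{n-1} b)$, and the obstacle is that $\Sigma^{n-1} b$ need not lie in $\aA$. I would handle this by leveraging the already-established $\Omega$-spectrum structure on $\aA^{\Gamma}(a,b)$ via the levelwise comparison $F(S^n) \to \aC(a, \Sigma^n b)$, which is the identity on $\aC(a,b)$ at level $0$: the commutative diagrams relating the colimit systems for the two sides then propagate the level-$0$ equivalence upward, showing that the colimit for $\aA^{S}$ also stabilizes at $\pi_k\aC(a,b)$. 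With both colimits thus identified, the comparison is an isomorphism on $\pi_k$ for $k \geq 0$, proving that $\aA^{\Gamma}(a,b)\to\aA^{S}(a,b)$ is a connective cover.
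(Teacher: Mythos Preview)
Your approach diverges from the paper's and contains two real gaps.

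First, your Segal condition argument claims that the cofibration $\bigvee_{q-1} b \to \bigvee_q b$ ``induces a fiber sequence on $\aC(a,-)$.'' But Definition~\ref{defsimpwaldcat}(iii) only guarantees that the \emph{contravariant} functor $\aC(-,z)$ takes cofibrations to Kan fibrations; nothing in the axioms makes $\aC(a,-)$ send cofiber sequences to fiber sequences. The hypothesis gives you the single delooping $\aC(a,c)\simeq\Omega\aC(a,\Sigma c)$ for $c\in\aA$, and from that loop-space structure one can indeed build an explicit homotopy inverse to the Segal map---but that is a different argument from the one you wrote, and it needs to be made.

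Second, and more seriously, your resolution of the ``hard part'' does not work. You have a map of semistable symmetric spectra $\aA^{\Gamma}(a,b)\to\aA^{S}(a,b)$ that is the identity at level~$0$, and you know the source is an $\Omega$-spectrum. That is not enough to conclude the map is an isomorphism on $\pi_k$ for $k\geq 0$: a map of spectra that is an equivalence at level~$0$ need not be a stable equivalence, even when one side is an $\Omega$-spectrum. The commutative squares you invoke show that the image of $\pi_k\aC(a,b)$ in $\pi_{k+n}\aC(a,\Sigma^n b)$ is compatible with the bonding maps, but they do not show that the bonding maps on the $\aA^{S}$ side are eventually isomorphisms, nor that the colimit is exhausted by this image. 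You have correctly identified the obstacle and then not overcome it.

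The paper sidesteps both issues by recognizing that $\aA^{\Gamma}(a,b)$ and $\aA^{S}(a,b)$ are two avatars of a \emph{single} based simplicial functor
\[
F(X)=\aC\bigl(a,(b\otimes X)/(b\otimes *)\bigr)
\]
on finite based simplicial sets: the $\Gamma$-space is $F$ restricted to finite pointed sets, and the symmetric spectrum is $\{F(S^n)\}$. The hypothesis is used to assert that the canonical map $F(-)\to\Omega F(\Sigma-)$ is a weak equivalence, and the argument of \cite[17.9]{MMSS} then shows $F$ is linear (takes homotopy pushouts to homotopy pullbacks). Linearity delivers everything at once: by \cite[18.6]{MMSS} the $\Gamma$-space is very special, and directly $\{F(S^n)\}$ is an $\Omega$-spectrum, so both spectra have nonnegative homotopy groups equal to $\pi_*F(S^0)=\pi_*\aC(a,b)$. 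The subtlety you flagged---that $\Sigma^{n-1}b$ need not lie in $\aA$---is absorbed into the single linearity statement for $F$ rather than being attacked one sphere at a time.
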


\begin{proof}
Fix $a,b$ and consider the functor $F(-)=\aC(a,(b\otimes
-)/(b\otimes *))$ as a functor from based finite simplicial sets to  based
simplicial sets; we then get $\aA^{\Gamma}(a,b)$ by viewing $F$ as
$\Gamma$-space and $\aA^{S}(a,b)$ by viewing $\{F(S^{n})\}$ as a
symmetric spectrum.  By the hypothesis of the proposition, the
canonical map $F(-)\to \Omega F(\Sigma -)$ is a weak equivalence.  The
argument of \cite[17.9]{MMSS} shows that $F$ is ``linear'' meaning
that it takes homotopy pushouts to homotopy pullbacks, and in
particular, as a $\Gamma$-space $F$ is very special
\cite[18.6]{MMSS}.  The homotopy groups of $\aA^{\Gamma}(a,b)$ are
then the homotopy groups of $F(S^{0})=\aA(a,b)$.  Likewise,
$\{F(S^{n})\}$ is an $\Omega$-spectrum, so its non-negative homotopy
are also the homotopy groups of $F(S^{0})$.
\end{proof}

In the absence of the stability hypothesis of the previous
proposition, $\aA^{S}$ tends to better capture the stable homotopy
theory of $\aA\subset \aC$, as indicated for example in the following
proposition.

\begin{prop}\label{propfibercofiber}
Let $\aA\subset \aC$ be an enhanced simplicially enriched Waldhausen
category.
\begin{enumerate}
\item For any $x,y$ in $\aA$, the map $\aA^{S}(x,y)\to \aA^{S}(\Sigma
x,\Sigma y)$ is a weak equivalence.
\item For a cofibration $f\colon a\to b$, $Cf$ the homotopy
cofiber, and any object $z$, the sequences
\[
\xymatrix@R-1.5pc@C-1pc{
\Omega\aA^{S}(a,z)\ar@{{}{}{}}[r]|-{\textstyle \iso}&\aA^{S}(\Sigma a,z) \ar[r] \ar[r] & \aA^{S}(Cf,z) \ar[r] & \aA^{S}(b,z) \ar[r] & \aA^{S}(a,z) \\
&\aA^{S}(z,a) \ar[r] \ar[r] & \aA^{S}(z,b) \ar[r] & \aA^{S}(z,Cf) \ar[r] & \aA^{S}(z,\Sigma a)\ar@{{}{}{}}[r]|-{\textstyle\simeq}&\Sigma \aA^{S}(z,a) \\
}
\]
form a fiber sequence and a cofiber sequence in the stable category, respectively.
\end{enumerate}
\end{prop}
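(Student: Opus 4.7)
The plan is to identify the mapping spectra in terms of an internal loop construction on $\aC$ and then to promote levelwise statements to stable ones via the semistability provided by Proposition~\ref{propsemistableS}. For part~(i), applying $\aC(-,w)$ to the pushout square defining $\Sigma x$ (using axiom~(ii) that pushouts along cofibrations are preserved in $\aC$) together with axiom~(iii) and the Kan condition on mapping spaces identifies $\aC(\Sigma x,w)$ with the based loop space $\Omega\,\aC(x,w)$ at the zero map. Taking $w=\Sigma^{n+1}y$ yields a natural level equivalence $\aA^S(\Sigma x,\Sigma y) \simeq \Omega\,\mathrm{sh}\,\aA^S(x,y)$ under which the suspension-induced map becomes the adjoint structure map of $\aA^S(x,y)$. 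For any semistable symmetric spectrum this adjoint map is a $\pi_*$-isomorphism, hence a stable equivalence, giving~(i).

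For the first sequence in~(ii), I would use that the cofibration $f$ identifies the homotopy cofiber $Cf$ with the strict pushout $b\cup_{a}*$. By axiom~(ii) this remains a pushout in $\aC$, so applying $\aC(-,\Sigma^n z)$ turns it into a pullback, and axiom~(iii) makes the relevant map a Kan fibration. This produces a levelwise strict fiber sequence of symmetric spectra $\aA^S(Cf,z) \to \aA^S(b,z) \to \aA^S(a,z)$, which extends on the left by $\Omega$ to the displayed sequence in the stable category.

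For the second sequence in~(ii), I would extend $a \to b \to Cf$ via the Puppe construction in the pointed simplicial model structure underlying $\aC$ to $a \to b \to Cf \to \Sigma a \to \Sigma b \to \dotsb$, where consecutive triples are homotopy cofiber sequences. Applying $\aC(z,\Sigma^n(-))$ and using the Puppe sequence on pointed mapping spaces gives long exact sequences of homotopy groups of mapping spaces; reassembling across~$n$ and using the identification $\aA^S(z,\Sigma w) \simeq \mathrm{sh}\,\aA^S(z,w) \simeq \Sigma\,\aA^S(z,w)$ (valid stably by part~(i) and semistability) exhibits the displayed cofiber sequence.

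The main obstacle is this second sequence in~(ii): unlike $\aC(-,z)$, the covariant functor $\aC(z,-)$ does not preserve pushouts, so no fiber- or cofiber-sequence statement of spaces holds at any individual spectrum level. The conclusion is genuinely stable and must be obtained by trading on the coincidence of fiber and cofiber sequences in the stable category together with the suspension-compatibility established in part~(i).
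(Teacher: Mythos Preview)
Your proposal is correct and follows essentially the same route as the paper's proof. The paper is equally terse: it declares part~(i) and the first sequence in~(ii) ``clear'' (your levelwise arguments via the pushout-to-pullback property and the Kan fibration axiom are exactly what makes them clear), and for the second sequence it simply invokes part~(i) together with the standard argument from \cite[\S III.2.1]{LMS} or \cite[7.4.vi]{MMSS}. Your identification $\aA^S(z,\Sigma w)\simeq \mathrm{sh}\,\aA^S(z,w)\simeq \Sigma\,\aA^S(z,w)$ via semistability is precisely the role part~(i) plays in that reduction.

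One sentence deserves tightening: ``using the Puppe sequence on pointed mapping spaces gives long exact sequences of homotopy groups of mapping spaces'' is not true at any fixed level, since covariant $[W,-]$ applied to a cofiber sequence of spaces is \emph{not} exact (e.g.\ take $S^1\hookrightarrow S^1\vee S^2$ with $W=S^2$). You correctly flag this in your final paragraph, so the issue is only one of presentation: the long exact sequence you want does not exist levelwise and must be produced by the genuinely stable argument you allude to --- namely, the comparison $Ff\to\Omega Cf$ (or $\Sigma Ff\to Cf$) for maps of spectra, which is the content of the LMS/MMSS references. It would be cleaner to drop the levelwise LES claim and pass directly from the Puppe extension in~$\aC$ and the identification $\aA^S(z,\Sigma(-))\simeq\Sigma\,\aA^S(z,-)$ to the cited fiber--cofiber coincidence in spectra.
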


\begin{proof}
Part~(i) and the statement about the first sequence in part~(ii) are
clear.  The statement about the second sequence follows from part~(i)
and the argument in~\cite[\S III.2.1]{LMS} or~\cite[7.4.vi]{MMSS}.
\end{proof}

The proposition indicates that for a simplicially tensored Waldhausen
category $\aC$, the spectral category $\aC^{S}$ is nearly
pretriangulated (Definition~\ref{deftriang}).  In
fact, we have the following easy corollary:

\begin{cor}\label{cor:pretriang}
Let $\aC$ be a simplicially tensored Waldhausen category in which
every object is weakly equivalent to a suspension.  Then the category
$\aC^{S}$ is \textin{pretriangulated}, and in particular, the category
of components 
$\pi_{0}\aC^{S}$ has the structure of a triangulated category with
triangles coming from the \textin{Puppe sequence}s and translation
from the suspension.  
\end{cor}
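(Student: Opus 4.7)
The plan is to verify the two defining ingredients of pretriangulation at the level of $\pi_{0}\aC^{S}$: first, that suspension induces an auto-equivalence of $\pi_{0}\aC^{S}$; second, that the cofiber sequences arising from cofibrations in $\aC_{0}$ assemble into a class of distinguished triangles satisfying the usual axioms. The functorial inputs are Proposition~\ref{propfibercofiber} and Definition~\ref{defsimpwaldcat}(iv), together with the mapping cylinder available from the pushout-product axiom in a simplicially tensored Waldhausen category.

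I would first handle the translation. Suspension is a tensored exact functor from $\aC$ to itself, so it descends to a functor $\Sigma\colon \pi_{0}\aC^{S}\to\pi_{0}\aC^{S}$. Proposition~\ref{propfibercofiber}(i) says that the map $\aC^{S}(x,y)\to\aC^{S}(\Sigma x,\Sigma y)$ is a weak equivalence, which on $\pi_{0}$ makes $\Sigma$ fully faithful. The standing hypothesis that every object of $\aC$ is weakly equivalent to a suspension, combined with Definition~\ref{defsimpwaldcat}(iv) (weak equivalences become isomorphisms in $\pi_{0}\aC^{S}$), supplies essential surjectivity, so $\Sigma$ is an equivalence. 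For the triangles, declare a sequence in $\pi_{0}\aC^{S}$ to be distinguished if it is isomorphic to the image of a cofiber sequence $a\to b\to Cf\to \Sigma a$ for a cofibration $f\colon a\to b$ in $\aC_{0}$. Proposition~\ref{propfibercofiber}(ii) then gives the long exact sequences of $\pi_{*}$-groups automatically in both variables. Any morphism in $\pi_{0}\aC^{S}$ is represented in some $\aC(x,\Sigma^{n}y)$, and by replacing $x$ with its cylinder target (using the mapping cylinder construction from the pushout-product axiom) we promote that map to a cofibration with a well-defined cofiber, which fills out the required triangle.

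The rotation axiom follows by unraveling the equivalence $\Sigma$ and using the standard diagram chase that identifies $\Sigma a$ with the cofiber of the cofibration $b\to Cf$, a comparison internal to $\aC_{0}$ and hence exhibited as a weak equivalence in $\aC$. The morphism axiom (TR3) reduces to the functoriality of the mapping cylinder applied to a commuting square, again combined with Definition~\ref{defsimpwaldcat}(iv) to turn the resulting weak equivalence of cofibers into an isomorphism in $\pi_{0}\aC^{S}$.

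The main obstacle is the octahedral axiom, which must be verified entirely inside the Waldhausen category $\aC_{0}$ since the cofiber is a construction at the point-set level. Given composable cofibrations $a\to b\to c$, I would form $C(a\to b)$, $C(b\to c)$, and $C(a\to c)$ as ordinary pushouts and invoke the standard iterated-pushout identification $C(b\to c)\htp C(C(a\to b)\to C(a\to c))$ along with a comparison of the boundary map to $\Sigma C(a\to b)$; these identifications are natural weak equivalences in $\aC_{0}$, hence isomorphisms in $\pi_{0}\aC^{S}$, yielding the octahedral diagram. The care required here is to ensure that the auxiliary cofibrations constructed from pushouts really are cofibrations in the Waldhausen sense (guaranteed by the Waldhausen category axioms and the cylinder axiom) and that the resulting isomorphisms in $\pi_{0}\aC^{S}$ are independent of the cofibration replacements chosen, which follows from the functoriality already used for TR3.
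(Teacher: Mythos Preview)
Your proposal is correct and matches the paper's intent: the paper gives no proof, stating the result as an ``easy corollary'' of Proposition~\ref{propfibercofiber}, and you have written out precisely the verification that proposition is meant to enable. One small point worth tightening is your representation of an arbitrary morphism in $\pi_{0}\aC^{S}$ by a map in $\aC_{0}$: since $\pi_{0}\aC^{S}(x,y)$ is the colimit $\colim_{n}\pi_{n}\aC(x,\Sigma^{n}y)$ and the tensor adjunction identifies $\pi_{n}\aC(x,\Sigma^{n}y)$ with $\pi_{0}\aC(\Sigma^{n}x,\Sigma^{n}y)$, a class is represented by an honest map $\Sigma^{n}x\to\Sigma^{n}y$ whose cofiber you then desuspend in $\pi_{0}\aC^{S}$ using the already-established invertibility of $\Sigma$; your phrasing ``represented in some $\aC(x,\Sigma^{n}y)$'' elides this but the argument goes through.
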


\begin{rem}
As the preceding results indicate, the construction of the mapping
spectra described above provides a version of stabilization of
the simplicial Waldhausen category $\aC$, when we regard the objects
of $\aC$ as being compact.  In particular, the zeroth
space of (a fibrant replacement of) the mapping spectrum 
$\aC^{S}(x,y)$ is given by 
\[
\colim_n \Omega^n \aC(x, \Sigma^n y) \cong \colim_n \aC(\Sigma^n
x, \Sigma^n y).
\]
It is possible to explicitly compare $\aC^{S}$ to a model of the
formal stabilization in terms of symmetric spectrum objects in $\aC$.
We give an example below, but general theorems of this sort are
encumbered with technical hypotheses, and since we do not need such
results we leave them to the interested reader.
\end{rem}

\begin{example}[Spectral categories and stabilization in Waldhausen's
algebraic $K$-theory of spaces]
Let $G$ be a group-like topological monoid, let
$W$ be a CW-complex on which $G$ acts, and let $R(W,G)$ denote the
category of $G$-spaces which have $W$ as a retract.  When restricting
to objects satisfying some kind of finiteness condition, $R(W,G)$ provides
Waldhausen's motivating example for a Waldhausen category and one of
the models underlying the algebraic $K$-theory of spaces. We can give
$R(W,G)$ the
model structure in which the weak equivalences are the equivariant maps
that induce underlying equivalences of spaces.  The category $R(W,G)$
is in no sense stable (for example, when $G$ and $W$ are trivial,
$R(W,G)$ is the category of based spaces), and the spectral category
$R(W,G)^{S}$ is equivalent to the evident subcategory of free
$\Sigma^{\infty}_+ G$-spectra, as expected. 
\end{example}

\section{The $\Sdot$ and {M}oore nerve constructions}\label{sec:moore1}

As part of the construction of $THH$ and $TC$ of simplicially enriched
Waldhausen categories and the construction of the cyclotomic
trace in the Section~\ref{secdefthh}, we need to extend Waldhausen's
$\Sdot$ construction and the nerve category construction to the
context of simplicially enriched Waldhausen categories.  We begin with
the $\Sdot$ construction, where no difficulties arise.

Let $\Ar[n]$ denote the lexicographically ordered set of ordered pairs
of integers $i,j$ where $0\leq i\leq j\leq n$.
Recall that for a Waldhausen category $\aC_{0}$, $\Sdot[n]\aC_{0}$ is
the full subcategory of the category of functors $A=a_{-,-}\colon
\Ar[n]\to \aC_{0}$ such that:
\begin{enumerate}
\item $a_{i,i}=*$,
\item $a_{i,j}\to a_{i,k}$ is a cofibration, and
\item $a_{i,i}\cup_{a_{i,j}}a_{i,k}\to a_{j,k}$ is an isomorphism
\end{enumerate}
for all $i\leq j\leq k$.  A map in $\Sdot\aC_{0}$ is simply a natural
transformation of 
functors $\Ar[n]\to \aC_{0}$.  This becomes a Waldhausen category with weak
equivalences defined objectwise and cofibrations defined to be the
objectwise cofibrations 
$A\to B$ such that each map $a_{i,k}\cup_{a_{i,j}}b_{i,j}\to b_{i,k}$
is a cofibration.

\begin{defn}
For a simplicially enriched Waldhausen category $\aC$, let
$\Sdot[n]\aC$ be the simplicially enriched category with objects the
same as $\Sdot[n]\aC_{0}$ and with the simplicial set of maps
$\Sdot[n]\aC(A,B)$ the 
simplicial set of natural transformations of functors $\Ar[n]\to \aC$
from $A$ to $B$.\index{Sdot construction@$\Sdot$ construction}
\end{defn}

Condition~(iii) in the definition of $\Sdot$ implies that a map $A\to
B$ is completely determined by the maps $a_{0,j}\to b_{0,j}$.  Since
the maps $a_{0,j}\to a_{0,j+1}$ are cofibrations, we can identify the
simplicial set of maps $\Sdot[n]\aC(A,B)$ as a pullback over fibrations
\begin{equation}\label{eqsn}
\Sdot[n]\aC(A,B)\iso
\aC(a_{0,1},b_{0,1})\times_{\aC(a_{0,1},b_{0,2})}
\times \dotsb
\times_{\aC(a_{0,n-1},b_{0,n})}
\aC(a_{0,n},b_{0,n}).
\end{equation}
That is, the simplicial set of maps computes a homotopy limit.  Using
this formulation of the maps, the following becomes an easy check of
the definitions and standard properties of pullbacks of fibrations of
Kan complexes.

\begin{prop}\label{prop:Sdotinherit}
Let $\aC$ be a simplicially enriched Waldhausen category.  Then:
\begin{enumerate}
\item  $\Sdot[n]\aC$
is a simplicially enriched Waldhausen category.
\item If $\aC$ is simplicially
tensored or enhanced, then so is $\Sdot[n]\aC$.
\item The face and degeneracy maps $\Sdot[m]\aC\to \Sdot[n]\aC$ are
enriched exact.
\item If $\aC$ is simplicially
tensored or enhanced then the face and degeneracy maps $\Sdot[m]\aC\to
\Sdot[n]\aC$ are tensored exact or enhanced exact.
\end{enumerate}
Moreover, $\Sdot[n]$ preserves enriched exact, tensored exact, and
enhanced exact functors.
\end{prop}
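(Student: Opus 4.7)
The plan is to verify each condition in turn, using the pullback description~\eqref{eqsn} of the simplicial mapping sets as the main technical tool.

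First I would prove~(i) by checking the four axioms of Definition~\ref{defsimpwaldcat} in order. That $\Sdot[n]\aC_{0}$ is a Waldhausen category is Waldhausen's original observation, so I only need to verify axioms (1)--(4) on the simplicial enrichment. Axiom (1) is immediate: the constant diagram $*$ is a zero object in $\Sdot[n]\aC$ because $\aC(*,x)$ and $\aC(x,*)$ are points and the pullback~\eqref{eqsn} of points over points is a point. Axiom (2) is also immediate, since pushouts in the functor category are computed objectwise, and cofibrations in $\Sdot[n]\aC_{0}$ are in particular pointwise cofibrations. The crux is axioms (3) and (4).

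For axiom (3), given a cofibration $A\to B$ in $\Sdot[n]\aC_{0}$ and any object $Z$, I would show that the restriction map $\Sdot[n]\aC(B,Z)\to \Sdot[n]\aC(A,Z)$ is a Kan fibration by induction on $n$, using that~\eqref{eqsn} exhibits it as an iterated pullback of Kan fibrations of the form $\aC(b_{0,k},z_{0,k})\to \aC(a_{0,k},z_{0,k})\times_{\aC(a_{0,k},z_{0,k+1})}\aC(b_{0,k},z_{0,k+1})$. The key point is that the definition of cofibration in $\Sdot[n]\aC_{0}$ gives cofibrations $a_{0,k}\cup_{a_{0,k-1}}b_{0,k-1}\to b_{0,k}$, and mapping out of these cofibrations into $\aC(-,z_{0,k})$ (using axiom (3) for $\aC$ itself) produces precisely the Kan fibrations that assemble into the pullback tower. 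Axiom (4) is easier: since both the source and target of~\eqref{eqsn} are finite limits of weak equivalences between Kan complexes over Kan fibrations, a map $A\to B$ in $\Sdot[n]\aC$ induces a weak equivalence on $\Sdot[n]\aC(B,Z)\to \Sdot[n]\aC(A,Z)$ for all $Z$ if and only if each $a_{0,j}\to b_{0,j}$ induces a weak equivalence on $\aC(b_{0,j},z)\to \aC(a_{0,j},z)$, which by axiom (4) for $\aC$ holds if and only if each $a_{0,j}\to b_{0,j}$ is a weak equivalence, i.e., if and only if $A\to B$ is a weak equivalence in $\Sdot[n]\aC_{0}$. The analogous argument works for the covariant case.

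For part~(ii), in the simplicially tensored case, the tensor of $A\in \Sdot[n]\aC$ with a finite simplicial set $K$ is formed levelwise, $(A\otimes K)_{i,j}=a_{i,j}\otimes K$; this satisfies the pushout-product axiom because it does so in $\aC$ and cofibrations in $\Sdot[n]\aC_{0}$ are detected on the maps $a_{0,j}\cup_{a_{0,j-1}}a_{0,j}\otimes K\to$ appropriate pushouts, which reduce to pushout-product statements in $\aC$. In the enhanced case, I take $\Sdot[n]\aA\subset \Sdot[n]\aC$; $\Sdot[n]\aA_{0}$ is visibly a full subcategory closed under the Waldhausen subcategory structure (and closed under weak equivalences because the weak equivalences in $\Sdot[n]\aC_{0}$ are objectwise). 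Part~(iii) and the last sentence are routine: face and degeneracy maps and functors $\phi\colon \aC\to \aD$ act objectwise on diagrams and so preserve the structure identified in~(i) and~(ii), with the pullback formula~\eqref{eqsn} showing that simplicial structure and tensors are preserved. The main obstacle is really just the Kan-fibration verification in axiom~(3), and once that is in place the rest of the argument is diagram chasing.
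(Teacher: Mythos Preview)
Your proposal is correct and takes essentially the same approach as the paper: the paper does not give a detailed proof, saying only that the statement ``becomes an easy check of the definitions and standard properties of pullbacks of fibrations of Kan complexes'' once one has the iterated pullback description~\eqref{eqsn}. You have simply unpacked that check axiom by axiom, with the expected emphasis on axiom~(3) as the only place where anything nontrivial happens.
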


Applying the spectral category constructions of the previous section,
we get a simplicial spectral category $\Sdot\aC^{\Gamma}$, natural in
enriched exact functors of $\aC$.  When $\aC$ is simplicially
tensored or enhanced, we get a simplicial spectral category
$\Sdot\aC^{S}$, natural in tensored exact or enhanced exact functors
of $\aC$.  The formula~\eqref{eqsn} for the mapping spaces then
implies the following results for spectral categories.

\begin{prop}\label{propfunct}
Let $\phi \colon \aC\to \aD$ be an enriched exact functor between
simplicially enriched Waldhausen categories that are DK-compatible.
If $\phi$ is a DK-embedding, then
$\Sdot[n]\phi^{\Gamma}\colon \Sdot[n]\aC^{\Gamma}\to
\Sdot[n]\aD^{\Gamma}$ is a DK-embedding.
\end{prop}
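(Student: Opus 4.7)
The plan is to verify that $\Sdot[n]\phi^{\Gamma}$ is a DK-embedding of spectral categories by reducing the claim to a level-by-level check on the underlying mapping $\Gamma$-spaces. Since the mapping symmetric spectrum $\Sdot[n]\aC^{\Gamma}(A,B)$ is the prolongation of the $\Gamma$-space $q\mapsto \Sdot[n]\aC(A,\myop\bigvee_{q} B)$, and prolongation carries level-wise weak equivalences of $\Gamma$-spaces to level-wise (hence stable) equivalences of symmetric spectra, it suffices to show that for every pair $A,B\in \Sdot[n]\aC$ and every $q\geq 0$, the map
\[
\Sdot[n]\aC(A,\myop\bigvee_{q} B)\to \Sdot[n]\aD(\phi A,\myop\bigvee_{q} \phi B)
\]
is a weak equivalence of simplicial sets.

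Next I would apply the pullback decomposition~\eqref{eqsn}. Because finite coproducts in $\Sdot[n]\aC$ are computed termwise (colimits commute with colimits, and the Waldhausen conditions in the fiber diagram are closed under termwise wedge), one has $(\myop\bigvee_{q}B)_{0,j}\iso \myop\bigvee_{q}b_{0,j}$, so both sides of the displayed map are iterated strict pullbacks of the simpler mapping spaces $\aC(a_{0,i},\myop\bigvee_{q}b_{0,j})$ and $\aD(\phi a_{0,i},\myop\bigvee_{q}\phi b_{0,j})$. The enriched exact functor $\phi$ preserves finite coproducts, so at each vertex of the pullback diagram the comparison is the map $\aC(a_{0,i},\myop\bigvee_{q}b_{0,j})\to \aD(\phi a_{0,i},\phi(\myop\bigvee_{q}b_{0,j}))$, which is a weak equivalence by the assumption that $\phi$ is a DK-embedding of DK-compatible simplicially enriched categories; equivalently, this is the content of Proposition~\ref{propfunctG} read off at each $\Gamma$-space level.

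Finally, the structural maps $a_{0,j-1}\to a_{0,j}$ and $b_{0,j-1}\to b_{0,j}$ in the $\Sdot$-diagrams are cofibrations in $\aC_{0}$ (part of the definition of $\Sdot[n]\aC_{0}$), so by Definition~\ref{defsimpwaldcat}(iii) the backward legs appearing in~\eqref{eqsn} are Kan fibrations between Kan complexes, both for $\aC$ and for $\aD$. Hence the pullbacks in~\eqref{eqsn} are homotopy pullbacks, and homotopy invariance of iterated homotopy pullbacks of Kan complexes transports the vertex-wise weak equivalences of the previous paragraph to the desired weak equivalence of the iterated pullbacks. The one step that requires a moment's care is the transition from level-wise $\Gamma$-space equivalences to stable equivalences of the prolonged mapping spectra, but for prolongations this is a standard fact (and is already implicit in the definition of DK-embedding for the $\Gamma$-enriched categories $\aC^{\Gamma}$ and $\aD^{\Gamma}$); I do not anticipate any genuine obstacle beyond bookkeeping.
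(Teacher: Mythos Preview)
Your proposal is correct and follows exactly the approach the paper intends: the paper states that ``the formula~\eqref{eqsn} for the mapping spaces then implies'' the proposition, and your argument simply unpacks this implication by noting that the iterated pullbacks in~\eqref{eqsn} are homotopy pullbacks (one leg of each is a Kan fibration by Definition~\ref{defsimpwaldcat}(iii)) and that the vertex-wise comparison maps are weak equivalences by the DK-embedding hypothesis. One minor imprecision: only the legs induced by precomposition with the cofibrations $a_{0,j-1}\to a_{0,j}$ are guaranteed to be fibrations, not both legs, but one fibrant leg suffices for the pullback of Kan complexes to be a homotopy pullback, so your conclusion stands.
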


\begin{prop}\label{propfunctwo}
Let $\phi \colon (\aA \subset \aC) \to (\aB \subset \aD)$ be an
enhanced exact functor between enhanced simplicially enriched
Waldhausen categories.  If $\phi \colon
\aC \to \aD$ is a DK-embedding, then 
\[
\Sdot[n]\phi^{S}\colon \Sdot[n]\aA^{S}\to \Sdot[n]\aB^{S}
\]
is a DK-embedding.
\end{prop}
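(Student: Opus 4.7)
The plan is to reduce the statement to the level-wise assertion that $\Sdot[n]\aC(A,\Sigma^m B) \to \Sdot[n]\aD(\phi A,\Sigma^m\phi B)$ is a weak equivalence of simplicial sets for every $m$, using the pullback description~\eqref{eqsn} of mapping spaces in $\Sdot[n]$. This is essentially the same argument as for Proposition~\ref{propfunct}, but applied to the non-connective enrichment, where the hypothesis on the ambient category becomes essential.

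First I would observe that since $\aC$ and $\aD$ are simplicially tensored, Proposition~\ref{prop:Sdotinherit}(ii) gives that $\Sdot[n]\aA\subset \Sdot[n]\aC$ and $\Sdot[n]\aB\subset \Sdot[n]\aD$ are again enhanced simplicially enriched Waldhausen categories. Because tensors in $\Sdot[n]\aC$ are computed objectwise in the $\Ar[n]$-diagram and $\phi$ is tensored exact on the ambient categories, suspension commutes objectwise with $\phi$. Thus by Definition~\ref{defenrich} the map on mapping symmetric spectra of level $m$ is
\[
\Sdot[n]\aA^{S}(A,B)(m)=\Sdot[n]\aC(A,\Sigma^{m}B)\longrightarrow
\Sdot[n]\aD(\phi A,\Sigma^{m}\phi B)=\Sdot[n]\aB^{S}(\phi A,\phi B)(m).
\]

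Next, by formula~\eqref{eqsn}, each side is an iterated pullback, over Kan fibrations, of mapping simplicial sets of the form $\aC(a_{0,j},\Sigma^{m}b_{0,k})$ in $\aC$ (respectively $\aD$). The hypothesis that $\phi\colon \aC\to \aD$ is a DK-embedding means that each such map $\aC(a_{0,j},\Sigma^{m}b_{0,k})\to\aD(\phi a_{0,j},\Sigma^{m}\phi b_{0,k})$ is a weak equivalence of Kan complexes. Since pullbacks of weak equivalences along Kan fibrations of Kan complexes remain weak equivalences, the induced map on iterated pullbacks is a weak equivalence. Hence $\Sdot[n]\phi^{S}$ is a level equivalence of symmetric spectra at each pair of objects.

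Finally, by Proposition~\ref{propsemistableS} the mapping symmetric spectra on both sides are semistable; for semistable symmetric spectra a level equivalence is a stable equivalence. Thus $\Sdot[n]\phi^{S}\colon \Sdot[n]\aA^{S}\to \Sdot[n]\aB^{S}$ induces weak equivalences of mapping spectra and is a DK-embedding. The only step that requires real care is the identification of $\Sdot[n]\aA^{S}(A,B)(m)$ with $\Sdot[n]\aC(A,\Sigma^{m}B)$, which hinges on suspensions of diagrams in $\Sdot[n]\aA$ landing in $\Sdot[n]\aC$ (not necessarily in $\Sdot[n]\aA$) and being computed objectwise; everything else is a formal consequence of~\eqref{eqsn} and semistability.
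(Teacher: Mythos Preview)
Your proof is correct and follows exactly the approach the paper intends: the paper states this proposition (together with Proposition~\ref{propfunct}) as an immediate consequence of formula~\eqref{eqsn}, and you have correctly unpacked that implication for the non-connective enrichment. One small remark: the appeal to semistability in your final step is harmless but unnecessary, since a level equivalence of symmetric spectra is already a stable equivalence.
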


In Proposition~\ref{propfunct}, we do not necessarily get a
DK-equivalence $\Sdot[n]\aC^{\Gamma}\to \Sdot[n]\aD^{\Gamma}$ from a
DK-equivalence $\aC\to \aD$.  Applying the results of
\cite{BlumbergMandellUW}, we can do slightly better in
Proposition~\ref{propfunctwo}.

\begin{prop}\label{propfuncthree}
Under the hypotheses of Proposition~\ref{propfunctwo}, if $\phi \colon
\aA\to \aB$ and $\phi \colon \aC \to \aD$ are DK-equivalences, then
\[
\Sdot[n]\phi^{\Gamma}\colon \Sdot[n]\aA^{\Gamma}\to \Sdot[n]\aB^{\Gamma}
\qquad \text{and}\qquad 
\Sdot[n]\phi^{S}\colon \Sdot[n]\aA^{S}\to \Sdot[n]\aB^{S}
\]
are also DK-equivalences. 
\end{prop}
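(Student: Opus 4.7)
The plan is to separate each DK-equivalence conclusion into its two parts, DK-embedding plus essential surjectivity on $\pi_{0}$. The DK-embedding halves are already supplied by Propositions~\ref{propfunct} and~\ref{propfunctwo}, since a DK-equivalence is in particular a DK-embedding, so only essential surjectivity remains. Because $\Sdot[n]$ preserves the enhanced simplicially enriched structure by Proposition~\ref{prop:Sdotinherit} and the result is still DK-compatible by Theorem~\ref{thmdkdiag}, both $\pi_{0}\Sdot[n]\aA^{\Gamma}$ and $\pi_{0}\Sdot[n]\aA^{S}$ have the objects of $\Sdot[n]\aA_{0}$, and by Proposition~\ref{propequiv} isomorphism classes in either are witnessed by zigzags of weak equivalences in the underlying Waldhausen category. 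The task therefore reduces to showing that every object of $\Sdot[n]\aB_{0}$ is connected to an object in the image of $\phi$ by a zigzag of weak equivalences in $\Sdot[n]\aD_{0}$.

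I would establish this essential surjectivity by induction along the columns of an $\Sdot[n]$-object, lifting the filtration $b_{0,1}\rightarrow\dotsb\rightarrow b_{0,n}$ a step at a time. Given a partial lift $a_{0,1}\rightarrow\dotsb\rightarrow a_{0,j}$ in $\aA$ together with a weak equivalence to the first $j$ columns of $B$, the DK-equivalence $\phi\colon\aA\to\aB$ provides an object $a'\in\aA$ weakly equivalent to $b_{0,j+1}$; after modifying $a'$ along a further zigzag of weak equivalences if needed, the map $a_{0,j}\to a'$ in the hammock localization is represented by an honest map in $\aA$, which I would then convert into a cofibration $a_{0,j}\to a_{0,j+1}$ via the mapping cylinder built from the tensor with $\Delta[1]$, available by the pushout-product axiom on the ambient $\aC$. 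The remaining entries of the $\Sdot$-diagram are forced as cofibers by axiom~(iii), and lie in $\aA_{0}$ because $\aA_{0}$ is a closed Waldhausen subcategory of $\aC_{0}$.

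The principal obstacle is coherence: each inductive step yields only a pointwise weak equivalence, and these have to be assembled into a single natural zigzag of maps of $\Sdot[n]$-diagrams in $\aD$, rather than a separate zigzag at each arrow-category coordinate. This is precisely the coherent lifting problem for DK-equivalences of Waldhausen categories under the $\Sdot$ construction that is addressed by the main results of~\cite{BlumbergMandellUW}, and I would invoke those results to upgrade the pointwise lifts into a coherent weak equivalence $\phi(A)\simeq B$ in $\Sdot[n]\aD_{0}$. The argument applies verbatim to $\Sdot[n]\phi^{S}$, since $\Sdot[n]\aA^{\Gamma}$ and $\Sdot[n]\aA^{S}$ have the same underlying objects and the lifting step uses the hypothesis that $\phi\colon\aC\to\aD$ is a DK-equivalence to run the mapping-cylinder construction in the ambient simplicially tensored category.
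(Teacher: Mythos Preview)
Your overall architecture matches the paper's: reduce to essential surjectivity of $\Sdot[n]\phi$ on weak-equivalence classes, and lift the filtration $b_{0,1}\to\dotsb\to b_{0,n}$ by induction, using a mapping-cylinder construction at each step and checking cofibers land in $\aA$ via the closed Waldhausen subcategory hypothesis. So the strategy is right.

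The soft spot is the coherence step, which you correctly flag but do not actually resolve. The paper does not invoke an unspecified ``coherent lifting'' result from \cite{BlumbergMandellUW}; it uses \cite[1.4]{BlumbergMandellUW} in a very specific way. At the inductive step, the DK-equivalence $\aC\to\aD$ implies that the homotopy category of objects in $\aC$ \emph{under} $a_{n-1}$ is equivalent to that in $\aD$ \emph{under} $\phi(a_{n-1})$. This produces in one stroke an honest map $a_{n-1}\to a'$ in $\aC$ together with a zigzag of weak equivalences \emph{under} $\phi(a_{n-1})$ from $\phi(a')$ to $b_{n}$. Because the zigzag lives under $\phi(a_{n-1})$, one can straighten it using a \emph{generalized interval} $J$ (not merely $\Delta[1]$, since the zigzag may have length greater than one): setting $a_{n}=(a_{n-1}\otimes J)\cup_{a_{n-1}}a'$ yields a single weak equivalence $\phi(a_{n})\to b_{n}$ making the square with $\phi(a_{n-1})\to b_{n-1}$ \emph{strictly} commute. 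That strict commutativity is what propagates the induction; your version, which first finds $a'$ and then tries to represent a hammock map by an honest one, does not by itself guarantee compatibility with the previously constructed $\phi(a_{n-1})\to b_{n-1}$. Once you phrase the inductive step via under categories and replace $\Delta[1]$ by the appropriate $J$, the rest of your argument (checking $a'\in\aA$, checking the cofiber lies in $\aA$ so that $a_{n-1}\to a_{n}$ is a cofibration in $\aA$) goes through exactly as you describe.
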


\begin{proof}
It suffices to show that for any sequence of cofibrations $b_{1}\to
\dotsb \to b_{n}$ in $\aB$, there exists a sequence of cofibrations
$a_{1}\to \dotsb \to a_{n}$ in $\aA$ and a commutative diagram
\[
\xymatrix@C-1pc{%
\phi (a_{1})\ar[r]\ar[d]_{\sim}&\phi (a_{2})\ar[r]\ar[d]_{\sim}
&\dotsb \ar[r]&\phi(a_{n-1})\ar[r]\ar[d]_{\sim}
&\phi(a_{n})\ar[d]_{\sim}\\
b_{1}\ar[r]&b_{2}\ar[r]&\dotsb \ar[r]&b_{n-1}\ar[r]&b_{n}
}
\]
with the vertical maps weak equivalences.  We argue by induction on
$n$, the base case of $n=1$ following from the fact that $\phi$ is a
DK-equivalence and all weak equivalences have homotopy inverses.
Having constructed the diagram
\[
\xymatrix@C-1pc{%
\phi (a_{1})\ar[r]\ar[d]_{\sim}
&\phi (a_{2})\ar[r]\ar[d]_{\sim}
&\dotsb \ar[r]&\phi(a_{n-1})\ar[d]_{\sim}\\
b_{1}\ar[r]&b_{2}\ar[r]&\dotsb \ar[r]&b_{n-1}\ar[r]&b_{n}
}
\]
by induction, we know from \cite[1.4]{BlumbergMandellUW} that the
homotopy category of objects in $\aC$ under $a_{n-1}$ is equivalent to
the homotopy category of objects in $\aD$ under $\phi(a_{n-1})$.  We
then get an object $a'$ a map $a_{n-1}\to a'$ in $\aC$ and a zigzag of weak
equivalences under $\phi(a_{n-1})$ in $\aD$ from $\phi(a')$ to
$b_{n}$.  Since $b_{n}$ is in $\aB$, by the embedding hypotheses, we
see that $a'$ is in $\aA$.
Using an appropriate generalized interval $J$, we let $a_{n}=(a_{n-1}\otimes
J)\cup_{a_{n-1}}a'$.  The inclusion of $a_{n-1}$ in $a_{n}$ is a
cofibration in $\aC$, and we get a weak equivalence under
$\phi(a_{n-1})$ from $\phi(a_{n})$ to $b_{n}$.  To complete the
argument we need to see that $a_{n-1}\to a_{n}$ is a cofibration in
$\aA$, i.e., that its cofiber is in $\aA$. This follows since
$\phi(a_{n}/a_{n-1})$ is weakly equivalent to $b_{n}/b_{n-1}$, which
is in $\aB$ since by hypothesis $b_{n-1}\to b_{n}$ is a cofibration in
$\aB$. 
\end{proof}

Waldhausen constructed the $K$-theory spectrum $K\aC_{0}$ as
$w\subdot\Sdot^{(n)}\aC_{0}$, where $\Sdot^{(n)}$ is the iterated
$\Sdot$-construction and $w\subdot$ is the nerve of the subcategory of
weak equivalences.  The previous proposition extends the iterated
$\Sdot$ construction to simplicially enriched categories.  We could
likewise consider the simplicially enriched categories $w_{n}\aC$ with
objects the sequences of weak equivalences
\[
a_{0}\overto{\sim}\dotsb \overto{\sim}a_{n}
\]
and simplicial sets of maps the natural transformations.  Then  for
objects $A$ and $B$, the simplicial set of $w_{n}\aC(A,B)$
becomes
\[
\aC(a_{0},b_{0})\times_{\aC(a_{0},b_{1})}
\dotsb
\times_{\aC(a_{n-1},b_{n})}
\aC(a_{n},b_{n}).
\]
While this works formally, it does not work well homotopically because
the pullbacks are not over fibrations and so the mapping spaces are
not homotopy limits.  

We can sometimes resolve this problem by working with the
simplicially enriched categories $\bar w_n \aC$, where the objects
are the sequences of maps which are weak equivalences and
cofibrations; we use this construction in Section~\ref{sec:loc}.
However, this is often inconvenient and does not always produce the
correct result, and so instead we 
describe a general technique for fixing the problem by putting choices of
homotopies in the mapping spaces.  As a first case, consider the
following construction.

\begin{cons}\label{consvone}
Let $\aC$ be a simplicially enriched category and let $v\aC_{0}$ be a
subcategory of $\aC_{0}$.  We construct a topologically enriched
category $v_{1}^{M}\aC$ as follows. An object consists of a map
$\alpha_{0}\colon a_{0}\to a_{1}$ in $v\aC_{0}$.  The space of maps
$v_{1}\aC(A,B)$ 
consists of elements $f_{0},f_{1}$ of the geometric realizations
$|\aC(a_{0},b_{0})|$, $|\aC(a_{1},b_{1})|$ (respectively), a
non-negative real number $r$, and a homotopy $f_{0,1}$ of length $r$
in $|\aC(a_{0},b_{1})|$ from $\beta_{0}\circ f_{0}$ to $f_{1}\circ
\alpha$; we topologize this as a subspace of
\[
|\aC(a_{0},b_{0})|\times |\aC(a_{1},b_{1})|\times \bR \times |\aC(a_{0},b_{1})|^{I}.
\]
Composition is induced by composition of maps and homotopies.
\end{cons}

In the notation ``$M$'' stands for Moore, as this employs the
\textin{Moore trick} for making homotopy composition associative.  In this
construction, the mapping space $v_{1}^{M}\aC(A,B)$ is homotopy
equivalent to the homotopy pullback
\[
|\aC(a_{0},b_{0})|\times_{|\aC(a_{0},b_{1})|} 
|\aC(a_{0},b_{1})|^{I} \times_{|\aC(a_{0},b_{1})|} 
|\aC(a_{1},b_{1})|.
\]
The Moore trick generalizes from paths to maps out of higher simplices
\cite[\S2]{McClureSmith}.  We understand the $n$-simplex of length
$r>0$ to be the subspace $\Delta[n]_{r}$ of points
$(t_{0},t_{1},\dotsc,t_{n})$ of 
$\bR^{n+1}$ with $t_{i}\geq 0$ and $\sum t_{i}=r$.  Then given
$r,s>0$, the maps $\sigma^{i,n-i}_{r,s}\colon \Delta[i]_{r}\times
\Delta[n-i]_{s}\to \Delta[n]_{r+s}$ defined by
\[
\sigma^{i,n-i}_{r,s}\colon (t_{0},\dotsc,t_{i}),(u_{0},\dotsc,u_{n-i})\longmapsto
(t_{0},\dotsc,t_{i}+u_{0},u_{1},\dotsc,u_{n-i})
\]
decompose $\Delta[n]_{r+s}$ as a union of prisms
\[
\psi^{n}_{r,s}\colon \Delta[n]_{r+s}\iso \bigcup_{i=0}^{n}
\Delta[i]_{r}\times \Delta[n-i]_{s}.
\]
(See Proof of Theorem~2.4 in \cite[p.~162]{McClureSmith}.)  This
decomposition clearly commutes with the simplicial face and degeneracy
operations, and it is associative in that the following diagram
commutes.
\[
\xymatrix@C+1em{%
\Delta[i]_{q}\times\Delta[j]_{r}\times \Delta[k]_{s}
\ar[r]^{\sigma^{i,j}_{q,r}\times \id}
\ar[d]_{\id\times \sigma^{j,k}_{r,s}}
&\Delta[i+j]_{q+r}\times \Delta[k]_{s}
\ar[d]^{\sigma^{i+j,k}_{q+r,s}}\\
\Delta[i]_{q}\times \Delta[j+k]_{r+s}
\ar[r]_{\sigma^{i,j+k}_{q,r+s}}
&\Delta[i+j+k]_{q+r+s}
}
\]

\begin{cons}[Moore Nerve]\label{consmoore}
For $\aC$ a simplicially enriched category and $v\aC_{0}$ a
subcategory of $\aC_{0}$, define the topologically enriched category
$v^{M}_{n}\aC$ as follows.  The objects consist of the sequences of
$n$ composable maps in $v\aC_{0}$
\[
a_{0}\overto{v}\dotsb \overto{v}a_{n}.
\]
For convenience in what follows, we denote the structure map $a_{i}\to
a_{j}$ as $\alpha_{i,j}$, for $i\leq j$ (and $\beta_{i,j}$,
$\gamma_{i,j}$ similarly for objects $B$,$C$).
An element of the space of maps from $A$ to $B$ consists of the
following data: 
\begin{enumerate}
\item An non-negative real number $r$
\item For each $0\leq m\leq n$ and each $0\leq i_{0}<\dotsb <i_{m}<n$
a map 
\[
f_{i_{0},\dotsc,i_{m}}\colon \Delta[m]_{r}\to |\aC(a_{i_{0}},b_{i_{m}})|
\]
for $r>0$, or an element of $|\aC(a_{i_{0}},b_{i_{m}})|$ for $r=0$.
\end{enumerate}
such that for any subset $i_{j_{0}},\dotsc,i_{j_{\ell}}$ of
$i_{1},\dotsc,i_{m}$, the map
\[
\beta_{i_{j_{\ell}},i_{m}} \circ 
f_{i_{j_{0}},\dotsc,i_{j_{\ell}}} \circ 
\alpha_{i_{0},i_{j_{0}}} \colon \Delta[\ell]_{r}\to
|\aC(a_{i_{0}},b_{i_{m}})|
\]
is the restriction to the face of $f_{i_{0},\dotsc,j_{m}}$ spanned by
$i_{j_{0}},\dotsc,i_{j_{\ell}}$.  We topologize this as a subset of
the evident product.  Composition is induced by the prismatic
decomposition above: for $F\colon A\to B$ of length $r>0$ and
$G\colon B\to C$ of length $s>0$, the composition $H\colon A\to C$ of
length $r+s$ is defined by taking $h_{i_{0},\dotsc,i_{m}}$ to be the
map
\[
(g_{i_{j},\dotsc,i_{m}}(u_{0},\dotsc,u_{m-j})\circ \alpha_{i_{0},i_{j}})
\circ 
(\gamma_{i_{j},i_{m}}\circ f_{i_{0},\dotsc,i_{j}}(t_{0},\dots,t_{j}))
\]
on the $\Delta[j]_{r}\times\Delta[m-j]_{s}$ prism in the
$\psi^{m}_{r,s}$ decomposition of $\Delta[m]_{r+s}$.  For $r=0$ or
$s=0$, composition is induced by composition in $\aC$.
\end{cons}

A straightforward check of the formulas verifies that this defines a
topological category. Moreover, $v^{M}\subdot\aC$ assembles into a
simplicial topological category with the following naturality
property.  (It applies in particular to the important special case
$\aC=\aD$ with $v\aC_{0}\subsetneqq v\aD_{0}$.)

\begin{prop}\label{propmoorefunct}
Given simplicially enriched categories $\aC$ and $\aD$, a
simplicially enriched functor $\phi \colon \aC\to\aD$ that takes
$v\aC_{0}$ into $v\aD_{0}$ induces a topologically enriched simplicial functor
$v^{M}\subdot\aC\to v^{M}\subdot\aD$.
\end{prop}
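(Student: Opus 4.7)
The plan is a direct verification that unpacks the definition of the Moore nerve. On objects, a sequence $a_{0}\to\dotsb\to a_{n}$ in $v\aC_{0}$ is sent to $\phi(a_{0})\to\dotsb\to \phi(a_{n})$, which lies in $v\aD_{0}$ by hypothesis; this clearly respects the simplicial structure on the index $n$, since $\phi$ preserves composition and identities. The work is therefore to define the map on morphism spaces and to check that it is a continuous, composition-preserving, simplicial assignment.

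First I would define the map on morphism spaces. Given $F=(r,\{f_{i_{0},\dotsc,i_{m}}\})\in v^{M}_{n}\aC(A,B)$, I set $\phi_{*}F=(r,\{|\phi|\circ f_{i_{0},\dotsc,i_{m}}\})$, where $|\phi|\colon |\aC(a_{i_{0}},b_{i_{m}})|\to |\aD(\phi(a_{i_{0}}),\phi(b_{i_{m}}))|$ is the realization of the action of $\phi$ on simplicial mapping sets. The length $r$ is unchanged. This assignment is manifestly continuous in the subspace topology. To see it lands in $v^{M}_{n}\aD(\phi(A),\phi(B))$, I would check the face-compatibility condition: for any subset $i_{j_{0}},\dotsc,i_{j_{\ell}}$, the required equality
\[
|\phi|\bigl(\beta_{i_{j_{\ell}},i_{m}}\circ f_{i_{j_{0}},\dotsc,i_{j_{\ell}}}\circ \alpha_{i_{0},i_{j_{0}}}\bigr)
=\phi(\beta_{i_{j_{\ell}},i_{m}})\circ\bigl(|\phi|\circ f_{i_{j_{0}},\dotsc,i_{j_{\ell}}}\bigr)\circ \phi(\alpha_{i_{0},i_{j_{0}}})
\]
is just the fact that $\phi$ is a simplicial functor (pre- and post-composition in $\aC$ correspond under $|\phi|$ to pre- and post-composition in $\aD$), combined with the face-compatibility for $F$ itself.

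Next I would check that composition is preserved. The composite in $v^{M}\subdot\aC$ is defined prism-by-prism via the formula using $\sigma^{j,m-j}_{r,s}$, and each prismatic piece is built out of composites of $f$'s, $g$'s, and structure maps in $\aC$. Since $\phi$ preserves composition in the enrichment, applying $|\phi|$ to such a prismatic expression yields the same expression with each ingredient replaced by its $\phi$-image; this is exactly the prismatic formula for $(\phi_{*}G)\circ(\phi_{*}F)$. The edge cases $r=0$ or $s=0$ reduce to ordinary composition in $\aC$, which $\phi$ preserves by hypothesis. Identities (the length-zero data given by identity morphisms) are likewise preserved.

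Finally, naturality in the simplicial variable is automatic: the face and degeneracy maps of $v^{M}\subdot$ act by dropping or repeating entries of the sequence and restricting or inflating the indexed families $\{f_{i_{0},\dotsc,i_{m}}\}$, and all of these operations commute with the entrywise application of $|\phi|$. No genuine obstacle arises; the only thing to be careful about is bookkeeping to ensure that the face-compatibility condition and the prismatic composition formula, both of which are phrased entirely in terms of composition in the enrichment, transport verbatim under a simplicial functor. Because $\phi$ is simplicial and sends $v\aC_{0}$ to $v\aD_{0}$, every needed identity holds strictly, giving the asserted topologically enriched simplicial functor.
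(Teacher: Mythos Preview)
Your proposal is correct and is exactly the kind of direct verification the paper has in mind; the paper does not actually prove this proposition, treating it as immediate from the construction (the sentence preceding the statement says only that ``a straightforward check of the formulas'' verifies that $v^{M}\subdot\aC$ is a topological category and has this naturality property). Your unpacking of the face-compatibility condition, the prismatic composition formula, and the simplicial structure is the content of that straightforward check.
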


For objects $A$ and $B$, $v^{M}_{n}(A,B)$ is homotopy equivalent to
the homotopy end of $\aC(a_{i},b_{i})$ for $n>0$, while
$v^{M}_{0}\aC(a,b)=|\aC(a,b)|\times [0,\infty)$.  In particular $\aC$
includes in $v^{M}_{0}\aC$ (after geometric realization) as the
subcategory of maps of length zero.  More generally, the nerve
categories $v_{n}\aC$ include (after geometric realization) as the
subcategories of the Moore nerve categories $v^{M}_{n}\aC$ of the maps
of length zero.  Restricting to simplicially enriched Waldhausen
categories, we get the following proposition.

\begin{prop}\label{propmoorecof}
Let $\aC$ be a simplicially enriched Waldhausen category and
$v\aC_{0}$ a subcategory of $\aC_{0}$.
\begin{enumerate}
\item If $v\aC_{0}\subset w\aC_{0}$, then the inclusion of $\aC$ in
$v^{M}_{n}\aC$ is a DK-equivalence.
\item If $v\aC_{0}\subset \co\aC_{0}$, then the inclusion of
$v_{n}\aC$ in $v^{M}_{n}\aC$ is a DK-equivalence.
\end{enumerate}
\end{prop}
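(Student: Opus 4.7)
Both statements are DK-equivalence claims, so each reduces to checking weak equivalence on mapping spaces together with essential surjectivity on $\pi_{0}$.

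Part~(ii) follows from Definition~\ref{defsimpwaldcat}(iii). Under the hypothesis $v\aC_{0}\subset \co\aC_{0}$, each structure map $\alpha_{i,j}\colon a_{i}\to a_{j}$ of an object $A=(a_{0}\to\dotsb\to a_{n})$ is a cofibration, so every restriction map $\aC(a_{j},b_{k})\to \aC(a_{i},b_{k})$ is a Kan fibration.  The strict pullback formula for $v_{n}\aC(A,B)$ in the pattern of~\eqref{eqsn} is therefore a pullback of Kan fibrations between Kan complexes, and so it models the homotopy end of $\aC(a_{i},b_{i})$; by the remark preceding the proposition this agrees with the homotopy type of $v^{M}_{n}\aC(A,B)$, and the length-zero inclusion realizes the comparison.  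Essential surjectivity on $\pi_{0}$ is automatic since $v_{n}\aC$ and $v^{M}_{n}\aC$ have the same objects.

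For part~(i), I would introduce the ``top object'' functor $p\colon v^{M}_{n}\aC\to \aC$, sending $A=(a_{0}\overto{\sim}\dotsb\overto{\sim}a_{n})$ to $a_{n}$ and sending a Moore morphism to its singleton component $f_{\{n\}}\in\aC(a_{n},b_{n})$. Functoriality follows by inspecting the prism decomposition on the subset $\{n\}$, where the composition formula of Construction~\ref{consmoore} collapses to ordinary composition in $\aC$.  Since $p\circ \mathrm{incl}=\id_{\aC}$, it suffices by two-out-of-three for DK-equivalences to show that $p$ itself is a DK-equivalence.  Essential surjectivity of $p$ on $\pi_{0}$ is trivial via constant diagrams, and the mapping-space map $v^{M}_{n}\aC(A,B)\to \aC(a_{n},b_{n})$ should be a weak equivalence because every structural map in the relevant homotopy end---pre- and post-composition with the weak equivalences $\alpha_{i,j}$ of $A$ and $\beta_{k,l}$ of $B$---is a weak equivalence of Kan complexes by Definition~\ref{defsimpwaldcat}(iv).

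The main obstacle will be justifying this last weak equivalence rigorously. The Moore mapping space is only homotopy-equivalent (not equal) to the honest homotopy end, so one must either unwind Construction~\ref{consmoore} directly to recognize the ``top component'' projection as a model of the standard projection of a homotopy end onto a vertex, or factor $p$ through the homotopy-end model described in the preceding remark and then invoke the invariance of homotopy limits under objectwise weak equivalences.  Care is needed with the length parameter (which is absorbed into the contractible space $[0,\infty)$) and with the coherence data on higher subsets, but these are standard manipulations once the homotopy-end framework is in place.
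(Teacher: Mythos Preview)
Your proposal is correct and unpacks what the paper leaves implicit: the proposition is stated without proof, immediately after the remark that $v^{M}_{n}\aC(A,B)$ is homotopy equivalent to the homotopy end of $\aC(a_{i},b_{j})$, and both parts are meant to be read off from that identification together with Definition~\ref{defsimpwaldcat}(iii)--(iv). Your argument for part~(ii) is exactly the intended one. For part~(i), your retraction $p$ and the two-out-of-three argument are a clean organizational choice; the alternative the paper presumably has in mind is to check DK-embedding of the inclusion on constant diagrams (where the homotopy end collapses) and then argue essential surjectivity directly by observing that the structure maps give a length-zero morphism $A\to\mathrm{const}(a_{n})$ which becomes an isomorphism in $\pi_{0}(v^{M}_{n}\aC)$. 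Your route trades that essential-surjectivity check for the (easier) one for $p$, at the cost of needing the mapping-space comparison for \emph{arbitrary} $A,B$ rather than just constant ones---but as you note, once one accepts the homotopy-end description this is immediate from invariance of homotopy limits under objectwise weak equivalences, and the functoriality of $p$ on the singleton $\{n\}$ is the trivial case of the prism formula.
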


Finally, we use the following notation.

\begin{defn}\label{defn:vnerve}\index{vMC@$v^{M}\subdot\aC$}
Let $\aC$ be a simplicial enriched Waldhausen category, and let
$v\aC_{0}$ be a subcategory of $\aC_{0}$.  Define
$v^{M}\subdot\aC^{\Gamma}$ to be the simplicial spectral category
obtained from the simplicial $\Gamma$-category with
\[
v^{M}_{n}\aC^{\Gamma}_{q}(X,Y)=v^{M}_{n}\aC(X,\myop\bigvee_{q}Y).
\]
For $\aA\subset \aC$ an enhanced simplicially enriched Waldhausen
category, define 
$v^{M}\subdot\aA^{S}$ to be the simplicial spectral category
with
\[
v^{M}_{n}\aA^{S}(X,Y)(q)=v^{M}_{n}\aC(X,\Sigma^{q}Y).
\]
\end{defn}

In the formula, $\myop\bigvee$ denotes the entry-wise coproduct;
although this is not the coproduct in $v^{M}_{n}\aC$, we can identify 
\[
v^{M}_{n}\aC(\myop\bigvee_{q} Y,Z)\subset \prod_{q} v^{M}_{n}\aC(Y,Z)
\]
as the subspace of $q$-tuples of maps, all having the same length.
We then obtain $\Gamma$-category composition as in
Section~\ref{secspec}.  Likewise, in the enhanced context, although
$\Sigma^{n} Y$ is not a based tensor in $v^{M}_{n}\aC$, we
nevertheless have a continuous functor
\[
v^{M}_{n}\aC(Y,Z)\to v^{M}_{n}(\Sigma^{n}Y,\Sigma^{n}Z)
\]
and we obtain the spectral category composition as in
Section~\ref{secspec}.  

\section{The {M}oore $\Spdot$ construction}\label{sec:spMdot}

Although the $\Sdot$ construction translates naturally to the enriched
context, it is often useful to be able to weaken the cocartesian
condition in the construction and instead work with an equivalent
construction defined in terms of \term{homotopy cocartesian} squares
called the $\Spdot$ construction \cite[\S2]{BlumbergMandell}.  This
flexibility plays a key role in the proof of the d\'evissage theorem for
the localization theorem for $THH(ku)$ in Section~\ref{sec:dev}.  Such
a definition also provides models of $K$-theory and $THH$ which are
functorial in functors ``exact up to homotopy'' as explained in
Section~\ref{sec:weaklyexact}.
In this section we introduce an appropriately enriched
version of the $\Spdot$ construction, using 
the Moore ideas from the previous section to construct
the homotopically correct enrichment.  

We begin by reviewing the $\Spdot$ construction.  For this, recall
from \cite[\S2]{BlumbergMandellUW} that a \term{weak cofibration} is a
map that is weakly equivalent (by a zigzag) to a cofibration in the
category $\Ar\aC_{0}$ of arrows in $\aC_{0}$, and a \term{homotopy
cocartesian square} is a square diagram that is weakly equivalent (by
a zigzag) to a pushout square where one of the parallel sets of arrows
consists of cofibrations.

\begin{cons}
Let $\aC_{0}$ be a Waldhausen category.  Define 
$\Spdot[n]\aC_{0}$ to be the full subcategory of functors $A\colon
\Ar[n]\to \aC_{0}$ such that: 
\begin{itemize}
\item The initial map $*\to a_{i,i}$ is a weak equivalence for all $i$, 
\item The map $a_{i,j}\to a_{i,k}$ is a weak cofibration for all $i
\leq j \leq k$, and 
\item The diagram
\[  \xymatrix@-1pc{%
a_{i,j}\ar[r]\ar[d]&a_{i,k}\ar[d]\\a_{j,j}\ar[r]&a_{j,k}
} \]
is a homotopy cocartesian square for all $i \leq j \leq k$.
\end{itemize}
\end{cons}

We define a map $A\to B$ to be a weak equivalence when each
$a_{i,j}\to b_{i,j}$ is a weak equivalence.  Clearly $\Spdot$
assembles into a simplicial category with the usual face and
degeneracy functors.

In order to use $\Spdot \aC_0$ to construct $K$-theory, we need a 
mild hypothesis on $\aC_0$.
We say that a Waldhausen category $\aC_{0}$ admits \term{factorization}
when any map $f\colon a\to b$ in $\aC_{0}$ factors as a cofibration
followed by a weak equivalence 
\[
\xymatrix{%
a\ar@{ >->}[r]\ar@{..>}@/_1em/[rr]_{f}&Tf\ar[r]^{\sim}&b,
}
\]
We say that $\aC_{0}$ admits functorial factorization if this
factorization may be chosen functorially in $f$ in the category
$\Ar\aC_{0}$ of arrows in $\aC_{0}$.  More generally, we say that
$\aC_{0}$ admits factorization of weak cofibrations (FWC) or
functorial factorization of weak cofibrations (FFWC) when the weak
cofibrations can be factored as above.  Enhanced simplicially enriched
Waldhausen categories always admit FFWC using the standard mapping
cylinder construction.

\begin{prop}
If $\aA$ is an enhanced simplicially enriched Waldhausen category,
then $\aA$ admits FFWC.
\end{prop}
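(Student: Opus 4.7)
The plan is to use the standard mapping cylinder construction, which is available in any simplicially tensored Waldhausen category. Given $f\colon a\to b$ in $\aA_0$, set
\[
Tf = b\cup_{a\otimes \{0\}} (a\otimes \Delta[1])
\]
as a pushout in $\aC_0$ (which exists because $a\otimes \{0\}\to a\otimes \Delta[1]$ is a cofibration by the pushout-product axiom), together with the factorization $a=a\otimes\{1\}\to Tf \to b$, where the second map is induced by $f$ and the projection $\Delta[1]\to *$. This construction is manifestly functorial in $f\in \Ar\aC_0$.

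First I would verify the formal properties of this factorization in $\aC_0$. Writing $a\to Tf$ as the composite $a\to a\sqcup b\to Tf$ and analyzing the second map as the pushout of the cofibration $a\otimes \partial\Delta[1]\to a\otimes \Delta[1]$ (obtained from the pushout-product axiom applied to $*\to a$ and $\partial\Delta[1]\to \Delta[1]$) along $\id\sqcup f\colon a\otimes \partial\Delta[1]\to a\sqcup b$, we see that $a\to Tf$ is a cofibration in $\aC_0$. The map $Tf\to b$ is a weak equivalence by the gluing lemma applied to the map of cospans
\[
(a\otimes \Delta[1]\leftarrow a\otimes \{0\}\to b)\longrightarrow (a\leftarrow a\to b),
\]
using that the projection $a\otimes \Delta[1]\to a$ is a weak equivalence by Proposition~\ref{propcswprop}(ii).

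The substantive step is to check that this factorization actually lives in $\aA_0$ when $f$ is a weak cofibration in $\aA_0$. Since $\aA$ is closed under weak equivalences and $Tf\sim b\in \aA$, we get $Tf\in \aA$ immediately. What remains is to show that the cofiber $Tf/a$ lies in $\aA$, so that $a\to Tf$ qualifies as a cofibration of the Waldhausen subcategory $\aA_0$. This cofiber is the unreduced mapping cone $Cf$, which is functorial in $f\in \Ar\aA_0$ and sends objectwise weak equivalences of arrows to weak equivalences. If $f$ is connected by a zigzag of objectwise weak equivalences in $\Ar\aA_0$ to a cofibration $g\colon a'\to b'$ of $\aA_0$, then $Cf$ is weakly equivalent to $Cg$, and $Cg$ in turn is weakly equivalent to the honest cofiber $b'/a'$ by another application of the gluing lemma (using that the cone $Ca'$ is contractible). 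Finally, $b'/a'\in \aA$ by the definition of cofibration in the Waldhausen subcategory $\aA_0$, so closure of $\aA$ under weak equivalences yields $Cf\in \aA$.

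The main obstacle is this last step connecting the abstract notion of weak cofibration to the concrete mapping cone. Once one accepts that the homotopy cofiber is invariant under weak equivalences of arrows and that the mapping cone of an honest cofibration is weakly equivalent to its strict cofiber, the closure hypothesis on $\aA$ carries the rest of the argument. Functoriality of the whole factorization is inherited directly from the pushout definition of $Tf$.
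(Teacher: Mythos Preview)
Your argument is correct and is precisely the approach the paper has in mind: the proposition is stated without proof, immediately after the remark that enhanced simplicially enriched Waldhausen categories ``always admit FFWC using the standard mapping cylinder construction.'' You have supplied exactly those details, including the key point that the cofiber $Cf=Tf/a$ lands in $\aA$ because $f$ is zigzag-equivalent in $\Ar\aA_0$ to an honest cofibration of $\aA_0$, whose strict cofiber lies in $\aA$ by definition, and $\aA$ is closed under weak equivalences.
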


The significance of the hypothesis of FFWC is the following comparison
result \cite[2.9]{BlumbergMandell}.

\begin{prop}\label{propbm}
Let $\aC_{0}$ be a Waldhausen category admitting FFWC.
Then for each $n$, the inclusion $w\Sdot[n]\aC_{0}\to
w\Spdot[n]\aC_{0}$ induces a weak equivalence on nerves.
\end{prop}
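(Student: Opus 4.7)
The plan is to construct a functor $F\colon \Spdot[n]\aC_{0}\to \Sdot[n]\aC_{0}$ that preserves weak equivalences, together with a natural weak equivalence $\iota\circ F\to \mathrm{id}$ on $\Spdot[n]\aC_{0}$, where $\iota$ denotes the inclusion $\Sdot[n]\aC_{0}\hookrightarrow \Spdot[n]\aC_{0}$. Restricting this natural transformation along $\iota$ also yields a natural weak equivalence $F\circ \iota\to \mathrm{id}$, so on nerves of the subcategories of weak equivalences, both $\iota$ and $F$ induce mutually inverse homotopy equivalences.

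To build $F(A)$ for $A = a_{-,-}$ in $\Spdot[n]\aC_{0}$, I would inductively construct a chain of honest cofibrations $*=a'_{0,0}\rightarrowtail a'_{0,1}\rightarrowtail \dotsb \rightarrowtail a'_{0,n}$ together with compatible weak equivalences $a'_{0,j}\overto{\sim}a_{0,j}$. Set $a'_{0,0}=*$, using the canonical weak equivalence $*\overto{\sim}a_{0,0}$ from the definition of $\Spdot$. Having built $a'_{0,j-1}$, form the composite $a'_{0,j-1}\overto{\sim}a_{0,j-1}\to a_{0,j}$; this is a weak cofibration, since the second factor is and weak cofibrations are closed under precomposition with weak equivalences (both are preserved by the zigzag $(f,\mathrm{id})$ in $\Ar \aC_{0}$). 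Apply FFWC to factor this composite functorially in $A$ as a cofibration $a'_{0,j-1}\rightarrowtail a'_{0,j}$ followed by a weak equivalence $a'_{0,j}\overto{\sim}a_{0,j}$. Finally, define $F(A)_{i,j}$ to be the cofiber of $a'_{0,i}\rightarrowtail a'_{0,j}$; the resulting diagram on $\Ar[n]$ is then an object of $\Sdot[n]\aC_{0}$, and functorial in $A$ by functoriality of the factorizations.

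To verify the natural weak equivalence $F(A)\to A$: on the top row the comparison is the weak equivalence $a'_{0,j}\overto{\sim}a_{0,j}$ from the construction; for general $i\leq j$, the map $F(A)_{i,j}\to a_{i,j}$ is induced by comparing the pushout defining $F(A)_{i,j}$ with the homotopy cocartesian square in $A$ that exhibits $a_{i,j}$ as the (derived) cofiber of $a_{0,i}\to a_{0,j}$. Since the top-row comparison maps are weak equivalences, the induced maps on cofibers (along cofibrations in $F(A)$ and homotopy cocartesian squares in $A$) are weak equivalences by the gluing lemma. The main obstacle is ensuring that the entire construction is functorial in $A$, so that $F$ defines a simplicial map on $w\Spdot[n]\aC_{0}$; this is precisely why the hypothesis is \emph{functorial} factorization, and why the hypothesis must cover \emph{weak} cofibrations rather than only cofibrations, since the map factored at each inductive step is only a weak cofibration.
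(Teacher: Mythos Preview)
Your overall strategy---build a homotopy inverse to $\iota$ via FFWC and then use that a natural weak equivalence induces a homotopy on nerves of weak equivalence subcategories---is exactly the right one, and your inductive construction of the top row $*=a'_{0,0}\rightarrowtail\dotsb\rightarrowtail a'_{0,n}$ with compatible weak equivalences $a'_{0,j}\to a_{0,j}$ is correct. The gap is in the claimed natural transformation $\iota\circ F\to\mathrm{id}$ on all of $\Spdot[n]\aC_0$. For $i>0$ the map $F(A)_{i,j}\to a_{i,j}$ would have to come from the universal property of the pushout $F(A)_{i,j}=*\cup_{a'_{0,i}}a'_{0,j}$ applied to a map of spans
\[
(*\leftarrow a'_{0,i}\to a'_{0,j})\longrightarrow (a_{i,i}\leftarrow a_{0,i}\to a_{0,j}),
\]
but for this the left square must commute, i.e.\ the composite $a'_{0,i}\to a_{0,i}\to a_{i,i}$ must equal the zero map $a'_{0,i}\to *\to a_{i,i}$. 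In $\Spdot[n]$ the diagonal entry $a_{i,i}$ is only \emph{weakly} equivalent to $*$, so there is no reason for this to hold on the nose. (It does hold when $A\in\Sdot[n]$, which is why your restricted transformation $F\circ\iota\to\mathrm{id}$ is unproblematic.) As written, the components $F(A)_{i,j}\to a_{i,j}$ for $i>0$ simply do not exist.

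The repair is to replace the single natural weak equivalence by a short zigzag, which is all the nerve argument requires. Define $H(A)_{i,j}=a'_{0,j}\cup_{a'_{0,i}}a_{i,i}$, pushing out along the cofibration $a'_{0,i}\rightarrowtail a'_{0,j}$ and the composite $a'_{0,i}\to a_{0,i}\to a_{i,i}$; functoriality of $A$ on $\Ar[n]$ makes $H(A)$ a functor and FFWC makes $H$ natural in $A$. Now there \emph{is} an honest map of spans into the square for $A$, giving a natural weak equivalence $H\to\mathrm{id}$ (gluing against the homotopy cocartesian square); and collapsing $a_{i,i}\to *$ (using that $*$ is a zero object) gives a natural weak equivalence $H\to\iota\circ F$ by the gluing lemma, since $a_{i,i}\to *$ is a weak equivalence. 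With this zigzag in place your argument goes through.
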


The previous proposition implies that $w\subdot\Spdot\aC_{0}$ models
the $K$-theory space of $\aC_{0}$.  Using an iterated $\Spdot$
construction $\Spndot\aC_{0}$ as a full subcategory of
functors $\Ar[\ssdot]\times \dotsb \times \Ar[\ssdot]$ to $\aC_{0}$ (see
\cite[A.5.4]{BlumbergMandellUW}) gives a model
$w\subdot\Spndot\aC_{0}$ for the  $K$-theory spectrum.

For a simplicially enriched Waldhausen category $\aC$, we need a
version of $\Spdot\aC$ (or more generally $w\subdot\Spndot\aC$) with
the correct mapping spaces.  As in the construction of the Moore nerve
in~\ref{consmoore}, we do this using the Moore trick, this time with
the full generality of the McClure-Smith construction of the Moore Tot
\cite[\S2]{McClureSmith} of a cosimplicial object.

\begin{cons}\label{cons:moore}
Let $\aC$ be a category enriched in simplicial sets, let $D$ be a small
category, and let $D\aC_{0}$ be the category of $D$-diagrams in
$\aC_{0}$.  For $A=(a_{d})$ and $B=(b_{d})$ in $D\aC_{0}$, let
$D^{M}\aC(A,B)$ be the McClure-Smith Moore Tot (denoted $\Tot'$ in
\cite[\S2]{McClureSmith}) of the cosimplicial object
\[
D^{M}\aC(A,B)^{q}=\prod_{d_{0}\to \dotsb \to d_{q}}
|\aC(a_{d_{q}},b_{d_{0}})|
\]
(the cosimplicial object for the homotopy end of $|\aC(A,B)|$).
We let $D^{M}\aC$ be the topologically enriched category with objects
the objects of $D\aC_{0}$, maps the spaces $D^{M}\aC(A,B)$ above, and
composition induced by the ``cup-pairing'' \cite[2.1]{McClureSmith}
\[
\prod_{d_{p}\to \dotsb \to d_{0}}
|\aC(b_{d_{p}},c_{d_{0}})|
\times 
\prod_{d'_{q}\to \dotsb \to d'_{0}}
|\aC(a_{d'_{q}},b_{d'_{0}})|
\to 
\prod_{d_{p+q}\to \dotsb \to d_{0}}
|\aC(a_{d_{p+q}},c_{d_{0}})|.
\]
Here the map is induced on the $d_{p+q}\to \dotsb
\to d_{0}$ coordinate of the target by composition 
\[
\aC(b_{d_{p}},c_{d_{0}})\times \aC(a_{d_{p+q}},b_{d_{p}})
\to \aC(a_{d_{p+q}},c_{d_{0}})
\]
of the maps on the $d_{p}\to\dotsb \to d_{0}$ and $d_{p+q}\to \dotsb
\to d_{p}$ (i.e., $d'_{i}=d_{p+i}$) coordinates of the source.
\end{cons}

As in the previous section, we obtain a connective spectral enrichment
using the objectwise coproduct and (when defined) a non-connective
spectral enrichment using the objectwise suspension.

We use analogous notation for the enriched categories associated to
full subcategories of diagram categories, obtaining for example
$\SMdot[n]\aC$ and $\SpMdot[n]\aC$ as full subcategories of the
functors $\Ar[n] \to \aC$.  Because the Moore Tot always has the
homotopy type of the homotopy end (containing it as a deformation
retract), we obtain the following result as an immediate consequence. 

\begin{prop}
For a simplicially enriched Waldhausen category $\aC$, the inclusion
of the topologically enriched category $|\Sdot[n]\aC|$ in
$\SMdot[n]\aC$ as the length zero part is a 
DK-equivalence. 
\end{prop}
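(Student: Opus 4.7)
The plan is to observe first that the inclusion is the identity on objects: $\SMdot[n]\aC$ is the full topologically enriched subcategory of $\Ar[n]^{M}\aC$ on those objects satisfying the strict $\Sdot[n]$ cocartesian conditions, which are exactly the objects of $\Sdot[n]\aC$. Hence it suffices to exhibit the inclusion as a DK-embedding, i.e., a weak equivalence on each mapping space; it will then automatically be a DK-equivalence.

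Fix objects $A=(a_{d})_{d\in\Ar[n]}$ and $B=(b_{d})_{d\in\Ar[n]}$ of $\Sdot[n]\aC$. I would first identify the length-zero subspace of the McClure--Smith Moore Tot $\SMdot[n]\aC(A,B)$. By inspection of Construction~\ref{cons:moore}, the length $r=0$ elements are precisely the equalizer of the two coface maps
\[
\prod_{d\in\Ar[n]}|\aC(a_{d},b_{d})|\rightrightarrows\prod_{d_{0}\to d_{1}}|\aC(a_{d_{1}},b_{d_{0}})|,
\]
that is, the strict end over $\Ar[n]$ of the functor $(d,d')\mapsto|\aC(a_{d'},b_{d})|$. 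Because geometric realization commutes with finite limits, this strict end is canonically isomorphic to $|\Sdot[n]\aC(A,B)|$; indeed formula~\eqref{eqsn} displays it precisely as the iterated pullback
\[
|\aC(a_{0,1},b_{0,1})|\times_{|\aC(a_{0,1},b_{0,2})|}\dotsb\times_{|\aC(a_{0,n-1},b_{0,n})|}|\aC(a_{0,n},b_{0,n})|.
\]
Composition at length zero in the Moore Tot reduces to ordinary composition in $\aC$, so this identification is compatible with composition.

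Finally I would show the inclusion of this strict end into the full Moore Tot is a weak equivalence. The parenthetical immediately preceding the proposition states that the Moore Tot deformation retracts onto the homotopy end of the same cosimplicial object, so it suffices to verify that for $A,B\in\Sdot[n]\aC$ the strict end already models the homotopy end. This is immediate from condition~(iii) of Definition~\ref{defsimpwaldcat}: the cofibrations $a_{0,j-1}\to a_{0,j}$ induce Kan fibrations on mapping spaces, so the iterated pullback~\eqref{eqsn} is a pullback of Kan fibrations between Kan complexes, which is the standard model for the homotopy limit over $\Ar[n]$. I do not anticipate a serious obstacle; the main point is simply to unpack the Moore Tot together with the Kan fibration condition built into the definition of a simplicially enriched Waldhausen category.
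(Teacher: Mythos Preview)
Your proposal is correct and takes essentially the same approach as the paper. The paper treats the proposition as an immediate consequence of the remark preceding it (that the Moore Tot deformation retracts onto the homotopy end), together with the observation already made after formula~\eqref{eqsn} that the strict pullback computes the homotopy limit; you have simply unpacked these two ingredients explicitly.
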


Considering more complicated diagrams, this also applies to
$w_{p}\Sndot[q_{1},\dotsc,q_{n}]\aC$.  Thinking of these
categories as subcategories of $w_{p}\Spndot[q_{1},\dotsc,q_{n}]\aC$,
the more restricted homotopies in
$w_{p}^{M}\Sndot[q_{1},\dotsc,q_{n}]\aC$ make its mapping spaces
subspaces of $(w_{p}\Spndot[q_{1},\dotsc,q_{n}])^{M}\aC$, and we
get the following result.

\begin{prop}\label{prop:zeroinclude}
For a simplicially enriched Waldhausen category $\aC$, the inclusion
of $w_{p}^{M}\Sndot[q_{1},\dotsc,q_{n}]\aC$ in
$(w_{p}\Spndot[q_{1},\dotsc,q_{n}])^{M}\aC$ is a
DK-embedding.  If $\aC_{0}$ admits FFWC,
then it is a DK-equivalence.
\end{prop}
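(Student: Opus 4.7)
The plan is to prove the two assertions separately, using the Moore Tot technology from Construction~\ref{cons:moore} together with the rigidity of $\Sdot$-diagrams.

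For the DK-embedding, fix objects $A,B$ of $w_p^M\Sndot[q_1,\dotsc,q_n]\aC$, viewed also as objects of $(w_p\Spndot[q_1,\dotsc,q_n])^M\aC$. I would identify both mapping spaces as homotopy limits of essentially the same cosimplicial diagram of $\aC$-mapping spaces, but with different subsets of coordinates parametrizing the Moore homotopies. By construction, $(w_p\Spndot)^M\aC(A,B)$ is the Moore Tot of the cosimplicial object computing the homotopy end of $(x,y)\mapsto |\aC(a_x,b_y)|$ over the entire indexing category $w_p\Ar^{(n)}$, while $w_p^M\Sndot\aC(A,B)$ arises from the Moore nerve in the $w_p$-direction applied to the simplicial mapping space $\Sndot\aC(A_i,B_i)$. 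The iterated version of formula~\eqref{eqsn} (applied with the Kan fibration property of cofibrations in item (iii) of Definition~\ref{defsimpwaldcat}) identifies $\Sndot\aC(A_i,B_i)$ with a homotopy limit over just the top-row coordinates of $\Ar^{(n)}$. Because $A$ and $B$ satisfy the strict cocartesian conditions of $\Sdot$, the projection from the homotopy end over all of $\Ar^{(n)}$ down to the homotopy end over this top-row spine is a weak equivalence: the off-spine coordinates represent mapping spaces forced to be equivalent to spine mapping spaces by the pushout conditions. The subspace inclusion of Moore Tots realizes this spine projection up to the comparisons with homotopy ends already noted in the preceding proposition.

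For the DK-equivalence under FFWC, it then suffices to establish essential surjectivity on $\pi_0$. Here I would invoke the enriched iterated analogue of Proposition~\ref{propbm}: given any object of $w_p\Spndot\aC$, the FFWC hypothesis provides functorial factorizations of every weak cofibration as an honest cofibration followed by a weak equivalence. Applying this inductively along the $\Ar^{(n)}$-directions produces a weakly equivalent $\Sndot$-diagram; applying it compatibly at each stage of a $w_p$-sequence (using functoriality to ensure the intermediate maps remain weak equivalences) produces an object of $w_p\Sndot\aC$ zigzag-equivalent to the starting object in $w_p\Spndot\aC$.

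The main obstacle is the mapping-space comparison in the first step. The Moore Tot over $w_p\Spndot$ carries many more cosimplicial coordinates than the Moore Tot over $w_p$ alone, and one must verify that projecting away the extra coordinates is a weak equivalence precisely when one restricts to $\Sndot$-objects. I expect this to reduce to a careful fibration argument: the Kan fibration condition on cofibrations makes the pullback in \eqref{eqsn} a homotopy pullback, and the cocartesian axiom (iii) in the definition of $\Sdot$ together with the compatibility of Moore Tots with levelwise weak equivalences then lets us contract the off-spine coordinates inductively in the multi-variable $\Sndot$ construction.
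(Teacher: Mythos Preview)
Your proposal is correct and follows essentially the same approach as the paper, which in fact gives no separate proof but only the preceding remarks: both mapping spaces are Moore Tots computing homotopy ends, the restricted homotopies of $w_p^M\Sndot\aC$ sit as a subspace of the full Moore Tot, and formula~\eqref{eqsn} together with the strict cocartesian conditions in $\Sdot$ collapse the off-spine coordinates; essential surjectivity under FFWC is exactly Proposition~\ref{propbm} iterated. Your write-up simply unpacks what the paper leaves implicit.
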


We write $(w_{p}\Spndot[q_{1},\dotsc,q_{n}])^{M}\aC^{\Gamma}$ and
when appropriate $(w_{p}\Spndot[q_{1},\dotsc,q_{n}])^{M}\aC^{S}$ for
the associated spectrally enriched categories.

\section{$THH$, $TC$, and the cyclotomic trace}\label{secdefthh}

In this section, we apply the constructions of $THH$ and $TC$ of
spectral categories in the
context of the spectral enrichments associated to a simplicially enriched
Waldhausen category $\aC$.  For the connective enrichments,
we require Waldhausen's $\Sdot$ construction in order to properly handle
extension sequences in the Waldhausen structure for reasons first
observed by McCarthy~\cite[3.3.5]{McCarthyThesis} and Dundas-McCarthy
\cite[2.3.4]{DundasMcCarthy}; for the non-connective enrichment, the
$\Sdot$ construction turns out to be superfluous.

\begin{defn}\label{defwthhG}
\index{THH (construction)@$THH$ (construction)}
\index{TR (construction)@$TR$ (construction)}
\index{TC (construction)@$TC$ (construction)}
\index{WTHH (construction)@$WTHH$ (construction)}
\index{WTR (construction)@$WTR$ (construction)}
\index{WTC (construction)@$WTC$ (construction)}
For a simplicially enriched Waldhausen category $\aC$, we define 
\begin{align*}
WTHH^{\Gamma}\aC&=\Omega |THH(\Sdot \aC^{\Gamma})|\\
WTR^{\Gamma}\aC&=\Omega |TR(\Sdot \aC^{\Gamma})|\\
WTC^{\Gamma}\aC&=\Omega |TC(\Sdot \aC^{\Gamma})|.
\end{align*}
If $\aC$ is a simplicially tensored Waldhausen category and
$\aA\subset \aC$ is an enhanced simplicially enriched Waldhausen category, then
we define
\begin{align*}
WTHH\aA&=\Omega |THH(\Sdot \aA^{S})|\\
WTR\aA&=\Omega |TR(\Sdot \aA^{S})|\\
WTC\aA&=\Omega |TC(\Sdot \aA^{S})|.
\end{align*}
\end{defn}

In other words, we apply $THH$, $TR$, or $TC$ first to get simplicial
(or multisimplicial) cyclotomic spectra or pro-spectra.  Then we take the
geometric realization in the simplicial directions, followed by loops.
We have the following naturality properties.

\begin{prop}\label{propnat}
An enriched exact functor induces maps on $WTHH^{\Gamma}$,
$WTR^{\Gamma}$, and $WTC^{\Gamma}$.  A tensored exact or enhanced exact
functor induces maps on $WTHH$, $WTR$, and $WTC$.   Naturally weakly
equivalent functors induce the same map in the stable category.
\end{prop}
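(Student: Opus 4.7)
The result splits into two parts: functoriality of the constructions, and homotopy invariance under natural weak equivalence of functors.

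Functoriality is essentially bookkeeping. For an enriched exact functor $\phi\colon \aC\to \aD$, Proposition~\ref{prop:Sdotinherit} produces simplicial enriched exact functors $\Sdot[n]\phi$ compatible with face and degeneracy maps, and Proposition~\ref{propfunctG} upgrades these to a simplicial spectral functor $\Sdot\phi^{\Gamma}\colon \Sdot\aC^{\Gamma}\to\Sdot\aD^{\Gamma}$. Applying the continuous functors $THH$, $TR$, $TC$ from spectral categories to (pro-)cyclotomic spectra, then geometric realization in the $\Sdot$ direction and $\Omega$, gives the induced maps on $WTHH^{\Gamma}$, $WTR^{\Gamma}$, $WTC^{\Gamma}$. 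The tensored/enhanced case is identical with Proposition~\ref{propfunctS} in place of Proposition~\ref{propfunctG}.

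For the invariance part, a zigzag argument reduces us to a single natural weak equivalence $\eta\colon \phi\to \psi$. The plan is to factor both induced maps through the Moore nerve $w^{M}_{1}\Sdot\aD$ of Section~\ref{sec:moore1}. Using the naturality of $\eta$, I build an enriched functor $\Phi\colon \Sdot\aC\to w^{M}_{1}\Sdot\aD$ sending an object $A$ of $\Sdot[n]\aC$ to the weak equivalence $\Sdot\eta_{A}\colon \Sdot\phi(A)\to \Sdot\psi(A)$, and sending a mapping-space element $f$ to the pair $(\Sdot\phi(f),\Sdot\psi(f))$ equipped with the length-zero constant homotopy at the common value $\Sdot\eta\circ \Sdot\phi(f)=\Sdot\psi(f)\circ \Sdot\eta$. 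By construction, the two face maps $d_{0},d_{1}\colon w^{M}_{1}\Sdot\aD\to w^{M}_{0}\Sdot\aD=\Sdot\aD$ compose with $\Phi$ to give $\Sdot\phi$ and $\Sdot\psi$ respectively.

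Passing to $\Gamma$-enrichments via Definition~\ref{defn:vnerve}, the levelwise inclusion $\Sdot\aD^{\Gamma}\hookrightarrow w^{M}_{n}\Sdot\aD^{\Gamma}$ as length-zero constant sequences is a DK-equivalence of spectral categories by Proposition~\ref{propmoorecof}(i). Because $d_{0},d_{1}$ are retractions of this DK-equivalence, two-out-of-three makes them DK-equivalences, and $THH$ sends them to stable equivalences. To upgrade this to the equality $d_{0}^{*}=d_{1}^{*}$ in the stable category, I bring in the full simplicial structure of $w^{M}\subdot\Sdot\aD^{\Gamma}$: after further geometric realization in the $w^{M}\subdot$ direction, the two face maps at level $1$ are homotopic via the canonical $1$-simplex, and the canonical inclusion of the level-$0$ piece is a stable equivalence since each $w^{M}_{n}$ level is DK-equivalent to $w^{M}_{0}=\Sdot\aD$. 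Inverting this equivalence gives $d_{0}^{*}=d_{1}^{*}$, so $\phi_{*}=d_{0}^{*}\Phi_{*}=d_{1}^{*}\Phi_{*}=\psi_{*}$ in the stable category. The tensored/enhanced case is identical with the $S$-enrichment in place of the $\Gamma$-enrichment.

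The main technical obstacle is the final step: two stable equivalences sharing a common section need not be equal in the stable category, so both the extra simplicial direction $w^{M}\subdot$ and its geometric realization are essential. The Moore-length parameter is likewise crucial, as it is what makes $\Phi$ an honest enriched functor rather than a homotopy-coherent object, enabling the strict equalities $d_{i}\circ \Phi=\Sdot\phi,\Sdot\psi$ on which the argument turns.
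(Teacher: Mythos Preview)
Your approach is essentially the paper's: build the factoring functor $\Phi\colon \Sdot\aC^{\Gamma}\to w^{M}_{1}\Sdot\aD^{\Gamma}$ from the natural weak equivalence, then argue that the two projections $d_{0},d_{1}$ agree on $THH$ in the stable category.  However, you overcomplicate (and misdiagnose) the final step.  Your claim that ``two stable equivalences sharing a common section need not be equal in the stable category'' is false in the situation at hand: the common section is the inclusion $i\colon \Sdot\aD^{\Gamma}\hookrightarrow w^{M}_{1}\Sdot\aD^{\Gamma}$, which is a DK-equivalence by Proposition~\ref{propmoorecof}(i) and hence induces an \emph{isomorphism} $i_{*}$ in the stable category.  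From $d_{0*}\circ i_{*}=\id=d_{1*}\circ i_{*}$ and the invertibility of $i_{*}$ it follows immediately that $d_{0*}=i_{*}^{-1}=d_{1*}$.  That is the paper's entire argument for this step; no appeal to the full simplicial direction $w^{M}\subdot$ or to a further geometric realization is needed.

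Your detour through $|THH(w^{M}\subdot\Sdot\aD^{\Gamma})|$ does produce a valid proof, but it is strictly more work for the same conclusion.  The lesson is the elementary one that a one-sided inverse of an isomorphism is automatically a two-sided inverse, hence unique.
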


\begin{proof}
The only part not immediate from the construction is the last
statement.  We use Construction~\ref{consvone} for $v\aD_{0}$ the
subcategory of weak equivalences $w\aD_{0}$.  We have a pair of
simplicial spectrally enriched functors $w^{M}_{1}\Sdot\aD^{\Gamma}\to
\Sdot\aD^{\Gamma}$ each split by the inclusion $\Sdot\aD^{\Gamma}\to
w^{M}_{1}\Sdot\aD^{\Gamma}$.  Since the inclusion induces a DK-equivalence
$\Sdot\aD^{\Gamma}\to w^{M}_{1}\Sdot\aD^{\Gamma}$, both maps
$w^{M}_{1}\Sdot\aD^{\Gamma}\to \Sdot\aD^{\Gamma}$ induce the same map
in the stable category on $THH$, $TR$, and $TC$.  Now, given enriched
exact functors $\phi_{0},\phi_{1}\colon \aC\to \aD$  and $h$ a natural weak equivalence between them, we get
a simplicial spectrally enriched functor
$\Sdot\aC^{\Gamma}\to w^{M}_{1}\Sdot\aD^{\Gamma}$ (factoring through
the length zero part $w_{1}\Sdot\aD^{\Gamma}$).  The two composites
\[
\Sdot\aC^{\Gamma}\to w^{M}_{1}\Sdot\aD^{\Gamma}\to \Sdot\aD^{\Gamma}
\]
are the maps induced by $\phi_{0}$ and $\phi_{1}$.  For tensored exact or
enhanced exact functors $\phi_{0}$ and $\phi_{1}$ and a natural weak
equivalence between them, the same argument applies to show that the
maps on $WTHH$, $WTR$, and $WTC$ induced by $\phi_{0}$ and $\phi_{1}$
coincide in the stable category.
\end{proof}

Applying Proposition~\ref{propfunct} and~\ref{propfuncthree} we obtain
the following homotopy invariance properties.

\begin{prop}\label{propapproxone}
Let $\phi \colon \aC\to \aD$ be an enriched exact functor between
simplicially enriched Waldhausen categories that are DK-compatible.  Assume
that $\phi$ is a DK-embedding and that every object of $\Sdot[n]\aD$
is weakly equivalent to an object in the image of $\Sdot[n]\phi$ (for
all $n$). Then $\phi$ induces weak equivalences on $WTHH^{\Gamma}$, $WTR^{\Gamma}$, and $WTC^{\Gamma}$.
\end{prop}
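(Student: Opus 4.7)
The plan is to reduce, one simplicial degree at a time, to the homotopy invariance of $THH$ (and hence of $TR$ and $TC$). Concretely, I would show that for each $n$, the spectral functor $\Sdot[n]\phi^{\Gamma}\colon \Sdot[n]\aC^{\Gamma}\to \Sdot[n]\aD^{\Gamma}$ is a DK-equivalence of spectral categories, apply $THH$, $TR$, and $TC$ degreewise to get level weak equivalences of simplicial cyclotomic spectra and pro-spectra, and then conclude by geometric realization and looping.

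For the first step, Proposition~\ref{propfunct} already supplies that $\Sdot[n]\phi^{\Gamma}$ is a DK-embedding, since $\aC$ and $\aD$ are DK-compatible and $\phi$ is a DK-embedding by hypothesis. To upgrade this to a DK-equivalence I need essential surjectivity on $\pi_{0}$ of the spectral categories, and this is exactly the content of the approximation-style assumption: every object of $\Sdot[n]\aD$ is weakly equivalent to one in the image of $\Sdot[n]\phi$, and in the DK-compatible setting weak equivalences in the underlying Waldhausen category become isomorphisms in $\pi_{0}$ of the associated $\Gamma$-spectral category. Thus each $\Sdot[n]\phi^{\Gamma}$ is simultaneously a DK-embedding and essentially surjective on components, hence a DK-equivalence.

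For the remaining steps, the fundamental property of $THH$ recalled in Section~\ref{secdefthh} converts these DK-equivalences into level weak equivalences of simplicial cyclotomic spectra $THH(\Sdot\aC^{\Gamma})\to THH(\Sdot\aD^{\Gamma})$, with the corresponding statements for $TR$ and $TC$ following because weak equivalences on $THH$ give level weak equivalences of the associated pro-spectra. The standing technical conditions on the spectral categories mentioned in Section~\ref{secdefthh} (non-degenerately based mapping spectra and cofibrant identity inclusions) ensure that the relevant simplicial spectra are well-behaved enough that geometric realization preserves levelwise equivalences; loops then preserves equivalences and yields the claim on $WTHH^{\Gamma}$, $WTR^{\Gamma}$, and $WTC^{\Gamma}$. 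The main point requiring care is the essential surjectivity step, where one must verify that the approximation hypothesis delivers essential surjectivity in the spectral, rather than merely the Waldhausen, sense; this is precisely what DK-compatibility provides at each simplicial level of $\Sdot$.
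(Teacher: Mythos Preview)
Your proposal is correct and matches the paper's approach: the paper states the proposition as an immediate consequence of Proposition~\ref{propfunct} (DK-embedding on each $\Sdot[n]$) together with the essential surjectivity hypothesis, then invokes the invariance of $THH$ under DK-equivalences and Proposition~\ref{prop:realok} for geometric realization. One small remark: the essential surjectivity step does not actually need DK-compatibility---weak equivalences become isomorphisms in $\pi_{0}$ already by axiom~(iv) of Definition~\ref{defsimpwaldcat}; DK-compatibility enters only through the hypothesis of Proposition~\ref{propfunct}.
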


\begin{prop}\label{propapprox}
Let $\phi \colon \aA\to \aB$ be an enhanced exact functor between
enhanced simplicially enriched Waldhausen categories.  If $\phi$ is a
DK-equivalence of the ambient simplicially tensored
categories and a DK-equivalence $\aA\to \aB$, then $\phi$
induces a weak 
equivalence on $WTHH$, $WTR$, and $WTC$.
\end{prop}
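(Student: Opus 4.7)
The plan is to combine the homotopy invariance of $\Sdot$ established in Proposition~\ref{propfuncthree} with the fundamental invariance property of $THH$ (and $TR$, $TC$) under DK-equivalences of spectral categories as recorded at the beginning of Section~\ref{secdefthh}. Concretely, $WTHH$, $WTR$, and $WTC$ are defined by applying $THH$, $TR$, $TC$ to the simplicial spectral category $\Sdot\aA^{S}$ and then geometrically realizing and looping, so it suffices to show that the induced map of simplicial spectral categories
\[
\Sdot\phi^{S}\colon \Sdot\aA^{S}\to \Sdot\aB^{S}
\]
is a DK-equivalence in each simplicial degree.

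First I would verify the hypotheses of Proposition~\ref{propfuncthree}: by assumption $\phi$ is an enhanced exact functor which is a DK-equivalence and a DK-embedding on the ambient simplicially tensored Waldhausen categories $\aC\to \aD$, and is also a DK-equivalence $\aA\to \aB$. Proposition~\ref{propfuncthree} then gives that $\Sdot[n]\phi^{S}\colon \Sdot[n]\aA^{S}\to \Sdot[n]\aB^{S}$ is a DK-equivalence for every $n\geq 0$. Note that this step genuinely uses the strength of the hypotheses, since merely having a DK-equivalence $\aA\to \aB$ is not enough to lift diagrams in $\Sdot[n]\aB$ back to $\Sdot[n]\aA$ --- it is precisely the DK-embedding and DK-equivalence on the ambient simplicially tensored categories that allows the inductive lifting of sequences of cofibrations carried out in the proof of Proposition~\ref{propfuncthree}.

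Next I would apply the fundamental property~(ii) of $THH$: a DK-equivalence of spectral categories induces a weak equivalence on $THH$. Applied in each simplicial degree, this yields a levelwise weak equivalence of simplicial cyclotomic spectra
\[
THH(\Sdot\aA^{S})\to THH(\Sdot\aB^{S}).
\]
Since geometric realization preserves levelwise weak equivalences of simplicial (cyclotomic) spectra built out of the non-degenerately based, cofibrant mapping spectra we work with, and since loops preserve weak equivalences of spectra, we conclude that $WTHH\phi$ is a weak equivalence. For $WTR$ and $WTC$ the same argument applies: by the fundamental property~(iii), a spectral functor that induces a weak equivalence on $THH$ induces a level weak equivalence of pro-spectra on $TR$ and $TC$, so the levelwise $THH$-equivalence above upgrades to levelwise $TR$- and $TC$-equivalences, and realization plus looping concludes.

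There is no substantive obstacle here --- the work is all packaged into Proposition~\ref{propfuncthree} (whose proof in turn does the inductive lifting) and the fundamental properties of $THH$, $TR$, $TC$ from \cite{BlumbergMandellTHHLoc}. The only minor thing to check is that the simplicial cyclotomic spectra in question satisfy the non-degeneracy/cofibrancy hypotheses required for realization to preserve levelwise equivalences, but as noted in Section~\ref{secdefthh} this is automatic for the spectral categories constructed in this paper.
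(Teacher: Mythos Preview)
Your proof is correct and follows exactly the approach the paper takes: the paper's own justification is simply the sentence ``Applying Proposition~\ref{propfunct} and~\ref{propfuncthree} we obtain the following homotopy invariance properties,'' and you have correctly unpacked this by invoking Proposition~\ref{propfuncthree} to get levelwise DK-equivalences on $\Sdot\aA^{S}\to\Sdot\aB^{S}$, then the invariance of $THH$ under DK-equivalence, then Proposition~\ref{prop:realok} for realization. One trivial remark: the property you cite as ``fundamental property~(iii)'' is actually item~(i) in the list of properties for $TR$ and $TC$ in Section~\ref{secdefthh}.
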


Implicitly in the previous propositions we passed from a level weak
equivalence of simplicial spectra $X\subdot\to Y\subdot$ to a weak
equivalence on geometric realization $|X\subdot|\to |Y\subdot|$.
Using the standard geometric realization, we need hypotheses on
$X\subdot$ and $Y\subdot$ for this to work (cf.~\ref{propoldpaper}).
One sufficient hypothesis
is that $X\subdot$ and $Y\subdot$ are 
\noindexterm{spacewise proper}\index{proper!spacewise}: we say
that a simplicial symmetric spectrum of topological spaces
$X\subdot$ is space-wise proper when the 
simplicial space $X\subdot(n)$ is proper for every $n$, i.e., for each $k$, each
degeneracy map $X_{k}(n)\to X_{k+1}(n)$ is a Hurewicz cofibration
(satisfies the homotopy extension property).
The following proposition applies to verify this property for the
constructions in the previous propositions and the many other
constructions in this paper.  Its proof requires the details of the
$THH$ construction in Definition~\ref{defn:123} 
but is then straightforward given the standard
properties of Hurewicz cofibrations.

\begin{prop}\label{prop:realok}
Let $\aC\subdot$ be a simplicial object in the category of spectral
categories (in topological symmetric spectra).  Assume that for all
$k$ the category $\aC_{k}$ is non-degenerately based and that for all
objects $x,y$ of $\aC_{k}$ and for each space of the mapping
each degeneracy map $s^{i}\colon \aC_{k}(x,y)(n)\to
\aC_{k+1}(s^{i}x,s^{i}y)(n)$ is a Hurewicz cofibration.  Then the
simplicial spectrum $THH(\aC\subdot)$ is spacewise proper.
\end{prop}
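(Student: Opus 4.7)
The plan is to unwind the B\"okstedt-Dundas-McCarthy definition of $THH$ of a spectral category and verify that the degeneracy maps of the simplicial spectrum $THH(\aC\subdot)$ are Hurewicz cofibrations in every spectrum level. Recall that for a spectral category $\aC$ in topological symmetric spectra, the $q$-simplices of $THH(\aC)$ form a symmetric spectrum whose $m$-th space is, roughly, the homotopy colimit
\[
\hocolim_{(n_{0},\dotsc,n_{q})\in I^{q+1}} \ \ \myop\bigvee_{(c_{0},\dotsc,c_{q})} \Map_{*}\bigl(S^{n_{0}}\sma \dotsb \sma S^{n_{q}}, \ \aC(c_{0},c_{1})(n_{0})\sma \dotsb \sma \aC(c_{q},c_{0})(n_{q})\sma S^{m}\bigr),
\]
where the wedge runs over $(q+1)$-tuples of objects of $\aC$ and the indexing category $I$ is B\"okstedt's. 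Passing to a simplicial object $\aC\subdot$, the simplicial degeneracy $s^{i}\colon THH(\aC_{k})_{q}(m)\to THH(\aC_{k+1})_{q}(m)$ is built from the same formula applied to the object-indexing map $(c_{0},\dotsc,c_{q})\mapsto (s^{i}c_{0},\dotsc,s^{i}c_{q})$ and the maps $s^{i}\colon \aC_{k}(c_{j},c_{j+1})(n_{j})\to \aC_{k+1}(s^{i}c_{j},s^{i}c_{j+1})(n_{j})$.

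The strategy is then to observe that at each stage of the construction, the hypothesis that these maps are Hurewicz cofibrations between non-degenerately based spaces is preserved by the operations used to build $THH$. Specifically, I would cite or verify four standard closure properties:
\begin{enumerate}
\item A smash product of two non-degenerately based spaces is non-degenerately based, and if $f\colon X\to X'$ is a Hurewicz cofibration of non-degenerately based spaces and $A$ is non-degenerately based, then $f\sma A\colon X\sma A\to X'\sma A$ is a Hurewicz cofibration (the based pushout-product property with non-degenerate basepoints).
\item If $K$ is a based CW complex (here $K=S^{n_{0}}\sma\dotsb\sma S^{n_{q}}$) and $f\colon X\to X'$ is a Hurewicz cofibration between non-degenerately based spaces, then $\Map_{*}(K,f)\colon \Map_{*}(K,X)\to \Map_{*}(K,X')$ is a Hurewicz cofibration between non-degenerately based spaces.
\item Arbitrary wedge sums of Hurewicz cofibrations between non-degenerately based spaces are Hurewicz cofibrations between non-degenerately based spaces.
\item The bar-construction model of the homotopy colimit over $I^{q+1}$ converts a levelwise Hurewicz cofibration of (proper) $I^{q+1}$-diagrams into a Hurewicz cofibration on geometric realization; in particular it preserves the Hurewicz cofibration property.
\end{enumerate}

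Combining (i)--(iii), each iterated smash product appearing in the $I^{q+1}$-diagram inherits the Hurewicz cofibration property from the coordinatewise degeneracies $s^{i}\colon \aC_{k}(x,y)(n_{j})\to \aC_{k+1}(s^{i}x,s^{i}y)(n_{j})$ supplied by hypothesis, together with the identity maps in the remaining smash factors and in $S^{m}$. Applying (ii) to pull this through $\Map_{*}(S^{n_{0}}\sma\dotsb\sma S^{n_{q}},-)$, and then (iii) over the wedge over object tuples, yields a levelwise Hurewicz cofibration of $I^{q+1}$-diagrams; invoking (iv) then shows that $s^{i}\colon THH(\aC_{k})_{q}(m)\to THH(\aC_{k+1})_{q}(m)$ is itself a Hurewicz cofibration between non-degenerately based spaces. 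Since this holds for all $k$, $q$, and $m$, the simplicial spectrum $THH(\aC\subdot)$ is spacewise proper.

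The only real obstacle is bookkeeping: one must maintain, at each stage, both halves of the hypothesis (non-degeneracy of basepoints together with the Hurewicz cofibration property), since the closure results in (i)--(iii) typically require non-degenerate basepoints as input. This is where the hypothesis on the individual mapping spectra $\aC_{k}(x,y)(n)$ being non-degenerately based (and the identities being cofibrations in the zeroth space, ensuring the unit maps $S^{m}\to \aC_{k}(x,x)(m)$ behave well) is essential; given that input the verification is mechanical.
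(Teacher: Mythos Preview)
Your proposal is correct and is exactly what the paper has in mind: the paper does not spell out a proof but simply remarks that it ``requires the details of the $THH$ construction but is then straightforward given the standard properties of Hurewicz cofibrations.'' Your outline of those standard closure properties---under smash products, based mapping spaces out of compact $K$, wedges, and bar-model homotopy colimits---is precisely the intended verification, and your care about carrying non-degenerate basepoints through each step is the right bookkeeping. (The parenthetical about identity inclusions is not needed for this particular proposition; that hypothesis is used elsewhere for other properties of $THH$.)
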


Waldhausen's approximation property provides a convenient formulation
for the conditions in Propositions~\ref{propapproxone} and~\ref{propapprox} that often holds in
practice.  We say that exact functor $\phi \colon \aC\to\aD$ has
the \term{approximation property}\index{Waldhausen's approximation
property} when:
\begin{enumerate}
\item A map $f\colon a\to b$ is a weak equivalence in $\aC$ only if
the map $\phi(f)$ in $\aD$ is a weak equivalence.
\item For every map $f\colon \phi(a)\to x$ in $\aD$, there exists a
map $g\colon a\to b$ in $\aC$ and a weak equivalence $h\colon \phi(b)\to x$ in
$\aD$ such that $g=h\circ \phi(g)$.
\end{enumerate}
We then have the following $THH$ analogue of Waldhausen's
\textin{Approximation Theorem}.  The proof is that under factorization
hypotheses, the approximation property implies that $\phi$ is a
DK-equivalence; see \cite[1.4--1.5]{BlumbergMandellUW}.

\begin{thm}\label{thmapprox}
Let $\phi \colon (\aA\subset \aC)\to (\aB\subset \aD)$ be an enhanced
exact functor between 
enhanced simplicially enriched Waldhausen categories, and suppose that
$\phi_{0}\colon \aC_{0}\to \aD_{0}$
satisfies the approximation property.  If every object of $\aB$ is
weakly equivalent to the image of an object of $\aA$, then 
$\phi$ induces weak equivalences on $WTHH^{\Gamma}$, $WTR^{\Gamma}$,
$WTC^{\Gamma}$ and on $WTHH$, $WTR$, $WTC$.
\end{thm}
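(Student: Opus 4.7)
The plan is to derive Theorem~\ref{thmapprox} from the homotopy invariance results Propositions~\ref{propapproxone} and~\ref{propapprox} by verifying their DK-hypotheses. The main input, as telegraphed by the paragraph preceding the statement, is that under factorization the approximation property implies a DK-equivalence via \cite[1.4--1.5]{BlumbergMandellUW}.

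First I would observe that since $\aA \subset \aC$ is enhanced, the ambient category $\aC_{0}$ admits FFWC using the standard mapping cylinder construction coming from the simplicial tensors. Combining FFWC with the approximation property on $\phi_{0}\colon \aC_{0}\to \aD_{0}$, the argument of \cite[1.4--1.5]{BlumbergMandellUW} shows that $\phi_{0}$ is a DK-equivalence of categories with weak equivalences. Since by Theorem~\ref{thmdkdiag} both $\aC$ and $\aD$ are DK-compatible, Proposition~\ref{propequiv} then upgrades this to a DK-equivalence of simplicially enriched categories $\phi \colon \aC\to\aD$; in particular $\phi$ is a DK-embedding on the ambient categories.

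Next I would use the hypothesis that every object of $\aB$ is weakly equivalent to the image under $\phi$ of an object of $\aA$, combined with the fact that $\aA_{0}\subset \aC_{0}$ and $\aB_{0}\subset \aD_{0}$ are closed Waldhausen subcategories (so mapping spaces are inherited from the ambient categories and weak equivalences are detected there), to deduce that $\phi\colon \aA\to \aB$ is itself a DK-equivalence. The DK-embedding part is immediate from $\phi\colon \aC\to \aD$ being a DK-embedding together with fullness of $\aA\subset \aC$ and $\aB\subset \aD$; essential surjectivity on $\pi_{0}$ of the Dwyer-Kan localization is exactly the hypothesis on objects of $\aB$.

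At this point Proposition~\ref{propapprox} applies directly to give the weak equivalences on $WTHH$, $WTR$, $WTC$. For the connective versions I would apply Proposition~\ref{propapproxone} to $\phi\colon \aA\to \aB$; the only remaining hypothesis is that every object of $\Sdot[n]\aB$ is weakly equivalent to an object in the image of $\Sdot[n]\phi$. This is the main technical point, and it follows by induction on the length of a cofibration sequence in $\aB$ by the same lifting argument used in the proof of Proposition~\ref{propfuncthree}: given a partial lift through $b_{n-1}$, the approximation property produces a map $a_{n-1}\to a'$ in $\aC$ together with a weak equivalence from $\phi(a')$ to $b_{n}$ under $\phi(a_{n-1})$; we then replace $a_{n-1}\to a'$ by a cofibration using the enhanced mapping cylinder, and the closedness of $\aA\subset \aC$ together with the weak equivalence $\phi(a'/a_{n-1})\simeq b_{n}/b_{n-1}\in \aB$ ensures the cofiber lies in $\aA$. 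The hardest part is bookkeeping this lifting argument so that the cofibrations produced in $\aC$ actually land in the subcategory $\aA$, which is where the closedness hypothesis is crucial.
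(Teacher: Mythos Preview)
Your proposal is correct and follows essentially the paper's approach: the paper's one-line proof is precisely that the approximation property together with factorization implies $\phi$ is a DK-equivalence via \cite[1.4--1.5]{BlumbergMandellUW}, after which the homotopy invariance propositions apply. Your step~6 is slightly more work than needed, since once you have established that both $\phi\colon \aA\to\aB$ and $\phi\colon\aC\to\aD$ are DK-equivalences, Proposition~\ref{propfuncthree} already gives the $\Sdot[n]$ DK-equivalences for both enrichments directly; there is no need to redo its inductive lifting argument by hand.
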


In many situations, the underlying Waldhausen category $\aC_{0}$ of
a simplicially enriched Waldhausen category $\aC$ admits a second
subcategory of weak equivalences $v\aC_{0}$ (not necessarily related
to the simplicial structure, or even satisfying the two out of three
property). When $v\aC_{0}$ contains all the
isomorphisms and satisfies the Gluing Axiom (stated as~(iv) in the
introduction to this chapter), each Waldhausen category 
$\Sdot[n]\aC_{0}$ inherits a subcategory $v\Sdot[n]\aC_{0}$ also
satisfying these properties.  In this context, we have additional
variants of $THH$, $TR$, and $TC$.

\begin{defn}\label{deflocthh}
\index{THH (construction)@$THH$ (construction)}
\index{TR (construction)@$TR$ (construction)}
\index{TC (construction)@$TC$ (construction)}
\index{WTHH (construction)@$WTHH$ (construction)}
\index{WTR (construction)@$WTR$ (construction)}
\index{WTC (construction)@$WTC$ (construction)}
Let $\aC$ be a simplicially enriched Waldhausen category, and let
$v\aC_{0}$ be a subcategory of $\aC_{0}$ containing all the
isomorphisms and satisfying the Gluing Axiom. Then we define the
connective relative $THH$, $TC$, and $TR$ of $(\aC|v)$ as indicated
below (on the left).  When $\aC$ is simplicially tensored and
$\aA\subset \aC$ is an enhanced simplicially enriched Waldhausen
category, we define the non-connective $THH$, $TC$, and $TR$ of
$(\aA|v)$ as indicated below (on the right).
\begin{align*}
WTHH^{\Gamma}(\aC|v)&=\Omega |THH(v^{M}\subdot\Sdot \aC^{\Gamma})|
&WTHH(\aA|v)&=\Omega |THH(v^{M}\subdot\Sdot \aA^{S})|\\
WTR^{\Gamma}(\aC|v)&=\Omega |TR(v^{M}\subdot\Sdot \aC^{\Gamma})|
&WTR(\aA|v)&=\Omega |TR(v^{M}\subdot\Sdot \aA^{S})|\\
WTC^{\Gamma}(\aC|v)&=\Omega |TC(v^{M}\subdot\Sdot \aC^{\Gamma})|
&WTC(\aA|v)&=\Omega |TC(v^{M}\subdot\Sdot \aA^{S})|
\end{align*}
\end{defn}

In the special case when $v\aC_{0}$ is the category of weak
equivalences $w\aC_{0}$, the inclusion of each $\Sdot[n]\aC^{\Gamma}$
into $w^{M}_{q} \Sdot[n] \aC^{\Gamma}$ and (when defined) $\Sdot[n]\aA^{S}$
into $w^{M}_{q} \Sdot[n] \aA^{S}$ is a DK-equivalence.  This implies the
following proposition. 

\begin{prop}
For $\aC$ a simplicially enriched Waldhausen category and $\aA$ an
enhanced simplicially enriched Waldhausen category, the maps
\begin{align*}
WTHH^{\Gamma}\aC&\to WTHH^{\Gamma}(\aC|w)
&WTHH\aA&\to WTHH(\aA|w)\\
WTR^{\Gamma}\aC&\to WTR^{\Gamma}(\aC|w)
&WTR\aA&\to WTR(\aA|w)\\
WTC^{\Gamma}\aC&\to WTC^{\Gamma}(\aC|w)
&WTC\aA&\to WTC(\aA|w)
\end{align*}
are weak equivalences
\end{prop}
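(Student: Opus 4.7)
The plan is to reduce the statement to the DK-equivalences of spectral categories indicated in the sentence immediately preceding the proposition, and then invoke the fundamental properties of $THH$, $TR$, and $TC$ collected at the beginning of Section~\ref{secdefthh}.

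First, I would make the DK-equivalences explicit at the level of spectral categories. Proposition~\ref{propmoorecof}(i), applied with $v\aC_{0}=w\aC_{0}$ to each $\Sdot[n]\aC$ (which is again simplicially enriched Waldhausen by Proposition~\ref{prop:Sdotinherit}(i), and enhanced when $\aC$ is by (ii)), gives DK-equivalences $\Sdot[n]\aC\hookrightarrow w^{M}_{q}\Sdot[n]\aC$ and $\Sdot[n]\aA\hookrightarrow w^{M}_{q}\Sdot[n]\aA$ for every $n,q\geq 0$. Applying Propositions~\ref{propfunctG} and~\ref{propfunctS} (plus the functoriality of the mapping spectra construction in $w^{M}\subdot$) promotes these to DK-equivalences of spectral categories
\[
\Sdot[n]\aC^{\Gamma}\to w^{M}_{q}\Sdot[n]\aC^{\Gamma}\qquad\text{and}\qquad\Sdot[n]\aA^{S}\to w^{M}_{q}\Sdot[n]\aA^{S}
\]
in every bidegree $(n,q)$.

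Next, I would invoke the first fundamental property of $THH$: it sends DK-equivalences of spectral categories to weak equivalences of cyclotomic spectra. Combined with the previous step, this yields bidegreewise weak equivalences of bisimplicial cyclotomic spectra between $THH(\Sdot\aC^{\Gamma})$ (regarded as constant in the $w^{M}$ direction) and $THH(w^{M}\subdot\Sdot\aC^{\Gamma})$, and likewise in the enhanced non-connective setting. Proposition~\ref{prop:realok} verifies that both bisimplicial cyclotomic spectra are spacewise proper: the Moore nerve construction and the $\Sdot$ face and degeneracy maps produce Hurewicz cofibrations on non-degenerately based mapping spectra, which is the hypothesis of that proposition. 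Hence iterated geometric realization preserves the bidegreewise weak equivalences, and looping once gives the asserted weak equivalences $WTHH^{\Gamma}\aC\to WTHH^{\Gamma}(\aC|w)$ and $WTHH\aA\to WTHH(\aA|w)$.

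Finally, for $WTR$ and $WTC$, I would invoke the companion fundamental property from Section~\ref{secdefthh}: a spectral functor inducing a weak equivalence on $THH$ induces a level weak equivalence of pro-spectra on $TR$ and $TC$. Applying this bidegreewise promotes the weak equivalences above to level weak equivalences of bisimplicial pro-spectra, which then descend through realization and $\Omega$. The main (and only) technical point is the spacewise properness needed so that realization respects the bidegreewise weak equivalences, and this is handled cleanly by Proposition~\ref{prop:realok}; everything substantive has already been absorbed into that proposition and Proposition~\ref{propmoorecof}, so no further obstacle is expected.
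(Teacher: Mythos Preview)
Your proposal is correct and follows exactly the paper's approach: the paper dispatches this proposition in the single sentence preceding it, observing that the inclusions $\Sdot[n]\aC^{\Gamma}\hookrightarrow w^{M}_{q}\Sdot[n]\aC^{\Gamma}$ and $\Sdot[n]\aA^{S}\hookrightarrow w^{M}_{q}\Sdot[n]\aA^{S}$ are DK-equivalences, and you have simply unpacked the steps (Proposition~\ref{propmoorecof}(i), the $THH$ invariance under DK-equivalence, Proposition~\ref{prop:realok}, and the passage to $TR$ and $TC$) that make that implication go through. One small caveat: Propositions~\ref{propfunctG} and~\ref{propfunctS} are stated for enriched exact functors between simplicially enriched Waldhausen categories, whereas $w^{M}_{q}\Sdot[n]\aC$ is a topologically enriched category defined directly via Construction~\ref{consmoore}; the DK-equivalence of the spectral enrichments follows more directly from the explicit formulas in Definition~\ref{defn:vnerve} (which your parenthetical essentially acknowledges).
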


Using the model of $THH$ in the previous proposition, we have the
following sharper version of Proposition~\ref{propnat}. 

\begin{prop}\label{propnathomotopy}
Let $\phi_{0},\phi_{1} \colon \aC\to \aD$ be exact functors between
simplicially enriched Waldhausen categories.  A natural weak
equivalence from $\phi_{0}$ to $\phi_{1}$ induces a simplicial
homotopy of the induced functors
\begin{align*}
WTHH^{\Gamma}(\aC|w)&\to WTHH^{\Gamma}(\aD|w)\\
WTR^{\Gamma}(\aC|w)&\to WTR^{\Gamma}(\aD|w)\\
WTC^{\Gamma}(\aC|w)&\to WTC^{\Gamma}(\aD|w)
\end{align*}
and similarly for the non-connective enrichments when appropriate.
\end{prop}

\begin{proof}
The proof is the usual one, using the $n+2$ functors 
\[
w^{M}_{n}\aX\to w^{M}_{n+1}\aY
\]
obtained by inserting the natural transformation in each position
(giving it length 0 in the Moore construction).
\end{proof}

To construct the cyclotomic trace, we need a final variant of these
constructions where we iterate the $\Sdot$ construction. 

\begin{defn}\label{deftildethh}
\index{THH (construction)@$THH$ (construction)}
\index{TR (construction)@$TR$ (construction)}
\index{TC (construction)@$TC$ (construction)}
\index{WTHH (construction)@$WTHH$ (construction)}
\index{WTR (construction)@$WTR$ (construction)}
\index{WTC (construction)@$WTC$ (construction)}
Let 
\[
\wt{WTHH}^{\Gamma}\aC(n)=|THH(w^{M}\subdot S^{(n)}\subdot\aC^{\Gamma})|.
\]
The simplicial maps of spectral categories
\[
\Sigma^{n-m}\subdot w^{M}\subdot S^{(m)}\subdot\aC^{\Gamma}
\to w^{M}\subdot S^{(n)}\subdot\aC^{\Gamma}
\]
induce maps
\[
\Sigma^{n-m} \wt{WTHH}^{\Gamma}\aC(m)
\to \wt{WTHH}^{\Gamma}\aC(n)
\]
(as in \cite[\S1.3]{WaldhausenKT}).  These maps assemble
$\wt{WTHH}^{\Gamma}\aC$ 
into a symmetric spectrum in the category of cyclotomic spectra.
We define $\wt{WTR}$ and
$\wt{WTC}$ to be the $TR$ and $TC$ pro-spectra constructed from
$\wt{WTHH}$. 
\end{defn}

As a consequence of the {Additivity Theorem}~\ref{thmadditivity}, we
prove the following lemma in  Section~\ref{secadd}.

\begin{lem}\label{lemadd}
The map $\Sigma^{n-m} \wt{WTHH}^{\Gamma}\aC(m)
\to \wt{WTHH}^{\Gamma}\aC(n)$ in Definition~\ref{deftildethh}  is a
weak equivalence for all $n\geq m>0$.
\end{lem}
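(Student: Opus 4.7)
I would argue by induction on $n-m$, reducing immediately to the case $n = m+1$; once each map $\Sigma \wt{WTHH}^{\Gamma}\aC(m) \to \wt{WTHH}^{\Gamma}\aC(m+1)$ is shown to be an equivalence for $m \geq 1$, the general statement follows by composing. Fix $m \geq 1$ and single out one distinguished $\Sdot$-direction by writing $S^{(m+1)}_{\ssdot} = \Sdot S^{(m)}_{\ssdot}$. Then $\wt{WTHH}^{\Gamma}\aC(m+1)$ is the geometric realization in that distinguished $\Sdot$-direction of the simplicial symmetric spectrum
\[
X\subdot := |THH(w^M\subdot \Sdot S^{(m)}\subdot \aC^{\Gamma})|,
\]
where the realization bars denote realization in $w^M\subdot$ and in the other $m$ copies of $\Sdot$. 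We must show $|X\subdot| \simeq \Sigma X_{1}$, since $X_{1}$ is precisely $\wt{WTHH}^{\Gamma}\aC(m)$ and the structure map of Definition~\ref{deftildethh} in this direction realizes this comparison.

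The key input is the Additivity Theorem (Theorem~\ref{thmadditivity}). Applied to the $k$ enriched exact functors $\Sdot[k]\aC \to \aC$ picking off the successive subquotients $a_{i-1,i}$, combined with the DK-invariance of $THH$ and the naturality of Proposition~\ref{propnat}, Additivity yields a natural weak equivalence
\[
X_{k} \simeq X_{1}^{\times k}
\]
for every $k \geq 0$; and $X_{0} \simeq *$ since $S_{0}\aC$ consists only of the zero object. Thus $X\subdot$ is a Segal-type simplicial spectrum with contractible $0$-th term. Proposition~\ref{prop:realok} (applied with the obvious non-degeneracy and Hurewicz cofibration conditions inherited from $\Sdot$ and $w^M\subdot$) ensures that $X\subdot$ is spacewise proper, so its geometric realization is homotopically meaningful and commutes with the smash products arising in the symmetric spectrum structure.

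A standard delooping argument for proper Segal-type simplicial spectra (the bar construction argument used by Waldhausen to identify $|wS\subdot\aC|$ as a delooping of $|w\aC|$) then identifies $|X\subdot|$ with $\Sigma X_{1}$, and inspection shows that the resulting equivalence is induced by the very map of spectral categories defining the structure map in Definition~\ref{deftildethh}. Combining these equivalences gives the required equivalence $\Sigma\wt{WTHH}^{\Gamma}\aC(m) \to \wt{WTHH}^{\Gamma}\aC(m+1)$.

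The main obstacle is verifying that the Segal splitting coming from Additivity is genuinely compatible with the simplicial face maps of the outer $\Sdot$ direction, i.e., that $X\subdot$ really is a special $\Gamma$-space-type object rather than merely levelwise a product up to equivalence; this is the point at which the hypothesis $m \geq 1$ enters, since the presence of the already-realized $S^{(m)}\subdot$ direction forces the required compatibilities at the level of $THH$ (and hence $TR$ and $TC$) by the general principle that $TR$ and $TC$ preserve weak equivalences of cyclotomic spectra coming from DK-equivalences of spectral categories.
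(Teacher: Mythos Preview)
Your core approach matches the paper's: reduce to a single suspension and use Additivity to identify the levels of the extra $\Sdot$-direction. The paper's execution is more direct, however. Rather than establishing a Segal condition $X_{k}\simeq X_{1}^{\times k}$ and then invoking an unspecified ``standard delooping argument'', the paper observes that the structure map itself is the geometric realization of an explicit \emph{simplicial} map
\[
\myop\bigvee_{k} WTHH^{\Gamma}(\aC)\to WTHH^{\Gamma}(\Sdot[k]\aC)
\]
(sending $(a_{1},\dotsc,a_{k})$ to the wedge filtration) and shows this map is a weak equivalence at each level using Corollary~\ref{coradditivity}. Since the domain realizes to $\Sigma WTHH^{\Gamma}(\aC)$ on the nose, the conclusion is immediate. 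This sidesteps entirely the ``main obstacle'' you raise: there is no compatibility to check because one is comparing two simplicial objects via a genuine simplicial map.

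Your final paragraph is confused. The reason $m\geq 1$ is required has nothing to do with ``already-realized $S^{(m)}\subdot$ directions forcing compatibilities''. The point is simply that $\wt{WTHH}^{\Gamma}\aC(0)=|THH(w^{M}\subdot\aC^{\Gamma})|\simeq THH(\aC^{\Gamma})$, and for the \emph{connective} enrichment $THH(\aC^{\Gamma})$ is generally not equivalent to $\Omega\wt{WTHH}^{\Gamma}\aC(1)=WTHH^{\Gamma}(\aC)$; that equivalence is the content of Theorem~\ref{thmnoSnoncon}, which holds only for the non-connective enrichment. So the spectrum $\wt{WTHH}^{\Gamma}\aC$ need not be an $\Omega$-spectrum below level $1$, and the lemma only claims it is one from level $1$ onward.
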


We have analogous constructions and results in the relative case
(using $v^{M}\subdot$ in place of $w^{M}\subdot$)
and non-connective case (using $\aA^{S}$ for $\aA^{\Gamma}$
when $\aA$ is enhanced).
The identity $WTHH^{\Gamma}\aC=\Omega \wt{WTHH}^{\Gamma}\aC(1)$
then immediately implies the following result.

\begin{thm}\label{thmwtadd}
We have natural isomorphisms in the stable category 
\[
WTHH^{\Gamma}\aC \htp 
\wt{WTHH}^{\Gamma}\aC
\qquad 
WTR^{\Gamma}\aC \htp 
\wt{WTR}^{\Gamma}\aC
\qquad
WTR^{\Gamma}\aC \htp 
\wt{WTC}^{\Gamma}\aC
\]
and likewise for the relative and non-connective variants when these
are defined.
\end{thm}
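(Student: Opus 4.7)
The plan is to derive the theorem as a direct formal consequence of Lemma~\ref{lemadd} together with the identification $WTHH^{\Gamma}\aC=\Omega \wt{WTHH}^{\Gamma}\aC(1)$ (and its $TR$/$TC$ analogues). The key observation is that Lemma~\ref{lemadd} says exactly that the structure maps
\[
\Sigma \wt{WTHH}^{\Gamma}\aC(n) \to \wt{WTHH}^{\Gamma}\aC(n+1)
\]
of the symmetric spectrum in cyclotomic spectra are weak equivalences for all $n \geq 1$, so $\wt{WTHH}^{\Gamma}\aC$ is a positive $\Omega$-spectrum in cyclotomic spectra. For such an object, the underlying stable homotopy type is represented by $\Omega\wt{WTHH}^{\Gamma}\aC(1)$, and this is $WTHH^{\Gamma}\aC$ by construction.

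To make this precise, I would first unpack the symmetric spectrum structure of Definition~\ref{deftildethh}: taking the $n$-th structure map to be the case $m=n$, $n\mapsto n+1$ in the bonding maps there, and observing that these are precisely (shifts of) the maps considered in Lemma~\ref{lemadd}. Passing to adjoints, the maps $\wt{WTHH}^{\Gamma}\aC(n)\to \Omega \wt{WTHH}^{\Gamma}\aC(n+1)$ are weak equivalences for $n \geq 1$, which identifies the canonical map $\Omega \wt{WTHH}^{\Gamma}\aC(1) \to \Omega^\infty \wt{WTHH}^{\Gamma}\aC$ as a weak equivalence (semistability of the underlying symmetric spectrum, after level $1$, lets us realize it as a genuine $\Omega$-spectrum). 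Combined with the definitional identity $WTHH^{\Gamma}\aC=\Omega \wt{WTHH}^{\Gamma}\aC(1)$, this gives the asserted isomorphism in the stable category.

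For the $TR$ and $TC$ statements, I would apply the fundamental properties of $TR$ and $TC$ recorded in Section~\ref{secdefthh}: $TR$ and $TC$ preserve weak equivalences of cyclotomic spectra and are constructed levelwise from $THH$. Hence the structure maps of the symmetric spectra $\wt{WTR}^{\Gamma}\aC$ and $\wt{WTC}^{\Gamma}\aC$ are weak equivalences in positive levels by the same input, and the identifications $WTR^{\Gamma}\aC=\Omega\wt{WTR}^{\Gamma}\aC(1)$ and $WTC^{\Gamma}\aC=\Omega\wt{WTC}^{\Gamma}\aC(1)$ deliver the result. The relative variants $WTHH^{\Gamma}(\aC|v)$ and the non-connective variants $WTHH\aA$, $WTR\aA$, $WTC\aA$ are handled identically, since Lemma~\ref{lemadd} and its proof via the Additivity Theorem~\ref{thmadditivity} will apply uniformly to $v^{M}\subdot\Sdot[\ssdot]\aC^{\Gamma}$ and to $\Sdot[\ssdot]\aA^{S}$ in place of $\Sdot[\ssdot]\aC^{\Gamma}$.

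The main obstacle is entirely encapsulated in Lemma~\ref{lemadd}, whose proof is deferred to Section~\ref{secadd} and rests on the Additivity Theorem; once that lemma is in hand, the theorem itself involves no further substantive argument and amounts to the standard stable-category recognition principle for positive $\Omega$-spectra. One minor bookkeeping point to handle carefully is that $\wt{WTHH}^{\Gamma}\aC(0)$ is not required to be equivalent to $\Omega\wt{WTHH}^{\Gamma}\aC(1)$, which is why we must shift and define $WTHH^{\Gamma}\aC$ from level $1$ rather than level $0$; this matches the role of $\Sdot$ in placing the basepoint cofiber sequence and is already built into Definition~\ref{defwthhG}.
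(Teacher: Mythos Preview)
Your proposal is correct and matches the paper's approach exactly: the paper states that the identity $WTHH^{\Gamma}\aC=\Omega \wt{WTHH}^{\Gamma}\aC(1)$ together with Lemma~\ref{lemadd} ``immediately implies'' the theorem, and your write-up simply unpacks that implication via the positive $\Omega$-spectrum recognition principle. The only difference is that you give more detail (including the $TR$/$TC$ and variant cases) than the paper bothers to record.
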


We can now define the cyclotomic trace.

\begin{defn}
For a simplicially enriched Waldhausen category $\aC$, the 
\term{cyclotomic trace}
\[
K(\aC_{0})\to \wt{WTC}^{\Gamma}\aC\to \wt{WTHH}^{\Gamma}\aC
\]
is the map induced by the inclusion of objects 
\[
K\aC_{0}=\Ob(w\subdot\Sdot^{(n)}\aC_{0})=
\Ob(w^{M}\subdot\Sdot^{(n)}\aC^{\Gamma})\to
\wt{WTHH}^{\Gamma}\aC.
\]
For $v\aC_{0}$ a subcategory of
$\aC_{0}$ containing the isomorphisms and satisfying the Gluing Axiom,
the relative cyclotomic trace is the map
\[
K(\aC_{0}|v) \to \wt{WTC}^{\Gamma}(\aC|v)\to \wt{WTHH}^{\Gamma}(\aC|v)
\]
induced by the inclusion of objects
\[
K(\aC_{0}|v)=\Ob(v\subdot\Sdot^{(n)}\aC_{0})=
\Ob(v^{M}\subdot\Sdot^{(n)}\aC^{\Gamma})\to
\wt{WTHH}^{\Gamma}(\aC|v).
\]
\end{defn}

Finally, to compare the definitions of this section with the theories
used in \cite{BlumbergMandellTHHLoc} and Chapter~\ref{tw-thhmv}, we
state the following two theorems. 
The first is a consequence of the 
{Additivity Theorem}~\ref{thmadditivitynoncon} and proved in 
Section~\ref{secadd}. 

\begin{thm}\label{thmnoSnoncon}
Let $\aA$ be an enhanced simplicially enriched Waldhausen category.
The inclusion of $THH(\aA^{S})$ in $WTHH(\aA)$ is a weak equivalence
of cyclotomic spectra. 
\end{thm}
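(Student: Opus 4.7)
The plan is to recognize Theorem~\ref{thmnoSnoncon} as an instance of the Segal-to-delooping principle applied to the simplicial cyclotomic spectrum $[n]\mapsto THH(\Sdot[n]\aA^{S})$. Recall the general principle: for any simplicial spectrum $X\subdot$ with $X_{0}\htp *$ for which the Segal maps $X_{n}\to X_{1}^{n}$ are weak equivalences, the canonical map $X_{1}\to \Omega|X\subdot|$ adjoint to the inclusion of the $1$-simplex is a weak equivalence. The inclusion of $THH(\aA^{S})$ into $WTHH(\aA)=\Omega|THH(\Sdot\aA^{S})|$ is precisely this canonical map, via the identification $\Sdot[1]\aA^{S}=\aA^{S}$, so the theorem reduces to verifying the two input hypotheses for the principle.

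The first hypothesis is easy: the category $\Sdot[0]\aA$ consists of a single zero object, so $\Sdot[0]\aA^{S}$ is a trivial spectral category and $THH(\Sdot[0]\aA^{S})\htp *$. For the Segal condition, I would appeal to the non-connective Additivity Theorem~\ref{thmadditivitynoncon}: the case $n=2$ asserts that the map
\[
THH(\Sdot[2]\aA^{S})\to THH(\aA^{S})\times THH(\aA^{S})
\]
induced by the sub and quotient functors of the universal cofiber sequence parametrized by $\Sdot[2]$ is a weak equivalence of cyclotomic spectra. The general $n$-case follows by induction on $n$, iterating additivity along the inner face maps to obtain $THH(\Sdot[n]\aA^{S})\htp THH(\aA^{S})^{n}$.

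The main subtlety is confirming that the Segal-to-delooping principle applies in the category of cyclotomic spectra rather than merely on underlying spectra. This follows because cyclotomic spectra form a stable homotopy theory in which geometric realization commutes with suspension and in which the loop functor has its usual formal properties, and because all the structure maps in the simplicial cyclotomic spectrum $THH(\Sdot\aA^{S})$ are manifestly compatible with the $S^{1}$-action and the cyclotomic Frobenius by naturality of $THH$ on spectral functors. Having checked both input hypotheses, the principle delivers that $THH(\aA^{S})\to \Omega|THH(\Sdot\aA^{S})|=WTHH(\aA)$ is a weak equivalence of cyclotomic spectra, as desired.
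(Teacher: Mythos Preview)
Your proposal is correct and takes essentially the same approach as the paper: both use Theorem~\ref{thmadditivitynoncon}, iterated as in the argument following Corollary~\ref{coradditivity}, to split $THH(\Sdot[n]\aA^{S})$ into $n$ copies of $THH(\aA^{S})$ and then conclude. The paper phrases the final step slightly differently, exhibiting an explicit simplicial map $\bigvee_{n}THH(\aA^{S})\to THH(\Sdot[n]\aA^{S})$ modeling $\Sigma THH(\aA^{S})\to |THH(\Sdot\aA^{S})|$ and checking it is a levelwise weak equivalence of cyclotomic spectra, rather than invoking the Segal delooping principle as a black box; this has the minor advantage that cyclotomic compatibility is immediate and no separate appeal to stability of the cyclotomic category is needed.
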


The second is a special case of the Sphere Theorem from
Section~\ref{secspheretheorem}; see Corollaries~\ref{cor:ekmm}
and~\ref{cor:simpring}.  

\begin{thm}\label{thmspecsphere}
Let $R$ be a ring, a simplicial ring, or a connective ring spectrum,
and let $\aA$ be the simplicially tensored Waldhausen category of
finite cell
modules (built out of free or finitely generated projective modules) in
Example~\ref{exofinterest}.(i) or (ii) (as appropriate). 
Then the natural map $WTHH^{\Gamma}(\aA)\to WTHH(\aA)$ is a weak
equivalence of cyclotomic spectra.
\end{thm}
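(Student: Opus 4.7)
The theorem is announced as a consequence of the general Sphere Theorem of Section~\ref{secspheretheorem}, applied via Corollaries~\ref{cor:ekmm} and~\ref{cor:simpring}. The strategy behind that theorem, specialized to the present setting, rests on the observation that in each of the three cases---a discrete ring, a simplicial ring, or an EKMM connective $S$-algebra---the Waldhausen category $\aA$ of finite cell modules is homotopically generated, via iterated cofiber sequences, by the ``sphere'' objects $\Sigma^{k}R$ for $k\geq 0$.

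The first step is the comparison of $\aA^{\Gamma}$ and $\aA^{S}$ on these generators. The spectral mapping object $\aA^{S}(\Sigma^{i}R,\Sigma^{j}R)$ is, by construction, the usual symmetric spectrum model for the shifted free module $\Sigma^{j-i}R$. The connective enrichment $\aA^{\Gamma}(\Sigma^{i}R,\Sigma^{j}R)$ is obtained by $\Gamma$-space prolongation and, by Proposition~\ref{prop:conncover} applied to $\aC$ restricted to the subcategory of generators (on which the suspension map is a weak equivalence because $R$ is connective and everything in sight is free), is precisely the connective cover of $\aA^{S}(\Sigma^{i}R,\Sigma^{j}R)$. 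Since $R$ is connective, this connective cover is a weak equivalence on the generators.

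The second step is to upgrade this pointwise equivalence to an equivalence of $THH$ after the $\Sdot$ construction. This uses three ingredients: Proposition~\ref{propfibercofiber}, which exhibits $\aA^{S}$ as pretriangulated so that cofibrations in $\aA$ yield cofiber sequences of mapping spectra; the Additivity Theorem~\ref{thmadditivity}, which passes such cofiber sequences through $WTHH^{\Gamma}$ and $WTHH$; and the iterated $\Sdot$ machinery of Lemma~\ref{lemadd} and Theorem~\ref{thmwtadd}, which identifies $WTHH^{\Gamma}\aA$ with the spectrum $\wt{WTHH}^{\Gamma}\aA$ built from $w^{M}\subdot S^{(n)}\subdot\aC^{\Gamma}$. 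Iterating $\Sdot$ deloops the connective cover, which is what makes the comparison an equivalence in every degree rather than only in non-negative ones. Because every finite cell module is obtained from finitely many cofiber attachments of generating spheres, additivity reduces the comparison on all of $\aA$ to the comparison on the generators handled in the first step.

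The main obstacle is the connectivity bookkeeping. The pointwise map $\aA^{\Gamma}\to \aA^{S}$ is in general only a connective cover, so the induced map on $THH$ is a priori only a connective cover of cyclotomic spectra; one might worry that $\aA^{S}$ carries genuine negative homotopy information invisible to $\aA^{\Gamma}$. The content of the Sphere Theorem is to show that once $R$ is connective and $\aA$ is generated by spheres, this apparent extra non-connective content is in fact created by cofiber sequences of actual objects of $\aA$ and is therefore recovered by the $\Sdot$ delooping; this is the $THH$ analogue of the familiar agreement between connective and non-connective $K$-theory of a connective ring spectrum.
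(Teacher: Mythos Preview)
You have correctly identified that Theorem~\ref{thmspecsphere} is a specialization of the Sphere Theorem via Corollaries~\ref{cor:ekmm} and~\ref{cor:simpring}, and your first step---that $\aA^{\Gamma}\to\aA^{S}$ is an equivalence on the generating sphere objects because $R$ is connective---is exactly the role played by the connective class $Q$ in the paper.

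The gap is in your second step. Your claim that ``additivity reduces the comparison on all of $\aA$ to the comparison on the generators'' does not go through as stated. The Additivity Theorem~\ref{thmadditivity} and Corollary~\ref{coradditivity} concern cofiber sequences of exact \emph{functors}, not decompositions of \emph{objects}; they do not by themselves give a mechanism for reducing $WTHH^{\Gamma}(\aA)\to WTHH(\aA)$ to the subcategory of generators. The related claim that ``iterating $\Sdot$ deloops the connective cover'' and thereby forces an equivalence is also not right: both sides are already spectra, and a connective-cover map of spectra is not in general an equivalence. The crucial point you are missing is that Proposition~\ref{propfibercofiber} gives cofiber sequences of mapping spectra only for the $S$-enrichment, \emph{not} for the $\Gamma$-enrichment, so one cannot simply propagate the agreement on generators through cofiber attachments on the $\Gamma$ side.

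The paper's actual argument (Section following Theorem~\ref{thm:altsphere}) is substantially more delicate. It introduces the Waldhausen category of $Q$-CW complexes $CW_{Q}\aC$ and its truncations $CW_{Q}\aC_{[0,n]}$, shows that $THH(\Sdot(CW_{Q}\aC)^{\Gamma})\simeq THH(\Sdot\aC^{\Gamma})$, reduces to the connective truncations, and then proves the key inductive Lemma~\ref{lem:spheremain} comparing $CW_{Q}\aC_{[0,n-1]}$ to $CW_{Q}\aC_{[0,n]}$. That lemma in turn rests on Lemma~\ref{lem:spherefinal}, whose proof requires the Dennis--Waldhausen Morita machinery and $THH$ with coefficients from \cite[\S5--6]{BlumbergMandellTHHLoc}. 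Additivity does appear in the argument (for instance in the suspension-cofinality step of Proposition~\ref{prop:connreplace} and in the analysis of the extension categories $\aE\Gamma^{n}\subdot$), but it is one ingredient among several rather than the reduction principle you describe.
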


Thus, for a ring, simplicial ring, or connective ring spectrum, we
have weak equivalences of cyclotomic spectra
\[
WTHH^{\Gamma}(\aA)\overto{\sim} 
WTHH(\aA)\overfrom{\sim} 
THH(\aA) \overfrom{\sim} THH(R),
\]
where the last weak equivalence is a special case of
Theorem~\ref{thmthick}.



%
%

\chapter{$K$-theory theorems in $THH$ and $TC$}
\label{tw-ktt}

The purpose of this chapter is to take the standard theorems of
$K$-theory as proved in \cite{WaldhausenKT} and describe versions of
these for $THH$.  Because weak equivalences in $THH$ and cofiber
sequences in $THH$ automatically produce weak equivalences and cofiber
sequences for $TC$, we typically make statements just for $THH$.

Section~\ref{secadd} reviews the \noindexterm{Additivity Theorem}
(q.v.~\cite[1.3.2]{WaldhausenKT}) for 
$THH$, based on work of McCarthy \cite{McCarthyAdditivity}.
Section~\ref{seccofiber} proves a $THH$ version of 
\cite[1.5.5]{WaldhausenKT}, which constructs a cofibration sequence of
$THH$ spectra associated to a map.  Section~\ref{sec:loc} proves a
$THH$ version of Waldhausen's \noindexterm{Fibration
Theorem}~\cite[1.6.4]{WaldhausenKT}.  Section~\ref{secspheretheorem}
proves the \noindexterm{Sphere Theorem}
(cf.~\cite[\S1.7]{WaldhausenKT}), which in certain cases identifies the
$THH$ of simplicially tensored Waldhausen category as the $THH$ of a
subcategory of a subcategory of generators.  Section~\ref{secpfsphere}
proves the Sphere Theorem.

\section{The Additivity Theorem}\label{secadd}

In this section, we present the \textin{Additivity Theorem} for the $THH$ of
Waldhausen categories. 
The modern viewpoint, implicit in \cite{WaldhausenKT} but first
written explicitly by Staffeldt \cite{Staffeldt}, holds the Additivity
Theorem as the fundamental property of $K$-theory.  Following this
perspective, we deduce the remaining $K$-theoretic properties of $THH$
from the Additivity Theorem in the next three sections.

To state the Additivity Theorem, we use the following
notation. For $\aC$ a simplicially enriched Waldhausen category, let 
$\aE(\aC)=\Sdot[2]\aC$ be the simplicially enriched Waldhausen
category with objects the cofiber sequences $x \to y \to z$ (in
$\aC_{0})$.  We have enriched exact functors $\alpha,\beta,\gamma$
from $\aE(\aC)$ to $\aC$ defined by
\[
\alpha(x\sto y\sto z)=x,
\qquad
\beta(x\sto y\sto z)=y,
\qquad
\gamma(x\sto y\sto z)=z.
\]

\begin{thm}[Additivity Theorem]\label{thmadditivity}
\index{Additivity Theorem}
For a simplicially enriched Waldhausen category $\aC$, the
enriched exact  functors $\alpha$ and $\gamma$ induce 
a weak equivalence of cyclotomic spectra
\[
WTHH^\Gamma(\aE(\aC)) \to WTHH^\Gamma(\aC) \times  WTHH^\Gamma(\aC)
\htp WTHH^\Gamma(\aC) \vee  WTHH^\Gamma(\aC).
\]
\end{thm}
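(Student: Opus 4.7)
The plan follows the standard pattern for additivity theorems (Waldhausen, McCarthy, Dundas--McCarthy) adapted to the connective spectral enrichment $\aC^{\Gamma}$: produce an exact section of $(\alpha,\gamma)$, and use the outer $\Sdot$-direction to homotope the other composite to the identity.

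First, by Proposition~\ref{propsemistableG}, the mapping spectra of $\aC^{\Gamma}$ and $\aE(\aC)^{\Gamma}$ are semistable and connective, so the spectra $WTHH^{\Gamma}(\aC)$ and $WTHH^{\Gamma}(\aE(\aC))$ are connective. Hence the canonical map $WTHH^{\Gamma}(\aC)\vee WTHH^{\Gamma}(\aC) \to WTHH^{\Gamma}(\aC)\times WTHH^{\Gamma}(\aC)$ is a stable equivalence, and it suffices to prove $(\alpha,\gamma)$ induces a weak equivalence onto the product. Since $THH$ of a product of spectral categories is the product of $THH$'s (and similarly for $\Sdot$ and geometric realization, using Proposition~\ref{prop:realok} for properness), the target can be identified with $WTHH^{\Gamma}(\aC\times \aC)$.

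Second, I construct an enriched exact section. Using the zero object and the pushouts of axiom~(ii) in Definition~\ref{defsimpwaldcat}, the category $\aC$ admits binary coproducts $x\vee z$, and the enriched exact functor $s\colon \aC\times \aC \to \aE(\aC)$ sending $(x,z)$ to the split cofiber sequence $x\hookrightarrow x\vee z\twoheadrightarrow z$ satisfies $(\alpha,\gamma)\circ s=\id$ on the nose. Thus $(\alpha,\gamma)$ admits a section on $WTHH^{\Gamma}$.

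The core of the proof is to show that the other composite $s\circ(\alpha,\gamma)\colon (x\to y\to z) \mapsto (x\to x\vee z\to z)$ induces the identity on $WTHH^{\Gamma}\aE(\aC)$. In general there is no natural weak equivalence of functors $\aE(\aC)\to\aE(\aC)$ from $s\circ(\alpha,\gamma)$ to $\id$, since $x\vee z$ need not be weakly equivalent to $y$. Instead, I exploit the outer $\Sdot$-direction in $\Sdot\aE(\aC)=\Sdot\Sdot[2]\aC$: I construct an enriched exact functor $\aE(\aC)\to \Sdot[3]\aC$ whose consecutive subquotients in the outer direction realize both the original sequence $(x\to y\to z)$ and the split sequence $(x\to x\vee z\to z)$ as face diagrams of a single three-step filtration. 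After applying $\Sdot$ in the remaining direction and passing to $WTHH^{\Gamma}$, this witnesses the two maps $s\circ(\alpha,\gamma)$ and $\id$ as being connected by a zigzag of natural weak equivalences of simplicial spectral functors, and Proposition~\ref{propnat} then identifies them in the stable category. Refining the homotopy to be compatible with the cyclotomic structure is automatic from the functoriality of $THH$, $TR$, $TC$ in spectral functors.

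The main obstacle will be the third step: producing the interpolating functor as a genuinely enriched (and simplicial, spectral) natural transformation, and then verifying that the resulting comparison descends, through geometric realization in the $\Sdot$-direction, to an equivalence of spectra — this uses the properness and Hurewicz-cofibration hypotheses ensured by Proposition~\ref{prop:realok}. Once additivity is proved in the connective form, the corresponding relative ($WTHH^{\Gamma}(\aC|v)$) and non-connective ($WTHH$, for enhanced $\aA$) versions follow by transferring along the natural spectral functors $\aC^{\Gamma}\to \aA^{S}$ and $\Sdot\to v^{M}\subdot\Sdot$ introduced in Section~\ref{secdefthh}.
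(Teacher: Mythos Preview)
Your first two steps are fine, and you correctly identify the crux: there is no natural weak equivalence between $\id$ and $s\circ(\alpha,\gamma)$ on $\aE(\aC)$, so Proposition~\ref{propnat} does not apply directly. The problem is that your proposed workaround via an enriched exact functor $\aE(\aC)\to\Sdot[3]\aC$ does not work.

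Concretely, suppose such a functor sends $(x\to y\to z)$ to a filtration $a_{0,1}\to a_{0,2}\to a_{0,3}$ in $\Sdot[3]\aC$ whose face $d_{3}$ is $(x\to y\to z)$ and some other face, say $d_{2}$, is $(x\to x\vee z\to z)$. Then $a_{0,1}=x$, $a_{0,2}=y$, $a_{0,3}=x\vee z$, $a_{1,2}=z$, $a_{1,3}=z$, and hence $a_{2,3}=*$. But then $y=a_{0,2}\to a_{0,3}=x\vee z$ must be a cofibration with trivial quotient, i.e.\ an isomorphism, which is exactly the hypothesis you do not have. No rearrangement of faces avoids this. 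More fundamentally, even if some interpolating object in $\Sdot[3]$ existed, concluding that its faces induce the same map on $WTHH^{\Gamma}$ is itself an instance of additivity (this is precisely how Corollary~\ref{coradditivity} and the argument following it use Theorem~\ref{thmadditivity}, not the other way around). Your argument is therefore either impossible to carry out or circular.

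The paper's proof is different in kind: it invokes McCarthy's argument, which constructs an explicit \emph{simplicial homotopy} between $s\circ(\alpha,\gamma)$ and $\id$ at the level of the simplicial spectral categories $\Sdot\aE(\aC)^{\Gamma}$. The point is that although there is no natural transformation between these endofunctors of $\aE(\aC)$, after passing to $\Sdot$ one can write down, for each $n$, compatible functors $\Sdot[n]\aE(\aC)\to\Sdot[n+1]\aE(\aC)$ assembling into a simplicial homotopy of simplicial spectral categories. The only property of $THH$ used is that it sends such simplicial homotopy equivalences to weak equivalences. This is a genuinely combinatorial construction in the $\Sdot$-direction, not a consequence of having a single $\Sdot[3]$ object, and it is the missing ingredient in your outline.
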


McCarthy's proof of the Additivity Theorem for
$K$-theory~\cite{McCarthyAdditivity} provides a very general argument
for showing that the map $(\alpha ,\gamma)\colon \Sdot\aE(\aC)\to
\Sdot\aC\times \Sdot\aC$ induces a homotopy equivalence in various
contexts.  The elaboration in~\cite[\S3.4-3.5]{McCarthyThesis} to
prove the Additivity Theorem for cyclic homology of $k$-linear
categories carries over essentially word for word to prove the
Additivity Theorem above, just replacing ``$CN$'' with ``$THH$'' and
``$k$-linear'' with ``spectral''.  (The only property of $THH$ or $CN$
needed is that it takes simplicial homotopy equivalences of simplicial
(spectrally or $k$-linearly) enriched categories to weak equivalences
of spectra or simplicial sets.)

The following result is both a generalization and a corollary of the
Additivity Theorem above.  Recall that a sequence of natural
transformations of exact functors $f\to g\to h$ from $\aC_{0}$ to
$\aD_{0}$ forms a \term{cofiber sequence of exact functors}, when
(taken together) they define an exact functor $\aC$ to $\aE(\aD)$.

\begin{cor}\label{coradditivity}
Let $\aC$ and $\aD$ be simplicially enriched Waldhausen categories,
and let $f\to g\to h$ be a sequence of enriched exact functors $\aC\to
\aD$ that forms a cofiber sequence of exact functors.
Then the maps
\[
WTHH^{\Gamma}(g)
\qquad \text{and}\qquad 
WTHH^{\Gamma}(f) \vee WTHH^{\Gamma}(h).
\]
from $WTHH^{\Gamma}(\aC)$ to $WTHH^{\Gamma}(\aD)$
agree in the homotopy category of cyclotomic spectra.
\end{cor}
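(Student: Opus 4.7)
The plan is to deduce this from the Additivity Theorem (Theorem~\ref{thmadditivity}) by a standard stable-categorical manipulation. By definition, a cofiber sequence of exact functors $f\to g\to h$ is an enriched exact functor $\Phi\colon \aC\to \aE(\aD)$ with $\alpha\Phi=f$, $\beta\Phi=g$, and $\gamma\Phi=h$. By functoriality of $WTHH^{\Gamma}$, it therefore suffices to establish the identity
\[
WTHH^{\Gamma}(\beta)\htp WTHH^{\Gamma}(\alpha)+WTHH^{\Gamma}(\gamma)
\]
as maps $WTHH^{\Gamma}(\aE(\aD))\to WTHH^{\Gamma}(\aD)$ in the (additive) stable category of cyclotomic spectra; the corollary then follows by precomposing with $WTHH^{\Gamma}(\Phi)$.

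To prove the identity, I would exhibit the two canonical sections of the Additivity equivalence via enriched exact functors $\iota_1,\iota_2\colon \aD\to \aE(\aD)$ defined by
\[
\iota_1(d)=(d\overto{\id}d\to *), \qquad \iota_2(d)=(*\to d\overto{\id}d).
\]
Directly from the definitions, $\alpha\iota_1=\beta\iota_1=\id_{\aD}$ with $\gamma\iota_1=0$, while $\gamma\iota_2=\beta\iota_2=\id_{\aD}$ with $\alpha\iota_2=0$, where $0$ denotes the constant functor at the zero object (which induces the null map on $WTHH^{\Gamma}$ since the associated $\Gamma$-spaces are trivial). Consequently, under the Additivity equivalence $(WTHH^{\Gamma}(\alpha),WTHH^{\Gamma}(\gamma))\htp WTHH^{\Gamma}(\aD)\vee WTHH^{\Gamma}(\aD)$ of Theorem~\ref{thmadditivity}, the maps $WTHH^{\Gamma}(\iota_1)$ and $WTHH^{\Gamma}(\iota_2)$ realize the inclusions of the first and second wedge summands respectively; equivalently, they assemble into a two-sided homotopy inverse of $(WTHH^{\Gamma}(\alpha),WTHH^{\Gamma}(\gamma))$.

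Finally, $WTHH^{\Gamma}(\beta)$ precomposed with $WTHH^{\Gamma}(\iota_1)$ and with $WTHH^{\Gamma}(\iota_2)$ equals $\id$ in both cases, so the composition of $WTHH^{\Gamma}(\beta)$ with the inverse of the Additivity equivalence is the fold map $\nabla\colon WTHH^{\Gamma}(\aD)\vee WTHH^{\Gamma}(\aD)\to WTHH^{\Gamma}(\aD)$. Composing on the right with $(WTHH^{\Gamma}(\alpha),WTHH^{\Gamma}(\gamma))$ yields
\[
WTHH^{\Gamma}(\beta)\htp \nabla\circ(WTHH^{\Gamma}(\alpha),WTHH^{\Gamma}(\gamma)) = WTHH^{\Gamma}(\alpha)+WTHH^{\Gamma}(\gamma),
\]
as required. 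The only point demanding care is the identification of $WTHH^{\Gamma}(\iota_i)$ with the wedge summand inclusions dual to $(\alpha,\gamma)$, but this reduces to the vanishing of $WTHH^{\Gamma}$ on the constant zero functor together with the elementary fact that a map out of a binary wedge in an additive category is determined by its two restrictions; no input beyond Theorem~\ref{thmadditivity} is needed.
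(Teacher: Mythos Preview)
Your proof is correct and follows essentially the same approach as the paper. The paper packages your two sections $\iota_1,\iota_2$ into a single enriched exact functor $\aD\times\aD\to\aE(\aD)$, $(a,b)\mapsto(a\to a\vee b\to b)$, and observes that the composite $WTHH^{\Gamma}(\aD)\vee WTHH^{\Gamma}(\aD)\to WTHH^{\Gamma}(\aD\times\aD)\to WTHH^{\Gamma}(\aE(\aD))$ splits the Additivity equivalence; restricting that functor to the two axes $\aD\times\{*\}$ and $\{*\}\times\aD$ recovers exactly your $\iota_1$ and $\iota_2$, and the remaining computation of $\beta$ on the section is identical.
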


\begin{proof}
The functor $\aD\times \aD\to \aE(\aD)$ sending $(a,b)$ to $a \to
a\vee b\to b$ is an enriched exact functor, and the composite map
\[
WTHH^{\Gamma}(\aD)\vee WTHH^{\Gamma}(\aD)
\to WTHH^{\Gamma}(\aD\times \aD)
\to WTHH^{\Gamma}(\aE(\aD))
\]
splits the zigzag of weak equivalences in the Additivity Theorem and
is therefore a weak equivalence of cyclotomic spectra.  It follows
that $\beta$ and $\alpha \vee \gamma$ induce the same map $\aE(\aD)\to
\aD$ in the homotopy category of cyclotomic spectra.  Precomposing
with the map $\aC\to \aE(\aD)$ defined by $f\to g\to h$ proves the
corollary. 
\end{proof}

This corollary provides the key tool for even more general additivity
statements.  For example, the map
$\Sdot[n]\aC\to  \aC\times \Sdot[n-1]\aC$
defined by sending $X=(x_{i,j})$ to $(x_{0,1},d_{0}X)$
induces a weak equivalence on $WTHH^{\Gamma}$.  To see this, consider
the map $\aC\times \Sdot[n-1]\aC\to \Sdot[n]\aC$ sending $(x,Y)$ to
$Z=(z_{i,j})$ with 
\[
z_{i,j}=\begin{cases}
x\vee y_{0,j-1}&i=0\\
y_{i-1,j-1}&i>0.
\end{cases}
\]
The composite map on $\aC\times \Sdot[n-1]\aC$ is the identity, and
the composite map on $\Sdot[n]\aC$ is $f\vee h$ for exact functors $f$
and $h$ that fit in a cofiber sequence of exact functors $f\to g\to h$
with $g$ the identity.  We will use this argument many times in what
follows. 

When $\aA$ is an enhanced simplicially enriched Waldhausen category, so is
$\aE(\aA)$ and the functors $\alpha,\beta,\gamma$ are enhanced exact.
We have precise analogues of the previous results (with the same
proof).  In fact, we have the following stronger version of the
Additivity Theorem for the non-connective enrichment
(cf.~\cite[10.8]{BlumbergMandellTHHLoc}).

\begin{thm}\label{thmadditivitynoncon}
Let $\aA$ be an enhanced simplicially enriched Waldhausen category.
The enhanced exact functors $\alpha$ and $\gamma$ induce a
weak equivalence of cyclotomic spectra
\[
THH(\aE^{S}(\aA)) \to
THH(\aA^{S})\times THH(\aA^{S}) \htp
THH(\aA^{S}) \vee THH(\aA^{S}).
\]
\end{thm}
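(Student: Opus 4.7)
The strategy, as hinted in the paragraph preceding the theorem statement, is to deduce this strong non-connective additivity statement directly from the Thomason--Trobaugh style localization theorem for $THH$ of (sufficiently pretriangulated) spectral categories \cite[6.1, B.8]{BlumbergMandellTHHLoc}, bypassing any outer $\Sdot$ construction. The key geometric input is that $\aE(\aA)$ admits two canonical enhanced exact sections
\[
i\colon \aA \to \aE(\aA), \quad x\mapsto (x\overto{=}x\to *),
\qquad
j\colon \aA \to \aE(\aA), \quad x\mapsto (*\to x \overto{=} x),
\]
satisfying $\alpha\circ i = \id$, $\gamma\circ i = 0$, $\alpha\circ j = 0$, and $\gamma\circ j = \id$. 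In particular $i$ identifies $\aA$ with the full subcategory of cofiber sequences whose cofiber vanishes, while $j$ provides a section of $\gamma$.

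I would next verify that the resulting sequence of non-connective spectral categories
\[
\aA^{S} \overto{i^{S}} \aE^{S}(\aA) \overto{\gamma^{S}} \aA^{S}
\]
fits the hypotheses of \cite[6.1, B.8]{BlumbergMandellTHHLoc}: that $i^{S}$ embeds $\aA^{S}$ as the ``kernel'' full spectral subcategory consisting of those objects $x\to y\to z$ with $z\htp *$, and that $\gamma^{S}$ computes the Verdier/Drinfel'd style quotient of $\aE^{S}(\aA)$ by this subcategory in the stable category of spectral categories. The crucial technical input here is Proposition~\ref{propfibercofiber}, which supplies the pretriangulated behavior of the mapping spectra of $\aA^{S}$ and $\aE^{S}(\aA)$ that underpins the Thomason--Trobaugh localization machinery.

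Granting those hypotheses, \cite[6.1]{BlumbergMandellTHHLoc} yields a cofiber sequence of cyclotomic spectra
\[
THH(\aA^{S}) \overto{i^{S}_{*}} THH(\aE^{S}(\aA)) \overto{\gamma^{S}_{*}} THH(\aA^{S}),
\]
and $j^{S}$ induces a section of $\gamma^{S}_{*}$. The resulting splitting combined with $i^{S}_{*}$ produces a weak equivalence $THH(\aA^{S})\vee THH(\aA^{S}) \htp THH(\aE^{S}(\aA))$ whose composition with $(\alpha^{S}_{*},\gamma^{S}_{*})$ is the identity, by the formulas above; equivalently, $(\alpha^{S}_{*}, \gamma^{S}_{*})$ is itself the asserted weak equivalence (after the standard wedge-to-product identification). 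The central obstacle is the setup in the previous paragraph: translating the description of $\aE^{S}(\aA)$ as cofiber sequences into a genuine thick-subcategory/Karoubi-filtration picture at the level of the homotopy categories of mapping spectra, and checking the required pretriangulation conditions so that \cite[6.1, B.8]{BlumbergMandellTHHLoc} applies. Once that setup step is dispatched, the splitting argument is entirely formal.
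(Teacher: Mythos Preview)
Your proposal is correct and follows essentially the same approach as the paper: the paper does not give a detailed proof but simply records the theorem ``as a consequence of the analogue of the Thomason--Trobaugh localization theorem for $THH$ \cite[6.1]{BlumbergMandellTHHLoc} (q.v.\ op.\ cit.~B.8),'' and your outline (embed $\aA^{S}$ via $i$, quotient via $\gamma$, invoke the localization cofiber sequence, split it with $j$) is exactly the natural way to unpack that citation. Your identification of the pretriangulation/thick-subcategory verification as the central obstacle is apt; this is precisely the content hidden behind the appeal to Proposition~\ref{propfibercofiber} and the cited appendix result.
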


\begin{proof}
By Theorems~\ref{thmtriangenv} and~\ref{thmthick}, it suffices to
consider the case when $\aA^{S}$ is pretriangulated.  Then the functor
$a\mapsto (a\to a \to *)$ embeds $\aA^{S}$ as a triangulated
subcategory of $\aE^{S}(\aA)$ and the functor $a\mapsto (*\to a\to a)$
induces an equivalence of $\pi_{0}\aA^{S}$ with the triangulated
quotient $\pi_{0}\aE^{S}(\aA)/\pi_{0}\aA^{S}$.  The statement now
follows from Theorem~\ref{thmgenone}.
\end{proof}

As a consequence of the Additivity Theorems~\ref{thmadditivity}
and~\ref{thmadditivitynoncon}, we can now prove Lemma~\ref{lemadd} and
Theorem~\ref{thmnoSnoncon}. 

\begin{proof}[Proof of Lemma~\ref{lemadd} and
Theorem~\ref{thmnoSnoncon}]
We will treat Lemma~\ref{lemadd} in detail;
Theorem~\ref{thmnoSnoncon} follows from the same argument using $\aA$
in place of $\Sdot\aC$ and Theorem~\ref{thmadditivitynoncon} in place
of Theorem~\ref{thmadditivity}.  To prove Lemma~\ref{lemadd}, it
suffices to show that the map $\Sigma WTHH^{\Gamma}(\aC)\to
WTHH^{\Gamma}(\Sdot \aC)$ is a weak equivalence.  This map is induced by a
simplicial map 
\[
\myop\bigvee_{n} WTHH^{\Gamma}(\aC)\to WTHH^{\Gamma}(\Sdot[n]\aC),
\]
which is a levelwise weak equivalence by the Additivity Theorem and
the argument following Corollary~\ref{coradditivity}. 
\end{proof}

\section{The Cofiber Theorem}\label{seccofiber}

This section is the first of three 
that apply the Additivity Theorem to prove standard
$K$-theory theorems in $THH$ and $TC$.  This section provides a
general cofibration sequence for $THH$ (and $TC$) associated to a map of
Waldhausen categories by identifying the cofiber term as a version of
$THH$ (cf.~\cite[\S1.5]{WaldhausenKT}).  We call this 
theorem the ``Cofiber Theorem''.  

We begin with the construction of the cofiber term.  For $f\colon
\aC\to \aD$ an enriched exact functor, we define a simplicially
enriched Waldhausen category $\Sdot[n] f$ as follows.  An object
consists of an object $Y=(y_{i,j})$ of $\Sdot[n]\aC$ together with an
object $X=(x_{i,j})$ of $\Sdot[n+1]\aD$ such that $d_{0}X=f(Y)$, that
is, $x_{i+1,j+1}=f(y_{i,j})$, with the structure maps for this
subdiagram in $X$ identical with $f(Y)$.  For objects $(X,Y),(X',Y')$,
the simplicial set of maps consists of the simplicial set of natural
transformations.  We make this a Waldhausen category by declaring a
map $(X,Y)\to (X',Y')$ to be a cofibration (resp., weak equivalence)
when the restrictions $X'\to X$ (in $\Sdot[n+1]\aD$) and $Y'\to Y$ (in
$\Sdot[n]\aC$) are both cofibrations (resp., weak equivalences).  This
assembles into a simplicial object in the category of simplicially
enriched Waldhausen categories using the usual face and degeneracy
maps on $\Sdot[n] \aC$ and the last $n+1$ face and degeneracy maps on
$\Sdot[n+1]\aD$.

\begin{defn}\label{defrelthh}
\index{THH (construction)@$THH$ (construction)}
\index{WTHH (construction)@$WTHH$ (construction)}
For $f\colon \aC\to \aD$ an enriched exact functor, define
\[
WTHH^{\Gamma}(f)=|WTHH^{\Gamma}(\Sdot f)|.
\]
\end{defn}

We note that when $f\colon \aA\to \aB$ is an enhanced exact functor
between enhanced simplicially enriched Waldhausen categories, then
$\Sdot[n]f$ is also an enhanced simplicially enriched Waldhausen
category and $\Sdot f$ is a simplicial object in enhanced simplicially
enriched Waldhausen categories.  We write $WTHH(f)=|WTHH(\Sdot f)|$.

To put this construction in perspective, we have an alternative
description of $\Sdot f$ as a pullback.  For any simplicial object
$Z\subdot$, we can form the ``path'' object $PZ\subdot$ precomposing
with the shift operation $[n] \mapsto [n+1]$ in the category of
standard simplices (or finite ordered sets).  In this notation, we
have a pullback square
\[
\xymatrix{%
\Sdot f\ar[r]^{X}\ar[d]_{Y}&P\Sdot \aD\ar[d]^{d_{0}}\\
\Sdot \aC\ar[r]_{f}&\Sdot \aD
}
\]
in the category of simplicial simplicially enriched categories.  The
usual extra degeneracy argument produces a simplicial null homotopy on
$P\Sdot \aC$, and applying $WTHH^{\Gamma}$ and (when appropriate)
$WTHH$, we get  commutative  squares of cyclotomic spectra 
\[
\xymatrix@C-1pc{%
WTHH^{\Gamma}(f)\ar[r]\ar[d]&|WTHH^{\Gamma}(P\Sdot \aD)|\ar[d]
&WTHH(f)\ar[r]\ar[d]&|WTHH(P\Sdot \aB)|\ar[d]\\
|WTHH^{\Gamma}(\Sdot\aC)|\ar[r]&|WTHH^{\Gamma}(\Sdot\aD)|
&|WTHH (\Sdot\aA)|\ar[r]&|WTHH(\Sdot\aB)|
}
\]
where the top right entry comes with a canonical null homotopy through
cyclotomic maps.    We therefore get a map of cyclotomic spectra from
$WTHH^{\Gamma}(f)$ to the homotopy fiber of the map 
$|WTHH^{\Gamma} (\Sdot\aC)|\to |WTHH^{\Gamma} (\Sdot\aD)|$, which is
equivalent to the 
homotopy cofiber of the map $WTHH^{\Gamma}\aC\to WTHH^{\Gamma}\aD$.
Likewise in the enhanced exact context, we get a map of cyclotomic
spectra from $WTHH(f)$ to the homotopy cofiber of the map
$WTHH\aA\to WTHH \aB$. The Cofiber Theorem asserts that these maps are
weak equivalences.

\begin{thm}[Cofiber Theorem]\label{thmcofiber}
For $f \colon \aC \to \aD$ an enriched exact functor, we have a
cofiber sequence of cyclotomic spectra
\[
WTHH^{\Gamma}(\aC)\to WTHH^{\Gamma}(\aD)\to WTHH^{\Gamma}(f)
\to |WTHH^{\Gamma}(\Sdot\aC)|.
\]
For $f\colon \aA\to \aB$ an enhanced exact functor, we have a cofiber
sequence of cyclotomic spectra
\[
WTHH(\aA)\to WTHH(\aB)\to WTHH(f)\to |WTHH(\Sdot\aA)|.
\]
\end{thm}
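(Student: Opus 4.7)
The plan is to verify in both the connective and non-connective cases that the map from $WTHH^{\Gamma}(f)$ (respectively $WTHH(f)$) to the homotopy cofiber of $WTHH^{\Gamma}(\aC)\to WTHH^{\Gamma}(\aD)$ (respectively $WTHH(\aA)\to WTHH(\aB)$), exhibited by the pullback square preceding the theorem statement, is a weak equivalence of cyclotomic spectra. Since the arguments are formally parallel, I will describe the connective case; the non-connective case goes through verbatim by invoking Theorem~\ref{thmadditivitynoncon} applied to each $\Sdot[n]\aA$, which is itself an enhanced simplicially enriched Waldhausen category by Proposition~\ref{prop:Sdotinherit}.

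First I would use the extra degeneracy null homotopy on $|WTHH^{\Gamma}(P\Sdot\aD)|$ together with the standard relation $|WTHH^{\Gamma}(\Sdot\aC)|\htp \Sigma WTHH^{\Gamma}(\aC)$ in the cyclotomic stable category to reduce the theorem to showing that the square of simplicial cyclotomic spectra obtained by applying $WTHH^{\Gamma}$ to the pullback square of simplicial simplicially enriched Waldhausen categories is a homotopy pullback after geometric realization. For this, it suffices to show that the square is a levelwise homotopy pullback of cyclotomic spectra and that passage to realization preserves this homotopy pullback structure.

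The levelwise check is an application of the Additivity Theorem~\ref{thmadditivity}. By the argument following Corollary~\ref{coradditivity}, the enriched exact functor
\[
\Sdot[n+1]\aD\to \aD\times \Sdot[n]\aD, \qquad X\mapsto(x_{0,1},d_{0}X),
\]
induces a weak equivalence $WTHH^{\Gamma}(\Sdot[n+1]\aD)\htp WTHH^{\Gamma}(\aD)\vee WTHH^{\Gamma}(\Sdot[n]\aD)$, and the analogous enriched exact functor
\[
\Sdot[n]f\to \aD\times \Sdot[n]\aC,\qquad (X,Y)\mapsto(x_{0,1},Y),
\]
induces $WTHH^{\Gamma}(\Sdot[n]f)\htp WTHH^{\Gamma}(\aD)\vee WTHH^{\Gamma}(\Sdot[n]\aC)$. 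Under these splittings the levelwise square decomposes as the direct sum of the identity square on $WTHH^{\Gamma}(\aD)$ with the square obtained by applying $WTHH^{\Gamma}$ to $f\colon \Sdot[n]\aC\to \Sdot[n]\aD$ at both source and target, and is therefore manifestly a homotopy pullback.

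I expect the principal technical obstacle to be the assembly step: verifying that the homotopy pullback of levelwise homotopy pullbacks of simplicial cyclotomic spectra persists after geometric realization. This requires spacewise properness for each of the four simplicial spectral categories involved and care in handling the interaction of $|THH(-)|$ and $\Omega$ with the homotopy pullback; Proposition~\ref{prop:realok} supplies the needed properness input, and the remaining argument is standard cofibration bookkeeping for simplicial cyclotomic spectra as in \cite{BlumbergMandellTHHLoc}. The non-connective case proceeds by the identical argument, replacing $\aC^{\Gamma}$ with $\aA^{S}$ throughout, using Theorem~\ref{thmadditivitynoncon} in place of Theorem~\ref{thmadditivity}, and noting that the splitting of $\Sdot[n]f$ and of $\Sdot[n+1]\aB$ via the $x_{0,1}$ projection is again an enhanced exact functor.
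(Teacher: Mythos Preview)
Your approach is correct and very close to the paper's, but packaged differently. The paper avoids your flagged ``assembly step'' by building an explicit \emph{simplicial} map rather than arguing levelwise. Concretely, for each $n$ the paper writes down the enriched exact functor $\aD\times\aC^{n}\to\Sdot[n]f$ sending $(b,a_{1},\dotsc,a_{n})$ to the object $(X,Y)$ with $y_{i,j}=a_{i+1}\vee\dotsb\vee a_{j}$ and $x_{0,j}=b\vee f(y_{0,j-1})$; this is exactly the section $\iota$ underlying your additivity splitting. The paper then observes that as $n$ varies these assemble into a simplicial map from the standard simplicial model of the mapping cone $WTHH^{\Gamma}(\aD)\cup_{WTHH^{\Gamma}(\aC)}(WTHH^{\Gamma}(\aC)\sma\Delta[1])$ to $WTHH^{\Gamma}(\Sdot f)$, and the additivity argument shows it is a levelwise weak equivalence. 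A levelwise weak equivalence of (spacewise proper) simplicial spectra realizes to a weak equivalence directly, so no separate ``realization preserves homotopy pullbacks'' step is needed.

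Your route works too, but two points deserve more care. First, your phrase ``decomposes as the direct sum of the identity square on $WTHH^{\Gamma}(\aD)$ with~\ldots'' is imprecise: only the top row of the square splits, not the bottom. What you actually need is that the projections $\pi_{1}\colon\Sdot[n]f\to\aD\times\Sdot[n]\aC$ and $\pi_{2}\colon\Sdot[n+1]\aD\to\aD\times\Sdot[n]\aD$ fit into a strictly commuting ladder with the top map of the square on one side and $\id\times f$ on the other (they do, since $d_{0}X=f(Y)$), after which additivity identifies the square in the homotopy category with the evident projection square $A\vee C_{n}\to A\vee D_{n}$ over $C_{n}\to D_{n}$. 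Second, the assembly step you flag is genuinely needed in your formulation and is standard (in spectra, levelwise homotopy cocartesian squares realize to homotopy cocartesian squares given properness), but the paper's explicit simplicial map sidesteps it entirely.
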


\begin{proof}(cf.~\cite[1.5.5]{WaldhausenKT})
The argument for the connective and non-connective enrichments are
identical; we treat the connective case in detail.
Consider the map 
\[
WTHH^{\Gamma}(\aD)\vee \myop\bigvee_{n} WTHH^{\Gamma}(\aC)
\to WTHH^{\Gamma}(\aD\times \Sdot[n]\aC)
\to WTHH^{\Gamma}(\Sdot[n]f)
\]
induced by sending $b, a_{1},\dotsc,a_{n}$ to $(b,Y)$ and then $(X,Y)$
with  $Y=(y_{i.j})$ for
\[
y_{i,j}=a_{i+1}\vee\dotsb \vee a_{j}
\]
and $X=(x_{i,j})$ for
\[
x_{i,j}=\begin{cases}
b\vee f(y_{0,j-1})&i=0\\
f(y_{i-1,j-1})&i>0
\end{cases}
\]
with the canonical maps induced by inclusions and quotients of
summands.  Applying the argument following
Corollary~\ref{coradditivity}, we see that this 
map is a weak equivalence.  Letting $n$ vary, these assemble into a
simplicial map where we regard the domain as the simplicial cyclotomic
spectrum 
\[
WTHH^{\Gamma}(\aD)\cup_{WTHH^{\Gamma}(\aC)}
WTHH^{\Gamma}(\aC)\sma \Delta[1].
\]
On geometric realization, this induces a map
from the homotopy cofiber
\[
\cofiber{WTHH^{\Gamma}(\aD)}{WTHH^{\Gamma}(\aC)} =
WTHH^{\Gamma}(\aD)\cup_{WTHH^{\Gamma}(\aC)}
WTHH^{\Gamma}(\aC)\sma I
\]
to $ WTHH^{\Gamma}(f)$
that we see is a weak equivalence.  The composite map
\[
\cofiber{WTHH^{\Gamma}(\aD)}{WTHH^{\Gamma}(\aC)}
\to WTHH^{\Gamma}(f)
\to WTHH^{\Gamma}(\Sdot \aC)
\]
factors as the connecting map
$\cofiber{WTHH^{\Gamma}(\aD)}{WTHH^{\Gamma}(\aC)}
\to \Sigma WTHH^{\Gamma}(\aC)$
composed with the weak equivalence $\Sigma WTHH^{\Gamma}(\aC)\to
|WTHH^{\Gamma}(\Sdot \aC)|$.
\end{proof}

Using the alternate models $\wt{WTC}$ and $\wt{WTHH}$ of
Definition~\ref{deftildethh}, we get constructions
$\wt{WTC}^{\Gamma}(f)$ and 
$\wt{WTHH}^{\Gamma}(f)$ that admit a cyclotomic trace from $K$-theory.
Because on objects, the map constructed in the proof of
Theorem~\ref{thmcofiber} agrees with the corresponding map in
cofiber sequence on $K$-theory, we get the following theorem as an
immediate consequence.

\begin{thm}
For $f\colon \aC\to \aD$ an enriched exact functor, the following
diagram commutes.
\[
\xymatrix@-1pc{%
K(\aC_{0})\ar[r]\ar[d]_{\trc}
&K(\aD_{0})\ar[r]\ar[d]_{\trc}
&K(f)\ar[r]\ar[d]_{\trc}
&K(\Sdot\aC_{0})\ar[d]_{\trc}\\
\wt{WTC}^{\Gamma}(\aC)\ar[r]\ar[d]
&\relax\wt{WTC}^{\Gamma}(\aD)\ar[r]\ar[d]\ar[d]
&\relax\wt{WTC}^{\Gamma}(f)\ar[r]\ar[d]
&\relax\wt{WTC}^{\Gamma}(\Sdot\aC)\ar[d]\\
\relax\wt{WTHH}^{\Gamma}(\aC)\ar[r]
&\relax\wt{WTHH}^{\Gamma}(\aD)\ar[r]
&\relax\wt{WTHH}^{\Gamma}(f)\ar[r]
&\relax\wt{WTHH}^{\Gamma}(\Sdot\aC)
}
\]
\end{thm}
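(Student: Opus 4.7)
The plan is to deduce the commutativity of every square of the diagram from two structural observations: first, the cyclotomic trace is by definition induced from the identification of sets of objects
\[
\Ob(w\subdot\Sdot^{(n)}\aC_{0}) = \Ob(w^{M}\subdot\Sdot^{(n)}\aC^{\Gamma}),
\]
and is therefore natural in any construction that is built out of object-level data; second, every horizontal map appearing in the diagram --- whether in the $K$-theory row or in the $\wt{WTC}^{\Gamma}$ and $\wt{WTHH}^{\Gamma}$ rows --- is induced by exactly the same simplicial-enriched-exact-functor-level recipe applied to $\Sdot\aC$, $\Sdot\aD$, and $\Sdot f$. With these two facts in hand, commutativity is then verified square by square.

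First I would handle the two leftmost squares of each row, which commute by straightforward naturality of the cyclotomic trace under the enriched exact functors $f\colon\aC\to\aD$ and the inclusion $\aD \to \Sdot f$ given by $b\mapsto (b,*)$. Next, for the middle squares involving the comparison map from the homotopy cofiber to the relative term, I would trace through the construction in the proof of Theorem~\ref{thmcofiber}: that map is induced by an explicit enriched exact functor $\aD\times \Sdot[n]\aC \to \Sdot[n] f$ specified by the object-level formulas
\[
y_{i,j}=a_{i+1}\vee\dotsb\vee a_{j},\qquad
x_{i,j} = \begin{cases} b\vee f(y_{0,j-1})& i=0\\ f(y_{i-1,j-1}) & i>0.\end{cases}
\]
The analogous comparison map in Waldhausen's $K$-theory cofiber theorem (\cite[1.5.5]{WaldhausenKT}) is built from the \emph{same} object-level formulas, and since the trace is induced from the identity on objects, commutativity of this square follows at each simplicial level and hence after geometric realization.

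Finally, for the rightmost square involving the boundary map to $\wt{WTHH}^{\Gamma}(\Sdot\aC)$, I would use the pullback square defining $\Sdot f$ together with the extra-degeneracy simplicial null-homotopy on $P\Sdot\aD$; the identification of the cofiber with $\Sigma WTHH^{\Gamma}(\aC)$ (respectively, $\Sigma K(\aC_{0})$) uses only object-level data, so the trace again carries one to the other. The main obstacle, if there is any, is purely bookkeeping: verifying that the explicit formulas defining the comparison maps in the proof of Theorem~\ref{thmcofiber} really do coincide on objects with Waldhausen's formulas for the corresponding $K$-theory comparison --- and this is essentially by construction, since the proof of the Cofiber Theorem was modeled on Waldhausen's argument. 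With that identification made, the statement is a formal consequence of the naturality of the cyclotomic trace in object-level constructions.
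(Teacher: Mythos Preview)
Your proposal is correct and follows the same approach as the paper. The paper states the theorem as an immediate consequence of the single observation that the map constructed in the proof of Theorem~\ref{thmcofiber} agrees on objects with Waldhausen's corresponding $K$-theory map (combined with the definition of the trace as inclusion of objects); your write-up simply unpacks this observation square by square.
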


Returning to Theorem~\ref{thmcofiber}, we have the following corollary
that allows us to study the cofibers of exact functors in 
``$THH$-theoretic'' terms.

\begin{cor}\label{corcocart}
Let $f\colon \aA\to \aB$ and $g\colon \aC\to \aD$ be enriched
exact functors.  Then the commutative square of cyclotomic spectra on
the left is homotopy (co)cartesian.
\[
\xymatrix@C-1pc{%
WTHH^{\Gamma}(\aB)\ar[r]\ar[d]&WTHH^{\Gamma}(f)\ar[d]
&WTHH(\aB)\ar[r]\ar[d]&WTHH(f)\ar[d]\\
WTHH^{\Gamma}(\aC)\ar[r]&WTHH^{\Gamma}(g\circ f)
&WTHH(\aC)\ar[r]&WTHH(g\circ f)
}
\]
If $f$ and $g$ are enhanced exact then the commutative square of
cyclotomic spectra on the right is homotopy cartesian.
\end{cor}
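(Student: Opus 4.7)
The plan is to deduce both squares are homotopy cocartesian from the Cofiber Theorem (Theorem~\ref{thmcofiber}) by arranging the horizontal edges as parts of cofiber sequences with a common first term. To make $g\circ f$ typecheck, I read the hypothesis as $f\colon\aA\to\aB$ and $g\colon\aB\to\aC$ (the occurrence of ``$\aC\to\aD$'' in the statement appears to be a typographical error). Applying Theorem~\ref{thmcofiber} to $f$ and to the composite $g\circ f\colon \aA\to \aC$ then yields cofiber sequences of cyclotomic spectra
\[
WTHH^{\Gamma}(\aA) \to WTHH^{\Gamma}(\aB) \to WTHH^{\Gamma}(f) \to \Sigma WTHH^{\Gamma}(\aA)
\]
and
\[
WTHH^{\Gamma}(\aA) \to WTHH^{\Gamma}(\aC) \to WTHH^{\Gamma}(g\circ f) \to \Sigma WTHH^{\Gamma}(\aA),
\]
where I have used Lemma~\ref{lemadd} to identify $|WTHH^{\Gamma}(\Sdot\aA)|$ with $\Sigma WTHH^{\Gamma}(\aA)$ in the stable category.

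Next, the commutative square of enriched exact functors with $\id_{\aA}$ on the left edge, $g$ on the right edge, and $f$, $g\circ f$ on the horizontal edges produces, by the evident functoriality of the construction in Definition~\ref{defrelthh}, a simplicial map of simplicial simplicially enriched Waldhausen categories $\Sdot f\to \Sdot(g\circ f)$ that sends an object $(Y,X)\in \Sdot[n]f$ to $(Y,gX)\in \Sdot[n](g\circ f)$; this is well-defined precisely because $d_{0}(gX)=g(f(Y))=(g\circ f)(Y)$, and the cofibration/weak-equivalence conditions are preserved. After applying $WTHH^{\Gamma}$ and geometric realization, I obtain a morphism between the two cofiber sequences displayed above that restricts to the identity on the common leftmost term $WTHH^{\Gamma}(\aA)$ and to the map induced by $g$ on the middle column, thereby exhibiting the right square of the corollary as the right square in a morphism of cofiber sequences.

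Because the comparison map between leftmost terms of the two cofiber sequences is the identity on $WTHH^{\Gamma}(\aA)$, the induced comparison of horizontal cofibers in the claimed square is also the identity on $\Sigma WTHH^{\Gamma}(\aA)$; hence the square is homotopy cocartesian, and since we work in the stable (triangulated) category of cyclotomic spectra it is equivalently homotopy cartesian. The non-connective statement is proved by the identical argument, using the second cofiber sequence in Theorem~\ref{thmcofiber} with enhanced simplicially enriched Waldhausen categories and enhanced exact functors throughout. I do not anticipate any real obstacle: the only point to verify carefully is that the identification of the cofiber term as $WTHH^{\Gamma}(f)$ (resp.\ $WTHH(f)$) is natural with respect to commutative squares of enriched (resp.\ enhanced) exact functors, which is immediate from the explicit simplicial construction of the equivalence from the homotopy cofiber to $WTHH^{\Gamma}(f)$ given in the proof of Theorem~\ref{thmcofiber}.
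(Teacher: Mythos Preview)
Your argument is correct and is precisely the intended deduction: the paper states this as an immediate corollary of the Cofiber Theorem without giving an explicit proof, and the natural way to read it is exactly the map-of-cofiber-sequences argument you supply (including your observation that ``$g\colon\aC\to\aD$'' is a typo for ``$g\colon\aB\to\aC$''). The naturality of the identification in Theorem~\ref{thmcofiber} that you invoke at the end is indeed clear from the explicit simplicial construction there, so nothing further is needed.
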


In the special case when $\aC$ is a simplicially enriched Waldhausen
subcategory of $\aD$ 
and $f$ is the inclusion, $S\subdot f$ admits an equivalent but
smaller variant where we omit the choices of subquotients.

\begin{defn}\label{defF}
\index{Fdot(D,C)@$\Fdot(\aD,\aC)$}
We say that $\aC\subset \aD$ is a 
\term{simplicially enriched Waldhausen subcategory}%
\index{Waldhausen subcategory!simplicially enriched}
when $\aC\subset \aD$ is full as a
simplicially enriched category and $\aC_{0}$ is a Waldhausen
subcategory of $\aD_{0}$. In this case we define $\Fdot(\aD,\aC)$
to be the simplicially enriched Waldhausen subcategory of the nerve of
the cofibrations in $\aD$ whose quotients lie in $\aC$.
\end{defn}

Concretely, $\Fdot[n](\aD,\aC)$ has as objects the  composable sequences
of $n$ cofibrations
\[
\xymatrix@C-1pc{%
x_{0}\ar@{ >->}[r]&x_{1}\ar@{ >->}[r]&\dotsb\ar@{ >->}[r]&x_{n}
}
\]
such that $x_{i+1}/x_{i}$ is an object of $\aC$ for all $i$, with maps
the simplicial sets of natural transformations.  We have a forgetful
functor from $S\subdot (\aC\sto \aD)$ to $\Fdot(\aD,\aC)$ that
throws away the subquotients, i.e., sending $(X,Y)$ in
$\Sdot[n+1]\aD\times \Sdot[n]\aC$ to 
\[
\xymatrix@C-1pc{%
x_{0,1}\ar@{ >->}[r]&x_{0,2}\ar@{ >->}[r]&\dotsb\ar@{ >->}[r]&x_{0,n+1}
}
\]
in $\Fdot[n](\aD,\aC$), where $X=(x_{i,j})$.  At each simplicial level
this map is an equivalence of simplicial Waldhausen categories, and in
particular induces a DK-equivalence
\[
\Sdot[m]\Sdot[n](\aC\sto \aD) \to \Sdot[m]\Fdot[n](\aD,\aC).
\]
We therefore obtain the following observation, useful in combination
with Theorem~\ref{thmcofiber}.

\begin{prop}\label{propcofiber}
For $\aC\subset \aD$ a simplicially enriched Waldhausen subcategory,
the forgetful functor from $S\subdot(\aC\to\aD)$ to
$\Fdot(\aD,\aC)$ induces a weak equivalence of cyclotomic spectra
\[
WTHH^{\Gamma}(\aC\sto\aD)\to |WTHH^{\Gamma}(\Fdot(\aD,\aC))|.
\]
\end{prop}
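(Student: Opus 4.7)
The plan is to reduce Proposition~\ref{propcofiber} to the observation recorded in the paragraph preceding the statement: at each simplicial level $n$, the forgetful functor $\Sdot[n](\aC \to \aD) \to \Fdot[n](\aD, \aC)$ is an equivalence of simplicially enriched Waldhausen categories, and applying $\Sdot[m]$ levelwise yields a DK-equivalence
\[
\Sdot[m]\Sdot[n](\aC \to \aD) \longrightarrow \Sdot[m]\Fdot[n](\aD, \aC)
\]
for all $m, n$. This is the only genuinely new input, and once it is secured the remainder of the proof is formal.

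First I would unwind Definition~\ref{defrelthh} on both sides, so that the forgetful functor translates into a map of bisimplicial cyclotomic spectra
\[
THH\bigl(\Sdot[m]\Sdot[n](\aC \to \aD)^{\Gamma}\bigr) \longrightarrow THH\bigl(\Sdot[m]\Fdot[n](\aD, \aC)^{\Gamma}\bigr).
\]
Since the underlying functor at each bisimplicial level is essentially surjective and --- being determined on the bottom row $x_{0,*}$ --- an isomorphism on mapping simplicial sets, applying the $\Gamma$-enrichment of Definition~\ref{defgammaenrich} produces a DK-equivalence of connective spectral categories (cf.\ Proposition~\ref{propfunctG}). The fundamental property of $THH$ recalled at the start of Section~\ref{secdefthh}, namely that it carries DK-equivalences of spectral categories to weak equivalences of cyclotomic spectra, then delivers a weak equivalence at each bisimplicial level.

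To conclude, I would pass to geometric realization over $(m, n)$ by invoking Proposition~\ref{prop:realok}: degeneracies in both simplicial directions are induced by insertion of identities, so the Hurewicz cofibration and non-degenerate basepoint hypotheses hold automatically, the bisimplicial cyclotomic spectra on both sides are spacewise proper, and the levelwise weak equivalences realize to a weak equivalence of cyclotomic spectra. The main obstacle is the input observation that the forgetful functor is a levelwise equivalence: essential surjectivity requires producing iterated subquotients of a chain $x_{0,1} \rightarrowtail \dotsb \rightarrowtail x_{0,n+1}$ in $\aD$ and checking that the resulting interior entries $x_{i,j}$ still lie in $\aC$ (and not merely the consecutive $x_{i,i+1}$), which uses the Waldhausen subcategory hypothesis on $\aC \subset \aD$; fullness and faithfulness on simplicial mapping sets are immediate from the fact that a natural transformation of $\Sdot$-diagrams is determined by its bottom row. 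With that input in place, everything that follows is a formal application of the functoriality and homotopy invariance of $THH$ already developed in Sections~\ref{secspec} and~\ref{secdefthh}.
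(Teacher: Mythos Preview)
Your proposal is correct and follows essentially the same approach as the paper. The paper records the DK-equivalence $\Sdot[m]\Sdot[n](\aC\to\aD)\to\Sdot[m]\Fdot[n](\aD,\aC)$ in the paragraph immediately preceding the proposition and then simply writes ``We therefore obtain the following observation''; your write-up just makes that ``therefore'' explicit by spelling out the passage through $THH$ of DK-equivalences and the realization step via Proposition~\ref{prop:realok}.
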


We have the notion of a 
\indexterm{closed}{simplicially enriched Waldhausen
subcategory!closed}\index{closed (Waldhausen subcategory)}
simplicially enriched Waldhausen
subcategory, which is a simplicially enriched Waldhausen subcategory
$\aA\subset \aB$ where $\aA_{0}$ is a closed Waldhausen subcategory of
$\aB_{0}$ (i.e., every object of $\aB$ weakly equivalent to an object
of $\aA$ is in $\aA$).  When $\aB$ is an enhanced simplicially enriched
Waldhausen category and $\aA\subset \aB$ is a closed simplicially
enriched Waldhausen subcategory, then $\aA$ is also enhanced
simplicially enriched.  The discussion above then generalizes to show
that 
\[
\Sdot[m]\Sdot[n](\aC\sto \aD) \to \Sdot[m]\Fdot[n](\aD,\aC).
\]
induces an equivalence (and in particular DK-equivalence) on
non-connective enrichments.  It follows that 
\[
WTHH(\aA\sto \aB)\to |WTHH(\Fdot(\aB,\aA))|
\]
is also a weak equivalence of cyclotomic spectra.

\section{The Localization Theorem}\label{sec:loc}

The Localization Theorem, called by Waldhausen the ``Fibration
Theorem'', provides the most important instance of the Cofiber
Theorem.  Roughly speaking, this theorem states that algebraic
$K$-theory takes quotient sequences of triangulated categories
to cofiber sequences of spectra.  In this section, we prove versions
of this theorem for $THH$ and $TC$.  In the case of the non-connective
enrichment, we obtain a localization sequence equivalent to the one in
\cite{BlumbergMandellTHHLoc}; in the case of the connective
enrichment, we obtain a localization sequence generalizing the one in
\cite{HM3} (q.v.~Chapter~\ref{tw-loc}).  

For the setup for the Localization Theorem, we take an enhanced
simplicially enriched Waldhausen category $\aA$ together with an
additional subcategory of weak equivalences $v\aA_{0}$ that contains
its usual weak equivalences $w\aA_{0}$.  We assume that $v\aA_{0}$
satisfies the \term{two-out-of-three property}, meaning that for composable
maps $f$ and $g$, if any two of $f$, $g$, and $g\circ f$ are in
$v\aA_{0}$, then so is the third.  We also assume that $v\aA_{0}$
satisfies the \term{Extension Axiom}
\cite[\S1.2]{WaldhausenKT}, meaning that given a map of cofibration
sequences
\[
\xymatrix@-1pc{%
x\ar@{ >->}[r]\ar[d]_{v}&y\ar[r]\ar[d]&y/x\ar[d]^{v}\\
x'\ar@{ >->}[r]&y'\ar[r]&y'/x'
}
\]
with the outer maps $x\to x'$ and $x/y\to x'/y'$ in $v\aA_{0}$, then
the inner map $y\to y'$ is in $v\aA_{0}$.  Finally, recalling that as
an enhanced simplicially enriched Waldhausen category, $\aA$ admits 
tensors with contractible simplicial sets, we say that $v\aA_{0}$ is 
\term{compatible with cylinders} when for any map $x\to x'$ in
$v\aA_{0}$, the map
\[
x\to x' \cup_{x} (x\otimes \Delta[1])
\]
is a cofibration in $\aA_{0}$, i.e., its quotient is in $\aA$.  The category
of \specialterm{$v$-acyclics}{v-acyclics} $\aA_{0}^{v}$ consists of the full subcategory
of objects $v$-equivalent to the trivial object $*$.  Under these
hypotheses, $\aA_{0}^{v}$ forms a
closed Waldhausen subcategory of $\aA$.  Moreover, $\aA_{0}^{v}$ 
is \noindexterm{closed under extensions and cofibers} in
$\aA_{0}$, meaning that for a cofibration sequence in $\aA_{0}$
\[
\xymatrix@-1pc{%
x\ar@{ >->}[r]&y\ar[r]&y/x,
}
\]
if $x$ and either of $y$ or $y/x$ is in $\aA_{0}^{v}$, then so is the
other.  Letting $\aA^{v}$ be the full simplicially enriched
subcategory of $\aA$ consisting of the objects in $\aA^{v}_{0}$, then
$\aA^{v}$ forms an enhanced simplicially enriched Waldhausen category
with the inclusion functor $\aA^{v}\to \aA$ enhanced exact.  We can
now state the Localization Theorem.

\begin{thm}[Localization Theorem]\label{thmloc}
With hypotheses and notation as in the previous paragraph, the
following commutative squares of cyclotomic spectra are homotopy
(co)cartesian.
\[
\xymatrix{%
WTHH^{\Gamma}(\aA^{v})\ar[r]\ar[d]&WTHH^{\Gamma}(\aA^{v}|v)\ar[d]
&WTHH(\aA^{v})\ar[r]\ar[d]&WTHH(\aA^{v}|v)\ar[d]\\
WTHH^{\Gamma}(\aA)\ar[r]&WTHH^{\Gamma}(\aA|v)
&WTHH(\aA)\ar[r]&WTHH(\aA|v)
}
\]
Moreover, in each square, the upper right entry is null homotopic
through cyclotomic maps.  Thus, we have cofiber sequences of
cyclotomic spectra,
\begin{gather*}
WTHH^{\Gamma}(\aA^{v})\to WTHH^{\Gamma}(\aA)\to
WTHH^{\Gamma}(\aA|v)\to \Sigma WTHH^{\Gamma}(\aA^{v})\\
WTHH(\aA^{v})\to WTHH(\aA)\to WTHH(\aA|v)\to \Sigma WTHH(\aA^{v}).
\end{gather*}
\end{thm}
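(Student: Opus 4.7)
The plan is to deduce both cofiber sequences from the Cofiber Theorem (Theorem~\ref{thmcofiber}) applied to the inclusion $i \colon \aA^v \hookrightarrow \aA$. That theorem immediately yields cofiber sequences
\[
WTHH^\Gamma(\aA^v) \to WTHH^\Gamma(\aA) \to WTHH^\Gamma(i) \to |WTHH^\Gamma(\Sdot\aA^v)|
\]
and analogously in the non-connective case. The remaining work splits in two: first, identify $WTHH^\Gamma(i)$ with $WTHH^\Gamma(\aA|v)$ (and $WTHH(i)$ with $WTHH(\aA|v)$) compatibly with the natural maps from $WTHH^\Gamma(\aA)$ induced by the inclusion $\Sdot\aA \hookrightarrow v^M\subdot\Sdot\aA$; second, exhibit an explicit null-homotopy through cyclotomic maps for the upper right entry of each square. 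Taken together, these produce the homotopy (co)cartesian squares and the claimed cofiber sequences.

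For the identification, I would first apply Proposition~\ref{propcofiber} to replace $\Sdot(\aA^v \to \aA)$ by the equivalent filtration model $\Fdot(\aA, \aA^v)$. Next, introduce an intermediate bisimplicial object $\bar v\subdot\Fdot(\aA,\aA^v)$ whose bisimplices parameterize filtrations by cofibrations that are simultaneously $v$-equivalences and whose successive subquotients lie in $\aA^v$; the cylinder compatibility hypothesis is exactly what makes $\bar v\subdot\Fdot(\aA,\aA^v)$ into a simplicial simplicially enriched Waldhausen category. Two natural maps
\[
\Fdot(\aA,\aA^v) \leftarrow \bar v\subdot\Fdot(\aA,\aA^v) \to v^M\subdot\Sdot\aA
\]
connect this intermediate model to both sides: the left by forgetting the $v$-filtration on the outside, the right by reading each filtration level as a $v$-equivalence and repackaging its coherence data via the Moore construction. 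The key claim, paralleling Waldhausen's Lemma~1.6.5 in~\cite{WaldhausenKT}, is that both arrows induce levelwise DK-equivalences of the associated spectral categories; granting this and invoking Theorem~\ref{thmwtadd}, applying $THH$ gives the desired identifications in both the connective and non-connective enrichments.

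For the null-homotopy, observe that the closure of $\aA^v_0$ under extensions and cofibers combined with two-out-of-three forces every morphism of $\aA^v_0$ to be a $v$-equivalence, and that every object of $\aA^v$ admits a $v$-equivalence to $*$. Combined with cylinder compatibility, this structure yields, via the Moore nerve of Construction~\ref{consmoore}, a simplicial contraction of $v^M\subdot\Sdot\aA^v$ onto its trivial subcategory; applying $THH$ then gives $WTHH^\Gamma(\aA^v|v)\htp *$ and likewise $WTHH(\aA^v|v)\htp *$, making the upper right entry of each square contractible through cyclotomic maps.

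The principal technical obstacle will be the second DK-equivalence in the zigzag above, which requires coherently lifting a homotopy-end of $v$-equivalences in $\Sdot\aA$ to a compatible cofibrant filtration with $v$-acyclic subquotients. Waldhausen's argument handles the $\pi_0$-level in the $K$-theory setting; the Moore-nerve formalism developed in Section~\ref{sec:moore1} is engineered precisely to extend this to the simplicial mapping spaces needed for $THH$, and carrying it out carefully using the factorization and cylinder-compatibility hypotheses will be where the bulk of the technical work lies.
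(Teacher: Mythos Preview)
Your overall strategy matches the paper's, but you have missed the key simplification that collapses your intermediate bisimplicial object. The proposition preceding the proof shows that a cofibration in $\aA_0$ has quotient in $\aA^v_0$ if and only if it is a $v$-equivalence; consequently $\Fdot(\aA,\aA^v)$ is literally \emph{equal} to $\bar v\subdot\aA$ (the nerve of $v$-acyclic cofibrations) as a simplicial simplicially enriched Waldhausen category. There is no need for a separate bisimplicial object $\bar v\subdot\Fdot(\aA,\aA^v)$ or a zigzag; one direct map $\bar v\subdot\aA \to v^M\subdot\aA$ does the whole job. The paper gets the square form directly from Corollary~\ref{corcocart} applied to $\aA^v \to \aA^v$ and $\aA^v \to \aA$, together with Proposition~\ref{propcofiber}, which already produces a homotopy (co)cartesian square with right column $|WTHH^\Gamma(\bar v\subdot\aA^v)|$ and $|WTHH^\Gamma(\bar v\subdot\aA)|$.

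What you flag as the ``principal technical obstacle'' is therefore much lighter than you suggest. On mapping spaces, the inclusion $|\bar v_p\Sdot[q]\aA| \hookrightarrow |v^M_p\Sdot[q]\aA|$ is a DK-embedding because for $\bar v$ the structure maps are cofibrations, so the strict iterated pullback computing $\bar v_p$-mapping spaces already has the homotopy type of the homotopy end that $v^M_p$ computes (this is Proposition~\ref{propmoorecof}(ii) in spirit). Essential surjectivity is the easy mapping cylinder argument: given a string of $v$-equivalences, replace each by its mapping cylinder to get a string of $v$-acyclic cofibrations weakly equivalent to the original; compatibility with cylinders is exactly what guarantees these are cofibrations \emph{in $\aA$}. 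No coherent lifting of homotopy-ends is required. For the null homotopy of $WTHH^\Gamma(\aA^v|v)$, the paper exhibits an explicit extra degeneracy on $v^M\subdot\aA^v$ (insert $*$ at the start of the chain, with the unique constant homotopies on the new simplices), which is more direct than invoking closure properties.
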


Although formally similar in statement and proof, the two localization
sequences above are very different in practice.  In the case when
$\aA$ is pretriangulated (which by Corollary~\ref{cor:pretriang} 
just means in this context that every object is weakly equivalent to a
suspension), the Localization Theorem of \cite{BlumbergMandellTHHLoc}
(Theorem~\ref{thmgenone} above) identifies the relative term $WTHH(\aA|v)$ in the second sequence
above as the $THH$ of the triangulated quotient
$\pi_{0}\aA^{S}/\pi_{0}(\aA^{v})^{S}$ (for any 
spectrally enriched model of this quotient).  

In the special case when $\aA$ is the category of finite cell EKMM
$R$-modules for the $S$-algebra $R=HA$ for a discrete valuation ring
$A$ or $R=ku$ is connective $K$-theory, we take the $v$-equivalences
$v\aA_{0}$ to be the $R[\beta^{-1}]$-equivalences, the maps that
induce isomorphisms on homotopy groups after inverting $\beta$, where
$\beta$ is a uniformizer for $A$ (when $R=HA$) or is the Bott-element
(when $R=ku$).  Then Theorem~\ref{thmspecsphere} (proved in
Section~\ref{secspheretheorem}) combined with
Theorem~\ref{thmnoSnoncon} identify both $WTHH^{\Gamma}(\aA)$ and
$WTHH(\aA)$ as $THH(R)$.  In the non-connective case, we then have
that $WTHH(\aA|v)$ is equivalent to $THH(R[\beta^{-1}])$. 
Calculations show $WTHH(\aA^{v})$ cannot be equivalent to
$THH(R/\beta)$.  On the other hand, we will prove a d\'evissage theorem
in Part~4 that identifies $WTHH^{\Gamma}(\aA^{v})$ as $THH(R/\beta)$
and calculations show that $WTHH^{\Gamma}(\aA|v)$ cannot be equivalent
to $THH(R[\beta^{-1}])$.

Returning to Theorem~\ref{thmloc}, it follows that the analogous
squares in the ``tilde'' models $\wt{WTHH}^{\Gamma}$ and $\wt{WTC}^{\Gamma}$
are homotopy (co)cartesian as well, and we get cofiber sequences on
$\wt{WTHH}^{\Gamma}$ and $\wt{WTC}^{\Gamma}$.  By naturality, the maps in the
squares and in the cofiber sequences commute with the cyclotomic
trace.  For convenient reference, we state this explicitly in the
following theorem.

\begin{thm}
Under the hypotheses of Theorem~\ref{thmloc}, the following diagram of
cofiber sequences commutes.
\[
\xymatrix@-1pc{%
K(\aA_{0}^{v})\ar[r]\ar[d]_{\trc}&K(\aA_{0})\ar[r]\ar[d]_{\trc}
&K(\aA_{0}|v)\ar[r]\ar[d]_{\trc}
&\Sigma K(\aA_{0}^{v})\ar[d]_{\trc}\\
\wt{WTC}^{\Gamma}(\aA^{v})\ar[r]\ar[d]
&\wt{WTC}^{\Gamma}(\aA)\ar[r]\ar[d]
&\wt{WTC}^{\Gamma}(\aA|v)\ar[r]\ar[d]
&\Sigma \wt{WTC}^{\Gamma}(\aA^{v})\ar[d]\\
\wt{WTHH}^{\Gamma}(\aA^{v})\ar[r]
&\wt{WTHH}^{\Gamma}(\aA)\ar[r]
&\wt{WTHH}^{\Gamma}(\aA|v)\ar[r]
&\Sigma \wt{WTHH}^{\Gamma}(\aA^{v})
}
\] 
\end{thm}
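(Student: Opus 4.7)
The plan is to prove each of the three rows is a cofiber sequence separately and then observe that the vertical cyclotomic trace maps commute with the horizontal maps by naturality. The top row is Waldhausen's Fibration Theorem applied to $(\aA_{0},w,v)$: the hypotheses placed on $v\aA_{0}$ (two-out-of-three, the Extension Axiom, and compatibility with cylinders) together with the standard mapping cylinder from the simplicial tensoring supply Waldhausen's cylinder axiom. The middle and bottom rows are obtained from Theorem~\ref{thmloc} by applying it levelwise in the iterated $\Sdot^{(n)}$ constructions that define $\wt{WTHH}^{\Gamma}$ and $\wt{WTC}^{\Gamma}$.

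Concretely, I would first verify that $\Sdot^{(n)}\aA$ inherits the structure of an enhanced simplicially enriched Waldhausen category (Proposition~\ref{prop:Sdotinherit}), that an objectwise-defined class of $v$-equivalences on $\Sdot^{(n)}\aA$ inherits the two-out-of-three, Extension, and cylinder-compatibility hypotheses from $v\aA_{0}$, and that the $v$-acyclics of $\Sdot^{(n)}\aA$ coincide with $\Sdot^{(n)}(\aA^{v})$.  Theorem~\ref{thmloc} then produces a homotopy (co)cartesian square of cyclotomic spectra at each simplicial level, and Proposition~\ref{prop:realok} together with the standard geometric-realization machinery assembles these into a homotopy (co)cartesian square of cyclotomic spectra for $\wt{WTHH}^{\Gamma}$. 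Since $TC$ preserves cofiber sequences of cyclotomic spectra (one of the stated fundamental properties in Section~\ref{secdefthh}), applying $TC$ produces the $\wt{WTC}^{\Gamma}$ row.

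For the vertical compatibility, the essential point is that the cyclotomic trace $K(\aC_{0}|v)\to \wt{WTHH}^{\Gamma}(\aC|v)$ is by definition induced by the inclusion of objects $\Ob(v\subdot\Sdot^{(n)}\aC_{0})=\Ob(v^{M}\subdot\Sdot^{(n)}\aC^{\Gamma})$, and that the three left-most horizontal maps in each row --- the inclusion of the Waldhausen subcategory $\aA^{v}\hookrightarrow \aA$, the change-of-weak-equivalences functor $(\aA,w)\to (\aA,v)$, and the map out of the $v$-relative construction --- are all induced functorially from the same underlying categorical data on both sides. Thus commutativity of the three left-most squares is immediate from the functoriality of the object inclusion. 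The right-most square, involving the connecting maps, commutes because on each side the connecting map is the canonical Puppe map attached to the homotopy (co)cartesian square in the previous step, and the cyclotomic trace (viewed as a natural transformation of functors into the stable category of cyclotomic spectra, then composed with $TC$) defines a morphism of (co)cartesian squares that necessarily intertwines the two Puppe connecting maps in the stable category.

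The main obstacle I anticipate is the last point: rigorously establishing that the connecting map of the $K$-theory cofiber sequence is identified, under the cyclotomic trace, with the connecting map of the $\wt{WTHH}^{\Gamma}$ cofiber sequence. I would address this by exhibiting the trace as a natural transformation on the level of the simplicial (co)cartesian squares built in the previous paragraph --- so that the induced map on homotopy cofibers in each column is literally forced by a commuting diagram of simplicial objects --- and then transporting through the equivalence $\Sigma \wt{WTHH}^{\Gamma}(\aA^{v})\htp |\wt{WTHH}^{\Gamma}(\Sdot\aA^{v})|$ (and its $K$-theory analogue) obtained from the argument underlying Lemma~\ref{lemadd}. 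Once this naturality is in place, the remaining commutativities are formal.
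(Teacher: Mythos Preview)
Your proposal is correct and takes essentially the same approach as the paper, which treats this theorem as an immediate consequence of Theorem~\ref{thmloc} (transported to the tilde models) together with the naturality of the cyclotomic trace as the inclusion-of-objects map. The paper's own argument is a single sentence---``by naturality, the maps in the squares and in the cofiber sequences commute with the cyclotomic trace''---so your elaboration of the levelwise verification and the handling of the connecting maps is exactly the kind of detail one would supply to flesh it out.
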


We begin the proof of Theorem~\ref{thmloc} by noting that the category
of $v$-acyclics completely characterizes the $v$-equivalences $v\aA_{0}$.

\begin{prop}
Under the hypotheses of Theorem~\ref{thmloc}, a map $f\colon x\to y$ is in
$v\aA_{0}$ if and only if the homotopy cofiber 
\[
Cf=y \cup_{x}(x\otimes \Delta[1])\cup_{x}*
\]
is in $\aA_{0}^{v}$.
\end{prop}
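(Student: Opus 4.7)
The plan is to derive both implications from the cofibration sequence
$x \rightarrowtail Tf \twoheadrightarrow Cf$ in $\aA_{0}$, where $Tf = y\cup_{x}(x\otimes\Delta[1])$ is the mapping cylinder. Two preliminary reductions simplify the argument. First, the natural retraction $Tf \to y$ that collapses the cylinder is a simplicial homotopy equivalence, hence lies in $w\aA_{0}\subset v\aA_{0}$, so by two-out-of-three $f\in v\aA_{0}$ if and only if the inclusion $i\colon x\to Tf$ is in $v\aA_{0}$. Second, compatibility of $v\aA_{0}$ with cylinders (in the forward direction) or the hypothesis $Cf\in\aA_{0}$ (in the backward direction) ensures that $i$ is a cofibration in $\aA_{0}$ with cofiber $Cf$, so the above cofibration sequence genuinely lives in $\aA_{0}$.

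Both directions then reduce to applying the axioms for $v\aA_{0}$ to the single map of cofibration sequences in $\aA_{0}$
\[
\xymatrix@-1pc{
x \ar@{=}[r] \ar@{=}[d] & x \ar@{->>}[r] \ar[d]^{i} & * \ar[d] \\
x \ar@{ >->}[r] & Tf \ar@{->>}[r] & Cf
}
\]
For the backward direction, assume $Cf\in\aA_{0}^{v}$, so that $*\to Cf\in v\aA_{0}$. Then the left vertical $\mathrm{id}_{x}$ and the right vertical $*\to Cf$ both lie in $v\aA_{0}$, and the Extension Axiom forces the middle vertical $i$ into $v\aA_{0}$, from which $f\in v\aA_{0}$ follows by two-out-of-three. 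For the forward direction, assume $f\in v\aA_{0}$, so $i\in v\aA_{0}$: now the source verticals $\mathrm{id}_{x}$ and $i$ both lie in $v\aA_{0}$, and one wants the cofiber vertical $*\to Cf$ to lie in $v\aA_{0}$.

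The main obstacle is precisely this forward direction, because the Extension Axiom as stated gives information only in the opposite direction (outer-in-$v$ implies middle-in-$v$, not the converse). However, under the standing hypotheses of Theorem~\ref{thmloc} (Extension, two-out-of-three, and compatibility with cylinders), a standard Waldhausen-style argument---using the cylinder construction to replace arbitrary maps by cofibrations and then applying Extension to suitably enlarged cofibration sequences---derives a Gluing Axiom for $v\aA_{0}$, equivalently that $v$-equivalences are preserved under pushout along cofibrations. Applied to the diagram above, this Gluing conclusion yields $*\to Cf \in v\aA_{0}$, i.e., $Cf\in\aA_{0}^{v}$. Since this is essentially the same derivation that establishes the closure of $\aA_{0}^{v}$ under cofibers (asserted in the paragraph just above the proposition), it should already be available at this point in the paper.
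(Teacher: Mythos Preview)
Your setup and the backward implication match the paper exactly: reduce $f$ to the mapping-cylinder inclusion $i\colon x\to Tf$, write down the same map of cofiber sequences, and apply the Extension Axiom with the outer verticals $\id_{x}$ and $*\to Cf$.

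The divergence is in the forward implication. The paper handles it in one line by invoking the Gluing Axiom: with $\id_{x}$ and $i$ in $v\aA_{0}$, Gluing (applied to the pushout $*\cup_{x}Tf=Cf$) forces $*\to Cf$ into $v\aA_{0}$. You missed that Gluing is among the standing hypotheses. In the setup paragraph for Theorem~\ref{thmloc}, $v\aA_{0}$ is introduced as ``an additional subcategory of weak equivalences,'' which in Waldhausen's sense means it satisfies Weq~2 (the Gluing Axiom); the same requirement is stated explicitly earlier when the relative construction $WTHH^{\Gamma}(\aA|v)$ is defined. The closure of $\aA_{0}^{v}$ under cofibers that you cite from the preceding paragraph is likewise an \emph{assertion} drawn from the assumed Gluing Axiom, not a derivation from Extension.

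Your proposed workaround---deriving Gluing from Extension, two-out-of-three, and compatibility with cylinders via ``a standard Waldhausen-style argument''---is not actually standard, and you give no argument for it. Extension moves information from the outer verticals to the middle one, and there is no obvious way to reverse this using only cylinders and two-out-of-three. So as written your forward direction has a gap; but the gap disappears once you recognize that Gluing is already available and simply invoke it, exactly as the paper does.
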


\begin{proof}
Let $Mf=y\cup_{x}(x\otimes \Delta [1])$ so that $Cf=Mf/x$.  The map $Mf\to y$
is a weak equivalence (and so  in particular a $v$-equivalence) and
the composite map $x\to Mf\to y$ is $f$ and so the inclusion of  $x$
in $ Mf$ is in $v\aA_{0}$ if and only if $f$ is. Consider the
commutative diagram of cofiber sequences
\[
\xymatrix@-1pc{%
x\ar@{ >->}[r]^{=}\ar[d]_{=}&x\ar[r]\ar[d]&\relax*\ar[d]\\
x\ar@{ >->}[r]&Mf\ar[r]&Cf.
}
\]
By the Gluing Axiom, $Cf$ is in $\aA_{0}^{v}$ when $x\to Mf$ is in
$v\aA_{0}$. By the Extension Axiom $x\to Mf$ is in $v\aA_{0}$ when
$Cf$ is in $\aA_{0}^{v}$.
\end{proof}

Let $\bar v\aA_{0}=v\aA_{0}\cap \co\aA_{0}$ denote the subcategory of
$\aA_{0}$ consisting of the maps that are both cofibrations and
$v$-equivalences.  The previous proposition implies that $\bar
v\aA_{0}$ consists of those cofibrations whose quotients are
$v$-acyclic.  It follows that $\Fdot(\aA,\aA^{v})=\bar v\subdot\aA$,
and applying Corollary~\ref{corcocart} and Proposition~\ref{propcofiber}, we get
homotopy (co)cartesian squares
\[
\xymatrix@C-1pc{%
WTHH^{\Gamma}(\aA^{v})\ar[r]\ar[d]
&\relax|WTHH^{\Gamma}(\bar v\subdot\aA^{v})|\ar[d]
&WTHH(\aA^{v})\ar[r]\ar[d]
&\relax|WTHH(\bar v\subdot\aA^{v})|\ar[d]\\
WTHH^{\Gamma}(\aA)\ar[r]
&\relax|WTHH^{\Gamma}(\bar v\subdot\aA)|
&WTHH(\aA)\ar[r]
&\relax|WTHH(\bar v\subdot\aA)|.
}
\]
We now have what we need to prove Theorem~\ref{thmloc}.

\begin{proof}[Proof of Theorem~\ref{thmloc}]
To obtain the homotopy (co)cartesian squares,
we just need to see that the maps
\[
WTHH^{\Gamma}(\bar v\subdot\aA)\to WTHH^{\Gamma}(v^{M}\subdot\aA)
\quad \text{and}\quad
WTHH(\bar v\subdot\aA)\to WTHH(v^{M}\subdot\aA)
\]
are weak equivalences.  The inclusion of $|\bar v_{p}\Sdot[q]\aA|$ in
$|v_{p}^{M}\Sdot[q]\aA|$ is a DK-embedding and an easy mapping
cylinder argument shows that it is a DK-equivalence.

It follows that $WTHH^{\Gamma}(\aA^{v}|v)$ and
$WTHH^{\Gamma}(\aA^{v}|v)$ are weakly equivalent as cyclotomic spectra
to the trivial spectrum, and to produce a null homotopy through
cyclotomic maps is not much more work.  The simplicial object
$v^{M}\subdot\aA^{v}$ has an extra degeneracy which on objects inserts
the trivial map at the start of the chain of maps.  On maps, we use
the unique (constant trivial) homotopy on any subsimplex that has the
new trivial object as one of its vertices.
\end{proof}

\section{The Sphere Theorem}\label{secspheretheorem}

In this section, we state versions of Waldhausen's ``Sphere Theorem''
for the $THH$ of Waldhausen categories, which we prove in the next section.
These theorems allow us to deduce the important consistency result
that all the different models for the $THH$ of the finite-cell modules
over an EKMM $S$-algebra or a simplicial ring agree
(Theorem~\ref{thmspecsphere} above).  Before stating a precise
theorem, we need two definitions,

\begin{defn}\label{defn:stable}
Let $\aC$ be a simplicially tensored Waldhausen category.  We say that
$\aC$ is \term{stable} when:
\begin{enumerate}
\item Every object of $\aC$ is weakly equivalent to a suspension, and
\item For all objects $x$ and $y$ in $\aC$, the suspension map
$\aC(x,y)\to \aC(\Sigma x,\Sigma y)$ is a weak equivalence.
\end{enumerate}
We say that $\aC$ is \term{almost stable} when it satisfies just
condition~(ii). 
\end{defn}

As observed in Corollary~\ref{cor:pretriang}, the first condition
implies that the non-connective spectral category $\aC^{S}$ is
pretriangulated, and its homotopy category $\pi_{0}\aC^{S}$ is
triangulated.  The second condition implies that the homotopy category
$\pi_{0}\aC^{S}$ coincides with the homotopy category $\pi_{0}\aC$ and
also that the connective spectral enrichment $\aC^{\Gamma}(x,y)$
is the connective cover of the non-connective spectral enrichment
$\aC^{S}(x,y)$ (Proposition~\ref{prop:conncover}).  Combined with the
fact that the mapping simplicial sets $\aC(x,y)$ are Kan complexes
(and that weak equivalences in $\aC$ are homotopy equivalences in the
obvious sense), this puts all the basic tools and techniques of homotopy
theory and stable homotopy theory at our disposal.

In the stable case the hypotheses we need for the Sphere Theorem
greatly simplify and so we will explore that case first.  In addition
to the stability assumptions above, we need to assume that $\aC$ is
generated by connective objects in the following sense.

\begin{defn}\label{defconnclass}
Let $\aC$ be an almost stable simplicially tensored Waldhausen category.
A \term{connective class} $Q$ in $\aC$ is a set of objects of $\aC$
such that for any $a,b$ in $Q$, $\aC^{S}(a,b)$ is connective.  If
$\aC$ is stable, then we say that $Q$ is 
\indexterm{generating}{connective class!generating}%
\index{generating (connective class)}
if the
smallest triangulated subcategory of the triangulated category $\pi_{0}
\aC^{S}$ that contains $Q$ is all of $\pi_{0} \aC^{S}$.
\end{defn}

See Definition~\ref{defn:asgen} for the definition of generating when
$\aC$ is almost stable.
In this terminology, we prove the following
theorem, the $THH$ analogue of Waldhausen's Sphere Theorem for the
stable case.

\begin{thm}[Sphere Theorem, Stable Version]\label{thm:spherestable}
Let $\aC$ be a stable simplicially tensored Waldhausen category and
assume that $\aC$ has a generating connective class $Q$.  Then the canonical
cyclotomic maps are weak equivalences
\[
WTHH^{\Gamma}(\aC) \overto{\sim} WTHH(\aC) 
\overfrom{\sim} 
THH(\aC^{S}) \overfrom{\sim} THH(Q^{S}).
\]
Here $Q^{S}$ denotes the full spectral subcategory of $\aC^{S}$ on
the objects of $Q$.
\end{thm}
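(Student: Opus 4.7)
The plan is to verify each of the three canonical maps as a weak equivalence separately. The middle map $THH(\aC^S)\to WTHH(\aC)$ is exactly Theorem~\ref{thmnoSnoncon} applied with $\aA=\aC$. For the rightmost map $THH(Q^S)\to THH(\aC^S)$, stability of $\aC$ together with Corollary~\ref{cor:pretriang} gives that $\aC^S$ is pretriangulated, and the generating hypothesis on $Q$ asserts that $Q$ generates $\pi_0\aC^S$ as a triangulated category. I would then invoke Morita invariance of $THH$ for pretriangulated spectral categories, which follows from the Thomason-Trobaugh style localization theorem for $THH$ of spectral categories proved in the companion paper \cite{BlumbergMandellTHHLoc}: since the pretriangulated closure of $Q^S$ inside $\aC^S$ fills up $\aC^S$ up to idempotent completion, the relevant quotient spectral category is trivial and $THH$ of the inclusion $Q^S\hookrightarrow \aC^S$ is a weak equivalence.

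The remaining map $WTHH^\Gamma(\aC)\to WTHH(\aC)$ is the main content. The key observation is that the connective class hypothesis ensures $\aC^S(q,q')$ is connective for $q,q'\in Q$, so Proposition~\ref{prop:conncover} gives that $Q^\Gamma(q,q')\to Q^S(q,q')$ is a weak equivalence. Hence $Q^\Gamma\to Q^S$ is a DK-equivalence of spectral categories, and so $THH(Q^\Gamma)\to THH(Q^S)$ is a weak equivalence. Combined with the previous two steps this produces a weak equivalence $THH(Q^\Gamma)\to WTHH(\aC)$, so by two-out-of-three in the evident commuting triangle it suffices to show that the natural map $THH(Q^\Gamma)\to WTHH^\Gamma(\aC)$ is a weak equivalence.

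The main obstacle is that $\aC^\Gamma$ need not be pretriangulated and we have no direct Morita invariance statement for the connective Waldhausen $THH$, so we cannot simply transport the argument from the non-connective case. The approach I would take is to exploit stability to build up $\aC$ from $Q$ by iterated cofiber sequences: this yields a filtration of $\aC$ by Waldhausen subcategories $\aA_n$ consisting of objects with at most $n$ ``cells'' drawn from $Q$, and an inductive application of the Additivity Theorem~\ref{thmadditivity} and the Cofiber Theorem~\ref{thmcofiber} at each filtration step identifies $WTHH^\Gamma(\aA_n)$ with an appropriate wedge built from $THH(Q^\Gamma)$; passage to the colimit, justified by the Approximation Theorem~\ref{thmapprox} applied to the inclusion $\bigcup_n \aA_n \hookrightarrow \aC$ (using the generating hypothesis to see that every object of each $\Sdot[n]\aC$ is weakly equivalent to a cellular object), then yields the desired equivalence $THH(Q^\Gamma)\simeq WTHH^\Gamma(\aC)$. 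The delicate technical point will be arranging the cellular filtration compatibly with the Waldhausen subcategory structures inherited by each $\Sdot[n]\aC$, so that the approximation hypothesis can be verified levelwise in the $\Sdot$-direction.
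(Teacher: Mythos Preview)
Your reduction is correct and matches the paper's: the two rightward maps are handled exactly as you say (Theorem~\ref{thmnoSnoncon} and \cite[4.12]{BlumbergMandellTHHLoc}), and the diagram after Proposition~\ref{prop:sncc} records precisely your two-out-of-three argument, reformulated as the statement that $WTHH^\Gamma(\aQ)\to WTHH^\Gamma(\aC)$ is a weak equivalence (Theorem~\ref{thm:altsphere}).

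The gap is in your proof of this last step. Filtering by number of cells and asserting that Additivity and the Cofiber Theorem identify $WTHH^\Gamma(\aA_n)$ with ``an appropriate wedge built from $THH(Q^\Gamma)$'' cannot be right: if that held, the colimit would be an infinite wedge rather than a single copy of $THH(Q^\Gamma)$. More substantively, the Cofiber Theorem for $\aA_{n-1}\hookrightarrow\aA_n$ produces a relative term $WTHH^\Gamma(\aA_{n-1}\to\aA_n)$, and you have no mechanism for identifying it. The underlying obstruction is exactly what makes the connective case hard: $\aC^\Gamma$ does not take cofiber sequences in $\aC$ to (co)fiber sequences of mapping spectra (contrast Proposition~\ref{propfibercofiber} for $\aC^S$), so the Morita-type argument available non-connectively has no direct analogue, and neither Additivity nor the Cofiber Theorem alone will split off a copy of $THH(Q^\Gamma)$ at each stage.

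The paper's argument does use a cellular filtration, but filters $Q$-CW complexes by dimension range $[0,n]$ rather than by number of cells, and proves the inductive step (Lemma~\ref{lem:spheremain}) by an indirect route: it introduces an extension spectral category $\aE\Gamma^n$ recording the top cell, uses Additivity there, and then compares $\aE\Gamma^n$ to $\aC\Gamma^n$ by passing through the $S$-enrichment and applying the Dennis--Waldhausen Morita argument with bimodule coefficients from \cite[\S5--6]{BlumbergMandellTHHLoc} (Lemma~\ref{lem:spherefinal}). That bimodule comparison is the missing idea in your sketch.
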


We state the following corollary for ease of reference and citation;
it is one case of Theorem~\ref{thmspecsphere}.

\begin{cor}\label{cor:ekmm}
Let $R$ be a connective EKMM $S$-algebra and $\aC_{R}$ the
category of finite cell $R$-modules.  Then the canonical cyclotomic
maps 
\[
WTHH^{\Gamma}(\aC_{R}) \overto{\sim} WTHH(\aC_{R}) 
\overfrom{\sim} 
THH(\aC_{R}^{S}) \overfrom{\sim} THH(R)
\]
are weak equivalences.
Here $THH(R)$ denotes the usual B\"okstedt model of $THH$ of a
particular symmetric ring spectrum (or FSP) equivalent to $R$.
\end{cor}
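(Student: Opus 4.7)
The plan is to apply the Sphere Theorem in its stable form (Theorem~\ref{thm:spherestable}) to $\aC_R$ with the generating connective class $Q=\{R\}$, and then identify $THH(Q^{S})$ with $THH(R)$.

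First I would verify the structural hypotheses on $\aC_R$. The category of finite cell $R$-modules is a simplicially tensored Waldhausen category as in Example~\ref{exofinterest}.(i), inheriting the pushout-product axiom from the EKMM model structure. For the stability conditions of Definition~\ref{defn:stable}: condition~(ii), that $\aC_R(x,y)\to \aC_R(\Sigma x,\Sigma y)$ is a weak equivalence, is a standard property of the simplicial mapping spaces in a stable simplicial model category (equivalently, both sides compute the derived $R$-module mapping space, and $\Sigma$ is a Quillen self-equivalence). Condition~(i), that every object is weakly equivalent to a suspension, I would verify using the fact that smashing with $S^{-1}$ takes finite cell $R$-modules to finite cell $R$-modules, so any $M$ is weakly equivalent to $\Sigma(S^{-1}\sma_S M)$.

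Second, I would check that $Q=\{R\}$ is a generating connective class. Connectivity of $\aC_R^{S}(R,R)$ follows because for any fibrant-cofibrant model this mapping symmetric spectrum represents the derived endomorphism spectrum of $R$ as an $R$-module, which is $R$ itself, and $R$ is connective by hypothesis. For the generating condition, $\pi_0\aC_R^{S}$ is (a full subcategory of) the homotopy category of $R$-modules, and every finite cell $R$-module is built from $\{\Sigma^n R\}_{n\geq 0}$ by iterated cofiber sequences; in particular each lies in the smallest triangulated subcategory of $\pi_0\aC_R^{S}$ containing $R$.

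Third, invoking Theorem~\ref{thm:spherestable} produces the zigzag of weak equivalences
\[
WTHH^{\Gamma}(\aC_R) \overto{\sim} WTHH(\aC_R)
\overfrom{\sim}
THH(\aC_R^{S}) \overfrom{\sim} THH(Q^{S}).
\]
It remains to identify $THH(Q^{S})$ with $THH(R)$. The spectral category $Q^{S}$ has one object, with endomorphism symmetric ring spectrum $\aC_R^{S}(R,R)$, whose $n$-th space is $\aC(R,\Sigma^n R)$; since fibrant replacement makes $\aC(R,\Sigma^n R)\htp \Omega^{\infty}\Sigma^n R$, this ring spectrum is semistable (Proposition~\ref{propsemistableS}) and weakly equivalent to $R$. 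The invariance of $THH$ under DK-equivalence of spectral categories (cited at the start of Section~\ref{secdefthh}) and the comparison \cite[4.12]{BlumbergMandellTHHLoc} between $THH$ of a ring spectrum and $THH$ of the one-object spectral category on it then give $THH(Q^{S})\htp THH(R)$.

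The main obstacle is the verification that $\aC_R$ is genuinely stable in the sense of Definition~\ref{defn:stable}, since this goes through technical features of the EKMM model structure on $R$-modules (both the existence of a desuspension preserving the finite cell property and the homotopical behavior of the simplicial mapping spaces on suspensions). Once this structural check is in place, the identification of $Q^{S}$ with $R$ as a ring spectrum is standard, and the rest of the argument is a direct citation of the Sphere Theorem.
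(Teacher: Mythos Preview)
Your approach is essentially the same as the paper's: apply the stable Sphere Theorem with a one-object generating connective class and identify $THH$ of the resulting endomorphism ring spectrum with $THH(R)$. There is one technical slip worth flagging: in EKMM, $R$ itself is typically not an object of $\aC_R$, since finite cell $R$-modules are built from the sphere $R$-modules $S_R^n = R\sma_S S_S^n$ and $R$ is not cofibrant as an $R$-module. The paper takes $Q=\{S_R\}$, the cell zero sphere $R$-module, which is a finite cell $R$-module weakly equivalent to $R$; your verification of connectivity and generation goes through unchanged with this substitution. The paper is also more explicit than your final step about identifying the endomorphism symmetric ring spectrum $\aC_R^S(S_R,S_R)$ with a standard model of $R$: it describes this FSP concretely as $F(n)=\aC_R(S_R,S_R\sma S^n)$ and then (in the proposition following the corollary) constructs an explicit zigzag of weak equivalences of symmetric ring spectra to Schwede's comparison functor, rather than appealing to an abstract equivalence.
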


The corollary follows by taking the connective class $Q$ to be the
singleton set containing the one object $S_{R}$, the ``cell zero
sphere $R$-module'' \cite[III.2]{EKMM}.  Then $THH(Q^{S})$ coincides
with $THH$ of the symmetric ring spectrum 
\[
F=Q^{S}(S_{R},S_{R})=\aC_{R}^{S}(S_{R},S_{R}).
\]
Concretely, this has $n$-th space
\[
F(n)=\aC_{R}(S_{R},S_{R}\sma S^{n}),
\]
and multiplication induced by composition.  We can identify this as
the symmetric ring spectrum (or ``FSP defined on
spheres'') obtained from the FSP
$\mathbf{F}(-)=\aC_{R}(S_{R},S_{R}\sma -)$ by restricting to
spheres $F(n)=\mathbf{F}(S^{n})$.  

Another symmetric ring spectrum derives from the general theory
of~\cite{SchwedeEKMM}; writing $\aM_{S}$ for the category of
$S$-modules, this has spaces $\Phi(n)=\aM_{S}((S_{S}^{-1})^{(n)},R)$ and
multiplication induced by smash product together with the
multiplication on $R$.  Experts know how to compare these symmetric
ring spectra and therefore their $THH$, $TR$, and $TC$ spectra:
Briefly, noting that $S_{R}=R\sma S_{S}$, we construct a third
symmetric ring spectrum $\Phi'$ that lies between them.  $\Phi'$ has
spaces
\begin{align*}
\Phi'(n)
&=\aC_{R}(S_{R}\sma_{S}(S_{S}^{-1} \sma S^{1})^{(n)},S_{R}\sma S^{n})\\
&\iso \aM_{S}((S_{S}^{-1} \sma S^{1})^{(n)},F_{R}(S_{R},S_{R}\sma S^{n}))
\end{align*}
and multiplication induced both by smash product (on the
$(S_{S}^{-1} \sma S^{1})^{(n)}$ factors) and composition (on the
$F_{R}(S_{R},S_{R}\sma S^{n})$ factors).  We have a weak equivalence
of symmetric ring spectra from $F$ to $\Phi'$ given by 
\[
F(n)=
\aC_{R}(S_{R},S_{R}\sma S^{n}) \to
\aC_{R}(S_{R}\sma_{S}(S_{S}^{-1} \sma S^{1})^{(n)},S_{R}\sma S^{n})
=\Phi'(n)
\]
induced by the collapse map $S_{S}^{-1}\sma S^{1}\to S$; the induced
map $F(n)\to \Phi'(n)$ is a weak equivalence of simplicial sets for
all $n$.  We have a
weak equivalence of symmetric ring spectra from $\Phi$ to $\Phi'$ given by
\begin{multline*}
\Phi(n)=
\aM_{S}((S_{S}^{-1})^{(n)},R)\to
\aM_{S}((S_{S}^{-1})^{(n)},F_{R}(S_{R},S_{R}))\\
\to
\aM_{S}((S_{S}^{-1}\sma S^{1})^{(n)},F_{R}(S_{R},S_{R}\sma S^{n}))
\iso \Phi'(n)
\end{multline*}
induced by the unit map $R\to F_{R}(S_{R},S_{R})$ (which arises from
the extra $R$ action on $S_{R}=R\sma S_{S}$); again, this is a weak
equivalence of simplicial sets for all $n$.  For convenience, we
state these remarks as a proposition.

\begin{prop}\label{prop:corEKMM}
The symmetric ring spectrum in Corollary~\ref{cor:ekmm} is weakly
equivalent to the symmetric ring spectrum obtained from the EKMM
$S$-algebra $R$ by~\cite{SchwedeEKMM}.
\end{prop}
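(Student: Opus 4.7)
The plan is to follow exactly the strategy sketched in the discussion preceding the proposition. One constructs the intermediate symmetric ring spectrum $\Phi'$ with
\[
\Phi'(n)=\aC_{R}(S_{R}\sma_{S}(S_{S}^{-1}\sma S^{1})^{(n)}, S_{R}\sma S^{n})\iso \aM_{S}((S_{S}^{-1}\sma S^{1})^{(n)}, F_{R}(S_{R}, S_{R}\sma S^{n})),
\]
and then verifies that the two maps $F\to\Phi'$ and $\Phi\to\Phi'$ described above are morphisms of symmetric ring spectra and are level weak equivalences. A zigzag $F\overto{\sim}\Phi'\overfrom{\sim}\Phi$ of level equivalences of symmetric ring spectra then provides the required identification in the stable category.

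First I would make the symmetric ring spectrum structure on $\Phi'$ explicit: the $\Sigma_{n}$-action permutes the smash factors of $(S_{S}^{-1}\sma S^{1})^{(n)}$ and correspondingly the copies of $S^{1}$ in the target, while the multiplication combines smash product on the source variable with the composition pairing on $F_{R}(S_{R}, S_{R}\sma -)$. Second, I would check that the collapse map $S_{S}^{-1}\sma S^{1}\to S$ and the unit $R\to F_{R}(S_{R},S_{R})$ are compatible with these structures, yielding morphisms of symmetric ring spectra $F\to\Phi'$ and $\Phi\to\Phi'$; both verifications reduce to unpacking definitions. Third, I would establish the level weak equivalences: for $F(n)\to\Phi'(n)$, smashing $S_{R}$ with the weak equivalence of cell $S$-modules $(S_{S}^{-1}\sma S^{1})^{(n)}\to S$ produces a weak equivalence of cell $R$-modules, and $\aC_{R}(S_{R},-)$ preserves weak equivalences between cell $R$-modules; for $\Phi(n)\to\Phi'(n)$, the unit map $R\to F_{R}(S_{R},S_{R})$ is a weak equivalence because $S_{R}\iso R\sma S_{S}$ with $S_{S}$ the sphere $S$-module, and this promotes through the smash-function adjunction to the required level equivalence.

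The main obstacle will be purely bookkeeping within the EKMM framework: one must track cofibrancy carefully so that $F_{R}(S_{R},-)$, $\aM_{S}(-,-)$, and the smash products all have the expected homotopical behavior, and one must verify that the canonical associativity and adjunction isomorphisms used to interchange the two expressions for $\Phi'(n)$ are compatible with $\Sigma_{n}$-equivariance and with the multiplicative structure. Once these routine checks are in place the comparison is immediate; it is essentially the standard comparison between EKMM and symmetric-spectrum models of ring spectra, specialized to the particular symmetric ring spectrum $F$ arising from the spectral subcategory on $\{S_{R}\}$.
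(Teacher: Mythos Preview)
Your proposal is correct and follows exactly the paper's approach: the ``proof'' in the paper is precisely the discussion preceding the proposition, constructing the intermediate $\Phi'$ and exhibiting the two level-equivalence maps $F\to\Phi'\leftarrow\Phi$. One small slip to fix: for the equivalence $F(n)\to\Phi'(n)$, the collapse map $(S_{S}^{-1}\sma S^{1})^{(n)}\to S$ changes the \emph{domain} of the mapping space, so the relevant fact is that $\aC_{R}(-,S_{R}\sma S^{n})$ takes weak equivalences between cofibrant $R$-modules to weak equivalences (all objects being fibrant in EKMM), not that $\aC_{R}(S_{R},-)$ does.
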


For the other half of Theorem~\ref{thmspecsphere}, we need to treat
the almost stable case.  This requires introducing the following
subcategories of $\aC$ associated to a connective class $Q$.

\begin{notn}\label{notn:SigmaQ}
Let $\aC$ be an almost stable  simplicially tensored Waldhausen
category and let $Q$ be a connective class.  Write $\aQ$ for the
smallest closed Waldhausen category of $\aC$ containing $Q$.  
For $n\geq 0$, let $\Sigma^{n}\aQ$ denote the full subcategory of $\aC$
containing all objects weakly equivalent to $\Sigma^{n}x$ for $x$
in $\aQ$.  Let $\Sigma^{-n}\aQ$ denote the 
full subcategory of $\aC$ containing all $x$ such that
$\Sigma^{n}x$ is in $\aQ$.
\end{notn}

We note that the subcategories $\Sigma^{n}\aQ$ are themselves
connective classes and closed Waldhausen subcategories.

\begin{prop}\label{prop:wccc}
Let $\aC$ be an almost stable  simplicially tensored Waldhausen
category and let $Q$ be a connective class.  Then 
$\Sigma^{n}\aQ$ is a connective class  and closed Waldhausen
subcategory  for all $n\in \bZ$. 
\end{prop}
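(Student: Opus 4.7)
The plan is to argue in two stages: first I show $\aQ$ itself is a connective class, and then transfer this to $\Sigma^{n}\aQ$ via the suspension invariance of mapping spectra.

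For the first stage, I build $\aQ$ inductively from $Q$ by closing under (a) weakly equivalent objects, (b) cofibers of cofibrations between current objects, and (c) pushouts along cofibrations with all three vertices current (equivalently, middle terms of cofibration sequences with known outer terms). The base case is the hypothesis on $Q$, and closure (a) propagates connectivity because weak equivalences induce weak equivalences of mapping spectra (Definition~\ref{defsimpwaldcat}(iv)). Closures (b) and (c) are handled uniformly by Proposition~\ref{propfibercofiber}(ii): every cofibration sequence $a \to b \to c$ yields cofiber and fiber sequences of mapping spectra, and the long exact sequences of homotopy groups show that if any two of $\aC^{S}(-,a)$, $\aC^{S}(-,b)$, $\aC^{S}(-,c)$ (and similarly in the contravariant variable) applied to objects of the current class are connective, then so is the third.

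For the second stage, Proposition~\ref{propfibercofiber}(i) supplies weak equivalences $\aC^{S}(x,y) \to \aC^{S}(\Sigma x,\Sigma y)$ for all $x,y$; iterating gives $\aC^{S}(x,y) \simeq \aC^{S}(\Sigma^{n}x,\Sigma^{n}y)$ for any $n \geq 0$. For $n \geq 0$ and $a,b \in \Sigma^{n}\aQ$ with $a \simeq \Sigma^{n}a'$ and $b \simeq \Sigma^{n}b'$ for some $a',b' \in \aQ$, this yields $\aC^{S}(a,b) \simeq \aC^{S}(a',b')$, which is connective by the first stage. For $n = -m < 0$, the definition $\Sigma^{-m}\aQ = \{x : \Sigma^{m}x \in \aQ\}$ combined with the same equivalence gives the analogous conclusion, so $\Sigma^{n}\aQ$ is a connective class for all $n \in \bZ$.

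For the closed-Waldhausen-subcategory property, the case $n = -m < 0$ is easy: given a cofibration $f\colon x \to y$ with $x,y \in \Sigma^{-m}\aQ$, the suspension $\Sigma^{m}$ (a tensored exact functor) produces a cofibration $\Sigma^{m}f$ with source, target, and cofiber $\Sigma^{m}(y/x)$ in $\aQ$, whence $y/x \in \Sigma^{-m}\aQ$. The case $n \geq 0$ will be the main obstacle. Given a cofibration $f\colon a \to b$ with $a \simeq \Sigma^{n}a'$ and $b \simeq \Sigma^{n}b'$ ($a',b' \in \aQ$), almost stability gives a bijection $\pi_{0}\aC(a',b') \cong \pi_{0}\aC(\Sigma^{n}a',\Sigma^{n}b') \cong \pi_{0}\aC(a,b)$, so I can pick $g\colon a' \to b'$ representing the class of $f$. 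Factoring $g$ as $a' \rightarrowtail M_{g} \overto{\sim} b'$ through its mapping cylinder, both $M_{g}$ and $M_{g}/a'$ lie in $\aQ$ (by closure under weak equivalence and the Waldhausen-subcategory property of $\aQ$). The technical crux is then to identify the cofiber of $f$ with $\Sigma^{n}(M_{g}/a')$ up to weak equivalence; this requires rectifying the homotopy-commutative square relating $f$ to the cofibration $\Sigma^{n}a' \rightarrowtail \Sigma^{n}M_{g}$ into a strictly commutative one (using the Kan condition on mapping spaces from Definition~\ref{defsimpwaldcat}(iii) together with the functorial mapping cylinder in $\aC$), after which the Gluing Axiom identifies the cofiber of $f$ with $\Sigma^{n}(M_{g}/a') \in \Sigma^{n}\aQ$.
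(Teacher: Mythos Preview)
There is a genuine gap, rooted in one misconception: you treat connectivity of mapping spectra as a two-out-of-three property along cofiber sequences, and correspondingly treat $\aQ$ as closed under arbitrary cofibers. Neither holds. Your closure (b) adds the cofiber $c$ of any cofibration $a\to b$ between current objects; covariantly $\aC^{S}(z,c)$ is indeed connective by the long exact sequence, but contravariantly $\aC^{S}(c,z)$ is the \emph{fiber} of $\aC^{S}(b,z)\to\aC^{S}(a,z)$, and the fiber of a map between connective spectra can have nontrivial $\pi_{-1}$ (namely the cokernel on $\pi_{0}$). Concretely, take $\aC$ to be finite cell $S$-modules and $Q$ the singleton containing the sphere; then $\aQ$ consists precisely of objects weakly equivalent to finite wedges of copies of $S$, so the Moore spectrum $S/2$ is not in $\aQ$, yet your step (b) would add it --- and $\aC^{S}(S/2,S)$ has $\pi_{-1}\iso\bZ/2$. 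The paper avoids this by building $\aQ$ only via homotopy pushouts along maps $x\to y$ whose homotopy cofiber is \emph{already} in the previous stage $Q_{n-1}$; the contravariant mapping spectrum out of such a pushout is then the middle term of a fiber sequence between two known connectives, and that direction of the long exact sequence argument is valid.

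The same misconception undermines your Waldhausen-subcategory verification. A Waldhausen subcategory need not be closed under cofibers; its cofibrations are \emph{defined} to be those ambient cofibrations whose cofiber already lies in the subcategory, and what must be checked is closure under pushouts along such cofibrations. Thus for $n<0$ you should not be arguing that $\Sigma^{m}(y/x)\in\aQ$ from $\Sigma^{m}x,\Sigma^{m}y\in\aQ$ alone (false in the example above), and for $n>0$ your claim that $M_{g}/a'\in\aQ$ fails for the same reason. The paper instead \emph{assumes} $x,y,y/x$ all lie in $\Sigma^{n}\aQ$, lifts $f$ to a map $f'\colon x'\to y'$ with $\Sigma^{n}f'$ homotopic to $f$, uses the hypothesis $y/x\in\Sigma^{n}\aQ$ together with almost-stability to deduce $Cf'\in\aQ$, and only then invokes the already-established Waldhausen-subcategory property of $\aQ$ to place the required pushouts in $\Sigma^{n}\aQ$.
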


\begin{proof}
We begin by showing that $\aQ$ is a connective class; stability
hypothesis~(ii) then shows that $\Sigma^{n}\aQ$ is a connective class
for all $n$.
Let $Q_{0}$ be the collection of objects of $\aC$ weakly equivalent to
finite coproducts of objects in $Q$, and inductively let $Q_{n}$ be the
collection of objects of $\aC$ that are weakly equivalent to finite
coproducts of homotopy pushouts $y\cup_{x}(x\otimes \Delta[1])\cup_{x}z$ where
$x,y,z\in Q_{n-1}$ and $y\cup_{x}(x\otimes \Delta[1])\cup_{x}*\in Q_{n-1}$.
If we regard $\bigcup Q_{n}$ as the full subcategory of $\aC$ of objects in $Q_{n}$ for
some $n$, it is then clear that $\aQ=\bigcup Q_{n}$ is the smallest closed
Waldhausen subcategory of $\aC$ containing $Q$.  To show that $\aQ$ is
a connective class, it suffices to show that for $x,y$ in $Q_{n}$,
$\aC^{S}(x,y)$ is connective, which we do by induction.  We know that
$x$ is weakly equivalent to a finite coproduct of homotopy pushouts of
objects in $Q_{n-1}$ along maps whose homotopy cofiber is also in
$Q_{n-1}$.  Looking at the long exact sequence of homotopy
groups from the fibration sequence in
Proposition~\ref{propfibercofiber}, we then see that $\aC(x,z)$ is
connective for all $z$ in $Q_{n-1}$. Using the same fact about $y$ and
the long exact sequence of homotopy groups from the cofibration
sequence in Proposition~\ref{propfibercofiber}, we see that $\aC(x,y)$
is connective.

By definition $\Sigma^{0}\aQ=\aQ$ is a closed Waldhausen subcategory
and it follows that $\Sigma^{n}\aQ$ is a closed Waldhausen subcategory
for $n<0$ since suspension preserves homotopy pushouts.  Let $n>0$ and
suppose $f\colon x\to y$ is a cofibration in $\aC$ such that $x$, $y$, and
$y/x\htp Cf$ are all in $\Sigma^{n}\aQ$.  Then we can find $x'$ and
$y'$ in $\aQ$ and weak equivalences $\Sigma^{n}x'\to x$ and
$\Sigma^{n}y'\to y$.  By stability hypothesis~(ii) and the fact that
the mapping spaces in $\aC$ are Kan complexes, we can find a
map $f'\colon x'\to y'$ such that the diagram
\[
\xymatrix{%
\Sigma^{n}x'\ar[r]^{\Sigma^{n}f'}\ar[d]_{\sim}&\Sigma^{n}y'\ar[d]_{\sim}\\
x\ar[r]_{f}&y
}
\]
commutes up to homotopy.  Choosing a homotopy, we get a weak
equivalence $C\Sigma^{n} f'\to Cf$.  Then $y/x$ is weakly equivalent
to $\Sigma^{n}Cf$ and it follows (again applying
stability hypothesis~(ii)) that $Cf$ is in $\aQ$.  For any map $x\to
z$ with $z$ in $\Sigma^{n}\aQ$, we can choose a compatible map $x'\to
z'$ (for some $z'$ with $\Sigma^{n}z'\htp z$) such that the pushout
$w=z\cup_{x}y$ is weakly equivalent to $\Sigma^{n}$ of the homotopy
pushout $w'=z'\cup_{x'}(x'\otimes \Delta[1])\cup_{x'}y'$.  Since $\aQ$
is a closed Waldhausen subcategory of $\aC$, $w'$ is in $\aQ$, and it
follows that $w$ is in $\Sigma^{n}\aQ$.  This shows that
$\Sigma^{n}\aQ$ is a closed Waldhausen subcategory of $\aC$.
\end{proof}

We use the subcategories $\Sigma^{n}\aQ$ to define what it means for a
connective class to be generating in the almost stable case.  For this,
we need the following technical definitions.

\begin{defn}\label{defn:built}
Given a class $A$ of objects of a Waldhausen category $\aC$, we say
that an object $x$ of $\aC$ is \term{finitely cellularly built from}
$A$ if we can find a sequence of objects $x_{0},x_{1},\dotsc,x_{n}$ of
$\aC$ that fit into pushout squares
\[
\xymatrix@-1pc{%
a_{j}\ar@{ >->}[r]\ar[d]&b_{j}\ar[d]\\
x_{j}\ar@{ >->}[r]&x_{j+1}
}
\]
where $x_{n}$ is weakly equivalent to $x$, $x_{0}=*$, and for each $j$,
$a_{j}$ is in $A$, $b_{j}$ is contractible (weakly equivalent to $*$),
and $a_{j}\to b_{j}$ is a cofibration. 
\end{defn}

The concept of ``finitely cellularly built from'' above differs from other
notions of ``built from'' in other contexts. Note in particular that
an object of $A$ is not necessarily finitely cellularly built from
$A$.  However, suspensions of objects of $A$ are finitely cellularly
built from $A$, for example.

\begin{defn}\label{defn:asgen}
Let $\aC$ be an almost stable simplicially tensored Waldhausen
category and $Q$ a connective class.  We say that $Q$ is
\indexterm{generating}{connective class!generating}%
\index{generating (connective class)}
if every object of $\aC$ is finitely cellularly built from $\bigcup
\Sigma^{n}\aQ$. 
\end{defn}

The following proposition clarifies the relationship between the
notions of generating given in Definitions~\ref{defconnclass}
and~\ref{defn:asgen}.

\begin{prop}\label{prop:generating}
Let $\aC$ be a stable simplicially tensored Waldhausen
category and $Q$ a connective class.  Then $Q$ is generating in the
sense of Definition~\ref{defconnclass} if and only if it is generating
in the sense of Definition~\ref{defn:asgen}.
\end{prop}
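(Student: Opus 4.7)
The plan is to compare the smallest triangulated subcategory $T_{\triangle}$ of $\pi_{0}\aC^{S}$ containing $Q$ with the collection $T_{\text{cell}}$ of objects of $\aC$ weakly equivalent to one that is finitely cellularly built from $\bigcup_{n}\Sigma^{n}\aQ$. Definition~\ref{defconnclass} amounts to $T_{\triangle} = \pi_{0}\aC^{S}$, while Definition~\ref{defn:asgen} amounts to $T_{\text{cell}}$ containing every object of $\aC$ up to weak equivalence, so the proposition reduces to the two containments (as essential subcategories of $\pi_{0}\aC^{S}$, using stability hypothesis~(ii) to identify $\pi_{0}\aC^{S}$ with $\pi_{0}\aC$).

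For the forward implication, I would first check $\bigcup_{n}\Sigma^{n}\aQ \subset T_{\triangle}$: the proof of Proposition~\ref{prop:wccc} shows $\aQ$ is built from $Q$ by iterated finite coproducts and pushouts along cofibrations, operations that descend in $\pi_{0}\aC^{S}$ to direct sums and completions of triangles, and closure of $T_{\triangle}$ under shifts yields the $\Sigma^{n}$-translates. The rest is induction on the number of cells: each step $x_{j}\to x_{j+1}$ of a finite cellular decomposition sits in a cofiber sequence $x_{j}\to x_{j+1}\to b_{j}/a_{j}$ with $b_{j}/a_{j}\simeq \Sigma a_{j}\in \bigcup_{n}\Sigma^{n}\aQ$ (since $b_{j}$ is contractible), so $x_{j+1}\in T_{\triangle}$ whenever $x_{j}$ is.

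For the reverse implication, the plan is to show that the essential image of $T_{\text{cell}}$ in $\pi_{0}\aC^{S}$ is a triangulated subcategory containing $Q$, and hence equals $T_{\triangle} = \pi_{0}\aC^{S}$. That $Q \subset T_{\text{cell}}$ uses stability hypothesis~(i): given $q\in Q$, pick $y\in \aC$ with $\Sigma y\simeq q$; then $y\in \Sigma^{-1}\aQ$, and a single pushout along the cofibration $y \to y\otimes \Delta[1]$ starting from $*$ builds an object weakly equivalent to $\Sigma y\simeq q$. Closure of $T_{\text{cell}}$ under shifts is automatic because $\Sigma A = A$ (up to weak equivalence, where $A = \bigcup_{n}\Sigma^{n}\aQ$) while $\Sigma$ preserves pushouts along cofibrations and contractible objects.

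The main obstacle is closure under extensions: given a triangle $x\to y\to z$ in $\pi_{0}\aC^{S}$ with $x, z \in T_{\text{cell}}$, I would show $y\in T_{\text{cell}}$ by induction on the number of cells $m$ in a chosen decomposition of $z$. The base case $m=0$ gives $z\simeq *$ and $y\simeq x$. For the inductive step, let $z'$ be the stage just before the last cell, so $z'$ has $m-1$ cells and fits in a cofiber sequence $z'\to z\to c$ with $c\simeq \Sigma a$ for some $a\in A$. The octahedron axiom in $\pi_{0}\aC^{S}$ produces an object $y'$ sitting in triangles $x\to y'\to z'$ and $y'\to y\to c$; by induction applied to the first triangle, $y'\in T_{\text{cell}}$. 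To realize the final attachment concretely in $\aC$, I would pick a finite cellular model $\tilde{y}'$ for $y'$, lift the connecting map $a \to y'$ of the second triangle to an actual map $a\to \tilde{y}'$ in $\aC$ (possible because $\aC(a,\tilde{y}')$ is a Kan complex whose $\pi_{0}$ agrees with $\pi_{0}\aC^{S}(a,y')$ by stability), and then form the pushout $\tilde{y}' \cup_{a} (a\otimes \Delta[1])$. This pushout is weakly equivalent to $y$ by Proposition~\ref{propfibercofiber}, since pushouts along cofibrations in $\aC$ compute the cofibers in $\pi_{0}\aC^{S}$, and it is manifestly a one-cell extension of the cellular decomposition of $\tilde{y}'$.
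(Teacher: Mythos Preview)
Your easy direction matches the paper's. For the hard direction the approaches diverge. The paper filters $\pi_0\aC^{S}$ as $\bigcup C_n$ with $C_0=\bigcup_n\Sigma^n\aQ$ and $C_n$ the homotopy cofibers of maps in $C_{n-1}$; since $\bigcup C_n$ is triangulated and contains $Q$, the hypothesis forces it to be all of $\pi_0\aC^S$, and the paper then shows inductively that each $C_n$ lies in $T_{\text{cell}}$ by an explicit construction: to build $z=Cf$ for $f\colon x\to y$ with $x,y$ cellular, first lay down the cells of $y$, then glue on cones $Cb_j$ along the objects $a'_j\simeq\Sigma a_j$ assembled from the cells $a_j\to b_j$ of $x$. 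Your route instead argues that $T_{\text{cell}}$ is closed under extensions, peeling one cell off $z$ at a time via the octahedral axiom. Both work; yours is more structural, the paper's more concrete.

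There is, however, a repeated slip in your cell attachments: $a\otimes\Delta[1]$ is the \emph{cylinder} on $a$, weakly equivalent to $a$ and not contractible, so it cannot serve as the $b_j$ in Definition~\ref{defn:built}. In your one-cell model for $q$, the pushout $*\cup_y(y\otimes\Delta[1])$ is the cone $Cy\simeq *$, not $\Sigma y$; in the extension step, $\tilde y'\cup_a(a\otimes\Delta[1])$ is the mapping cylinder of $a\to\tilde y'$, weakly equivalent to $\tilde y'$ rather than to $y$. Replace the cylinder by the cone $Ca$ throughout: $a\to Ca$ is a cofibration (part of the cylinder-functor structure guaranteed by the pushout-product axiom), $Ca$ is contractible, and then $*\cup_y Cy\simeq\Sigma y$ while $\tilde y'\cup_a Ca$ is the homotopy cofiber of $a\to\tilde y'$, hence weakly equivalent to $y$. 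A smaller point: you assert $T_{\text{cell}}$ is a triangulated subcategory but only verify $\Sigma$-closure, not $\Sigma^{-1}$-closure; fortunately this does not matter, since extension closure together with $\Sigma$-closure yields cofiber closure by rotating triangles, and that plus $\bigcup\Sigma^n\aQ\subset T_{\text{cell}}$ already forces $T_\triangle\subset T_{\text{cell}}$.
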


\begin{proof}
Since the triangulated subcategory generated by $Q$ contains $\bigcup
\Sigma^{n}\aQ$ (cf.\ the proof of Proposition~\ref{prop:wccc}), one
direction is clear.  We must show that if $Q$ is generating in the
sense of Definition~\ref{defconnclass}, then it is generating in the
sense of Definition~\ref{defn:asgen}.  Let $C_{0}=\bigcup
\Sigma^{n}\aQ$, and inductively let $C_{n}$ be the collection of
objects of $\aC$ that are weakly equivalent to the homotopy cofiber of
a map between objects of $C_{n-1}$.  We note that the $C_{n}$ are
closed under suspension and desuspension and that the objects of
$C_{0}$ are finitely cellularly built from $C_{0}$ (since each is equivalent to
the suspension of an object of $C_{0}$).  Generating in the sense of
Definition~\ref{defconnclass} implies that $\aC=\bigcup C_{n}$, so it
suffices to show by induction that all objects of $C_{n}$ are
finitely cellularly built from $C_{0}$.  Given $f\colon x\to y$ with $x,y$ in
$C_{n-1}$, we need to show that $z=Cf=y\cup_{x}(x\otimes
\Delta[1])\cup_{x}*$ is finitely cellularly built from $C_{0}$.  Replacing $z$
with a weakly equivalent object, we can assume
without loss of generality that $x$ and $y$ are isomorphic rather than
just weakly equivalent to an iterated pushout.  Then we build $z$ by
first building $y$ and then gluing $Cb_{j}=(b_{j}\otimes \Delta
[1])\cup_{b_{j}}*$ along $a'_{j}=b_{j}\cup_{a_{j}}(a_{j}\otimes
\Delta[1])\cup_{a_{j}}*$ where $a_{j}\to b_{j}$ build $x$.  Since
$Cb_{j}$ is contractible and $a'_{j}$ is weakly equivalent to $\Sigma
a_{j}$, this shows that $z$ is finitely cellularly built from $C_{0}$.
\end{proof}

The following theorem now generalizes
Theorem~\ref{thm:spherestable} to the almost stable case.

\begin{thm}[Sphere Theorem]\label{spheretheorem}
Let $\aC$ be an almost stable simplicially tensored Waldhausen category and
assume that $\aC$ has a generating connective class $Q$.
Then the canonical 
cyclotomic maps are weak equivalences
\[
WTHH^{\Gamma}(\aC) \overto{\sim} WTHH(\aC) 
\overfrom{\sim} 
THH(\aC^{S}) \overfrom{\sim} THH(Q^{S}).
\]
where $Q^{S}$ denotes the full spectral subcategory of $\aC^{S}$ on
the objects of $Q$.
\end{thm}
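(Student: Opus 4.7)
The proof naturally splits into the three weak equivalences making up the chain
\[
WTHH^{\Gamma}(\aC) \to WTHH(\aC) \leftarrow THH(\aC^{S}) \leftarrow THH(Q^{S}),
\]
which I would treat in turn. The middle equivalence $THH(\aC^S)\overto{\sim} WTHH(\aC)$ is a direct application of Theorem~\ref{thmnoSnoncon}: any simplicially tensored Waldhausen category is an enhanced simplicially enriched Waldhausen category when regarded as the pair $\aC \subset \aC$, so the theorem applies verbatim.

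For the right-hand equivalence $THH(Q^S)\overto{\sim} THH(\aC^S)$, my plan is to appeal to a Morita/cofinality invariance for $THH$ of spectral categories (of the sort established in \cite{BlumbergMandellTHHLoc}). The generating hypothesis (Definition~\ref{defn:asgen}) says that every object of $\aC$ is finitely cellularly built from $\bigcup\Sigma^n\aQ$, and cellular building is precisely the pretriangulated operation of taking cofibers of cofibrations with contractible codomain. Combined with Proposition~\ref{propfibercofiber} and Corollary~\ref{cor:pretriang}, this identifies $\aC^S$ as the pretriangulated (thick) closure of $Q^S$, so $THH(Q^S)\to THH(\aC^S)$ is a weak equivalence.

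The main content is the left-hand equivalence $WTHH^{\Gamma}(\aC)\to WTHH(\aC)$. The crucial input is Proposition~\ref{prop:conncover}: almost-stability says the suspension map on simplicial mapping sets is a weak equivalence, so $\aC^{\Gamma}(x,y)\to\aC^S(x,y)$ is always a connective cover. On the closed Waldhausen subcategory $\aQ$, Proposition~\ref{prop:wccc} tells us the mapping spectra $\aC^S(x,y)=\aQ^S(x,y)$ are connective, and hence the connective cover is already a weak equivalence; this shows $\aQ^{\Gamma}\to\aQ^S$ is a levelwise DK-equivalence and therefore $WTHH^{\Gamma}(\aQ)\overto{\sim} WTHH(\aQ)$. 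I would then extend this to all of $\aC$ using the generating condition, the Additivity Theorem~\ref{thmadditivity}, the Cofiber Theorem~\ref{thmcofiber}, and the invariance of $WTHH$ under suspension of objects (via the argument following Corollary~\ref{coradditivity}) to propagate the equivalence through the cellular filtration of $\aC$ built from $\bigcup\Sigma^n\aQ$, matching this stage-by-stage against the cofiber construction on the spectral category side.

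The main obstacle is the passage from $\aQ$ to $\aC$ on the connective side. On the non-connective side this is just Morita invariance for pretriangulated spectral categories, but connective $THH$ does not formally satisfy such invariance, so I must work within the Waldhausen framework and realize the cellular construction via the $\Sdot$-iterates. Concretely, I would exhibit an ascending sequence of closed Waldhausen subcategories $\aQ=\aC_0\subset\aC_1\subset\cdots$ exhausting $\aC$ where each $\aC_{k}\subset\aC_{k+1}$ fits into the hypotheses of the Cofiber Theorem with relative term controlled by $\Sigma^{m}\aQ$, and induct using that the comparison is an equivalence on shifts of $\aQ$; verifying the requisite cellular structure and confirming the inductive step preserves DK-equivalence of the relevant spectral categories is where the real technical work lies.
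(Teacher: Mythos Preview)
Your handling of the two rightward equivalences matches the paper exactly: $THH(\aC^{S})\to WTHH(\aC)$ is Theorem~\ref{thmnoSnoncon}, and $THH(Q^{S})\to THH(\aC^{S})$ follows from the generating hypothesis together with the Morita invariance of \cite[4.12]{BlumbergMandellTHHLoc}. Your base step for the left equivalence is also right: $\aQ^{\Gamma}\to\aQ^{S}$ is a levelwise weak equivalence by connectivity (one needs Proposition~\ref{prop:sncc} as well as~\ref{prop:wccc} to handle each $\Sdot[n]\aQ$), and the paper packages this as the reduction to the equivalent Theorem~\ref{thm:altsphere}, namely that $WTHH^{\Gamma}(\aQ)\to WTHH^{\Gamma}(\aC)$ is a weak equivalence.

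The gap is in your extension step. You propose to filter $\aC$ itself by closed Waldhausen subcategories $\aQ=\aC_{0}\subset\aC_{1}\subset\cdots$ with successive quotients controlled by $\Sigma^{m}\aQ$ and then induct via the Cofiber Theorem. No such filtration of $\aC$ is available in general: the natural candidates (objects built with at most $k$ cells, or with cells only in a given dimension range) are not closed Waldhausen subcategories of $\aC$, and even if they were, the relative term $WTHH^{\Gamma}(\aC_{k}\to\aC_{k+1})=|WTHH^{\Gamma}(\Fdot(\aC_{k+1},\aC_{k}))|$ has no reason to be identifiable with $WTHH^{\Gamma}$ of a single suspension of $\aQ$. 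The paper avoids this by \emph{not} filtering $\aC$. Instead it introduces an auxiliary Waldhausen category $CW_{Q}\aC$ of $Q$-CW complexes (objects of $\aC$ equipped with a finite skeletal filtration whose associated graded lies in the $\Sigma^{n}\aQ$), with mapping spectra those of the underlying objects of $\aC$. The forgetful functor $\Sdot[n](CW_{Q}\aC)^{\Gamma}\to\Sdot[n]\aC^{\Gamma}$ is a DK-equivalence (Proposition~\ref{prop:cwreplace}), and now there \emph{is} a filtration by dimension, $CW_{Q}\aC_{[0,n]}$, whose colimit recovers $CW_{Q}\aC_{[0,\infty)}$; a suspension/telescope argument reduces further to the connective case.

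Even with the right filtration, the inductive step (Lemma~\ref{lem:spheremain}) does not follow from Additivity and the Cofiber Theorem alone. The mapping spectra in $CW_{Q}\aC_{[0,n]}^{\Gamma}$ are those of the underlying objects of $\aC$ and are not connective, so one cannot argue that the two enrichments agree levelwise on the strata. The paper instead bootstraps from the non-connective side: it introduces the extension spectral category $\aE\Gamma^{n}_{k}$ (encoding the cofiber sequence $X_{n-1}\to X\to x_{n}/x_{n-1}$), uses Additivity to split $|THH(\aE\Gamma^{n}\subdot)|$ and thereby compare it to $|THH(\aE S^{n}\subdot)|$ via the induction hypothesis, and then runs a Dennis--Waldhausen Morita/bimodule argument (Lemma~\ref{lem:spherefinal}) modeled on the localization machinery of \cite[\S5--6]{BlumbergMandellTHHLoc} to pass from the $\aE$-categories back to the $\aC$-categories. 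This last step is where the genuine work is, and it is the missing idea in your proposal.
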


We now have the other half of Theorem~\ref{thmspecsphere} as a
corollary. 

\begin{cor}\label{cor:simpring}
Let $A$ be a simplicial ring, let $\aC_{A}$ be the category of finite
cell $A$-modules and let $\aP_{A}$ be the category of finite cell
$A$-modules built out of finitely generated projective $A$-modules.
Then the canonical cyclotomic maps
\[
\xymatrix{%
WTHH^{\Gamma}(\aC_{A})\ar[d]_{\sim}\ar[r]^{\sim}
&WTHH(\aC_{A})\ar[d]_{\sim}
&THH(\aC_{A}^{S})\ar[l]_{\sim}\ar[d]_{\sim}&THH(A)\ar[l]_{\sim}\\
WTHH^{\Gamma}(\aP_{A})\ar[r]^{\sim}
&WTHH(\aP_{A})&THH(\aP_{A}^{S})\ar[l]_{\sim}
}
\]
are weak equivalences.
\end{cor}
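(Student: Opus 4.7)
The plan is to apply the Sphere Theorem (Theorem~\ref{spheretheorem}) separately to $\aC_{A}$ and $\aP_{A}$, identify the resulting spectral endomorphism categories with $A$, and use naturality together with Morita invariance for the remaining comparisons.

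First, both $\aC_{A}$ and $\aP_{A}$ are simplicially tensored Waldhausen categories with structure inherited from the standard simplicial model structure on simplicial $A$-modules. Almost stability (Definition~\ref{defn:stable}(ii)) follows from the Dold--Kan correspondence: simplicial $A$-modules correspond to non-negative chain complexes over the associated DG-ring, finite cell modules correspond to perfect complexes, and since shift is an auto-equivalence on the associated unbounded derived category, the suspension map $\aC(x,y) \to \aC(\Sigma x, \Sigma y)$ is a weak equivalence for $x, y$ in either category.

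For $\aC_{A}$, take $Q = \{A\}$ as the generating connective class. The mapping spectrum $\aC_{A}^{S}(A,A)$ has homotopy groups isomorphic to $\pi_{*}A$ in non-negative degrees and zero in negative degrees, hence is connective; the generation condition (Definition~\ref{defn:asgen}) is immediate because finite cell $A$-modules are built by attaching cells of the form $A \otimes \Delta[n]/\partial\Delta[n]$, which are shifts of $A$. The Sphere Theorem then provides the top row of weak equivalences. The identification $THH(A) \simeq THH(Q^{S})$ is the standard comparison between the B\"okstedt $THH$ of the simplicial ring $A$ and the $THH$ of the one-object symmetric ring spectrum $\aC_{A}^{S}(A,A)$, entirely analogous to the EKMM argument surrounding Proposition~\ref{prop:corEKMM}.

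For $\aP_{A}$, take $Q'$ to be a set of representatives of isomorphism classes of finitely generated projective $A$-modules. Each $P \in Q'$ is a retract of some $A^{n}$, so $\aP_{A}^{S}(P, P')$ is a retract of a product of copies of $\aC_{A}^{S}(A,A)$, hence connective; generation follows by construction of $\aP_{A}$. The Sphere Theorem yields the bottom row of weak equivalences. To identify $THH((Q')^{S})$ with $THH(A)$ we use Morita invariance of $THH$: the inclusion $\{A\}^{S} \hookrightarrow (Q')^{S}$ exhibits $(Q')^{S}$ as essentially the idempotent completion of $\{A\}^{S}$ in the pretriangulated sense, and $THH$ is invariant under such completions, following the spectral-category machinery in~\cite{BlumbergMandellTHHLoc}. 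The vertical maps are then induced by the inclusion $\aC_{A} \subset \aP_{A}$, and they are weak equivalences by naturality, since all entries of the diagram are canonically identified with $THH(A)$.

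The main obstacle is the Morita invariance step in the $\aP_{A}$ case: unlike for $\aC_{A}$, where $\{A\}$ is directly a generating connective class and the Sphere Theorem delivers the comparison in one shot, passing from the one-object category $\{A\}^{S}$ to the category $(Q')^{S}$ of all finitely generated projectives requires showing that this inclusion induces an equivalence on $THH$. This is a non-trivial Morita-type statement about invariance under idempotent completion for spectral categories, and it is the one place where the argument departs from direct application of the theorems proved earlier in the paper.
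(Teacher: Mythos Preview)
Your proposal is correct and uses the same two ingredients as the paper---the Sphere Theorem and Morita invariance from \cite{BlumbergMandellTHHLoc}---but organizes them differently. The paper applies the Sphere Theorem only once, to $\aC_{A}$ with $Q=\{A\}$, giving the top row exactly as you do. For the vertical arrows it appeals directly to \cite[4.11]{BlumbergMandellTHHLoc}: every object of $\aP_{A}$ (and of $\Sdot[n]\aP_{A}$) is a direct summand of an object of $\aC_{A}$ (respectively $\Sdot[n]\aC_{A}$), so the inclusion induces equivalences on $THH$ of each of the spectral categories in the diagram. The bottom row then follows by two-out-of-three. Your route---running the Sphere Theorem a second time on $\aP_{A}$ with $Q'$ the projectives, then invoking Morita invariance to compare $THH((Q')^{S})$ with $THH(\{A\}^{S})$, and finally deducing the vertical arrows from compatibility of identifications---is valid, but it is slightly more work and the ``obstacle'' you flag is exactly the result \cite[4.11]{BlumbergMandellTHHLoc} that the paper uses more directly. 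In particular, your last step (vertical arrows ``by naturality, since all entries are identified with $THH(A)$'') is a two-out-of-three argument in disguise; the paper's version makes that step the primary one and avoids the second Sphere Theorem application altogether.
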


The vertical arrows are weak equivalences
by Theorem~\ref{thmfactor} since every object of $\aP_{A}$
or $\Sdot[n]\aP_{A}$ is a direct summand of an object of $\aC_{A}$ or
$\Sdot[n]\aC_{A}$.  We get the top row from
Theorem~\ref{spheretheorem} taking the connective class $Q$ to be the
singleton set containing the object $A$, which is clearly generating.
The symmetric spectrum $Q^{S}(A,A)=\aC^{S}(A,A)$ is just the usual
symmetric ring spectrum constructed from $A$.

We have one last version of the Sphere Theorem, which is closer in
spirit to Waldhausen's Sphere Theorem for $K$-theory.  It also has the
technical advantage of being stated purely in terms of the connective
enrichments.

\begin{thm}[Sphere Theorem, Alternate Version]\label{thm:altsphere}
Let $\aC$ be an almost stable simplicially tensored Waldhausen category,
let $Q$ be a generating connective class, and let $\aQ$ be the smallest
closed Waldhausen subcategory of $\aC$ containing $Q$. The inclusion
of $\aQ$ into $\aC$ induces 
a weak equivalence $WTHH^{\Gamma}(\aQ)\to WTHH^{\Gamma}(\aC)$.
\end{thm}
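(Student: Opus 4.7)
The plan is to exploit the filtration $\aQ = \Sigma^{0}\aQ \subset \Sigma^{-1}\aQ \subset \Sigma^{-2}\aQ \subset \dotsb$ of closed Waldhausen subcategories of $\aC$ given by Proposition~\ref{prop:wccc}. First I would check this filtration exhausts $\aC$. Since $\aQ$ is closed under suspension (by the argument used in the proof of Proposition~\ref{prop:wccc}), the generating hypothesis says every object of $\aC$ is finitely cellularly built from $\bigcup_{N\geq 0}\Sigma^{-N}\aQ$. Because each $\Sigma^{-N}\aQ$ is a closed Waldhausen subcategory of $\aC$ and contains the contractible objects (the contractibles are weakly equivalent to $*\in \aQ$), any finite cellular construction built from this union lies in a single $\Sigma^{-N}\aQ$ (take $N$ to be the largest index appearing in the construction). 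Hence $\aC = \bigcup_{N}\Sigma^{-N}\aQ$ and, by the same argument applied to finite diagrams, each $\Sdot[m]\aC = \bigcup_{N}\Sdot[m]\Sigma^{-N}\aQ$ as a filtered union of full simplicially enriched Waldhausen subcategories.

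The main step is to show that each inclusion $i_{N}\colon \aQ\hookrightarrow \Sigma^{-N}\aQ$ induces a weak equivalence on $WTHH^{\Gamma}$. Since $\Sigma^{N} x \in \aQ$ whenever $x\in \Sigma^{-N}\aQ$, the $N$-fold suspension restricts to an enriched exact functor $\sigma_{N}\colon \Sigma^{-N}\aQ\to \aQ$, and the two composites $\sigma_{N}\circ i_{N}$ and $i_{N}\circ \sigma_{N}$ are the $N$-fold suspension endofunctors of $\aQ$ and of $\Sigma^{-N}\aQ$ respectively. Both $\aQ$ and $\Sigma^{-N}\aQ$ are closed under $\Sigma$ and under the cone functor $Cx=(x\otimes \Delta[1])/x$ (using closure under weak equivalences, since $Cx\htp *$), so the cofibration sequence of enriched exact endofunctors $\id\to C\to \Sigma$ makes sense inside each subcategory. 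The natural projection $Cx\to *$ is a weak equivalence (as the cofiber of the trivial cofibration $x\to x\otimes \Delta[1]$), so $C$ is naturally weakly equivalent to the zero functor, and Proposition~\ref{propnat} gives $WTHH^{\Gamma}(C)=0$ in the stable category. Corollary~\ref{coradditivity} then forces $WTHH^{\Gamma}(\Sigma)=-WTHH^{\Gamma}(\id)$; iterating, $\Sigma^{N}$ acts as $(-1)^{N}$, a self-equivalence, on both $WTHH^{\Gamma}(\aQ)$ and $WTHH^{\Gamma}(\Sigma^{-N}\aQ)$. Both composites $\sigma_{N}\circ i_{N}$ and $i_{N}\circ \sigma_{N}$ therefore induce self-equivalences on $WTHH^{\Gamma}$, and the usual two-sided-inverse argument shows that so does $i_{N}$.

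Finally I would pass to the colimit. Because $\Sdot[m]\aC=\bigcup_{N}\Sdot[m]\Sigma^{-N}\aQ$ along full simplicially enriched subcategory inclusions with unchanged mapping simplicial sets, the corresponding $\Gamma$-enrichments and their $THH$ assemble compatibly: using that $THH$ of a spectral category is built from wedges indexed on finite tuples of objects, one has $THH(\Sdot[m]\aC^{\Gamma})\htp \colim_{N}THH(\Sdot[m]\Sigma^{-N}\aQ^{\Gamma})$, and this identification is preserved by geometric realization in $m$ and by $\Omega$. Combined with the previous paragraph, the colimit system $N\mapsto WTHH^{\Gamma}(\Sigma^{-N}\aQ)$ consists of weak equivalences starting from $WTHH^{\Gamma}(\aQ)$, so $i_{0}\colon \aQ\hookrightarrow \aC$ induces a weak equivalence on $WTHH^{\Gamma}$. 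The principal technical obstacle to be careful about is verifying that the cofibration sequence $\id\to C\to \Sigma$ really lives as a sequence of enriched exact endofunctors within each $\Sigma^{-N}\aQ$ (rather than merely in the ambient $\aC$), since the cancellation producing $WTHH^{\Gamma}(\Sigma)=-WTHH^{\Gamma}(\id)$ must be applied inside each filtration stage separately; this amounts to the closure assertions above, which follow from Proposition~\ref{prop:wccc} together with the fact that $\aQ$ (and hence each $\Sigma^{-N}\aQ$) is closed under weak equivalence and contains $*$.
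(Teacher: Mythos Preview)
Your argument rests on the claimed inclusions $\aQ=\Sigma^{0}\aQ\subset\Sigma^{-1}\aQ\subset\Sigma^{-2}\aQ\subset\dotsb$, which is equivalent to $\aQ$ being closed under suspension. That closure is neither proved in Proposition~\ref{prop:wccc} nor true in general. Take the motivating example $\aC=\aC_{R}$, the finite cell modules over a connective $S$-algebra $R$ with $\pi_{0}R\neq 0$, and $Q=\{S_{R}\}$. The inductive description in the proof of Proposition~\ref{prop:wccc} shows that $\aQ$ consists exactly of the objects weakly equivalent to finite wedges $\bigvee S_{R}$: the only cofibrations in $\aQ$ are those with cofiber again a finite wedge of $S_{R}$, which forces all pushouts to remain finite wedges of $S_{R}$. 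Then $\Sigma S_{R}\notin\aQ$, so $\aQ\not\subset\Sigma^{-1}\aQ$; in fact $\Sigma^{-1}\aQ$ consists only of contractible objects, since $\Sigma x\simeq\bigvee S_{R}$ with $\Sigma x$ $0$-connected forces $\Sigma x\simeq *$. Your ``filtration'' therefore collapses rather than exhausts $\aC$, and the functors $i_{N}\colon\aQ\to\Sigma^{-N}\aQ$ and the endofunctor $\Sigma$ on $\aQ$ that you need simply do not exist. The remark after Definition~\ref{defusuwaldcat} already warns that an enhanced simplicially enriched Waldhausen category inherits tensors only with \emph{homotopically trivial} finite simplicial sets, not with $S^{1}$.

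The additivity maneuver you describe (the cofiber sequence $\id\to C\to\Sigma$ forcing $\Sigma$ to act as $-\id$) is correct and is exactly what the paper uses, but in a setting where suspension is available: the paper introduces an auxiliary category $CW_{Q}\aC$ of $Q$-CW complexes, filtered objects whose associated graded lies in the various $\Sigma^{n}\aQ$. Suspension \emph{does} act on $CW_{Q}\aC$ (and on its subcategories $CW_{Q}\aC_{[0,\infty)}$), and your argument is applied there to reduce to the connective part. The remaining step, comparing $CW_{Q}\aC_{[0,n-1]}$ with $CW_{Q}\aC_{[0,n]}$, cannot be done by a suspension trick (suspension shifts the allowed cell range) and instead requires an inductive comparison with the non-connective enrichment via the Dennis--Waldhausen Morita argument; this is the genuine work in Lemma~\ref{lem:spheremain} and Lemma~\ref{lem:spherefinal}. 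In short, the obstacle you flag at the end---that the cofiber sequence $\id\to C\to\Sigma$ must live inside each filtration stage---is real and is precisely what fails for the filtration you chose.
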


The previous theorem is equivalent to Theorem~\ref{spheretheorem}, but
to see this, we need more information about the categories $\Sdot\aQ$
implicit in the statement.  The following proposition has everything
we need for the comparison, plus what we need for the proofs in the
next section.

\begin{prop}\label{prop:sncc}
Let $\aC$ be an almost stable simplicially tensored Waldhausen
category and let $Q$ be a connective class.  Then $\Sdot[n]\aC$ is an
almost stable simplicially tensored Waldhausen category, and
$\Sdot[n]\aQ$ is a closed Waldhausen subcategory and a connective
class; moreover, $\Sdot[n]\Sigma^{m}\aQ=\Sigma^{m}\Sdot[n]\aQ$.  If
$Q$ is generating, then so is $\Sdot[n]\aQ$.
\end{prop}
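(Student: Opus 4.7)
The plan is to address each clause in turn, exploiting the fact that in $\Sdot[n]\aC$ tensors, suspensions, cofibrations, weak equivalences, and cofibers are all computed levelwise on $\Ar[n]$-diagrams, while the simplicial mapping spaces admit the iterated pullback description~\eqref{eqsn} over Kan fibrations. Proposition~\ref{prop:Sdotinherit} gives that $\Sdot[n]\aC$ is a simplicially tensored Waldhausen category. For almost stability, the suspension map $\Sdot[n]\aC(A,B)\to\Sdot[n]\aC(\Sigma A,\Sigma B)$ becomes, via~\eqref{eqsn}, the induced map on iterated pullbacks along Kan fibrations of the individual suspension maps $\aC(a_{0,\ell},b_{0,\ell'})\to\aC(\Sigma a_{0,\ell},\Sigma b_{0,\ell'})$, each an equivalence by almost stability of $\aC$. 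That $\Sdot[n]\aQ$ is a closed Waldhausen subcategory of $\Sdot[n]\aC$ follows levelwise from $\aQ$ being closed in $\aC$. For the connective-class property, the entirely analogous iterated-pullback description of $\Sdot[n]\aC^{S}(A,B)$ expresses it as a pullback along fibrations of the mapping spectra $\aC^{S}(a_{0,\ell},b_{0,\ell'})$, each connective since $\aQ$ is a connective class, and finite pullbacks along fibrations of connective spectra remain connective.

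For $\Sdot[n]\Sigma^{m}\aQ=\Sigma^{m}\Sdot[n]\aQ$, the inclusion $\Sigma^{m}\Sdot[n]\aQ\subseteq\Sdot[n]\Sigma^{m}\aQ$ is immediate from levelwise suspension, and the case $m<0$ of the reverse inclusion is a direct unpacking of definitions. For $m>0$, I would argue as in Proposition~\ref{prop:wccc}: given $A$ with $a_{i,j}\in\Sigma^{m}\aQ$, use almost stability and the Kan property of the mapping spaces to inductively desuspend the $0$-th row $a_{0,1}\to\dotsb\to a_{0,n}$ into a sequence of cofibrations $b_{0,1}\to\dotsb\to b_{0,n}$ in $\aQ$ with $\Sigma^{m}b_{0,j}\simeq a_{0,j}$ compatibly (factoring through mapping cylinders to ensure cofibrancy), then extend to $B\in\Sdot[n]\aQ$ by $b_{i,j}=b_{0,j}/b_{0,i}$, which stays in $\aQ$ by closure under cofibers. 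Then $\Sigma^{m}B\simeq A$.

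Finally, for preservation of generating, introduce the enriched exact functors $F_{k}\colon\aC\to\Sdot[n]\aC$ for $k=1,\dotsc,n$ defined by $F_{k}(x)_{i,j}=x$ if $i<k\leq j$ and $*$ otherwise; each carries $\aQ$ into $\Sdot[n]\aQ$, commutes with suspension, and preserves the relation ``finitely cellularly built from'' by exactness. Filter any $A\in\Sdot[n]\aC$ by the sub-diagrams $A^{(k)}$ with $A^{(k)}_{i,j}=a_{i,\min(j,k)}$ for $i<k$ and $*$ for $i\geq k$; a direct check of the $\Sdot$ axioms shows $A^{(0)}=*$, $A^{(n)}=A$, and yields cofiber sequences $A^{(k-1)}\to A^{(k)}\to F_{k}(a_{k-1,k})$ in $\Sdot[n]\aC$. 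Since each $a_{k-1,k}$ is finitely cellularly built from $\bigcup_{j}\Sigma^{j}\aQ$ by hypothesis, $F_{k}(a_{k-1,k})$ is finitely cellularly built from $\bigcup_{j}\Sigma^{j}\Sdot[n]\aQ$. The hard part will be assembling these cell structures through the extensions $A^{(k-1)}\to A^{(k)}$: this requires lifting attaching maps of cells of $F_{k}(a_{k-1,k})$ over $A^{(k-1)}$ into $A^{(k)}$, the standard closure-under-extensions question for ``finitely cellularly built,'' handled via the mapping cylinders from simplicial tensoring combined with almost stability, following the pattern of Propositions~\ref{prop:wccc} and~\ref{prop:generating}.
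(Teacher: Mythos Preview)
Your treatment of the first three clauses is essentially the paper's, and in fact your argument for $\Sdot[n]\Sigma^{m}\aQ=\Sigma^{m}\Sdot[n]\aQ$ is more careful than the paper's one-line ``both are the same functor category'' remark; the desuspension-by-induction you describe is exactly what makes that remark honest.

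The generating clause is where your plan diverges, and here there is a real gap. Your filtration $A^{(k)}$ has cofiber sequences $A^{(k-1)}\to A^{(k)}\to F_{k}(a_{k-1,k})$, and you correctly observe that each $F_{k}(a_{k-1,k})$ is finitely cellularly built from $\bigcup_{j}\Sigma^{j}\Sdot[n]\aQ$. But ``finitely cellularly built'' is not closed under extensions in an almost stable category, and the patterns of Propositions~\ref{prop:wccc} and~\ref{prop:generating} establish closure under \emph{cofibers}, not extensions. Concretely: to lift an attaching map $F_{k}(c)\to F_{k}((a_{k-1,k})_{j})$ to the partial build of $A^{(k)}$, one computes via~\eqref{eqsn} that $\Sdot[n]\aC(F_{k}(c),B)\simeq\aC(c,b_{0,k})$, so the obstruction to lifting lives in $\pi_{-1}\aC^{S}(c,a_{0,k-1})$. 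Since $c$ lies in some $\Sigma^{\ell}\aQ$ but $a_{0,k-1}$ is an arbitrary object of $\aC$, there is no reason for this to vanish. The mapping-cylinder and almost-stability tools you invoke do not supply the missing connectivity.

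The paper sidesteps this entirely by using the \emph{full} entries $a_{0,j}$ rather than the subquotients $a_{j-1,j}$. For $n=2$ with $Z=[x\to y\to z]$, it builds $F_{1}(x)\vee F_{2}(y)=[x\to x\amalg y\to y]$ (both summands f.c.b.\ since $x$ and $y$ are), and then attaches cells of the form $F_{2}(\Sigma a_{j})\to F_{2}(\text{contractible})$, one for each cell $a_{j}\to b_{j}$ of $x$, to glue the two copies of $x$ and arrive at $[x\to \mathrm{Cyl}(x\to y)\to Cf]\simeq Z$. No lifting is needed: the attaching maps are explicit, built from the given cell structure of $x$ and the given map $x\to y$. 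To repair your argument, replace the subquotient filtration by this direct construction.
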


\begin{proof}
We saw in Proposition~\ref{prop:Sdotinherit} that $\Sdot[n]\aC$ is
simplicially tensored; the fact that the tensor on $\Sdot[n]\aC$ is
objectwise on the diagram and the formula~\eqref{eqsn} for the mapping
spaces of $\Sdot[n]\aC$ prove that $\Sdot[n]\aC$ is almost stable.
Since $\aQ$ is a closed Waldhausen subcategory of $\aC$, $\Sdot[n]\aQ$
is a closed Waldhausen subcategory of $\Sdot[n]\aC$.  Again, the
formula~\eqref{eqsn} shows that the mapping spectra
$\Sdot[n]\aQ^{S}(A,B)$ are connective.  It is clear that
$\Sdot[n]\Sigma^{m}\aQ=\Sigma^{m}\Sdot[n]\aQ$ since both categories
are the functor categories whose objects are the sequences starting
with $*$ of $n$ composable cofibrations in $\aC$ between objects in
$\Sigma^{m}\aQ$ together with choices of quotients which also must be
in $\Sigma^{m}\aQ$.

Now assume that $Q$ is generating; it remains to show that
$\Sdot[n]\aQ$ is generating. For $n=2$, a typical object of $S_{2}\aC$
is of the form $Z=[\xymatrix@-1pc{x\ar@{ >->}[r]&y\ar@{->>}[r]&z }]$
for objects $x,y,z$ in $\aC$.  Replacing $Z$ with a weakly equivalent
object, we can assume without loss of generality that $x$ and $y$ are
isomorphic rather than just weakly equivalent to an iterated pushout
in Definition~\ref{defn:built}.
Clearly the objects
\[
[\xymatrix@-1pc{%
x\ar@{ = }[r]&x\ar@{->>}[r]&\relax *
}]
\qquad \text{and}\qquad
[\xymatrix@-1pc{%
\relax * \ar@{ >->}[r]&y\ar@{ = }[r]&y
}]
\]
can be finitely cellularly built using pushouts of objects of the same form.
We can then build
\[
Z'=[\xymatrix@-1pc{%
x\ar@{ >->}[r]&y\cup_{x}(x\otimes \Delta[1])\ar@{->>}[r]
&y\cup_{x}(x\otimes \Delta[1])\cup_{x}*
}]
\]
by first building
$[\xymatrix@-1pc{%
x\ar@{ >->}[r]&x \amalg y \ar@{->>}[r]&y
}]$
and then using pushouts over maps of the form
\begin{multline*}
[\xymatrix@-1pc{%
\relax * \ar@{ >->}[r]
& b_{j}\cup_{a_{j}}(a_{j}\otimes \Delta[1])\cup_{a_{j}}b_{j} \ar@{ = }[r]
&b_{j}\cup_{a_{j}}(a_{j}\otimes \Delta[1])\cup_{a_{j}}b_{j} 
}]\\
\to
[\xymatrix@-1pc{%
\relax * \ar@{ >->}[r]
& b_{j}\otimes \Delta[1] \ar@{ = }[r]
&b_{j}\otimes \Delta[1]
}]
\end{multline*}
where $a_{j}\to b_{j}$ are the cells building $x$.
Similar observations apply for $n>2$.
\end{proof}

Now Theorems~\ref{spheretheorem} and~\ref{thm:altsphere} are easily
seen to be equivalent by looking at the following diagram.
\[
\xymatrix{%
\Omega |THH(\Sdot\aQ^{\Gamma})|\ar[r]\ar[d]_{\sim}
&\Omega |THH(\Sdot\aC^{\Gamma})|\ar[d]\\
\Omega |THH(\Sdot\aQ^{S})|\ar[r]^{\sim}
&\Omega |THH(\Sdot\aC^{S})|
&THH(\aC^{S})\ar[l]_-{\sim}&THH(Q^{S}).\ar[l]_-{\sim}
}
\]
The lefthand vertical map is a weak equivalence since each map
$\Sdot[n]\aQ^{\Gamma}\to \Sdot[n]\aQ^{S}$ is a weak equivalence (and
in particular DK-equivalence) of spectral categories by
Propositions~\ref{prop:wccc} and~\ref{prop:sncc}, while the bottom horizontal
maps are weak equivalences by Theorem~\ref{thmthick}
(for the first and third maps) and Theorem~\ref{thmnoSnoncon} (for the
middle map).  Theorem~\ref{spheretheorem} then amounts to the
assertion that the righthand vertical map is a weak equivalence while
Theorem~\ref{thm:altsphere} is the assertion that the top horizontal
map is a weak equivalence.

\section{Proof of the Sphere Theorem}\label{secpfsphere}

This section contains the proof of Theorem~\ref{thm:altsphere}.  We
fix the almost stable simplicially tensored Waldhausen category $\aC$
and the generating connective class $Q$, letting $\aQ$ and
$\Sigma^{n}\aQ$ be as in Notation~\ref{notn:SigmaQ}.  Just as in
Waldhausen's argument \cite[\S1.7]{WaldhausenKT} we need to introduce
a Waldhausen category of CW complexes built out of cells based on $Q$.

\begin{defn}\label{defn:QCW}
A $Q$-CW complex\index{Q-CW complex@$Q$-CW complex} 
is a filtered object $X$ in $\aC$ 
\[
\cdots \to x_n \to x_{n+1} \to \cdots
\]
indexed on the integers, 
where the arrows $x_n \to x_{n+1}$ are cofibrations 
for all $n$ and such that the following
conditions hold for some $N\gg 0$:
\begin{enumerate}
\item $x_n = *$ for $n \leq -N$,
\item $x_n = x_{n+1}$ for $n \geq N$, and
\item For all $n$, the quotient $x_{n+1} / x_n$ is an object in $\Sigma^{n+1}\aQ$.
\end{enumerate}
We call $x_{N}$ the \noindexterm{underlying object} of $X$ in $\aC$.  Let
$CW_{Q}\aC$ denote the category whose objects are the $Q$-CW complexes
and whose maps are the maps of the underlying objects in $\aC$.  We say that a
$Q$-CW complex $X$ is connective if $x_n = *$ for $n < 0$, and denote
the full subcategory of connective $Q$-CW complexes by
$CW_{Q}\aC_{[0,\infty)}$.  More generally, for $I$ an interval in
$\bZ$, write $CW_{Q}\aC_{I}$ for the full subcategory of $Q$-CW
complexes $X$ with $x_{n}=*$ whenever $n$ is less than the elements of
$I$ and $x_{n}=x_{n+1}$ whenever $n+1$ is greater than the elements of
$I$.
\end{defn}

We define the mapping spectra in $CW_{Q}\aC^{\Gamma}$ and
$CW_{Q}\aC^{S}$ as the mapping spectra of the underlying objects in
$\aC^{\Gamma}$ and $\aC^{S}$, respectively.  For the
Waldhausen category structure, we use the following definition.

\begin{defn}\label{defn:cellular}
A \term{cellular map} of $Q$-CW complexes $X\to Y$ consists of
compatible maps
$x_{n}\to y_{n}$ for all $n$.  A cellular map is a \term{cellular cofibration} when 
each map $x_n \cup_{x_{n-1}} y_{n-1} \to
y_n$ is a cofibration in $\aC$ and the induced map $x_{n}/x_{n-1}\to
y_{n}/y_{n-1}$ is a cofibration in $\Sigma^{n}\aQ$.
\end{defn}

An easy check of the definitions then proves the following proposition.

\begin{prop}\label{prop:CWwald}
The category of $Q$-CW complexes and cellular maps forms a
Waldhausen category with 
cofibrations the cellular cofibrations of
Definition~\ref{defn:cellular} and weak equivalences the weak
equivalences of the underlying objects in $\aC$.  For $I$ an interval
in $\bZ$, the subcategory $CW_{Q}\aC_{I}$ forms a Waldhausen
subcategory (though not a closed one).
\end{prop}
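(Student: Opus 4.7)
The plan is to verify Waldhausen's axioms in turn, with the main effort focused on showing that the cellular cofibrations are closed under composition and under pushouts along arbitrary cellular maps, and then deducing the gluing axiom from the corresponding axiom in $\aC$. The zero object is the constant filtered diagram at $*$, and the unique map $*\to X$ is trivially a cellular cofibration since each filtration quotient $x_n/x_{n-1}$ lies in $\Sigma^{n}\aQ$ by hypothesis. Isomorphisms of $Q$-CW complexes are weak equivalences because they induce isomorphisms on underlying objects, and they are cellular cofibrations since in that case each map $x_n\cup_{x_{n-1}}y_{n-1}\to y_n$ is an isomorphism (hence a cofibration) and each $x_n/x_{n-1}\to y_n/y_{n-1}$ is an isomorphism in $\Sigma^n\aQ$.

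The key step is the construction of pushouts of cellular cofibrations. Given a cellular cofibration $X\to Y$ and a cellular map $X\to Z$, I define the candidate pushout $W$ with $w_n = y_n\cup_{x_n}z_n$, formed in $\aC$; for $N$ large enough all three filtrations stabilize and all three are trivial below $-N$, so $W$ has the correct vanishing properties. Using the fact that $x_n\cup_{x_{n-1}}y_{n-1}\to y_n$ is a cofibration in $\aC$, a standard pushout-of-pushouts diagram chase shows that $z_n\cup_{z_{n-1}}w_{n-1}\to w_n$ is also a cofibration in $\aC$, with induced map on cofibers naturally isomorphic to the pushout of $x_n/x_{n-1}\to y_n/y_{n-1}$ along $x_n/x_{n-1}\to z_n/z_{n-1}$ inside $\Sigma^{n}\aQ$. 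The hard part is verifying that this pushout stays in $\Sigma^{n}\aQ$: this is precisely where I invoke Proposition~\ref{prop:wccc}, which asserts that each $\Sigma^{n}\aQ$ is a closed Waldhausen subcategory of $\aC$ and hence closed under pushouts along cofibrations between its objects. The same analysis verifies that cellular cofibrations are closed under composition (using closure of $\Sigma^{n}\aQ$ under cofibrations composition) and that the pushout $W$ really is a categorical pushout in $CW_{Q}\aC$, since a cellular map out of $W$ is exactly a compatible family of maps out of $y_n$ and $z_n$ over $x_n$.

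For the gluing axiom, suppose we have a map of diagrams $X\leftarrow Y\rightarrow Z$ to $X'\leftarrow Y'\rightarrow Z'$ in $CW_{Q}\aC$ with each of $X\to X'$, $Y\to Y'$, $Z\to Z'$ a weak equivalence and $Y\to X$, $Y'\to X'$ cellular cofibrations. By the previous step the pushouts $W=X\cup_{Y}Z$ and $W'=X'\cup_{Y'}Z'$ exist in $CW_{Q}\aC$, and their underlying objects in $\aC$ agree with the pushouts computed levelwise in $\aC$. Since weak equivalences in $CW_{Q}\aC$ are detected on underlying objects and cellular cofibrations restrict to cofibrations on underlying objects, the gluing axiom in $\aC$ (which holds because $\aC$ is a Waldhausen category) applied stagewise and then passed to the colimit filtration gives that $W\to W'$ is a weak equivalence.

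Finally, for the subcategory $CW_{Q}\aC_{I}$ with $I$ an interval, I observe that the formula $w_n=y_n\cup_{x_n}z_n$ shows $W$ satisfies the vanishing and stabilization conditions of $I$ whenever $X$, $Y$, $Z$ do, so $CW_{Q}\aC_{I}$ is closed under the pushouts and compositions constructed above. Weak equivalences and isomorphisms trivially restrict, so $CW_{Q}\aC_{I}$ inherits the Waldhausen structure; the only main obstacle I foresee is keeping the bookkeeping of the two conditions in Definition~\ref{defn:cellular} straight, since one needs both that the underlying map is a cofibration in $\aC$ \emph{and} that the induced map on associated graded pieces is a cofibration in $\Sigma^{n}\aQ$, and closure of both conditions under composition and pushouts must be checked separately before combining them.
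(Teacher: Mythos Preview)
Your verification is correct and is essentially what the paper has in mind: the paper gives no proof beyond the sentence ``An easy check of the definitions then proves the following proposition.'' The one nontrivial ingredient you correctly isolate is Proposition~\ref{prop:wccc}, that each $\Sigma^{n}\aQ$ is a closed Waldhausen subcategory of $\aC$, which is exactly what is needed to keep the levelwise pushouts on associated graded pieces inside $\Sigma^{n}\aQ$; the remaining checks are indeed routine diagram chases in $\aC$. One small cleanup: in your gluing-axiom paragraph, the phrase ``passed to the colimit filtration'' is unnecessary, since the filtration stabilizes at a finite stage and the underlying object of $W=X\cup_{Y}Z$ is literally $x_{N}\cup_{y_{N}}z_{N}$ for $N$ large, so the gluing axiom in $\aC$ applies directly at that level.
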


Since $\Sdot[n]\aQ$ is a connective class, we also have the category
of $\Sdot[n]\aQ$-CW complexes in $\Sdot[n]\aC$.  When we restrict to the
subcategories of cellular maps, both $\Sdot[n](CW_{Q}\aC)$ and
$CW_{\Sdot[n]\aQ}\Sdot[n]\aC$ are subcategories of the category of
functors $\Ar[n]\times \bZ \to \aC$ (where the category $\bZ$ is the
ordered set of integers).  An easy check of the definitions then shows
that these categories coincide.  More generally, for $I$ an interval
in $\bZ$, the cellular maps in $\Sdot[n](CW_{Q}\aC_{I})$ and
$(CW_{\Sdot[n]\aQ}\Sdot[n]\aC)_{I}$ are the same subcategory of
functors $\Ar[n]\times I \to \aC$. Expanding to all maps in
$\Sdot[n](CW_{Q}\aC)$ and $CW_{\Sdot[n]\aQ}\Sdot[n]\aC$, and looking at
the cofibrations and weak equivalences, we get the following
proposition.

\begin{prop}\label{prop:snswitch}
The Waldhausen categories $\Sdot[n](CW_{Q}\aC)$ and
$CW_{\Sdot[n]\aQ}\Sdot[n]\aC$ are canonically isomorphic.
For any interval $I$ in $\bZ$, the Waldhausen categories\break 
$\Sdot[n](CW_{Q}\aC_{I})$ and $(CW_{\Sdot[n]\aQ}\Sdot[n]\aC)_{I}$ are
canonically isomorphic.
\end{prop}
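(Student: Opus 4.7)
The plan is to identify both categories as the same subcategory of the category of functors $\Ar[n] \times \bZ \to \aC$, with matching objects, morphisms, cofibrations, and weak equivalences. Unwinding objects: an object of $\Sdot[n](CW_Q\aC)$ is an $\Ar[n]$-diagram in $CW_Q\aC$ satisfying the $\Sdot$ axioms in the Waldhausen structure of $CW_Q\aC$. Since the structure maps of an $\Sdot$-diagram are cofibrations in $CW_Q\aC$ (hence cellular), and pushouts of cellular cofibrations are computed level-wise in the filtration direction, this amounts to a functor $F \colon \Ar[n] \times \bZ \to \aC$ whose restriction $F(i,j,-)$ at each $(i,j)$ is a $Q$-CW complex and whose restriction $F(-,-,m)$ at each filtration level $m$ is an object of $\Sdot[n]\aC$. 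An object of $CW_{\Sdot[n]\aQ}\Sdot[n]\aC$ is a filtered object in $\Sdot[n]\aC$ with successive quotients in $\Sdot[n]\Sigma^{m+1}\aQ$; since the Waldhausen structure on $\Sdot[n]\aC$ is level-wise in $\Ar[n]$ (with mapping spaces given by~\eqref{eqsn}), this also unfolds to a functor $\Ar[n] \times \bZ \to \aC$.

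The conditions on such an $F$ then match for three reasons. First, the $\Sdot$-axioms (initial objects trivial, cofibrations, cocartesian squares) are imposed level-wise in the filtration direction on both sides. Second, cellular cofibrations in the filtration direction are level-wise cofibrations in $\aC$, which exactly matches cofibrations in $\Sdot[n]\aC$. Third, and most importantly, the identity $\Sdot[n]\Sigma^{m}\aQ = \Sigma^{m}\Sdot[n]\aQ$ from Proposition~\ref{prop:sncc}, combined with the level-wise computation of quotients in $\Sdot[n]\aC$, shows that each $F(i,j,m+1)/F(i,j,m)$ lies in $\Sigma^{m+1}\aQ$ at every $\Ar[n]$-vertex $(i,j)$ if and only if the $\Sdot[n]\aC$-quotient of $F(-,-,m+1)$ by $F(-,-,m)$ lies in $\Sigma^{m+1}\Sdot[n]\aQ$.

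Morphisms coincide on both sides: in $\Sdot[n](CW_Q\aC)$ a morphism is a natural transformation of $\Ar[n]$-diagrams whose components are morphisms in $CW_Q\aC$, namely arbitrary maps of underlying objects in $\aC$; in $CW_{\Sdot[n]\aQ}\Sdot[n]\aC$ a morphism is a map of underlying objects in $\Sdot[n]\aC$, which is precisely a natural transformation of $\Ar[n]$-diagrams in $\aC$ between the top filtration pieces. Cofibrations on either side translate into the common condition that at every $\Ar[n]$-vertex each filtration-level comparison map is a cofibration in $\aC$ with quotient in the appropriate $\Sigma^{m}\aQ$, together with the $\Sdot$-cofibration condition holding at each filtration level; weak equivalences on both sides are the level-wise weak equivalences of underlying objects in $\aC$. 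The extension to an interval $I \subset \bZ$ requires no new idea, since restricting the filtration range commutes with both $\Sdot[n]$ and $CW$. The main obstacle is strictly the bookkeeping of matching the two sets of axioms, and Proposition~\ref{prop:sncc} is precisely the ingredient that makes this clean.
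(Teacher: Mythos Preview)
Your approach is essentially the same as the paper's: the paper does not give a separate proof of this proposition but rather, in the paragraph immediately preceding it, observes that when restricted to cellular maps both categories sit inside the category of functors $\Ar[n]\times\bZ\to\aC$, that ``an easy check of the definitions'' shows they coincide there, and that one then expands to all maps and reads off the cofibrations and weak equivalences. Your write-up carries out precisely this check, and you correctly isolate the identity $\Sdot[n]\Sigma^{m}\aQ=\Sigma^{m}\Sdot[n]\aQ$ from Proposition~\ref{prop:sncc} as the ingredient that matches the filtration-quotient conditions on the two sides.

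One small imprecision: when you describe morphisms as ``arbitrary maps of underlying objects,'' you are describing the all-maps enlargement (in the sense of Definition~\ref{defn:QCW} and Notation~\ref{notn:cwgamma}) rather than the Waldhausen category itself, whose morphisms are the cellular maps of Proposition~\ref{prop:CWwald}. The paper handles this in two steps, first matching the cellular-map subcategories inside $\mathrm{Fun}(\Ar[n]\times\bZ,\aC)$ and then ``expanding to all maps.'' Your argument conflates these, but since the identification of objects and of the filtration data already forces the cellular morphisms on the two sides to agree (they are exactly the natural transformations of $\Ar[n]\times\bZ$-diagrams), and the all-maps version then follows by forgetting filtrations, this does not affect correctness.
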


Because we need to restrict to cellular maps to obtain a Waldhausen
category, the category $CW_{Q}\aC$ does not fit into our usual
framework of simplicially enriched Waldhausen categories (as the
familiar example of CW complexes in spaces demonstrates).  Instead,
thinking of $Q$-CW complexes as objects of $\aC$ with extra structure,
we assign mapping spectra by looking at the underlying objects.  We
use the following notation.

\begin{notn}\label{notn:cwgamma}
Let $\Sdot[n](CW_{Q}\aC)^{\Gamma}$ 
denote the spectral category whose objects are the objects of
$\Sdot[n](CW_{Q}\aC)$ and whose mapping spectra are the
mapping spectra of the
underlying objects in $\Sdot[n]\aC^{\Gamma}$.
For $I$ an interval in $\bZ$, we define
$\Sdot[n](CW_{Q}\aC_{I})^{\Gamma}$ 
analogously.
\end{notn}

As an alternate take on this notation, we note that under the
canonical isomorphism of Proposition~\ref{prop:snswitch}, 
we get the identification of spectral categories
\[
\Sdot[n](CW_{Q}\aC_{I})^{\Gamma}=
(CW_{\Sdot[n]\aQ}\Sdot[n]\aC_{I})^{\Gamma}
\]
As a first reduction of Theorem~\ref{thm:altsphere}, we have the
following observation.  In it, the ``forgetful functor'' is the
functor that takes a $\Sdot[n]\aQ$-CW complex to its underlying object
of $\Sdot[n]\aC$.

\begin{prop}\label{prop:cwreplace}
For any $n$, the forgetful functor
$\Sdot[n](CW_{Q}\aC)^{\Gamma}\to \Sdot[n]\aC^{\Gamma}$ is a DK-equivalence.
\end{prop}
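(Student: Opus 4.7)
The plan is as follows. The DK-embedding property is automatic from the definition: by Notation~\ref{notn:cwgamma}, the mapping spectra in $\Sdot[n](CW_{Q}\aC)^{\Gamma}$ literally coincide with the mapping spectra of the underlying objects in $\Sdot[n]\aC^{\Gamma}$, so the forgetful functor induces isomorphisms on mapping spectra. What requires work is essential surjectivity on $\pi_{0}$ of the component categories. Using Proposition~\ref{prop:snswitch} to identify $\Sdot[n](CW_{Q}\aC)^{\Gamma}$ with $(CW_{\Sdot[n]\aQ}\Sdot[n]\aC)^{\Gamma}$, and Proposition~\ref{prop:sncc} to recall that $\Sdot[n]\aC$ is almost stable simplicially tensored with generating connective class $\Sdot[n]\aQ$, I reduce to the universal $n=0$ statement: for any almost stable simplicially tensored Waldhausen category $\aC$ with generating connective class $Q$, every object of $\aC$ is weakly equivalent to the underlying object of some $Q$-CW complex.

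Since $Q$ generates, each object of $\aC$ is finitely cellularly built from $\bigcup_{k\in \bZ}\Sigma^{k}\aQ$, so I would induct on the number of cells. The base case is the trivial filtration with underlying object $*$. For the inductive step, suppose $X$ is a $Q$-CW complex with underlying object $x_{N}$, and that one further cell $a\to b$ (with $a\in \Sigma^{n}\aQ$ and $b$ contractible) is attached along a map $f\colon a\to x_{N}$, producing the new object $y=x_{N}\cup_{a}b$. After enlarging $N$ if necessary, we may assume $x_{n}$ is defined in the filtration.

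The key technical input is a connectivity estimate: for $u\in \Sigma^{p}\aQ$ and $v\in \Sigma^{q}\aQ$ with $q>p$, the mapping spectrum $\aC^{S}(u,v)$ has $\pi_{i}=0$ for $i<q-p$. Iterated application of stability hypothesis~(ii) combined with Proposition~\ref{propfibercofiber}(i) gives $\aC^{S}(u,v)\htp \aC^{S}(u',\Sigma^{q-p}v')$ for representatives $u',v'\in \aQ$, and by Definition~\ref{defenrich} this is a $(q-p)$-fold shift of $\aC^{S}(u',v')$ in the stable category, which is connective by Proposition~\ref{prop:wccc}. Applied to the quotient $x_{N}/x_{n}$, which is an iterated extension of cofibers $x_{k+1}/x_{k}\in \Sigma^{k+1}\aQ$ for $k\geq n$, an induction up the cofiber sequences $x_{k}/x_{n}\to x_{k+1}/x_{n}\to x_{k+1}/x_{k}$ via Proposition~\ref{propfibercofiber}(ii) yields $\pi_{0}\aC^{S}(a,x_{N}/x_{n})=0$. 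The fiber sequence $\aC^{S}(a,x_{n})\to \aC^{S}(a,x_{N})\to \aC^{S}(a,x_{N}/x_{n})$ then provides a lift $\tilde{f}\colon a\to x_{n}$ of $f$ up to homotopy; since $a\to b$ is a cofibration, the pushout $x_{N}\cup_{a}b$ only depends on the homotopy class of the attaching map.

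I then define $X'$ by $x'_{k}=x_{k}$ for $k\leq n$ and $x'_{k}=x_{k}\cup_{a}b$ for $k\geq n+1$ (attaching via $\tilde{f}$ composed with the structure maps). For $k>n+1$, a direct excision computation gives $x'_{k}/x'_{k-1}\iso x_{k}/x_{k-1}\in \Sigma^{k}\aQ$. At the critical level $n+1$, the composite cofibration $x_{n}\to x_{n+1}\to x'_{n+1}$ yields a cofiber sequence $x_{n+1}/x_{n}\to x'_{n+1}/x'_{n}\to \Sigma a$, whose extension class lies in $\pi_{0}\aC^{S}(a,x_{n+1}/x_{n})$; this group vanishes by the connectivity estimate, so the extension splits and $x'_{n+1}/x'_{n}\htp (x_{n+1}/x_{n})\vee \Sigma a\in \Sigma^{n+1}\aQ$ (using that $\Sigma^{n+1}\aQ$ is a closed Waldhausen subcategory by Proposition~\ref{prop:wccc}, hence closed under finite coproducts and weak equivalences). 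The underlying object of $X'$ is $x_{N}\cup_{a}b=y$, completing the induction. The main obstacle I anticipate is precisely this splitting at level $n+1$, whose verification depends on the connectivity estimate; the rest of the argument amounts to bookkeeping with cofiber sequences and the Waldhausen structure.
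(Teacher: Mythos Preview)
Your proof is correct and follows the same strategy as the paper: reduce via Proposition~\ref{prop:snswitch} and Proposition~\ref{prop:sncc} to showing that the forgetful functor $CW_{Q}\aC^{\Gamma}\to\aC^{\Gamma}$ is essentially surjective for arbitrary $\aC$ and $Q$, then induct on the number of cells by lifting the attaching map down the filtration using the connectivity estimate. One simplification is available at your critical level: since the lifted map $\tilde f$ lands in $x_{n}$, the induced map $a\to x_{n+1}/x_{n}$ is literally the zero map, so the pushout computation gives $x'_{n+1}/x'_{n}\cong (x_{n+1}/x_{n})\vee(b/a)$ on the nose---the splitting is automatic and no appeal to the vanishing of $\pi_{0}\aC^{S}(a,x_{n+1}/x_{n})$ is needed (the paper simply writes down the resulting filtered object and asserts it is a $Q$-CW complex).
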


\begin{proof}
Using the identification of $\Sdot[n](CW_{Q}\aC)$ as
$CW_{\Sdot[n]\aQ}\Sdot[n]\aC$, it suffices to show that for the
arbitrary almost stable simplicially tensored Waldhausen category
$\aC$ and generating connective class $Q$ the forgetful functor
$CW_{Q}\aC^{\Gamma}\to \aC^{\Gamma}$ is a DK-equivalence.  By
definition of the mapping spectra, it is a DK-embedding, and so we
just need to show that every object of $\aC$ is weakly equivalent to
the underlying object of a $Q$-CW complex.  Since $Q$ is generating,
and $*$ is the underlying object of a $Q$-CW complex, it suffices to
show that if $y$ is the underlying object of a $Q$-CW complex $Y$,
then $x=y\cup_{a}b$ is weakly equivalent to the underlying object of a
$Q$-CW complex whenever $a$ is in $\Sigma^{n}\aQ$, $b$ is contractible, and
$a\to b$ is a cofibration.  Using the cofibration sequence of
Proposition~\ref{propfibercofiber} and stability hypothesis~(ii), we
see that we have homotopy fibration sequences
\[
\aC(a,y_{m})\to \aC(a,y_{m+1})\to \aC(a,y_{m+1}/y_{m})
\]
for all $m$.  Since $y_{m+1}/y_{m}$ is in $\Sigma^{m+1}\aQ$, for
$m\geq n$ we have that  $\pi_{0}\aC(a,y_{m+1}/y_{m})=0$ and every map
from $a$ to $y_{m+1}$ lifts up to homotopy to a map $a\to y_{m}$.
Thus, the map $a\to y$ lifts up to homotopy to a map $a\to y_{n}$.
Let $X$ be the $Q$-CW complex 
\[
X=(\dotsb \to y_{n-1}\to y_{n}\cup_{a} b \to y_{n+1}\cup_{a}b\to \dotsb ),
\]
Then the underlying object of $X$ is weakly equivalent to $x$.
\end{proof}

It follows from Proposition~\ref{prop:cwreplace} that the map 
\[
THH(\Sdot[n] (CW_{Q}\aC)^{\Gamma})\to
THH(\Sdot[n] \aC^{\Gamma})
\]
is a weak equivalence. The next step is to compare the subcategory of
connective objects.  The cone and suspension functor on $\aC$ extend
to cone and suspension functors of $Q$-CW complexes in the usual way:
for a $Q$-CW complex $X$, let $CX$ be the $Q$-CW complex with $n$-th
object $x_{n}\cup_{x_{n-1}}Cx_{n-1}$.  The inclusion of $X$ in $CX$ is
a cellular cofibration and $\Sigma X$ is its quotient.  The Additivity
Theorem and Corollary~\ref{coradditivity} generalize to the context of
$THH(\Sdot (CW_{Q}\aC)^{\Gamma})$ to show that the self-map of
$|THH(\Sdot (CW_{Q}\aC)^{\Gamma})|$ induced by $C$ coincides (in the
stable category) with the sum of the identity and the map induced by
$\Sigma $.  Since $C$ induces the trivial map, it follows that
$\Sigma$ induces the map $-\id$, and in particular is a weak
equivalence.  The analogous observations apply to $\aC_{[0,\infty)}$,
showing that suspension induces a weak equivalence on $|THH(\Sdot
(CW_{Q}\aC)^{\Gamma})|$ and on
$|THH(\Sdot(CW_{Q}\aC_{[0,\infty)})^{\Gamma})|$.  Taking the homotopy
colimit of the maps induced by suspension, we see that the inclusions
\begin{gather*}
|THH(\Sdot (CW_{Q}\aC)^{\Gamma})|
\to \hocolim_{\Sigma}|THH(\Sdot (CW_{Q}\aC)^{\Gamma})|\\
|THH(\Sdot (CW_{Q}\aC_{[0,\infty)})^{\Gamma})|
\to \hocolim_{\Sigma}|THH(\Sdot (CW_{Q}\aC_{[0,\infty)})^{\Gamma})|
\end{gather*}
are weak equivalences.  We use this observation in the proof of the
following proposition. 

\begin{prop}\label{prop:connreplace}
The inclusion 
\[
|THH(\Sdot (CW_{Q}\aC_{[0,\infty)})^{\Gamma})|\to
|THH(\Sdot (CW_{Q}\aC)^{\Gamma})|
\]
is a weak equivalence.
\end{prop}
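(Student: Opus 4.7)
The plan is to reduce the statement to a filtered colimit argument using the bounded-support filtration. Specifically, I would introduce the filtration
\[
CW_{Q}\aC_{[0,\infty)}\subset CW_{Q}\aC_{[-1,\infty)}\subset CW_{Q}\aC_{[-2,\infty)}\subset \dotsb
\]
whose union (over $n\geq 0$) is all of $CW_{Q}\aC$, since by Definition~\ref{defn:QCW} every $Q$-CW complex is supported in some bounded interval $[-N,N]$.  I would then prove two things: (i) the inclusion $CW_{Q}\aC_{[0,\infty)}\hookrightarrow CW_{Q}\aC_{[-n,\infty)}$ is a weak equivalence on $|THH(\Sdot(-)^{\Gamma})|$ for every $n\geq 0$; and (ii) $|THH(\Sdot(CW_{Q}\aC)^{\Gamma})|$ is the homotopy colimit over $n$ of the $|THH(\Sdot(CW_{Q}\aC_{[-n,\infty)})^{\Gamma})|$.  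Together these give the proposition.

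For (i), the first step is to observe that the cone/suspension argument given just before the proposition statement applies verbatim to each subcategory $CW_{Q}\aC_{[-n,\infty)}$.  Indeed, if $X\in CW_{Q}\aC_{[-n,\infty)}$ then $(CX)_{-n}=x_{-n}\cup_{x_{-n-1}}Cx_{-n-1}=*$, so $CX$ stays in $CW_{Q}\aC_{[-n,\infty)}$, and $X\to CX\to \Sigma X$ is a cofiber sequence of exact self-functors on this subcategory.  The natural transformation $CX\to *$ is a cellular weak equivalence (it is a map to the zero $Q$-CW complex), so by Proposition~\ref{propnat} the cone functor $C$ induces the trivial self-map on $|THH(\Sdot(CW_{Q}\aC_{[-n,\infty)})^{\Gamma})|$; the Additivity Theorem~\ref{thmadditivity} (via Corollary~\ref{coradditivity}) then gives $\Sigma\simeq -\id$ on this spectrum, in particular making $\Sigma$ a weak equivalence.

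Next, consider the inclusion $i\colon CW_{Q}\aC_{[0,\infty)}\hookrightarrow CW_{Q}\aC_{[-n,\infty)}$ together with the $n$-fold suspension $\Sigma^{n}\colon CW_{Q}\aC_{[-n,\infty)}\to CW_{Q}\aC_{[0,\infty)}$, which is well-defined since suspension shifts the support upward by one.  Both composites $\Sigma^{n}\circ i$ and $i\circ \Sigma^{n}$ agree with the $n$-fold self-suspension on the respective Waldhausen categories, and hence induce weak equivalences on $|THH(\Sdot(-)^{\Gamma})|$ by the previous paragraph.  In any category, a morphism whose two composites with a second morphism are both isomorphisms has a left and a right inverse (which must coincide), so is itself an isomorphism; applied in the stable category of cyclotomic spectra this shows that $i$ induces a weak equivalence of cyclotomic spectra, proving (i).

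For (ii), each object of $\Sdot[m](CW_{Q}\aC)$ consists of finitely many $Q$-CW complexes, each of bounded support, so it lies in $\Sdot[m](CW_{Q}\aC_{[-n,\infty)})$ for all sufficiently large $n$; the $\Gamma$-spectral enrichment is unchanged under these full-subcategory inclusions; and both the nerve-style $THH$ construction and geometric realization commute with filtered colimits of spectral categories.  Combining (i) and (ii) yields the weak equivalence asserted in the proposition.  The one point requiring care is uniformity in $n$ of the cone/suspension argument; this is addressed by the observation above that $CX$, and so the natural weak equivalence $CX\to *$, remains inside $CW_{Q}\aC_{[-n,\infty)}$ for every $n$.
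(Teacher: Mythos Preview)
Your argument is correct and takes a genuinely different route from the paper's.  The paper does not filter $CW_{Q}\aC$ from below; instead it works at a fixed simplicial level, builds an auxiliary ``stabilized'' spectral category $CW^{\Sigma}_{Q}\aC^{\Gamma}$ whose objects are pairs $(X,m)$ and whose mapping spectra are the colimits $\colim_{k}\aC^{\Gamma}(\Sigma^{k-m}X,\Sigma^{k-n}Y)$, identifies $\hocolim_{\Sigma}THH$ with $THH$ of this stabilized category, and then observes that the inclusion of the connective objects into it is a DK-equivalence.  Both arguments rest on the same underlying fact (suspension acts invertibly after Additivity), but yours avoids constructing a new spectral category and instead uses the two composites $\Sigma^{n}\circ i$ and $i\circ\Sigma^{n}$ to force $i$ to be an isomorphism in the homotopy category; this is more elementary and arguably cleaner.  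The paper's approach has the mild advantage that it works one simplicial degree at a time and packages the stabilization conceptually, which can be reused elsewhere.

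One small correction: your displayed computation $(CX)_{-n}=x_{-n}\cup_{x_{-n-1}}Cx_{-n-1}=*$ is not right---since $x_{-n-1}=*$ this pushout is $x_{-n}$, not $*$.  What you actually need (and what is true) is that $(CX)_{m}=*$ for $m<-n$, which follows because for such $m$ both $x_{m}$ and $x_{m-1}$ vanish.  The conclusion that $CX$ (and hence $\Sigma X$) stays in $CW_{Q}\aC_{[-n,\infty)}$ is unaffected, so the cofiber-sequence-of-self-functors argument goes through as you claim.
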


\begin{proof}
By the preceding observations, it suffices to prove that the map
\[
\hocolim_{\Sigma} THH(\Sdot[n] (CW_{Q}\aC_{[0,\infty)})^{\Gamma})
\to \hocolim_{\Sigma} THH(\Sdot[n] (CW_{Q}\aC)^{\Gamma})
\]
is a weak equivalence for each $n$.  Again using the fact that $\aC$
and $Q$ are arbitrary, it suffices to consider the case $n=1$.  Let
$CW^{\Sigma}_{Q}\aC^{\Gamma}$ be the spectrally enriched category
where an object is an ordered 
pair $(X,m)$ where $X$ is a $Q$-CW complex and $m$ is a non-negative
integer; for mapping spectra, we let 
\[
CW^{\Sigma}_{Q}\aC^{\Gamma}((X,m),(Y,n))=\lcolim_{k\geq \max(m,n)}
\aC^{\Gamma}(\Sigma^{k-m}X,\Sigma^{k-n}Y).
\]
(Composition is induced levelwise in the colimit system after taking
$k$ large enough.)  Let $CW^{\Sigma}_{Q}\aC_{[0,\infty)}$ be the full
subcategory of $CW^{\Sigma}_{Q}\aC^{\Gamma}$ consisting of the objects
$(X,m)$ with $X$ connective.  By Proposition~\ref{propthhcolim}, the
canonical maps 
\begin{gather*}
\hocolim_{\Sigma} THH(CW_{Q}\aC^{\Gamma})
\to THH(CW^{\Sigma}_{Q}\aC^{\Gamma})\\
\hocolim_{\Sigma} THH(CW_{Q}\aC_{[0,\infty)}^{\Gamma})
\to THH(CW^{\Sigma}_{Q}\aC_{[0,\infty)}^{\Gamma})
\end{gather*}
are weak equivalences.  The inclusion of
$CW^{\Sigma}_{Q}\aC_{[0,\infty)}^{\Gamma}$ in
$CW^{\Sigma}_{Q}\aC^{\Gamma}$ is a DK-equivalence, and so also induces
a weak equivalence on $THH$. 
\end{proof}

The previous two propositions show that the map
\[
|THH(\Sdot(CW_{Q}\aC_{[0,\infty)})^{\Gamma})|\to
|THH(\Sdot\aC^{\Gamma})|
\]
is a weak equivalence, reducing the proof of
Theorem~\ref{thm:altsphere} to showing that the map 
\[
|THH(\Sdot\aQ^{\Gamma})|\to |THH(\Sdot(CW_{Q}\aC_{[0,\infty)})^{\Gamma})|
\]
is a weak equivalence.  This is an easy consequence of the following lemma.

\begin{lem}\label{lem:spheremain}
For every $n\geq 1$, the inclusion of $CW_{Q}\aC_{[0,n-1]}$ in
$CW_{Q}\aC_{[0,n]}$ induces a weak equivalence 
\[
|THH(\Sdot (CW_{Q}\aC_{[0,n-1]})^{\Gamma})|\to 
|THH(\Sdot (CW_{Q}\aC_{[0,n]})^{\Gamma})|.
\]
\end{lem}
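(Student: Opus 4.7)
The plan is to apply the Additivity Theorem (Corollary~\ref{coradditivity}) to a cofiber sequence of exact endofunctors of $CW_Q\aC_{[0,n]}$. Define three functors: the inclusion $i\colon CW_Q\aC_{[0,n-1]} \to CW_Q\aC_{[0,n]}$ setting $x_n := x_{n-1}$; the $(n-1)$-truncation $\alpha\colon CW_Q\aC_{[0,n]} \to CW_Q\aC_{[0,n-1]}$ forgetting $x_n$, so that $\alpha \circ i = \mathrm{id}$; and the ``top cell'' endofunctor $\tau$ of $CW_Q\aC_{[0,n]}$ sending $X$ to the CW complex $(\ast \to \cdots \to \ast \to x_n/x_{n-1})$ concentrated in degree $n$. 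The natural cellular transformations $i\alpha \Rightarrow \mathrm{id} \Rightarrow \tau$ form a cofiber sequence of exact endofunctors: the map $i\alpha(X) \to X$ is the identity in degrees below $n$ and the attaching cofibration $x_{n-1} \to x_n$ in degree $n$, with cellular cofiber $\tau(X)$.

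The second step is to apply Additivity in the spectral setting of Notation~\ref{notn:cwgamma}. Some care is needed here: the truncation $\alpha$ is not naturally a spectral functor on $(CW_Q\aC_{[0,n]})^\Gamma$ by itself, since a spectral map $x_n \to y_n$ does not descend to a map of $(n-1)$-skeleta. However, the composite $i\alpha$ together with its natural transformation to the identity is spectrally well-defined via the natural map on underlying objects $x_{n-1} \to x_n$, and similarly for $\tau$. With this in place, Corollary~\ref{coradditivity} yields
\[
\mathrm{id}_\ast \simeq (i\alpha)_\ast + \tau_\ast
\]
as self-maps of $|THH(\Sdot(CW_Q\aC_{[0,n]})^\Gamma)|$ in the homotopy category of cyclotomic spectra. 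Since $\alpha_\ast i_\ast = \mathrm{id}$, the map $i_\ast$ is a split monomorphism, and its cofiber is identified with the image of the complementary projection $\tau_\ast$. Thus the lemma reduces to showing that $\tau_\ast$ is nullhomotopic.

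The third step---and the main obstacle---is to establish the nullity of $\tau_\ast$. The functor $\tau$ factors as $CW_Q\aC_{[0,n]} \xrightarrow{\tau'} CW_Q\aC_{[n,n]} \xrightarrow{j} CW_Q\aC_{[0,n]}$, with $CW_Q\aC_{[n,n]} \cong \Sigma^n\aQ$ sitting inside $CW_Q\aC_{[0,n]}$ as CW complexes concentrated in degree $n$. To show that $j_\ast$ is null on $|THH|$, the plan is an Eilenberg-swindle-style cone argument: for each $a \in \Sigma^n\aQ$, exhibit a cellular cofibration $j(a) \rightarrowtail Ca$ in $CW_Q\aC_{[0,n]}$ whose underlying object in $\aC$ is contractible and whose cellular cofiber is naturally weakly equivalent to a shift of $j(a)$. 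Iterating and taking a homotopy colimit in the style of the proof of Proposition~\ref{prop:connreplace} then produces a natural null homotopy of $j_\ast$ through cyclotomic maps. Carrying this out precisely---constructing $Ca$ compatibly with the spectral enrichment of Notation~\ref{notn:cwgamma} and with the $S_\bullet$-construction, and verifying that the null homotopy assembles correctly---is the principal technical difficulty.
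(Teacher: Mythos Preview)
Your reduction via Additivity to showing that $\tau_*$ (or equivalently $j_*\colon |THH(\Sdot(\Sigma^n\aQ)^\Gamma)| \to |THH(\Sdot(CW_Q\aC_{[0,n]})^\Gamma)|$) is null is a natural first move, and is essentially the decomposition the paper also exploits via the extension category $\aE\Gamma^n_\bullet$. But your proposal has a genuine gap at exactly the point you flag as ``the principal technical difficulty,'' and the sketch you give there does not work.

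The cone/swindle argument in step~3 needs, for $a\in\Sigma^n\aQ$, a cellular cofibration $j(a)\rightarrowtail Ca$ inside $CW_Q\aC_{[0,n]}$ with $Ca$ contractible. But the cellular cofiber $Ca/j(a)$ then has underlying object weakly equivalent to $\Sigma a\in\Sigma^{n+1}\aQ$, and there is no reason this admits a $Q$-CW structure with cells only in degrees $[0,n]$; in the almost stable (non-stable) case it typically does not. Even when it does, iterating produces objects equivalent to $\Sigma^k a$ for all $k$, and the boundedness of $[0,n]$ eventually obstructs the construction. The analogy with Proposition~\ref{prop:connreplace} fails: there one is free to suspend indefinitely inside $CW_Q\aC_{[0,\infty)}$, whereas here the ceiling at degree $n$ is precisely the obstruction. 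A secondary issue is that your step~2 invokes Corollary~\ref{coradditivity} for functors $i\alpha$ and $\tau$ that are not enriched exact on $(CW_Q\aC_{[0,n]})^\Gamma$; the paper handles this by passing to the fiber-product enrichment $\aE\Gamma^n_k$, where Additivity does apply, but then the map $\aE\Gamma^n_\bullet\to\aC\Gamma^n_\bullet$ being an equivalence is not formal---it is equivalent to the lemma itself.

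The paper's route is genuinely different from yours and avoids the nullity problem entirely. Rather than attempting to show the $\Sigma^n\aQ$ summand dies, it compares the connective enrichment $\aC\Gamma^n_\bullet$ to the non-connective enrichment $\aC S^n_\bullet$, where the analogue of the lemma is already known (since $\Sdot\aQ^S\to\Sdot\aC^S$ is a levelwise DK-equivalence on $THH$ by pretriangulated arguments). The bridge between the two enrichments is built by an induction on $n$, the extension categories $\aE\Gamma^n_\bullet$ and $\aE S^n_\bullet$, and a localization-style argument (Lemma~\ref{lem:spherefinal}) using $THH$ with coefficients and the Dennis-Waldhausen Morita technology from \cite{BlumbergMandellTHHLoc}. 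The key computation is that the relevant bimodule $\aL_\aE$ can be identified with a homotopy pullback, which lets one show $\aM_\aE\simeq u^*\aM_\aC$ and conclude. This machinery is what replaces your missing cone argument.
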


\begin{proof}[Proof of Theorem~\ref{thm:altsphere} from
Lemma~\ref{lem:spheremain}]
The lemma implies that the maps in the homotopy colimit system
\[
\hocolim_{n}|THH(\Sdot (CW_{Q}\aC_{[0,n]})^{\Gamma})|
\]
are all weak equivalences.  By Proposition~\ref{propthhcolim}, we see
that the canonical 
map from the homotopy colimit to
$|THH(\Sdot(CW_{Q}\aC_{[0,\infty)})^{\Gamma})|$  
is a weak equivalence.  It follows that the map
\[
|THH(\Sdot\aQ^{\Gamma})|=
|THH(\Sdot(CW_{Q}\aC_{[0,0]})^{\Gamma})|
\to |THH(\Sdot(CW_{Q}\aC)^{\Gamma})|
\]
is a weak equivalence.  Composing with the weak equivalence
\[
|THH(\Sdot (CW_{Q}\aC)^{\Gamma})|\to
|THH(\Sdot \aC^{\Gamma})|
\]
above and applying $\Omega$, we see that the map
$WTHH^{\Gamma}(\aQ)\to WTHH^{\Gamma}(\aC)$ is a weak equivalence.
\end{proof}

The remainder of the section is devoted to the proof of
Lemma~\ref{lem:spheremain}.  The argument is somewhat roundabout,
requiring the introduction of the spectral categories
$\Sdot[k](CW_{Q}\aC_{[0,n]})^{S}$, defined analogously to
$\Sdot[k](CW_{Q}\aC_{[0,n]})^{\Gamma}$ in Notation~\ref{notn:cwgamma}
but using the non-connective enrichment.  The proof of
Proposition~\ref{prop:cwreplace} equally well shows that the forgetful
functor $\Sdot[k](CW_{Q}\aC)^{S}\to \Sdot[k]\aC^{S}$ is a
DK-equivalence.  These non-connective enrichments are easier to
understand because  Proposition~\ref{propfibercofiber} implies that
when we DK-embed $\Sdot[k]\aC^{S}$ in a pretriangulated
spectral category, the DK-embedding 
takes cofiber sequences to distinguished triangles in the derived
category.   As a consequence, Theorem~\ref{thmthick} tells
us that the maps 
\[
THH(\Sdot[k]\aQ^{S})\to
THH(\Sdot[k](CW_{Q}\aC_{[0,n-1]})^{S}) \to
THH(\Sdot[k](CW_{Q}\aC_{[0,n]})^{S})
\]
are weak equivalences.
Looking at the diagram
\[
\xymatrix{%
|THH(\Sdot(CW_{Q}\aC_{[0,n-1]})^{\Gamma})|\ar[r]\ar[d]
&|THH(\Sdot(CW_{Q}\aC_{[0,n]})^{\Gamma})|\ar[d]\\
|THH(\Sdot(CW_{Q}\aC_{[0,n-1]})^{S})|\ar[r]^-{\sim}
&|THH(\Sdot(CW_{Q}\aC_{[0,n]})^{S})|,
}
\]
we assume by induction on $n$ that the lefthand map is a weak
equivalence, the base case being the already known case of 
$\Sdot CW_{Q}\aC_{[0,0]}=\Sdot\aQ$.  We then prove that the top map is a weak
equivalence by showing that the righthand map is a weak equivalence.

To save space and eliminate unnecessary symbols, we will now write
$\aC^{n}_{k}$ for $\Sdot[k](CW_{Q}\aC_{[0,n]})$ or
equivalently, $CW_{\Sdot[k]\aQ}\Sdot[k]\aC_{[0,n]}$, and 
$\aC\Gamma^{n}_{k}$ and $\aC S^{n}_{k}$ for the connective and
non-connective spectral enrichments, respectively.
Let $\aE\Gamma^{n}\subdot$ denote the simplicial spectral category where the
objects of $\aE\Gamma^{n}_{k}$ are the objects of
$\aC^{n}_{k}$ 
and for objects $X$ and $Y$, the mapping spectrum is the fiber product
\[
\aE\Gamma^{n}_{k}(X,Y) = \Sdot[k]\aC^{\Gamma}(x_{n-1},y_{n-1})
\times_{\Sdot[k]\aC^{\Gamma}(x_{n-1},y_{n})}
\Sdot[k]\aC^{\Gamma}(x_{n},y_{n})
\]
(which is a homotopy pullback because $x_{n-1}\to x_{n}$ is a
cofibration). 
We have a canonical simplicial spectral functor
$\aE\Gamma^{n}\subdot\to \aC\Gamma^{n}\subdot$
sending $X$ in $\aE\Gamma^{n}_{k}$ to $X$ viewed as an object of
$\aC\Gamma^{n}_{k}$ and using projection on the mapping
spectra. 
We also have canonical simplicial spectral functors 
\[
\aE\Gamma^{n}\subdot\to \aC\Gamma^{n-1}\subdot
\qquad \text{and}\qquad
\aE\Gamma^{n} \subdot\to \Sdot \Sigma^{n}\aQ^{\Gamma}
\]
sending $X$ to its $(n-1)$-skeleton $X_{n-1}$ and to $x_{n}/x_{n-1}$, respectively, and performing
the corresponding maps on mapping spectra.  Using these maps, we can identify
$\aE\Gamma^{n}\subdot$ as the spectral category of extension sequences
$[\xymatrix@-1pc{X_{n-1}\ar@{ >->}[r]&X\ar@{->>}[r]&x_{n}/x_{n-1}}]$ in
$\aC^{n}\subdot$. Although the categories $\aC\Gamma^{n}\subdot$ and
$\aE\Gamma^{n}$ do not exactly fit into the framework of
Section~\ref{secadd}, McCarthy's argument for the Additivity
Theorem works quite generally and formally essentially using little
more than the fact that the mapping spectra are functorial in the maps
in $\Sdot$; the Additivity Theorem generalizes to the current context,
and the argument following 
Corollary~\ref{coradditivity} shows that the maps described above
induce a weak equivalence 
\[
|THH(\aE\Gamma^{n}\subdot)|\overto{\sim}
|THH(\aC\Gamma^{n-1}\subdot)| \times 
|THH(\Sdot\Sigma^{n}\aQ^{\Gamma})|.
\]
We have an analogous simplicial spectral category $\aE S^{n}\subdot$
with the analogous weak equivalence.  The induction hypothesis
and the weak equivalences above then imply the following proposition.

\begin{prop}
The functor $\aE\Gamma^{n}\subdot\to \aE S^{n}\subdot$
induces a weak equivalence 
\[
|THH(\aE\Gamma^{n}\subdot)|\overto{\sim}
|THH(\aE S^{n}\subdot)|.
\]
\end{prop}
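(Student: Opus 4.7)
The plan is to apply the additivity decomposition from the previous proposition to both $\aE\Gamma^{n}\subdot$ and $\aE S^{n}\subdot$ and compare the resulting splittings factor-by-factor. The forgetful functor $\Gamma \to S$ is natural for these splittings, so we obtain a commutative square in which the horizontal additivity maps are weak equivalences; this reduces the claim to showing that the two maps
\[
|THH(\aC\Gamma^{n-1}\subdot)| \to |THH(\aC S^{n-1}\subdot)|
\qquad \text{and} \qquad
|THH(\Sdot\Sigma^{n}\aQ^{\Gamma})| \to |THH(\Sdot\Sigma^{n}\aQ^{S})|
\]
are individually weak equivalences.

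The first of these is a weak equivalence by the induction hypothesis on $n$; recall that the base case $n=0$ reduces to the already-known fact that $\Sdot\aQ^{\Gamma} \to \Sdot\aQ^{S}$ is a levelwise DK-equivalence, since $\aQ$ is a connective class.

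For the second, the key observation is that $\Sigma^{n}\aQ$ is a connective class by Proposition~\ref{prop:wccc}, and that $\Sdot[k]\Sigma^{n}\aQ = \Sigma^{n}\Sdot[k]\aQ$ remains a connective class in the almost stable simplicially tensored Waldhausen category $\Sdot[k]\aC$ by Proposition~\ref{prop:sncc}. Thus for any $a, b \in \Sdot[k]\Sigma^{n}\aQ$, the non-connective mapping spectrum $\Sdot[k]\aC^{S}(a,b)$ is already connective. Since almost stability of $\Sdot[k]\aC$ supplies the hypothesis of Proposition~\ref{prop:conncover}, the comparison map $\Sdot[k]\Sigma^{n}\aQ^{\Gamma}(a,b) \to \Sdot[k]\Sigma^{n}\aQ^{S}(a,b)$ is a connective cover whose target is connective, and hence a weak equivalence. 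It follows that $\Sdot[k]\Sigma^{n}\aQ^{\Gamma} \to \Sdot[k]\Sigma^{n}\aQ^{S}$ is a DK-equivalence of spectral categories at every simplicial level $k$, and applying $THH$ and geometric realization in the $\Sdot$-direction yields the desired weak equivalence.

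The only delicate point is the bookkeeping to confirm that the connective-class and almost-stability properties pass through both the $\Sdot$-construction and the $\Sigma^{n}$ operation; this is handled entirely by the two propositions cited above, so the argument is essentially formal given the additivity splittings and Proposition~\ref{prop:conncover}.
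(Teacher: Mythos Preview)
Your proposal is correct and is exactly the argument the paper has in mind. The paper's own ``proof'' is just the sentence preceding the proposition: ``The induction hypothesis and the weak equivalences above then imply the following proposition,'' and you have correctly unpacked both ingredients --- the induction hypothesis for the $\aC^{n-1}$ factor and the connective-class argument (via Propositions~\ref{prop:wccc}, \ref{prop:sncc}, and~\ref{prop:conncover}) for the $\Sigma^{n}\aQ$ factor, the latter being the same reasoning that established the base case $n=0$.
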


Let $(\aC^{n}_{k})^{w}$ denote the objects of $\aC^{n}_{k}$ that are
weakly equivalent to the zero object~$*$, and write
$(\aC\Gamma^{n}_{k})^{w}$, $(\aC S^{n}_{k})^{w}$,
$(\aE\Gamma^{n}_{k})^{w}$, $(\aE S^{n}_{k})^{w}$ for the various
spectral enrichments on this category, the full spectral subcategories
of $\aC\Gamma^{n}_{k}$, $\aC S^{n}_{k}$, $\aE\Gamma^{n}_{k}$, $\aE
S^{n}_{k}$, respectively.  The mapping spectra in
$(\aC\Gamma^{n}_{k})^{w}$ and  $(\aC S^{n}_{k})^{w}$ are all weakly
contractible so $THH$ is also weakly contractible,
\[
THH((\aC\Gamma^{n}_{k})^{w})\htp
THH((\aC S^{n}_{k})^{w})\htp *.
\]
For the $\aE$ categories, we have the following proposition.

\begin{prop}
The canonical spectral functor $(\aE\Gamma^{n}_{k})^{w}\to(\aE
S^{n}_{k})^{w}$ is a  DK-equivalence.
\end{prop}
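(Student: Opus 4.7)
The plan is to prove the functor is a DK-equivalence by observing that it is the identity on objects (so essential surjectivity of $\pi_{0}$ is automatic) and showing it induces weak equivalences on mapping spectra. Fix $X,Y\in (\aC^{n}_{k})^{w}$, so that the underlying objects $x_{n},y_{n}$ in $\Sdot[k]\aC$ are weakly contractible.

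First I simplify both mapping spectra. The gluing axiom implicit in properties~(iii)--(iv) of Definition~\ref{defsimpwaldcat} implies that wedges and suspensions preserve weak contractibility, so $\aC^{\Gamma}(-,y_{n})$ and $\aC^{S}(-,y_{n})$ are weakly contractible on any source; using formula~\eqref{eqsn} this carries over to $\Sdot[k]\aC^{\Gamma}(-,y_{n})$ and $\Sdot[k]\aC^{S}(-,y_{n})$. Property~(iii) together with formula~\eqref{eqsn} also makes each of the pullbacks defining $\aE\Gamma^{n}_{k}(X,Y)$ and $\aE S^{n}_{k}(X,Y)$ into a homotopy pullback. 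Contracting the two weakly trivial legs involving $y_{n}$ yields weak equivalences
\[
\aE\Gamma^{n}_{k}(X,Y)\htp \Sdot[k]\aC^{\Gamma}(x_{n-1},y_{n-1})
\qquad\text{and}\qquad
\aE S^{n}_{k}(X,Y)\htp \Sdot[k]\aC^{S}(x_{n-1},y_{n-1}),
\]
compatibly with the canonical map $\aC^{\Gamma}\to \aC^{S}$.

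Next I pin down where $x_{n-1}$ and $y_{n-1}$ live. A cofiber-sequence argument (using $x_{n}\htp *$ and the cofibration $x_{n-1}\hookrightarrow x_{n}$) gives a weak equivalence $\Sigma x_{n-1}\htp x_{n}/x_{n-1}$, and the $\Sdot[k]\aQ$-CW structure forces $x_{n}/x_{n-1}\in \Sigma^{n}\Sdot[k]\aQ$; hence $\Sigma x_{n-1}\htp \Sigma^{n}Z$ for some $Z\in \Sdot[k]\aQ$. Since $\Sdot[k]\aC$ is almost stable (Proposition~\ref{prop:sncc}), the suspension map $\Sdot[k]\aC(a,b)\to \Sdot[k]\aC(\Sigma a,\Sigma b)$ is a weak equivalence for all $a,b$; by enriched Yoneda this produces a map $x_{n-1}\to \Sigma^{n-1}Z$ whose suspension is the weak equivalence already constructed, and a diagram chase with naturality of the stability isomorphism combined with property~(iv) shows that this lift is itself a weak equivalence. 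Thus $x_{n-1}\in \Sigma^{n-1}\Sdot[k]\aQ$, and the same argument applies to $y_{n-1}$.

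Finally, Proposition~\ref{prop:sncc} says $\Sdot[k]\aQ$ is a connective class in $\Sdot[k]\aC$, and Proposition~\ref{prop:wccc} applied to $\Sdot[k]\aC$ then implies $\Sigma^{n-1}\Sdot[k]\aQ$ is as well. Therefore $\Sdot[k]\aC^{S}(x_{n-1},y_{n-1})$ is connective, and Proposition~\ref{prop:conncover} (whose only hypothesis is almost-stability) identifies $\Sdot[k]\aC^{\Gamma}(x_{n-1},y_{n-1})\to \Sdot[k]\aC^{S}(x_{n-1},y_{n-1})$ as a connective cover, hence a weak equivalence since the target is connective. Combined with the first step this shows $\aE\Gamma^{n}_{k}(X,Y)\to \aE S^{n}_{k}(X,Y)$ is a weak equivalence. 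The main obstacle is the second paragraph, where one must bootstrap the weak contractibility of the top cell $x_{n}$ together with almost-stability to place $x_{n-1}$ itself into the connective class $\Sigma^{n-1}\Sdot[k]\aQ$, rather than only verifying the weaker entrywise statement; the remainder of the argument is then routine bookkeeping with the standard $\Gamma$-versus-suspension comparison.
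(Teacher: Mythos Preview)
Your proof is correct and follows essentially the same approach as the paper's: both reduce the mapping spectrum comparison to the projection onto $\Sdot[k]\aC^{\Gamma/S}(x_{n-1},y_{n-1})$ using contractibility of $y_{n}$, place $x_{n-1},y_{n-1}$ in $\Sigma^{n-1}\Sdot[k]\aQ$ via almost-stability, and invoke connectivity. The paper compresses this into the single claim ``it suffices to show that the mapping spectra in $(\aE S^{n}_{k})^{w}$ are connective,'' leaving the parallel reduction on the $\Gamma$ side and the connective-cover comparison implicit; you have simply made those steps explicit.
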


\begin{proof}
Since the categories have the same object set, it suffices to show
that the map is a DK-embedding, and for this it suffices to show that
the mapping spectra in $(\aE S^{n}_{k})^{w}$ are connective.
We note that for $X$ in $(\aC^{n}_{k})^{w}$, stability hypothesis~(ii)
implies that $x_{n-1}$ is an object of
$\Sdot[k]\Sigma^{n-1}\aQ$ since $x_{n}$ is contractible and
$x_{n}/x_{n-1}$ is an object of $\Sdot[k]\Sigma^{n}\aQ$.  Now given $X$
and $Y$ in $(\aC^{n}_{k})^{w}$, the projection map 
\[
\aE S^{n}_{k}(X,Y) = \Sdot[k]\aC^{S}(x_{n-1},y_{n-1})
\times_{\Sdot[k]\aC^{S}(x_{n-1},y_{n})}
\Sdot[k]\aC^{S}(x_{n},y_{n})
\to \Sdot[k]\aC^{S}(x_{n-1},y_{n-1})
\]
is a weak equivalence.  In particular, $\aE S^{n}_{k}(X,Y)$ is connective.
\end{proof}

We denote by $CTHH(\aC\Gamma^{n}_{k},w)$ the homotopy cofiber of 
the inclusion
\[
THH((\aC\Gamma^{n}_{k})^{w})\to THH(\aC\Gamma^{n}_{k}),
\]
and analogously for $CTHH(\aC S^{n}_{k},w)$,
$CTHH(\aE\Gamma^{n}_{k},w)$, and $CTHH(\aE S^{n}_{k},w)$.  In this
notation, the two previous propositions then imply the following
proposition.

\begin{prop}
The map $|CTHH(\aE\Gamma^{n}\subdot,w)|\to 
|CTHH(\aE S^{n}\subdot,w)|$ is a
weak equivalence.
\end{prop}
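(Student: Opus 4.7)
The plan is to deduce this directly from the two immediately preceding propositions via a standard cofiber sequence argument, using that homotopy cofibers commute with geometric realization of (suitably proper) simplicial cyclotomic spectra.

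First, I would observe that the cofiber defining $CTHH$ assembles simplicially. For each $k$, the definition gives cofiber sequences
\[
THH((\aE\Gamma^{n}_{k})^{w})\to THH(\aE\Gamma^{n}_{k})\to CTHH(\aE\Gamma^{n}_{k},w),
\]
and analogously for $\aE S^{n}_{k}$; these are natural in the simplicial structure, so we obtain a map of simplicial cofiber sequences of cyclotomic spectra indexed by $k$. By Proposition~\ref{prop:realok} applied to the $THH$ constructions of the spectral categories $\aE\Gamma^{n}\subdot$, $\aE S^{n}\subdot$, $(\aE\Gamma^{n}\subdot)^{w}$, $(\aE S^{n}\subdot)^{w}$ (whose mapping spectra are constructed from fiber products over fibrations of the mapping spectra in $\Sdot\aC^{\Gamma}$ and $\Sdot\aC^{S}$, and so inherit the necessary Hurewicz cofibration properties), these simplicial cyclotomic spectra are spacewise proper. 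Hence geometric realization commutes with the homotopy cofiber, and $|CTHH(\aE\Gamma^{n}\subdot,w)|$ is equivalent to the homotopy cofiber of $|THH((\aE\Gamma^{n})^{w}\subdot)|\to|THH(\aE\Gamma^{n}\subdot)|$, and similarly on the $\aE S^{n}$ side.

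It therefore suffices to show that both vertical maps in
\[
\xymatrix{
|THH((\aE\Gamma^{n})^{w}\subdot)|\ar[r]\ar[d]&|THH(\aE\Gamma^{n}\subdot)|\ar[d]\\
|THH((\aE S^{n})^{w}\subdot)|\ar[r]&|THH(\aE S^{n}\subdot)|
}
\]
are weak equivalences. The right vertical map is a weak equivalence by the first of the two preceding propositions. For the left vertical map, the second preceding proposition gives, at each simplicial level $k$, a DK-equivalence $(\aE\Gamma^{n}_{k})^{w}\to(\aE S^{n}_{k})^{w}$. Applying $THH$, which sends DK-equivalences of spectral categories to weak equivalences of cyclotomic spectra (one of the fundamental properties recalled in Section~\ref{secdefthh}), produces a levelwise weak equivalence of simplicial cyclotomic spectra; on realization (again using spacewise properness) this gives the required weak equivalence.

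The main conceptual content is already contained in the two preceding propositions and in the identification of the realization of the simplicial cofiber with the cofiber of the realizations; no further obstacle is expected. The only technical point worth verifying is the properness hypothesis needed to commute realization with homotopy cofibers, and this follows routinely from the form of the mapping spectra in all the spectral categories involved.
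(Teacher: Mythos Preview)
Your proof is correct and takes essentially the same approach as the paper, which simply states that the two preceding propositions imply the result without writing out the details. You have filled in the standard cofiber/realization argument that the paper leaves implicit.
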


Since the inclusions 
\begin{gather*}
THH(\aC\Gamma^{n}_{k})\to
CTHH(\aC\Gamma^{n}_{k},w)
\\
THH(\aC S^{n}_{k})\to 
CTHH(\aC S^{n}_{k},w)
\end{gather*}
are weak equivalences, the following lemma
when combined with the previous proposition 
then completes the proof of
Lemma~\ref{lem:spheremain}.

\begin{lem}\label{lem:spherefinal}
For all $k$, the maps 
\begin{gather*}
CTHH(\aE\Gamma^{n}_{k},w)\to CTHH(\aC\Gamma^{n}_{k},w)\\
CTHH(\aE S^{n}_{k},w)\to CTHH(\aC S^{n}_{k},w)
\end{gather*}
are weak equivalences.
\end{lem}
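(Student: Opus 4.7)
The strategy is to treat the two variants (connective $\Gamma$ and non-connective $S$) in parallel. Write $p\colon \aE^n_k \to \aC^n_k$ for the projection functor, which is the identity on objects and the second projection of the defining fiber product on mapping spectra.

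The first reduction is to observe that $THH((\aC\Gamma^n_k)^w) \htp *$ and $THH((\aC S^n_k)^w) \htp *$. Indeed, since the underlying objects $x_n$ in $\Sdot[k]\aC$ of objects in the $w$-subcategory are weakly trivial, and the mapping spectra of $\aC^n_k$ are by definition those of $\Sdot[k]\aC$ on the underlying objects, the non-connective mapping spectra are contractible by the cofibration sequence in Proposition~\ref{propfibercofiber}, and the connective mapping spectra are contractible as connective covers via Proposition~\ref{prop:conncover}. Hence the canonical maps $THH(\aC^n_k) \to CTHH(\aC^n_k, w)$ are weak equivalences, and the lemma reduces to showing that the commutative square
\[
\xymatrix{
THH((\aE^n_k)^w) \ar[r] \ar[d] & THH(\aE^n_k) \ar[d] \\
\ast \ar[r] & THH(\aC^n_k)
}
\]
is homotopy cocartesian, for each of the two spectral enrichments.

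To analyze $p$ on mapping spectra, apply part~(ii) of Proposition~\ref{propfibercofiber} to the cofibration $y_{n-1} \to y_n$ with cofiber $y_n / y_{n-1}$ to identify the homotopy fiber of $\aE^S(X,Y) \to \aC^S(x_n, y_n)$ as $\Omega \Sdot[k]\aC^S(x_{n-1}, y_n / y_{n-1})$; the parallel statement holds on the $\Gamma$-enrichment after taking connective covers. In particular, restricting to $X, Y \in (\aE^n_k)^w$, where $y_n$ is weakly trivial so that $y_n / y_{n-1}$ is weakly equivalent to $\Sigma y_{n-1}$, this fiber recovers precisely the identification of the mapping spectra in $(\aE^n_k)^w$ as $\Sdot[k]\aC^S(x_{n-1}, y_{n-1})$ from the proposition immediately preceding the lemma.

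The final step is to package this pointwise fiber identification as a cofiber sequence of spectral categories $(\aE^n_k)^w \to \aE^n_k \to \aC^n_k$ and invoke the $THH$ localization theorem for spectral categories \cite[6.1]{BlumbergMandellTHHLoc} in the non-connective case, and its Appendix B.8 variant in the connective case, yielding the desired cofiber sequence on $THH$. The main obstacle is precisely this last step: verifying the pretriangulated hypotheses (available for the $S$-enrichment via stability of $\aC$ together with Corollary~\ref{cor:pretriang} applied to $\Sdot[k]\aC$, and extended to the $\Gamma$-case via Proposition~\ref{prop:conncover}) and checking that the fiber identification is functorial in a manner compatible with composition, which rests on the Kan condition satisfied by the mapping spaces of $\aC$.
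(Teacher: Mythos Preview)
Your reduction to the square being homotopy cocartesian is correct and matches the paper's implicit reformulation. The gap is in your final step, where you invoke the localization theorem \cite[6.1]{BlumbergMandellTHHLoc}. The paper explicitly notes that this theorem does not apply: the spectral categories $\aE\Gamma^{n}_{k}$, $\aC\Gamma^{n}_{k}$ (and their $S$-variants) are \emph{not} pretriangulated. Your attempt to verify the hypothesis via ``stability of $\aC$'' and Corollary~\ref{cor:pretriang} fails for two reasons. First, in the Sphere Theorem $\aC$ is only assumed \emph{almost} stable (Definition~\ref{defn:stable}(ii)), not stable; condition~(i), that every object be a suspension, is exactly what Corollary~\ref{cor:pretriang} needs and is not available. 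Second, even if $\aC$ were stable, the categories $\aC^{n}_{k}$ consist of $Q$-CW complexes with cells concentrated in $[0,n]$, and such a subcategory is never closed under desuspension, so it cannot be pretriangulated regardless. The reference to ``Appendix~B.8'' does not help either: that result concerns additivity, not localization, and in any case presupposes the same pretriangulated framework.

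Because the black-box localization theorem is unavailable, the paper instead reproves the relevant part of its argument by hand, using the Dennis-Waldhausen Morita machinery of \cite[\S5]{BlumbergMandellTHHLoc}. One constructs explicit bimodules $\aL_{\aE}$, $\aL_{\aC}$ (two-sided bar constructions over the $w$-acyclic subcategories) with cofibers $\aM_{\aE}$, $\aM_{\aC}$, and identifies the map in question as $THH(\aE\Gamma^{n}_{k};\aM_{\aE})\to THH(\aC\Gamma^{n}_{k};\aM_{\aC})$. The substantive work is showing $\aL_{\aE}(X,Y)$ agrees with the homotopy fiber $\aF(X,Y)$ of $\aE\Gamma^{n}_{k}(X,Y)\to \aC\Gamma^{n}_{k}(X,Y)$; this requires producing, for each $Y$, an acyclic object $Cp$ in $(\aE^{n}_{k})^{w}$ together with a cellular map $Cp\to Y$ inducing weak equivalences on both $\aL_{\aE}(X,-)$ and $\aF(X,-)$. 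That construction uses $n\geq 1$ and the connectivity bookkeeping of the $\Sigma^{m}\aQ$ filtration in an essential way, and is the genuine content missing from your outline.
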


We prove the case for the connective enrichment in detail, the case
for the non-connective enrichment being similar (but slightly easier).
The statement is analogous to the Localization Theorem~7.2 of
\cite{BlumbergMandellTHHLoc} (reviewed in Chapter~\ref{tw-thhmv} as
Theorem~\ref{thmgentwo}) using the Dennis-Waldhausen Morita Argument
(Section~\ref{secdwm}) except for the fact that the subcategories
above are not pretriangulated.  The following proof goes roughly along
the same lines as well.

For this argument $k$ is both fixed, and so replacing
$\aC\Gamma^{\infty}_{k}$ and $\aE\Gamma^{\infty}_{k}$ by weakly equivalent
spectral categories if necessary, we can assume without loss of
generality that they are pointwise cofibrant and their subcategories 
$\aC\Gamma^{n}_{k}$, $\aE\Gamma^{n}_{k}$, $(\aC\Gamma^{n}_{k})^{w}$,
$(\aE\Gamma^{n}_{k})^{w}$ are pointwise cofibrant.

Define the $\aE\Gamma^{n}_{k}$-bimodule $\fL_{\aE}$ and 
$\aC\Gamma^{n}_{k}$-bimodule $\fL_{\aC}$ by
\begin{gather*}
\fL_{\aE}(X,Y) = 
B(\aE\Gamma^{n}_{k}(-,Y);(\aE\Gamma^{n}_{k})^{w};\aE\Gamma^{n}_{k}(X,-))
\\
\fL_{\aC}(X,Y) = 
B(\aC\Gamma^{n}_{k}(-,Y);(\aC\Gamma^{n}_{k})^{w};\aC\Gamma^{n}_{k}(X,-)),
\end{gather*}
where $B$ denotes the two-sided bar construction
(Definition~\ref{defn:tsbc}). 
We then have maps of $\aE\Gamma^{n}_{k}$- and
$\aC\Gamma^{n}_{k}$-bimodules
\[
\fL_{\aE}\to \aE\Gamma^{n}_{k} \qquad \text{and}\qquad 
\fL_{\aC}\to \aC\Gamma^{n}_{k};
\]
we let $\fM_{\aE}$ and $\fM_{\aC}$ be the homotopy cofibers.  Then
the Dennis-Waldhausen Morita
Argument and specifically Theorem~\ref{thmdwmapp} give us weak equivalences 
\begin{align*}
THH(\aE\Gamma^{n}_{k};\fL_{\aE}) &\htp THH((\aE\Gamma^{n}_{k})^{w})
&THH(\aE\Gamma^{n}_{k};\fM_{\aE}) &\htp CTHH(\aE\Gamma^{n}_{k},w)\\
THH(\aC\Gamma^{n}_{k};\fL_{\aC}) &\htp THH((\aC\Gamma^{n}_{k})^{w})
&THH(\aC\Gamma^{n}_{k};\fM_{\aC}) &\htp CTHH(\aC\Gamma^{n}_{k},w),
\end{align*}
and we can identify the map in Lemma~\ref{lem:spherefinal} as the map
\begin{equation}\label{eq:spherefinal}
THH(\aE\Gamma^{n}_{k};\fM_{\aE})\to 
THH(\aC\Gamma^{n}_{k};\fM_{\aC}).
\end{equation}

As the mapping spectra in $(\aC\Gamma^{n}_{k})^{w}$ are weakly
contractible, 
the spectra $\fL_{\aC}(X,Y)$ are weakly contractible for all $X,Y$,
and it follows that the map of $\aC\Gamma^{n}_{k}$-bimodules
$\aC\Gamma^{n}_{k}\to 
\fM_{\aC}$ is a weak equivalence.  We next move towards understanding the
$\aE\Gamma^{n}_{k}$-bimodules $\fL_{\aE}$.   We write $u$ for the
canonical functor $\aE\Gamma^{n}_{k}\to \aC\Gamma$ and also its restriction
$(\aE\Gamma^{n}_{k})^{w}\to (\aC\Gamma)^{w}$.  We then have a
commutative diagram of $\aE\Gamma^{n}_{k}$-bimodules
\[
\xymatrix{%
\fL_{\aE}\ar[r]\ar[d]&u^{*}\fL_{\aC}\ar[d]\\
\aE\Gamma^{n}_{k}\ar[r]&u^{*}\aC\Gamma^{n}_{k},
}
\]
Letting $\aF$ be the homotopy pullback of the deleted diagram
\[
\aE\Gamma^{n}_{k}\to u^{*}\aC\Gamma^{n}_{k}\from
u^{*}\fL_{\aC},
\]
we get a map of $\aE\Gamma^{n}_{k}$-bimodules $\fL_{\aE}\to \aF$.

\begin{prop}
The map of $\aE\Gamma^{n}_{k}$-bimodules $\fL_{\aE}\to \aF$ is a weak
equivalence. 
\end{prop}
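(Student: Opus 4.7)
It suffices to verify the map is a pointwise weak equivalence, i.e., that $\aL_{\aE}(X,Y) \to \aF(X,Y)$ is a weak equivalence of spectra for each pair of objects $X, Y$ of $\aE\Gamma^{n}_{k}$. The essential input is the defining strict fiber product description
\[
\aE\Gamma^{n}_{k}(X,Y) = \aC\Gamma^{n-1}_{k}(X_{n-1}, Y_{n-1}) \times_{D(X,Y)} \aC\Gamma^{n}_{k}(X,Y),
\]
where $D(X,Y) = \Sdot[k]\aC^{\Gamma}(x_{n-1}, y_{n})$; the Kan fibration condition~(iii) of Definition~\ref{defsimpwaldcat} (applied level-wise to the mapping spectra) ensures that this is also a homotopy pullback. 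Unraveling the definition of $\aF$ and using the pasting law for homotopy pullbacks yields
\[
\aF(X,Y) \htp \aC\Gamma^{n-1}_{k}(X_{n-1}, Y_{n-1}) \times^{h}_{D(X,Y)} \aL_{\aC}(X,Y).
\]

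The plan is to obtain a matching pullback description of $\aL_{\aE}(X,Y) = TB(\aE\Gamma^{n}_{k}(-,Y); (\aE\Gamma^{n}_{k})^{w}; \aE\Gamma^{n}_{k}(X,-))$ by applying the homotopy pullback decomposition of $\aE\Gamma^{n}_{k}$ to both module arguments and commuting with the geometric realization defining $TB$. At each simplicial level of the defining bar construction, the pullback description exhibits the $p$-simplex of $\aL_{\aE}$ as a homotopy pullback of smash products; since realization and smash products preserve homotopy pullbacks of semistable spectra (and the mapping spectra here are semistable by Proposition~\ref{propsemistableG}), this assembles to a global pullback description of $\aL_{\aE}(X,Y)$. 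Matching this with the description of $\aF(X,Y)$ identifies the comparison map as a pullback of weak equivalences, each of which is then checked individually.

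The main technical subtlety lies in the middle spectral category of the bar construction. Although $(\aE\Gamma^{n}_{k})^{w}$ and $u^{*}(\aC\Gamma^{n}_{k})^{w}$ share the same underlying object set, they have genuinely different mapping spectra: for $Z, Z'$ acyclic, the spectrum $\aC\Gamma^{n}_{k}(Z, Z')$ is weakly contractible while $(\aE\Gamma^{n}_{k})^{w}(Z, Z') \htp \aC\Gamma^{n-1}_{k}(Z_{n-1}, Z'_{n-1})$ typically is not. What makes the argument go through is precisely the collapse of the pullback structure on acyclic objects noted in the preceding proposition: the contractibility of $\aC\Gamma^{n}_{k}(Z, Z')$ forces the smash factors built from $\aC\Gamma^{n}_{k}$-arguments to degenerate inside the bar construction, so that the $\aC\Gamma^{n}_{k}$-leg of the pullback decomposition of $\aL_{\aE}$ matches---via the naturality of $TB$ in the middle category---the $\aL_{\aC}$-leg of $\aF$ under the functor $u$, while the $\aC\Gamma^{n-1}_{k}$- and $D$-legs are identified directly. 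Verifying this matching on the full bisimplicial level is the principal obstacle.
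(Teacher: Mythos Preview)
Your proposal has a genuine gap at the step you flag as the ``principal obstacle.''  Even granting your pullback decomposition of $\aL_{\aE}(X,Y)$, the identification of the $\aC\Gamma^{n-1}_{k}$-leg and the $D$-leg is not direct.  After decomposing the left module $\aE\Gamma^{n}_{k}(-,Y)$, the $\aC\Gamma^{n-1}_{k}$-leg becomes the bar construction
\[
TB\bigl(\Sdot[k]\aC^{\Gamma}((-)_{n-1},y_{n-1});\,(\aE\Gamma^{n}_{k})^{w};\,\aE\Gamma^{n}_{k}(X,-)\bigr),
\]
and you need this to be weakly equivalent to the single mapping spectrum $\Sdot[k]\aC^{\Gamma}(x_{n-1},y_{n-1})$.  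This is \emph{not} an instance of the Two-Sided Bar Lemma: that lemma computes $TB(\aD(-,y);\aD;\aD(x,-))\htp\aD(x,y)$ for the full category $\aD$, whereas here the bar is taken over the proper subcategory $(\aE\Gamma^{n}_{k})^{w}$, and neither $x_{n-1}$ nor $y_{n-1}$ is \textit{a priori} represented by an object of that subcategory.  Your sketch gives no mechanism for this collapse; ``identified directly'' hides exactly the content of the proposition.  (There is also a smaller issue: applying the pullback decomposition simultaneously to \emph{both} module arguments would require smashing two pullback squares, which does not yield a pullback square even stably; but since the right module $\aE\Gamma^{n}_{k}(X,Z_{0})$ degenerates for acyclic $Z_{0}$, this can be avoided.)

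The paper's proof sidesteps this difficulty entirely.  Rather than computing $\aL_{\aE}(X,Y)$ globally, it introduces an auxiliary object $Cp\in(\aE\Gamma^{n}_{k})^{w}$---the cone on a chosen $p$ with $\Sigma p\htp y_{n}/y_{n-1}$---together with a cellular map $Cp\to Y$.  The key observation is that for any acyclic $Z$, $\aE\Gamma^{n}_{k}(Z,Y)\htp\Omega\Sdot[k]\aC^{\Gamma}(z_{n-1},y_{n}/y_{n-1})$, so the induced map $\aE\Gamma^{n}_{k}(Z,Cp)\to\aE\Gamma^{n}_{k}(Z,Y)$ is a weak equivalence; this makes $\aL_{\aE}(X,Cp)\to\aL_{\aE}(X,Y)$ a weak equivalence.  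The analogous statement holds for $\aF$, and since $Cp$ actually lies in $(\aE\Gamma^{n}_{k})^{w}$, the Two-Sided Bar Lemma applies directly to compute $\aL_{\aE}(X,Cp)\htp\aE\Gamma^{n}_{k}(X,Cp)\htp\aF(X,Cp)$.  A two-out-of-three argument in the resulting commutative square finishes the proof.  The construction of $Cp$ is precisely the missing idea in your approach.
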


\begin{proof}
Fix $X$ and $Y$ objects in $\aE\Gamma^{n}_{k}$; we need to show that
the map $\fL_{\aE}(X,Y)\to \aF(X,Y)$ is a weak equivalence.  Consider the cofibration sequence 
\[
y_{n-1}\to y_{n} \to y_{n}/y_{n-1}\to \Sigma y_{n-1}
\]
obtained using a homotopy inverse weak equivalence to the collapse
weak equivalence $y_{n}\cup_{y_{n-1}}Cy_{n-1}\to y_{n}/y_{n-1}$.
By definition, $y_{n}/y_{n-1}$ is in $\Sdot[k]\Sigma^{n}\aQ$, and since
$n\geq 1$, there exists an object $p$ in  $\Sdot[k]\Sigma^{n-1}\aQ$
such that $\Sigma p$ is weakly equivalent to $y_{n}/y_{n-1}$.  Then
applying stability hypothesis~(ii), we obtain from the cofibration
sequence above a (homotopy class of) map $p\to y_{n-1}$ and a null
homotopy $Cp\to y_{n}$ such that the induced map $\Sigma p\to
y_{n}/y_{n-1}$ is homotopic to the chosen weak equivalence.  Regarding
$Cp$ as an object of $\aE^{n}_{k}$, it is an object of
$(\aE^{n}_{k})^{w}$ and we have constructed a cellular map $Cp\to Y$.
Consider the following commutative square.
\[
\xymatrix{%
\fL_{\aE}(X,Y)\ar[d]&\fL_{\aE}(X,Cp)\ar[l]_{a}\ar[d]^{c}\\
\aF(X,Y)&\aF(X,Cp)\ar[l]^{b}
}
\]
We complete the proof by arguing that the maps $a$, $b$, and $c$ are
weak equivalences. 

To analyze the map $a$, consider an object $Z$ in
$(\aE\Gamma^{n}_{k})^{w}$.  Since $z_{n}$ is weakly equivalent to $*$
in $\aC$, $\aE\Gamma^{n}_{k}(Z,Y)$ is weakly equivalent to the
homotopy fiber of the map $\Sdot[k]\aC^{\Gamma}(z_{n-1},y_{n-1})$ to
$\Sdot[k]\aC^{\Gamma}(z_{n-1},y_{n})$.  We can use the cofibration
sequence of Proposition~\ref{propfibercofiber} to understand this
homotopy fiber: We have that $\Sdot[k]\aC^{\Gamma}(z_{n-1},
y_{n}/y_{n-1})$ is connected since $z_{n-1}$ is an object of
$\Sdot[k]\Sigma^{n-1}\aQ$ and $y_{n}/y_{n-1}$ is an object of
$\Sdot[k]\Sigma^{n}\aQ$.  It follows that $\aE\Gamma^{n}_{k}(Z,Y)$ is
weakly equivalent to $\Omega \Sdot[k]\aC^{\Gamma}(z_{n-1},
y_{n}/y_{n-1})$.  The same observations apply to $Cp$.  Since by
construction the map $\Sigma p=Cp/p\to y_{n}/y_{n-1}$ is a weak
equivalence, we see by naturality that the map
$\aE\Gamma^{n}_{k}(Z,Cp)\to \aE\Gamma^{n}_{k}(Z,Y)$ is a weak
equivalence.  Since this holds for any $Z$ in
$(\aE\Gamma^{n}_{k})^{w}$, unwinding the definition of $\fL_{\aE}$, we
see that $a$ is a weak equivalence.

For the map $b$, we note that $\aF(X,Y)$ being the homotopy fiber of
the map $\aE\Gamma^{n}_{k}(X,Y)$ to
$\aC\Gamma(X,Y)=\Sdot[k]\aC^{\Gamma}(x_{n},y_{n})$, it is naturally
weakly equivalent to the homotopy fiber of the map
$\Sdot[k]\aC^{\Gamma}(x_{n-1},y_{n-1})$ to
$\Sdot[k]\aC^{\Gamma}(x_{n-1},y_{n})$.  As in the previous case, we
can identify this up to weak equivalence as $\Omega
\Sdot[k]\aC^{\Gamma}(x_{n-1}, y_{n}/y_{n-1})$ since
$\Sdot[k]\aC^{\Gamma}(x_{n-1}, y_{n}/y_{n-1})$ is connected (which can
be proved by induction up the skeletal filtration of $X$ using
Proposition~\ref{propfibercofiber}).  Again, since the map $\Sigma
p=Cp/p\to y_{n}/y_{n-1}$ is a weak equivalence, we see that $b$ is a
weak equivalence.

For the map $c$, since $Cp$ is in $(\aE\Gamma^{n}_{k})^{w}$, the
Two-Sided Bar Lemma~\ref{lemtwobar}  implies that the
natural map $\fL_{\aE}(X,Cp)\to \aE\Gamma^{n}_{k}(X,Cp)$ is a weak
equivalence.  Since $Cp$ is weakly equivalent to $*$ in $\aC$,
$\aC\Gamma(X,Cp)$ is weakly contractible and we see that $b$ is a weak
equivalence.
\end{proof}

The previous proposition lets us understand $\fM_{\aE}$.

\begin{prop}
The map of $\aE\Gamma^{n}_{k}$-bimodules $\fM_{\aE}\to u^{*}\fM_{\aC}$
is a weak equivalence.
\end{prop}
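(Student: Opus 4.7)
The plan is to deduce this directly from the previous proposition, using that the category of bimodules valued in spectra is stable, so homotopy cartesian squares are automatically homotopy cocartesian.

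First I would observe that the restriction functor $u^{*}$, which sends a $\aC\Gamma^{n}_{k}$-bimodule $\aN$ to the $\aE\Gamma^{n}_{k}$-bimodule $(X,Y)\mapsto \aN(uX,uY)$, is computed objectwise in spectra and therefore preserves (homotopy) cofiber sequences of bimodules. Applying $u^{*}$ to the defining cofiber sequence $\aL_{\aC}\to \aC\Gamma^{n}_{k}\to \aM_{\aC}$ identifies $u^{*}\aM_{\aC}$ with the homotopy cofiber of $u^{*}\aL_{\aC}\to u^{*}\aC\Gamma^{n}_{k}$.

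Next, the previous proposition asserts that the comparison map $\aL_{\aE}\to \aF$ is a weak equivalence, which is by construction the assertion that the square
\[
\xymatrix{
\aL_{\aE}\ar[r]\ar[d]&u^{*}\aL_{\aC}\ar[d]\\
\aE\Gamma^{n}_{k}\ar[r]&u^{*}\aC\Gamma^{n}_{k}
}
\]
of $\aE\Gamma^{n}_{k}$-bimodules is homotopy cartesian. Since bimodule values lie in the stable category of spectra, any homotopy cartesian square of bimodules is also homotopy cocartesian, so the induced map on vertical homotopy cofibers is a weak equivalence; by the first step, this map is exactly $\aM_{\aE}\to u^{*}\aM_{\aC}$.

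No real obstacle stands in the way here: the technical inputs (stability of spectra and hence of spectral bimodules, and exactness of the restriction $u^{*}$) are standard, and all the genuine content has already been packaged into the proof of the previous proposition. If any care is needed, it is only to confirm that the levelwise notion of homotopy (co)cartesian square used in defining $\aF$ agrees with the ambient stable notion, which is immediate since both the pullback and the cofibers in question are formed objectwise in spectra.
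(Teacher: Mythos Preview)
Your proposal is correct and follows essentially the same approach as the paper: the paper's proof simply notes that since homotopy fiber squares in spectra are homotopy cocartesian, the canonical map from the homotopy cofiber of $\aF\to \aE\Gamma^{n}_{k}$ to that of $u^{*}\aL_{\aC}\to u^{*}\aC\Gamma^{n}_{k}$ is a weak equivalence. Your version unpacks this with a bit more care (explicitly noting that $u^{*}$ preserves cofiber sequences and that the previous proposition is precisely the statement that the square is homotopy cartesian), but the argument is the same.
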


\begin{proof}
Since homotopy fiber squares in spectra are homotopy cocartesian, the
canonical map from the homotopy cofiber of $\aF\to \aE\Gamma^{n}_{k}$
to the homotopy cofiber of $u^{*}\fL_{\aC}\to u^{*}\aC\Gamma^{n}_{k}$
is a weak equivalence.
\end{proof}

We now return to the map~\eqref{eq:spherefinal}.  We see from the
previous proposition that we are in the situation
where Theorem~\ref{thmcompcrit} applies.
Thus, to see that the map
\[
THH(\aE\Gamma^{n}_{k};\fM_{\aE})\to 
THH(\aC\Gamma^{n}_{k};\fM_{\aC})
\]
is a weak equivalence, we just need to check that the map
\[
B(\aC\Gamma^{n}_{k}(-,Y);\aE\Gamma^{n}_{k};\fM_{\aC}(X,-))
\to \aC\Gamma^{n}_{k}(X,Y)
\]
is a weak equivalence for all $X$, $Y$ in $\aC\Gamma^{n}_{k}$, or
equivalently in this case, for all $X$, $Y$ in $\aE\Gamma^{n}_{k}$.
Since the Two-Sided Bar 
Lemma~\ref{lemtwobar} shows that the map
\[
B(\aC\Gamma^{n}_{k}(-,Y);\aE\Gamma^{n}_{k};\aE\Gamma^{n}_{k}(X,-))
\to \aC\Gamma^{n}_{k}(X,Y)
\]
is a weak equivalence and $\fM_{\aC}(X,-)\htp \fM_{\aE}(X,-)$ is
the homotopy cofiber of $\fL_{\aE}(X,-)\to \aE\Gamma^{n}_{k}(X,-)$, it
suffices to show that  
\[
\aG(X,Y)=B(\aC\Gamma^{n}_{k}(-,Y);\aE\Gamma^{n}_{k};\fL_{\aE}(X,-))
\]
is weakly contractible.  But we have 
\begin{multline*}
\aG(X,Y)=
B(\aC\Gamma^{n}_{k}(-,Y);\aE\Gamma^{n}_{k};
B(\aE\Gamma^{n}_{k}(-,-);(\aE\Gamma^{n}_{k})^{w};\aE\Gamma^{n}_{k}(X,-))
)\\
\htp 
B(\aC\Gamma^{n}_{k}(-,Y);(\aE\Gamma^{n}_{k})^{w};\aE\Gamma^{n}_{k}(X,-)).
\end{multline*}
Since $\aC\Gamma^{n}_{k}(Z,Y)$ is weakly contractible for any $Z$ in
$(\aE\Gamma^{n}_{k})^{w}$, it follows that $\aG(X,Y)$ is weakly
contractible.  This completes the proof that~\eqref{eq:spherefinal} is
a weak equivalence and hence the proof of Lemma~\ref{lem:spherefinal},
which in turn completes the proof of Lemma~\ref{lem:spheremain}.



%
%

\chapter{Localization sequences for $THH$ and $TC$}
\label{tw-loc}

In \cite{HM3}, Hesselholt and Madsen introduced a localization
sequence for $THH$ and $TC$ in the context of discrete valuation
rings, producing cofiber sequences
\begin{gather*}
THH(k)\to THH(R)\to THH(R|F)\to \Sigma THH(k)\\
TC(k)\to TC(R)\to TC(R|F)\to \Sigma TC(k),
\end{gather*}
where $R$ denotes a discrete valuation ring, $k$ its residue field,
and $F$ its field of fractions.  Here $THH(R|F)$ and $TC(R|F)$ denote
the $THH$ and $TC$ of a relative theory they construct.  Although
their constructions are slightly different than ours, we prove in
Section~\ref{sechm3} that this sequence arises as the Localization
Theorem~\ref{thmloc} for the connective spectral enrichment
\[
WTHH^{\Gamma}(\aA^{v})\to WTHH^{\Gamma}(\aA)\to
WTHH^{\Gamma}(\aA|v)\to \Sigma WTHH^{\Gamma}(\aA^{v})
\]
where $\aA$ denotes either the category of perfect simplicial modules
over the ring $R$ or the category of finite cell EKMM $HR$-modules
(for the Eilenberg-Mac Lane spectrum $HR$) and $v\aA$ denotes the
subcategory of maps that induce an isomorphism on homotopy groups
after inverting the action of the uniformizer (or, equivalently,
tensoring over $R$ with $F$).  This is in contrast to the localization sequence
obtained from the non-connective spectral enrichment
\[
WTHH(\aA^{v})\to WTHH(\aA)\to WTHH(\aA|v)\to \Sigma WTHH(\aA^{v})
\]
which leads to the localization sequences
\begin{gather*}
THH(R \on k)\to THH(R)\to THH(F)\to \Sigma THH(R \on k)\\
TC(R \on k)\to TC(R)\to TC(F)\to \Sigma TC(R \on k)
\end{gather*}
where $THH(R \on k)$ and $TC(R \on k)$ are as in Theorem~1.1 of
\cite{BlumbergMandellTHHLoc}. 

Hesselholt and Ausoni \cite{AusoniTHH, AusoniK} conjectured that the
above localization sequences generalize from the ``chromatic level 0''
case to ``chromatic level 1'' and specifically that there should be
analogous cofiber sequences
\begin{gather*}
THH(\bZ)\to THH(ku)\to THH(ku|KU)\to \Sigma THH(\bZ)\\
TC(\bZ)\to TC(ku)\to TC(ku|KU)\to \Sigma TC(\bZ)
\end{gather*}
(as well as $p$-local and $p$-complete variants; see
Theorem~\ref{thmkuloc} below).
Here $ku$ denotes complex connective (topological) $K$-theory, $KU$
denotes complex periodic $K$-theory.  In Sections~\ref{secar}
and~\ref{secdevissage}, we prove these sequences arise again from 
the Localization Theorem for the connective enrichment
\[
WTHH^{\Gamma}(\aC^{v})\to WTHH^{\Gamma}(\aC)\to
WTHH^{\Gamma}(\aC|v)\to \Sigma WTHH^{\Gamma}(\aC^{v})
\]
where $\aC$ is the category of finite cell EKMM $ku$-modules and
$v\aC$ the maps that induce isomorphisms on homotopy groups after
inverting the action of the Bott element.  Our argument is general
enough to also produce the localization sequences   
\begin{gather*}
THH(\bW\bF_{p^{n}}[\![u_{1},\ldots, u_{n-1}]\!])\to THH(BP_{n})\to THH(BP_{n}|E_{n})\to \Sigma\cdots 
\\
TC(\bW\bF_{p^{n}}[\![u_{1},\ldots, u_{n-1}]\!])\to TC(BP_{n})\to
TC(BP_{n}|E_{n})\to \Sigma\cdots 
\end{gather*}
for all $n$
discussed in the introduction of this paper relating the $THH$ and
$TC$ of the Eilenberg-Mac Lane spectra on the Witt rings to the $THH$
and $TC$ of the connective cover $B_{n}$ of the Lubin-Tate spectrum
$E_{n}$ and the corresponding relative construction.

The chapter is organized as follows.  Section~\ref{sechm3} compares
our constructions $WTHH^{\Gamma}$ to the analogous construction of
Hesselholt-Madsen~\cite{HM3}.  Section~\ref{secar} states the main
theorem on localization sequences for $THH(ku)$ and reduces the proof
to a d\'evissage theorem, Theorem~\ref{thm:dev};
Section~\ref{secdevissage} then proves Theorem~\ref{thm:dev}.

\section{The localization sequence for $THH$ of a discrete valuation ring}
\label{sechm3}

In this section, we compare the construction of $THH$ we use here with
the construction used by Hesselholt-Madsen in~\cite{HM3} to prove the
localization sequences in $THH$ and $TC$ for discrete valuation
rings.  The main theorem of this section is then the following.

\begin{thm}\label{thmcomphm3}
Let $R$ be a discrete valuation ring, $k$ its quotient field and $F$
its field of fractions.  Let $\aA$ denote the category of perfect
simplicial $R$-algebras and let $v\aA$ denote the subcategory of those
maps which induce isomorphisms on homotopy groups after inverting a
uniformizer (i.e., after tensoring with $F$).  Then the cofibration
sequence
\[
WTHH^{\Gamma}(\aA^{v})\to WTHH^{\Gamma}(\aA)\to
WTHH^{\Gamma}(\aA|v)\to \Sigma WTHH^{\Gamma}(\aA^{v})
\]
of Theorem~\ref{thmloc} on $THH$ induces on $TC$ a cofbration sequence
naturally weakly equivalent to the cofibration sequence of
\cite[1.5.7]{HM3}, compatibly with the cyclotomic trace.
\end{thm}

Assuming Theorem~\ref{thmwkexact} from Chapter~\ref{tw-gen}, we also
sketch a proof of the following theorem for the EKMM $S$-module models.

\begin{thm}\label{thmcompekmm}
Let $R$ be a discrete valuation ring, $k$ its quotient field and $F$
its field of fractions.  Let $\aA$ denote the category of finite cell
EKMM $HR$-modules and let $v\aA$ denote the subcategory of those
maps which induce isomorphisms on homotopy groups after inverting a
uniformizer (i.e., after tensoring over $R$ with $F$).  Then the cofibration
sequence
\[
WTHH^{\Gamma}(\aA^{v})\to WTHH^{\Gamma}(\aA)\to
WTHH^{\Gamma}(\aA|v)\to \Sigma WTHH^{\Gamma}(\aA^{v})
\]
of Theorem~\ref{thmloc} on $THH$ induces on $TC$ a cofbration sequence
naturally weakly equivalent to the cofibration sequence of
\cite[1.5.7]{HM3}, compatibly with the cyclotomic trace.
\end{thm}

We begin with a quick review of the construction used by
Hesselholt-Madsen~\cite{HM3}.  Let $\aC_{0}$ denote the category of
perfect complexes $R$-modules, i.e., the category of
bounded chain complexes of finitely generated projective $R$-modules
and let $v\aC_{0}$ denote the subcategory of maps that induce isomorphisms
on homology after inverting a uniformizer (or equivalently,
tensoring over $R$ with $F$).  Regarding $\aC_{0}$ and $\aC_{0}^{v}$ as exact
categories, we get connective spectral enrichments $\aC_{0}^{\Gamma}$ and
$(\aC_{0}^{v})^{\Gamma}$.  Hesselholt-Madsen~\cite[p.~27]{HM3} then
produce weak equivalences
\[
THH(w\subdot\Sdot \aC_{0}^{\Gamma})\simeq THH(R),
\qquad 
THH(w\subdot\Sdot (\aC_{0}^{v})^{\Gamma})\simeq THH(k)
\]
and a homotopy cartesian square
\[
\xymatrix@C-1pc{%
THH(w\subdot\Sdot (\aC_{0}^{v})^{\Gamma})\ar[r]\ar[d]&
THH(v\subdot\Sdot (\aC_{0}^{v})^{\Gamma})\ar[d]\\
THH(w\subdot\Sdot \aC_{0}^{\Gamma})\ar[r]&
THH(v\subdot\Sdot \aC_{0}^{\Gamma})
}
\]
with the upper left hand entry (canonically) contractible.  Their
$THH$ cofibration sequence is then
\[
THH(w\subdot\Sdot (\aC_{0}^{v})^{\Gamma})\to
THH(w\subdot\Sdot \aC_{0}^{\Gamma})\to 
THH(v\subdot\Sdot \aC_{0}^{\Gamma})\to
\Sigma THH(w\subdot\Sdot (\aC_{0}^{v})^{\Gamma}).
\]
Since the simplicial categories $\Sdot[n]\aC_{0}$ and $\Sdot[n]\aC_{0}^{v}$
are discrete, the canonical inclusions
\[
w_{m}\Sdot[n]\aC_{0}^{v}\to w_{m}^{M}\Sdot[n]\aC_{0}^{v}
w_{m}\Sdot[n]\aC_{0}\to w_{m}^{M}\Sdot[n]\aC_{0}
v_{m}\Sdot[n]\aC_{0}\to v_{m}^{M}\Sdot[n]\aC_{0}
\]
are isomorphisms, and so we can identify the $THH$ cofibration
sequence of \cite{HM3} as the cofibration sequence
\[
WTHH^{\Gamma}(\aC_{0}^{v})\to WTHH^{\Gamma}(\aC_{0})\to
WTHH^{\Gamma}(\aC_{0}|v)\to \Sigma WTHH^{\Gamma}(\aC_{0}^{v}).
\]
The proof of Theorem~\ref{thmcomphm3} then consists of essentially two
parts: First reconciling the use of the category of complexes of
$R$-modules ($\aC_{0}$) with the use of the category of simplicial
$R$-modules ($\aA_{0}$), and second in the construction of connective
enrichment, reconciling the use mapping spaces ($\aA^{\Gamma}(x,y)$)
and mapping sets ($\aA_{0}^{\Gamma}(x,y)$).

To treat the case of $w\subdot\Sdot \aC_{0}^{v}$, $w\subdot\Sdot
\aC_{0}$, and $v\subdot\Sdot\aC_{0}$ on equal footing, we will work in
the following context.  Let $\abA$ be an abelian category, let $\aB_{0}$
be a full subcategory of the category of bounded below (in the
homological grading) complexes of $\abA$-modules, and let $v\aB_{0}$ be a
subcategory of $\aB_{0}$ containing all the quasi-isomorphisms,
satisfying the Gluing Axiom for the degreewise split monomorphisms,
and satisfying the two-out-of-three property.
We also assume that $\aB_{0}$ contains $0$, is closed 
under suspension and is closed under quotients and extensions by
degreewise split monomorphisms, i.e., if 
\[
0\to a\to b\to c\to 0
\]
is a short exact sequences of chain complexes in $\abA$ with $a\to b$
degreewise split, if $a$ is $\aB_{0}$ and either $b$ or $c$ is in
$\aB_{0}$ then so is the other.  Let $\aA_{0}$ be the subcategory of
strictly connective complexes in $\aB_{0}$.  Then the Dold-Kan
correspondence allows us to view $\aA_{0}$ as a full subcategory of
the category of simplicial objects in $\abA$, extending it to a
simplicially enriched category $\aA$.  We regard $\aB_{0}$ as a
Waldhausen category with cofibrations the degreewise split
cofibrations and weak equivalences the quasi-isomorphisms; then
$\aA_{0}$ is a Waldhausen subcategory (though not closed) and $\aA$ is
a simplicially tensored Waldhausen category.  We prove the following
lemmas.

\begin{lem}\label{lemhm31}
Under the hypotheses of the preceding paragraph, 
the inclusion of $THH(v\subdot\Sdot \aA^{\Gamma}_{0})$ in $THH(v\subdot \Sdot
\aB^{\Gamma}_{0})$ is a weak equivalence.
\end{lem}

\begin{lem}\label{lemhm32}
Under the hypotheses of the preceding paragraph, 
the inclusion of $THH(v\subdot\Sdot \aA^{\Gamma}_{0})$ in $THH(\aA|v)$
is a weak equivalence. 
\end{lem}

These two lemmas then immediately imply Theorem~\ref{thmcomphm3}.

\begin{proof}[Proof of Lemma~\ref{lemhm31}]
Writing $\Sigma$ for suspension and $C$ for cone, we have a cofiber
sequence of enriched exact functors 
\[
\Id \to C\to \Sigma
\]
on each $v_{m}\aB_{0}$, and so it follows from
Corollary~\ref{coradditivity} that 
\[
\Id\vee\Sigma, C\colon
  WTHH^{\Gamma}(v\subdot \aB_{0})\to  WTHH^{\Gamma}(v\subdot \aB_{0})
\]
induce the same map in the stable category.  On the other hand, using
a simplicial contraction, it is easy to see that $C$ induces the
trivial map. Thus,
\[
\Sigma \colon THH(v\subdot\Sdot\aB_{0})\to THH(v\subdot \Sdot \aB_{0})
\]
is a weak equivalence.  Similarly, 
\[
\Sigma \colon THH(v\subdot\Sdot\aA_{0})\to THH(v\subdot \Sdot \aA_{0})
\]
is a weak equivalence.  Since the canonical map
\[
\colim_{\Sigma}\aA_{0}\to \colim_{\Sigma}\aB_{0}
\]
is an isomorphism, the lemma now follows from
Proposition~\ref{propthhcolim}. 
\end{proof}

\begin{proof}[Proof of Lemma~\ref{lemhm32}]
Let $\bar v\aA_{0}$ denote the subcategory of $v\aA_{0}$ consisting of
those maps that are also degreewise split monomorphisms.  Then by
\cite[1.3.9]{HM3} and Proposition~\ref{propmoorecof}, it suffices to
show that the inclusion 
\[
THH(\bar v\subdot\Sdot \aA^{\Gamma}_{0})\to
THH(\bar v\subdot\Sdot \aA^{\Gamma})
\]
is a weak equivalence.  Since $\Sdot[n]\aB_{0}$ and $v\Sdot[n]\aB_{0}$
satisfy the same hypotheses as $\aB_{0}$, without loss of
generality, it suffices to show that the inclusion
\[
THH(\bar v\subdot\aA^{\Gamma}_{0})\to
THH(\bar v\subdot\aA^{\Gamma})
\]
is a weak equivalence.  By Proposition~\ref{propthhreal}, it suffices
to show that each degeneracy map
\[
THH(\bar v\subdot\aA^{\Gamma}_{0})\to
THH(\bar v\subdot\aA^{\Gamma}_{n})
\]
is a weak equivalence, which we do using an argument similar to the
proof of Theorem~\ref{thmdkdiag}.

Let $s\colon \aA_{0}\to \aA_{n}$ denote the iterated degeneracy and
let $d\colon \aA_{n}\to \aA_{0}$ denote the iterated last face map.
The composite functor $d\circ s$ is the identity and so induces the
identity map 
\[
THH(\bar v\subdot\aA^{\Gamma}_{0})\to
THH(\bar v\subdot\aA^{\Gamma}_{n}).
\]
We show that the composite $s\circ d$ is homotopic to the identity
map.  We have a map of simplicial sets 
\[
c\colon \Delta[n] \times \Delta[1]\to \Delta[n]
\]
that is a null homotopy from the identity map to the inclusion of the
last vertex.  We can use this to construct an exact functor
$c\colon \aA_{n}\to \aA_{n}$ as follows.  Regarding an element of
$f\in \aA_{n}(x,y)$ as a map 
$\tilde f\colon x\otimes \Delta[n]\to y$ in
$\aA_{0}$, we let $c(f)\in \aA_{n}(x\otimes \Delta[1],y\otimes
\Delta[1])$ be the element represented by the map 
\[
(x\otimes \Delta[n]) \otimes \Delta[1]
\iso x\otimes (\Delta [n]\times \Delta[1])
\to y\otimes \Delta[1]
\]
in $\aA_{0}$ induced by $\tilde f$, $c$, and the diagonal map on
$\Delta[1]$.  This then extends to a simplicial spectral functor 
\[
c\subdot\colon \bar v\subdot \aA_{n}^{\Gamma}\to \bar v\subdot \aA_{n}^{\Gamma}.
\]
We construct two simplicial homotopies $H_{0},H_{1}$ of simplicial
spectral functors 
using the two inclusions $\partial_{0},\partial_{1}$ of $\Delta[0]$ in
$\Delta[1]$: On objects, 
\[
x_{1}\to \cdots \to x_{n}
\]
in $\Ob v_{n}\aA$ is sent to 
\[
x_{1}\to \cdots \to x_{i}\to 
x_{i}\otimes \Delta[1]\to \cdots \to x_{n}\otimes \Delta[1]
\]
in $\Ob v_{n}\aA$ where the map $x_{i}\to x_{i}\otimes \Delta[1]$ is
$\partial_{0}$ for $H_{0}$ and $\partial_{1}$ for $H_{1}$.  On morphisms,
$H_{0}$ sends
\[
\xymatrix@-1pc{%
x_{1}\ar[r]\ar[d]_{f_{1}}&\relax\cdots\ar[r]&x_{n}\ar[d]_{f_{n}}\\
y_{1}\ar[r]&\relax\cdots\ar[r]&y_{n}
}
\]
to
\[
\xymatrix@-1pc{%
x_{1}\ar[r]\ar[d]_{f_{1}}&\relax\cdots\ar[r]
&x_{i}\ar[r]^-{\partial_{0}}\ar[d]_{f_{i}}
&x_{i}\otimes \Delta[1]\ar[r]\ar[d]_{c(f_{i})}
&\relax\cdots\ar[r]
&x_{n}\otimes \Delta[1]\ar[d]_{c(f_{n})}
\\
y_{1}\ar[r]&\relax\cdots\ar[r]&y_{i}\ar[r]
&y_{i}\otimes \Delta[1]\ar[r]
&\relax\cdots\ar[r]
&y_{n}\otimes \Delta[1]
}
\]
and $H_{1}$ sends it to
\[
\xymatrix@-1pc{%
x_{1}\ar[r]\ar[d]_{s(d(f_{1}))}&\relax\cdots\ar[r]
&x_{i}\ar[r]^-{\partial_{1}}\ar[d]_{s(d(f_{i}))}
&x_{i}\otimes \Delta[1]\ar[r]\ar[d]_{c(f_{i})}
&\relax\cdots\ar[r]
&x_{n}\otimes \Delta[1]\ar[d]_{c(f_{n})}
\\
y_{1}\ar[r]&\relax\cdots\ar[r]&y_{i}\ar[r]
&y_{i}\otimes \Delta[1]\ar[r]
&\relax\cdots\ar[r]
&y_{n}\otimes \Delta[1].
}
\]
Then $H_{0}$ is a simplicial homotopy of
spectral functors from the identity to $c\subdot$ and $H_{1}$ is a
simplicial homotopy of spectral functors from $s\circ d$ to $c\subdot$.
\end{proof}

We now move on to the proof of Theorem~\ref{thmcompekmm}.  Let $\aA$
denote the category of perfect simplicial $R$-modules and now let
$\aC$ denote the category of finite cell EKMM $HR$-modules.  Having
proved Theorem~\ref{thmcomphm3}, for the proof of
Theorem~\ref{thmcompekmm}, we just need to produce compatible zigzags
of weak equivalences 
\begin{gather*}
WTHH^{\Gamma}(\aA^{v})\simeq WTHH^{\Gamma}(\aC^{v})\\
WTHH^{\Gamma}(\aA)\simeq WTHH^{\Gamma}(\aC)\\
WTHH^{\Gamma}(\aA|v)\simeq WTHH^{\Gamma}(\aC|v).
\end{gather*}
Let $\aM$ denote the full subcategory of EKMM $HR$-modules that are
compact in the derived category and whose underlying spectra satisfy a
cardinality bound (for any limit cardinal large enough that
$\aC\subset \aM$).  The inclusion of $\aC$ in $\aM$ then induces
compatible weak equivalences 
\begin{gather*}
WTHH^{\Gamma}(\aC^{v})\overto{\simeq} WTHH^{\Gamma}(\aM^{v})\\
WTHH^{\Gamma}(\aC)\overto{\simeq} WTHH^{\Gamma}(\aM)\\
WTHH^{\Gamma}(\aC|v)\overto{\simeq} WTHH^{\Gamma}(\aM|v),
\end{gather*}
We will in fact compare $THH$ of the $\aA$ categories with $THH$ of
the $\aM$ categories.  We use the functor denoted $\bM$ in \cite[\S
I.7]{MM} to construct a simplicially enriched functor $\aA\to \aM$ as
follows.

Implicitly we are working with the standard model of $HR$ as a
commutative EKMM $S$-algebra, constructed as follows.  The usual
Eilenberg-Mac Lane spectrum $HR$ has as its $n$-th space 
\[
R\otimes \tilde \bZ[S^{n}]
\]
as in Example~\ref{remexact}; this spectrum is canonically a
commutative ring orthogonal spectrum and the canonical commutative
EKMM $S$-algebra $\bM$ of this (i.e., $S\sma_{\aL}(-)$ applied to its
Lewis-May spectrification).  As $\bM$ is a lax monoidal functor, for
any (discrete) $R$-module $M$, the standard EKMM $S$-module $HM$
is $\bM$ of the spectrum
\[
M\otimes \tilde \bZ[S^{n}]
\]
and is canonically an $HA$-module.  For a simplicial $R$-module $M$,
geometric realization commutes with $\bM$, and we obtain a simplicial
functor $M$ from simplicial $A$-modules to EKMM $HA$-modules.

Because the construction $(-)\otimes \tilde \bZ[S^{(-)}]$ does not
preserve coproducts or pushouts, $M\colon \aA\to \aM$ is not an exact
functor.  But it does preserve coproducts up to weak equivalence and
homotopy pushouts, so it is a \term{weakly exact} functor.  It is also 
\indexterm{based}{based (functor)} in that it sends $0$ to $*$ (after
perhaps modifying it by an isomorphism).  Theorem~\ref{thmwkexact}
and the work of Section~\ref{sec:weaklyexact} below then produces
compatible zigzags of maps of cyclotomic spectra
\begin{gather*}
WTHH^{\Gamma}(\aA^{v})\to WTHH^{\Gamma}(\aM^{v})\\
WTHH^{\Gamma}(\aA)\to WTHH^{\Gamma}(\aM)\\
WTHH^{\Gamma}(\aA|v)\to WTHH^{\Gamma}(\aM|v).
\end{gather*}
Since $M$ induces DK-equivalences
\begin{gather*}
\Sdot[n]\aA^{v}\to \Sdot[n]\aM^{v}\\
\Sdot[n]\aA\to \Sdot[n]\aM\\
v^{M}_{m}\Sdot[n]\aA\to v^{M}_{m}\Sdot[n]\aM,
\end{gather*}
the zigzags above consist of weak equivalences.  This completes the
sketch proof of Theorem~\ref{thmcompekmm}.

\section{The localization sequence for $THH(ku)$}
\label{secar}

The main result of this chapter is the following theorem conjectured
by Hesselholt and Ausoni-Rognes.

\begin{thm}\label{thmkuloc}
The transfer maps and the
canonical maps fit into cofiber sequences of cyclotomic spectra
\begin{gather*}
THH(\mathbb{Z}\phat)\to THH(\ell\phat)\to WTHH^{\Gamma}(\ell\phat |
L\phat)\to \Sigma THH(\mathbb{Z}\phat)\\ 
THH(\mathbb{Z}_{(p)})\to THH(\ell)\to WTHH^{\Gamma}(\ell|L)\to \Sigma THH(\mathbb{Z}_{(p)})\\
THH(\mathbb{Z})\to THH(ku)\to WTHH^{\Gamma}(ku|KU) \to \Sigma THH(\mathbb{Z})
\end{gather*}
inducing cofiber sequences
\begin{gather*}
TC(\mathbb{Z}\phat)\to TC(\ell\phat)\to WTC^{\Gamma}(\ell\phat |
L\phat)\to \Sigma TC(\mathbb{Z}\phat)\\ 
TC(\mathbb{Z}_{(p)})\to TC(\ell)\to WTC^{\Gamma}(\ell|L)\to \Sigma TC(\mathbb{Z}_{(p)})\\
TC(\mathbb{Z})\to TC(ku)\to WTC^{\Gamma}(ku|KU) \to \Sigma TC(\mathbb{Z})
\end{gather*}
which are compatible via the cyclotomic trace with the corresponding
cofiber sequences in 
algebraic $K$-theory constructed in \cite{BlumbergMandell}.
\end{thm}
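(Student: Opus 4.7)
The plan is to realize each cofiber sequence as an instance of the Localization Theorem~\ref{thmloc}. For the $ku$-sequence, take $\aA = \aC$ to be the simplicially tensored Waldhausen category of finite cell EKMM $ku$-modules from Example~\ref{exofinterest}.(i), with its usual weak equivalences $w\aA_0$. Let $v\aA_0$ consist of the $KU$-equivalences, i.e., maps $f$ such that $KU\sma_{ku} f$ is a weak equivalence; equivalently, maps inducing isomorphisms on $\beta^{-1}\pi_*$, where $\beta$ is the Bott element. One checks routinely that $v\aA_0$ contains $w\aA_0$, satisfies two-out-of-three and the Extension Axiom, and is compatible with cylinders; the $v$-acyclic subcategory $\aA^v$ consists of finite cell $ku$-modules with $\beta$-torsion homotopy. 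Theorem~\ref{thmloc} then yields the cofiber sequence of cyclotomic spectra
$$WTHH^{\Gamma}(\aA^v)\to WTHH^{\Gamma}(\aA)\to WTHH^{\Gamma}(\aA|v)\to \Sigma WTHH^{\Gamma}(\aA^v),$$
and Theorem~\ref{thmspecsphere} identifies the middle term with $THH(ku)$. I would \emph{define} $WTHH^{\Gamma}(ku|KU) := WTHH^{\Gamma}(\aA|v)$.

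The main obstacle is identifying the first term $WTHH^{\Gamma}(\aA^v)$ with $THH(\bZ)$, which is the content of the d\'evissage theorem announced for Part~4. The guiding idea, in analogy with Quillen's d\'evissage for algebraic $K$-theory, is that a $\beta$-torsion finite cell $ku$-module admits a finite $\beta$-adic filtration whose subquotients are $\beta$-killed, and hence naturally $H\bZ$-module spectra (since $ku/\beta\htp H\bZ$). Extension of scalars along $ku \to H\bZ$ gives an enhanced exact functor from finite cell $H\bZ$-modules into $\aA^v$, and I would prove that the induced map on $WTHH^{\Gamma}$ is a weak equivalence. The strategy is to use the Cofiber Theorem~\ref{thmcofiber} and the Additivity Theorem~\ref{thmadditivity} to decompose each extension along the $\beta$-adic filtration, reducing inductively to the $\beta = 0$ case (where Theorem~\ref{thmspecsphere} applied to $H\bZ$ closes the argument). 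Carrying out this d\'evissage rigorously in the Waldhausen-category setting---tracking enrichments, cofibrations, and the cyclotomic structure---is the substantive technical content left to Part~4; critically, it is precisely the connective enrichment $WTHH^{\Gamma}$ (not $WTHH$) for which d\'evissage works, as the remark following Theorem~\ref{thmloc} warns.

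The other two $THH$-sequences follow by exactly the same argument, replacing $(ku, KU, \beta)$ with $(\ell, L, v_1)$ or $(\ell\phat, L\phat, v_1)$: the $v$-acyclic subcategory is $v_1$-torsion finite cell modules, its $WTHH^{\Gamma}$ is identified by d\'evissage with $THH(\ell/v_1) = THH(\bZ_{(p)})$ (resp.\ $THH(\ell\phat/v_1) = THH(\bZ\phat)$), and the relative terms $WTHH^{\Gamma}(\ell|L)$ and $WTHH^{\Gamma}(\ell\phat|L\phat)$ are defined as the corresponding $WTHH^{\Gamma}(\aA|v)$. The three $TC$-cofiber sequences are then automatic from the fundamental property of $TC$ recalled in Section~\ref{secdefthh}: cofiber sequences of cyclotomic spectra are preserved by the $TC$-construction. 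Finally, compatibility with the $K$-theory localization sequences of \cite{BlumbergMandell} via the cyclotomic trace is a naturality statement: the maps in Theorem~\ref{thmloc} are induced at the $K$-theory level by the same Waldhausen data producing Waldhausen's Fibration Theorem, and the cyclotomic trace (Definition~\ref{deftildethh} and the ensuing discussion) is constructed by applying $\Ob$ to the iterated $w^{M}\subdot\Sdot^{(n)}$ nerves, so the ladder commutes termwise.
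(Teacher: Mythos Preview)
Your high-level outline matches the paper's: apply Theorem~\ref{thmloc} to $\aC_{R}$ with $v$ the $R[\beta^{-1}]$-equivalences, identify the middle term via Corollary~\ref{cor:ekmm}, and invoke d\'evissage for $\aC_R^v$; the passage to $TC$ and compatibility with the $K$-theory sequences via the trace are exactly as you describe. Where your account diverges is the d\'evissage mechanism. The paper's Theorem~\ref{thm:dev} is not a $\beta$-adic argument; it is a general statement for any connective $S$-algebra $R$ with $\pi_0 R$ left Noetherian, identifying $WTHH^{\Gamma}(\aP_R)$ with $THH$ of the \emph{exact category} $\aE^{fg}_{\pi_0 R}$ of finitely generated $\pi_0 R$-modules, where $\aP_R$ is the category of $R$-modules with finitely many finitely generated homotopy groups (and $\aC_R^v\to\aP_R$ is a DK-equivalence). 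The filtration driving Section~\ref{sec:dev} is the \emph{Postnikov} tower: one shows each $WTHH^{\Gamma}(\aP_0^n)\to WTHH^{\Gamma}(\aP_0^{n+1})$ is an equivalence (Lemma~\ref{lem:devmain}) via a rather elaborate comparison with auxiliary module categories over $\pi_0 R$, reducing to $\aP_0^0$, which $\pi_0$ identifies with $\aE^{fg}_{\pi_0 R}$. The identification $THH(\aE^{fg}_\bZ)\simeq THH(H\bZ)$ and the recognition of the first map as the transfer then fall out from naturality by applying the same d\'evissage to $R=H\bZ$.

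Your $\beta$-adic alternative would need independent development and faces obstacles the Postnikov route avoids. Even granting that $\beta^N$ is null on a compact $\beta$-torsion $M$, the cofiber of a null map splits as $M\vee\Sigma^{2N+1}M$, so one does not get a naive finite $\beta$-filtration of $M$ by cofibrations with $H\bZ$-module subquotients; extracting a genuine Waldhausen filtration from $\beta$-nilpotence is already nontrivial. Moreover $\{H\bZ\}$ is not a connective class in $\aC_{ku}$ (one computes $F_{ku}(H\bZ,H\bZ)\simeq H\bZ\vee\Sigma^{-3}H\bZ$), so the Sphere Theorem does not apply directly to $\aA^v$ and one cannot simply ``reduce to the $\beta=0$ case'' by that route. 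A minor terminological point: the functor you want from $H\bZ$-modules to $ku$-modules is \emph{restriction} of scalars along $ku\to H\bZ$ (the forgetful functor), not extension.
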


Here $WTHH^{\Gamma}(ku|KU)$ denotes the connective $THH$ of the
category of finite cell $ku$-modules with the spectral enrichment
induced by the canonical mapping spaces in $ku$ but weak equivalences
the $KU$-equivalences.  That is,  
\[
WTHH^{\Gamma}(ku|KU) = 
WTHH^{\Gamma}(\aC_{ku}|v) = 
\Omega|THH(v\subdot^{M} \Sdot \aC^{\Gamma}_{ku})|,
\]
where $\aC_{ku}$ is the category of finite cell EKMM $ku$-modules
(as in Example~\ref{exofinterest}.(i)) and $v \aC_{ku}$ is the
collection of maps $M \to N$ such that $M \sma_{ku} KU \to N \sma_{ku}
KU$ is an equivalence, or equivalently, those maps that induce an
isomorphism on homotopy groups after inverting the action of the Bott
element.

The proof of this theorem follows the same general outline as the
proof of the corresponding result in algebraic
$K$-theory \cite{BlumbergMandell}.  In particular, the localization
theorem follows from a ``d\'{e}vissage'' theorem for finitely
generated finite stage Postnikov towers.  We now give the definitions
necessary to state this theorem.  Throughout, we work with EKMM
$S$-algebras and $S$-modules.

For an $S$-algebra $R$, let $\aP_{R}$ denote the full subcategory of
left $R$-modules that are of the homotopy type of cell $R$-modules and
have only finitely many non-zero homotopy groups, all of which are
finitely generated over $\pi_{0}R$.  We give $\aP_{R}$ the structure
of a simplicially tensored Waldhausen category as follows.  For the
simplicial structure, we use the
usual simplicial enrichment obtained by regarding the category of
$R$-modules as a simplicial model category.  For the Waldhausen
category structure,  we take
the weak equivalences to be the usual weak equivalences and the
cofibrations to be the Hurewicz cofibrations, i.e., the maps
satisfying the homotopy extension property in the category of
$R$-modules.  As we described in \cite[\S 1]{BlumbergMandell}, this
gives $\aP_R$ the structure of a Waldhausen category, and the
pushout-product axiom on the tensors follows from \cite[X.2.3]{EKMM}.
(Techniques to make a version of $\aP_R$ that is a small category 
are discussed in \cite[1.7]{BlumbergMandell}.)

Restricting to the subcategory of the category of $S$-algebras with
morphisms the maps $R\to R'$ for which $\pi_{0}R'$ is finitely
generated as a left $\pi_{0}R$-module, we can regard
$WTHH^{\Gamma}(\aP_{(-)})$ as a contravariant functor to the
homotopy category of cyclotomic spectra.  We can now state the
D\'{e}vissage Theorem. 

\begin{thm}[D\'evissage Theorem]\label{thm:dev}
Let $R$ be a connective $S$-algebra with $\pi_{0}R$ left Noetherian.
Then there is a natural isomorphism in the homotopy category of
cyclotomic spectra $THH(\aE^{fg}_{\pi_{0}R})\to
WTHH^{\Gamma}(\aP_{R})$, where $\aE^{fg}_{\pi_{0}R}$ denotes the exact
category of finitely generated left $\pi_{0}R$-modules.  Moreover,
this isomorphism and the induced isomorphism (in the stable category)
on $TC$ are compatible via the cyclotomic trace with the analogous
isomorphism (in the stable category) on algebraic $K$-theory
$K'(\pi_{0}R)\to K'(R)$ in
the D\'evissage Theorem of~\cite{BlumbergMandell}.
\end{thm}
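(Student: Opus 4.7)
The plan is to follow the strategy of the $K$-theory d\'evissage theorem of~\cite{BlumbergMandell}, translating each step into the $WTHH^{\Gamma}$ framework via the theorems developed earlier in this paper. The natural map is induced by the Eilenberg--Mac~Lane functor $H \colon \aE^{fg}_{\pi_{0}R} \to \aP_{R}$ sending a finitely generated $\pi_{0}R$-module $M$ to the $R$-module $HM$ concentrated in degree zero; it factors through the full simplicial Waldhausen subcategory $\aP_{R}^{[0,0]} \subset \aP_{R}$ on modules $N$ with $\pi_{i}N = 0$ for $i \neq 0$.

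For the bottom case, a direct calculation along the lines of Example~\ref{remexact}, using the vanishing $[\Sigma^{i}HM, HN]_{R} = 0$ for $i > 0$, shows that the mapping $\Gamma$-spaces $\aP_{R}^{[0,0]}(HM, \bigvee_{q}HN)$ are discrete and concentrated in $\pi_{0} = \Hom_{\pi_{0}R}(M, N^{q})$. Hence $H$ induces a DK-equivalence of connective spectral enrichments $(\aE^{fg}_{\pi_{0}R})^{\Gamma} \to (\aP_{R}^{[0,0]})^{\Gamma}$, and therefore a weak equivalence of cyclotomic spectra $THH(\aE^{fg}_{\pi_{0}R}) \overto{\sim} WTHH^{\Gamma}(\aP_{R}^{[0,0]})$.

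To extend to all of $\aP_{R} = \bigcup_{N} \aP_{R}^{[-N,N]}$, I would induct along the Postnikov filtration, where $\aP_{R}^{[a,b]}$ denotes the closed Waldhausen subcategory of modules with $\pi_{i} = 0$ for $i \notin [a,b]$. Since the shift autoequivalence $\Sigma$ of $\aP_{R}$ identifies $\aP_{R}^{[0,0]}$ with each $\aP_{R}^{[n,n]}$, by continuity under filtered colimits it suffices to show that for every $a \leq b$ the inclusion $\aP_{R}^{[a,a]} \hookrightarrow \aP_{R}^{[a,b]}$ induces a weak equivalence on $WTHH^{\Gamma}$. Applying the Cofiber Theorem (Theorem~\ref{thmcofiber}) to the complementary inclusion $\aP_{R}^{[a+1,b]} \hookrightarrow \aP_{R}^{[a,b]}$ produces a cofiber sequence whose third term is to be identified with (a suspension of) $WTHH^{\Gamma}(\aP_{R}^{[a,a]})$ via the Postnikov section $P_{\leq a} \colon \aP_{R}^{[a,b]} \to \aP_{R}^{[a,a]}$; induction on $b - a$ then completes the reduction. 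Compatibility with the $K$-theory d\'evissage of~\cite{BlumbergMandell} follows by naturality of the cyclotomic trace, since both proofs proceed along the same Postnikov filtration.

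The chief obstacle is that $P_{\leq a}$ and the connective cover $\tau_{\geq a+1}$ preserve weak equivalences but not Hurewicz cofibrations, so they are not exact functors of Waldhausen categories in the strict sense. Correctly identifying the third term in the Cofiber Theorem therefore requires working with the Moore $\SpMdot$ construction of Section~\ref{sec:spMdot}, which produces models of $THH$ functorial in the weakly exact functors of Section~\ref{sec:weaklyexact}; the Noetherian hypothesis on $\pi_{0}R$ enters precisely to guarantee that each Postnikov layer $\pi_{i}M$ of a module in $\aP_{R}$ remains finitely generated throughout the construction, so that the various $\aP_{R}^{[a,a]}$ really do model $\aE^{fg}_{\pi_{0}R}$ at each stage of the induction.
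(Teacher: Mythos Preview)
Your broad outline---reduce to the subcategory $\aP_{0}^{0}$ via the Postnikov filtration and compare $\aP_{0}^{0}$ with $\aE^{fg}_{\pi_{0}R}$---matches the paper's, and your treatment of the bottom case is essentially right (the paper uses $\pi_{0}\colon \aP_{0}^{0}\to\aE^{fg}_{\pi_{0}R}$ rather than $H$, but either direction gives the required DK-equivalence on $\Sdot[n]$).

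The inductive step, however, has a genuine gap that the $\SpMdot$ machinery cannot repair. The obstacle is worse than you state: $P_{\leq a}$ and $\tau_{\geq a+1}$ are not even \emph{weakly} exact, because they fail to preserve homotopy cocartesian squares. For example, applying $P_{\leq 0}$ to the cocartesian square with vertices $H\bZ$, $*$, $*$, $\Sigma H\bZ$ in $\aP_{0}^{1}$ yields a square with homotopy pushout $\Sigma H\bZ$ but final vertex $*$. Hence Section~\ref{sec:weaklyexact} does not apply, and you get no induced map out of the cofiber term via $P_{\leq a}$. Worse, were an additivity relation of the sort you envision available for the triple $\tau_{\geq a+1}\to \id\to P_{\leq a}$, it would force $WTHH^{\Gamma}(\aP_{0}^{1})\simeq WTHH^{\Gamma}(\aP_{0}^{0})\vee WTHH^{\Gamma}(\aP_{1}^{1})$, contradicting the theorem itself.

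The paper instead shows that the cofiber of the inclusion $\aP_{0}^{n}\to\aP_{0}^{n+1}$ is \emph{contractible} (Lemma~\ref{lem:devmain}), and this contractibility is the substantive content. The argument relates $\Fdot(\aP_{0}^{n+1},\aP_{0}^{n})$ through the chain of DK-equivalences in diagram~\eqref{eq:devmain} to a bisimplicial category $u\subdot M\subdot Z$ built purely from finitely generated $\pi_{0}R$-modules, and then exhibits an explicit extra degeneracy (Proposition~\ref{prop:uMcont}). This simplicial contraction---not any functoriality of Postnikov truncation---is what makes the induction close, and your proposal offers no substitute for it.
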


We prove Theorem~\ref{thm:dev} in the next section and use the rest of
this section to prove Theorem~\ref{thmkuloc} from
Theorem~\ref{thm:dev}.  Let $R$ be one of $ku$, $\ell$, or
$\ell\phat$, and let $\beta$ denote the appropriate Bott element in
$\pi_{*}R$ in degree $2$ or $2p-2$.  Then $R[\beta^{-1}]$ is $KU$,
$L$, or $L\phat$ respectively.  For convenience, let $Z$ denote
$\pi_{0}R$; so $Z=\bZ$, $\bZ_{(p)}$, or $\bZ\phat$ in the respective
cases.  As above we write $\aC_{A}$ for the simplicially
tensored Waldhausen category of finite cell $A$-modules (where $A=HZ$,
$R$, or $R[\beta^{-1}]$).  On $\aC_{R}$ we have the
additional weak equivalences $v\aC_{R}$, the maps that induce an
isomorphism on homotopy groups after inverting the action of the Bott
element.  Since $v\aC_{R}$ contains the usual weak equivalences
$w\aC_{R}$, the hypothesis of the Localization Theorem
(Theorem~\ref{thmloc}) applies and we get a cofibration sequence of
cyclotomic spectra
\[
WTHH^{\Gamma}(\aC_{R}^{v})\to
WTHH^{\Gamma}(\aC_{R})\to
WTHH^{\Gamma}(\aC_{R}|v)\to
\Sigma WTHH^{\Gamma}(\aC_{R}^{v}),
\]
compatible with the analogous sequence in $K$-theory via the
cyclotomic trace.  Corollary~\ref{cor:ekmm} identifies
$WTHH^{\Gamma}(\aC_{R})$ with $THH(R)$, compatibly with the cyclotomic
trace. The inclusion of the $v$-acyclics $\aC_{R}^{v}$ into
the simplicially tensored Waldhausen category $\aP_{R}$ described
above is a tensored exact functor and a DK-equivalence.  Thus,
Theorem~\ref{thm:dev} identifies $\aC_{R}^{v}$ as  $THH(Z)$,
compatibly with the identification of $K(\aC_{R}^{v})$ with $K(HZ)$.

This completes most of the proof of Theorem~\ref{thmkuloc}; it just
remains to identify the map 
\[
THH(Z)\htp WTHH^{\Gamma}(\aC_{R}^{v})\to 
WTHH^{\Gamma}(\aC_{R})\htp THH(R)
\]
in terms of
the transfer map $THH(HZ)\to THH(R)$.  First, we review this transfer
map.  In our current context with $R=ku$, $\ell$, or
$\ell\phat$, the Eilenberg-Mac Lane $R$-module $HZ$ is weakly
equivalent to a finite cell $R$-module.  If we choose a model for $HZ$
as a cofibrant associative $R$-algebra, then finite cell $HZ$-modules
are cell $R$-modules and homotopy equivalent to finite cell
$R$-modules.  Let $\aM^{c}_{R}$ be the simplicially tensored
Waldhausen category whose objects are the $R$-modules that are
homotopy equivalent to finite cell $R$-modules with the usual
simplicial sets of maps, with the usual weak equivalences,  and with
cofibrations the Hurewicz cofibrations 
(using the technique of \cite[1.7]{BlumbergMandell} to make a version
that is a small category).  Then $\aP_{R}$ is a closed Waldhausen
subcategory of $\aM^{c}_{R}$; moreover, the inclusion of $\aC_{R}$ in
$\aM^{c}_{R}$ is tensored exact and a DK-equivalence, and so induces
an equivalence on all versions of $THH$.  We also have the analogous category
$\aM^{c}_{HZ}$ for $HZ$, which coincides with $\aP_{HZ}$. The forgetful
functor from $HZ$-modules to 
$R$-modules is a tensored exact functor $\aM^{c}_{HZ}\to
\aM^{c}_{R}$.  
The transfer map $THH(HZ)\to
THH(R)$ is by definition the map 
\[
\tau_{HZ}^{R} \colon THH(HZ)\to THH(\aM^{c}_{R})
\overfrom{\sim}THH(R),
\]
where the map on the right is
induced by the inclusion of $S_{HZ}$ in $\aM^{c}_{R}$ and the map of
endomorphism spectra  
\[
\aC^{S}_{HZ}(S_{HZ},S_{HZ}) = 
(\aM^{c}_{HZ})^{S}(S_{HZ},S_{HZ}) \to 
(\aM^{c}_{R})^{S}(S_{HZ},S_{HZ}).
\]
(We understand $THH$ of the EKMM $S$-algebra $HZ$ as $THH$ of the
symmetric ring spectrum 
$\aC^{S}_{HZ}(S_{HZ},S_{HZ})$; cf.\ Corollary~\ref{cor:ekmm} and the
remarks that follow it.)

Since the transfer map coincides with the map
\[
THH(HZ)\overto{\sim} THH(\aM^{c}_{HZ})
\to THH(\aM^{c}_{R})
\overfrom{\sim}THH(R),
\]
applying Corollary~\ref{cor:ekmm} and naturality, we can also identify
it as the map 
\[
THH(HZ)\simeq WTHH^{\Gamma}(\aM^{c}_{HZ})
\to WTHH^{\Gamma}(\aM^{c}_{R})
\simeq THH(R).
\]
Using the naturality of the isomorphism in
Theorem~\ref{thm:dev}, we obtain the following commutative diagram of
maps in the homotopy category of cyclotomic spectra. 
\[
\xymatrix{%
THH(Z)\ar[d]_{=}\ar[r]^-{\sim}
&WTHH^{\Gamma}(\aP_{HZ})\ar[d]\ar[r]^-{=}
&WTHH^{\Gamma}(\aM^{c}_{HZ})\ar[d]
&THH(HZ)\ar[l]_-{\sim}\ar@{..>}[d]^{\tau_{HZ}^{R}}
\\
THH(Z)\ar[r]^-{\sim}
&WTHH^{\Gamma}(\aP_{R})\ar[r]
&WTHH^{\Gamma}(\aM^{c}_{R})
&THH(R)\ar[l]_-{\sim}\ar[dl]_-{\sim}\\
&WTHH^{\Gamma}(\aC^{v}_{R})\ar[u]^-{\sim}\ar[r]
&WTHH^{\Gamma}(\aC_{R})\ar[u]^-{\sim}
}
\]
It will be obvious from the proof of Theorem~\ref{thm:dev} in the next
section that the isomorphism $THH(Z)\simeq THH(HZ)$ in the top row of
the diagram is the standard one, and this identifies the map
$THH(Z)\to THH(R)$ as the transfer map.  This completes the proof of
Theorem~\ref{thmkuloc}. 

\section{Proof of  the D\'{e}vissage Theorem}\label{sec:dev}
\label{secdevissage}

This section is devoted to the proof of the D\'evissage Theorem,
Theorem~\ref{thm:dev}.  The argument parallels the analogous
d\'evissage theorem in \cite{BlumbergMandell}, which we review along
the way.

We fix the connective $S$-algebra $R$, writing
$\aP$ for $\aP_{R}$ . Let $\aP_{m}^{n}$ denote the full subcategory of
$\aP$ consisting of those $R$-modules whose homotopy groups $\pi_{q}$
are zero for $q>n$ or $q<m$.  In this notation, we permit $m=-\infty$
and/or $n=\infty$, so $\aP=\aP_{-\infty}^{\infty}$.  The categories
$\aP_{m}^{n}$ are closed Waldhausen subcategories of $\aP_{R}$.  The
following theorem proved below parallels \cite[1.2]{BlumbergMandell}.

\begin{thm}\label{thmbigthm}
The inclusion $\aP_{0}^{0}\to \aP$ induces a weak equivalence
\[
WTHH^{\Gamma}(\aP_{0}^{0}) \to WTHH^{\Gamma}(\aP).
\]
\end{thm}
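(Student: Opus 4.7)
The plan is to adapt the algebraic $K$-theory d\'evissage of \cite[Thm.~1.2]{BlumbergMandell} to $WTHH^{\Gamma}$, replacing the $K$-theoretic tools with their counterparts developed in Part~2: the Additivity Theorem~\ref{thmadditivity} (and especially Corollary~\ref{coradditivity}), the Cofiber Theorem~\ref{thmcofiber}, the Localization Theorem~\ref{thmloc}, and the Moore $\SpMdot$ construction from Section~\ref{sec:spMdot}. The central geometric input is the Postnikov tower: every object $M$ in $\aP$ has only finitely many nonzero homotopy groups, so its connective-cover tower $\ast\to\tau_{\geq N}M\to\cdots\to\tau_{\geq -N}M=M$ is a finite filtration by weak cofibrations whose successive subquotients $\Sigma^{k}H\pi_{k}M$ lie in $\aP_{k}^{k}$, each of which is equivalent via the suspension autoequivalence to $\aP_{0}^{0}$.

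The first reduction is to a bounded range. Since $\aP=\bigcup_{N}\aP_{-N}^{N}$ as a filtered union of closed Waldhausen subcategories and $THH$ of spectral categories commutes with filtered colimits (up to weak equivalence, applied after the $\Sdot$-construction), it suffices to show that each inclusion $\aP_{0}^{0}\hookrightarrow\aP_{-N}^{N}$ is a $WTHH^{\Gamma}$-equivalence. Second, since the suspension functor $\Sigma$ is a tensored exact autoequivalence of $\aP$ restricting to an equivalence $\aP_{m}^{n}\to\aP_{m+1}^{n+1}$ for every $m\leq n$, and such equivalences induce weak equivalences on $WTHH^{\Gamma}$, I can further reduce to proving by induction on $n\geq 0$ that $\aP_{0}^{0}\hookrightarrow\aP_{0}^{n}$ induces a weak equivalence on $WTHH^{\Gamma}$.

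The inductive step runs as follows. On $\aP_{0}^{n}$, the cofiber sequence of weakly exact endofunctors
\[
\iota_{n}\circ\Sigma^{n}\pi_{n}\to\mathrm{id}\to\iota_{n-1}\circ\tau_{\leq n-1},
\]
where $\tau_{\leq n-1}\colon\aP_{0}^{n}\to\aP_{0}^{n-1}$ is Postnikov truncation, $\Sigma^{n}\pi_{n}\colon\aP_{0}^{n}\to\aP_{n}^{n}$ sends $M$ to a cofibrant model of $\Sigma^{n}H\pi_{n}M$, and $\iota_{n-1}$, $\iota_{n}$ are the inclusions, provides the key structure. Since $\tau_{\leq n-1}$ and $\Sigma^{n}\pi_{n}$ are not strictly exact (they preserve only \emph{homotopy}-cocartesian squares), they must be encoded as simplicial spectral functors into the Moore $\SpMdot$ framework of Section~\ref{sec:spMdot}; this is precisely the flexibility that the $\SpMdot$ construction provides. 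The additivity identity from Corollary~\ref{coradditivity}, applied in this framework, then yields in the homotopy category of cyclotomic spectra
\[
\mathrm{id}_{WTHH^{\Gamma}(\aP_{0}^{n})}\simeq WTHH^{\Gamma}(\iota_{n}\circ\Sigma^{n}\pi_{n})+WTHH^{\Gamma}(\iota_{n-1}\circ\tau_{\leq n-1}).
\]
Combining with the inductive hypothesis (which identifies $\iota_{n-1}$ with an equivalence onto $WTHH^{\Gamma}(\aP_{0}^{n-1})\simeq WTHH^{\Gamma}(\aP_{0}^{0})$) and the suspension-induced equivalence $WTHH^{\Gamma}(\aP_{n}^{n})\simeq WTHH^{\Gamma}(\aP_{0}^{0})$, displays $WTHH^{\Gamma}(\aP_{0}^{n})$ as a retract of the image of $WTHH^{\Gamma}(\aP_{0}^{0})$ compatibly with the inclusion $\aP_{0}^{0}\hookrightarrow\aP_{0}^{n}$.

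The main obstacle is the $\SpMdot$-encoding of the Postnikov truncation and top-piece functors so that the additivity corollary genuinely applies. This requires a functorial cofibrant model of the Postnikov tower within the simplicially tensored Waldhausen structure on $\aP$ inherited from the EKMM simplicial model category (Example~\ref{exofinterest}.(i)), and a verification that the resulting zigzags of weak equivalences assemble into a simplicial object in the Moore $\SpMdot$ category to which Corollary~\ref{coradditivity} applies. Once this is established and one traces through the resulting equivalences to confirm that the isomorphism is induced by the inclusion $\aP_{0}^{0}\hookrightarrow\aP_{0}^{n}$ (rather than some twist of it), compatibility with the cyclotomic trace to $K$-theory follows by naturality from the parallel $K$-theoretic argument in \cite{BlumbergMandell}.
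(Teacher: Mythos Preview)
Your approach has a genuine gap at its core: the Postnikov truncation functor $\tau_{\leq n-1}\colon \aP_{0}^{n}\to \aP_{0}^{n-1}$ and the top-piece functor $\Sigma^{n}\pi_{n}\colon \aP_{0}^{n}\to \aP_{n}^{n}$ are \emph{not} weakly exact. You assert that they ``preserve homotopy-cocartesian squares,'' but they do not. For a concrete counterexample, take $n\geq 1$ and consider the cofiber sequence $HR\to *\to \Sigma HR$ in $\aP_{0}^{n}$; applying $\tau_{\leq 0}$ yields $HR\to *\to *$, which is not a cofiber sequence. More generally, for a cofiber sequence $X\to Y\to Z$ in $\aP_{0}^{n}$, the long exact sequence of homotopy groups shows that $\tau_{\leq n-1}Z$ is the cofiber of $\tau_{\leq n-1}X\to \tau_{\leq n-1}Y$ only when $\pi_{n}Y\to\pi_{n}Z$ is surjective. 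Since the $\SpMdot$ framework and Corollary~\ref{coradditivity} require weakly exact functors, the additivity identity you invoke is not available.

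There is also an internal consistency check that shows the argument cannot succeed as written. Suppose the functors \emph{were} weakly exact, so that additivity gave $[\mathrm{id}]=[\iota_{n}\circ\Sigma^{n}\pi_{n}]+[\iota_{n-1}\circ\tau_{\leq n-1}]$ on $WTHH^{\Gamma}(\aP_{0}^{n})$. Since $\tau_{\leq n-1}\circ\iota_{n-1}=\mathrm{id}$ and $\Sigma^{n}\pi_{n}\circ\iota_{n}\simeq\mathrm{id}$, while the cross-composites vanish, the two summands would be orthogonal idempotents. This would yield a splitting
\[
WTHH^{\Gamma}(\aP_{0}^{n})\simeq WTHH^{\Gamma}(\aP_{0}^{n-1})\vee WTHH^{\Gamma}(\aP_{n}^{n}),
\]
and inductively $WTHH^{\Gamma}(\aP_{0}^{n})\simeq WTHH^{\Gamma}(\aP_{0}^{0})^{\vee(n+1)}$, contradicting the theorem rather than proving it. The failure of weak exactness is precisely what prevents this false conclusion.

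The paper circumvents this by never applying truncation functors to $\aP_{0}^{n}$ directly. Instead it reduces (via the Cofiber Theorem) to showing that $WTHH^{\Gamma}(\aP_{0}^{n}\to\aP_{0}^{n+1})\simeq\Omega||THH(\Sdot\Fdot(\aP_{0}^{n+1},\aP_{0}^{n})^{\Gamma})||$ is contractible, and then compares this through a chain of DK-equivalences to $||THH((u\subdot M\subdot Z)^{\Gamma})||$, where $Z=\pi_{0}R$ and $uM\subdot Z$ is a simplicial category of finitely generated $Z$-modules admitting an explicit simplicial contraction. The point is that the subcategory $u\aP$ of maps inducing isomorphisms on $\pi_{n+1}$ and injections on $\pi_{n}$ remembers exactly the extra data needed to make the comparison to $Z$-modules well-defined on the $\Sdot$ level, data that a bare Postnikov truncation discards.
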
 

The point of the previous theorem is that $\pi_{0}$ provides an exact
functor from $\aP_{0}^{0}$ to the exact category of finitely generated
left $\pi_{0}R$-modules $\aE^{fg}_{R}$.  Theorem~1.3 of 
\cite{BlumbergMandell} proves that this functor induces a weak
equivalence of $K$-theory.  Since the simplicial mapping sets for
$\aE^{fg}_{R}$ are discrete, $\pi_{0}$ is also a simplicially enriched
functor $\aP_{0}^{0}\to\aE^{fg}_{R}$. It is in fact a DK-equivalence
and induces a DK-equivalence $\Sdot[n]\aP_{0}^{0}\to
\Sdot[n]\aE^{fg}_{R}$ for all $n$.  This proves the following theorem,
which parallels \cite[1.3]{BlumbergMandell}.

\begin{thm}
The functor $\pi_{0}\colon \aP_{0}^{0}\to \aE^{fg}_{R}$ induces a weak
equivalence 
\[
WTHH^{\Gamma}(\aP_{0}^{0})\to
WTHH^{\Gamma}(\aE^{fg}_{R})=THH(\aE^{fg}_{R}).
\]
\end{thm}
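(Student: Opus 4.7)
The plan is to verify directly that $\pi_0$ induces a DK-equivalence of simplicially enriched Waldhausen categories $\aP_0^0 \to \aE^{fg}_R$, and moreover that $\Sdot[n]\pi_0 \colon \Sdot[n]\aP_0^0 \to \Sdot[n]\aE^{fg}_R$ is a DK-equivalence at every simplicial level. Applying the connective $\Gamma$-spectral construction, $THH$, geometric realization, and $\Omega$ will then give the desired weak equivalence of cyclotomic spectra.

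The heart of the argument is to show that for $M, N$ in $\aP_0^0$, the natural map
\[
\aP_0^0(M, N) \longrightarrow \aE^{fg}_R(\pi_0 M, \pi_0 N) = \Hom_{\pi_0 R}(\pi_0 M, \pi_0 N)
\]
is a weak equivalence. Since the target is discrete, this reduces to showing $\pi_0 \aP(M, N) = \Hom_{\pi_0 R}(\pi_0 M, \pi_0 N)$ and $\pi_i \aP(M, N) = 0$ for $i > 0$. Using the left Noetherian hypothesis on $\pi_0 R$, I would choose a free resolution $P_\ssdot \to \pi_0 M$ by finitely generated free $\pi_0 R$-modules and lift it to a simplicial cell $R$-module resolution $R\langle P_\ssdot\rangle \to M$; since $N$ has homotopy concentrated in degree zero, the totalization computes $\Ext^{\ssdot}_{\pi_0 R}(\pi_0 M, \pi_0 N)$ in non-positive degrees, which yields the required vanishing. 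Essential surjectivity on $\pi_0$ is immediate: any finitely generated $\pi_0 R$-module $A$ is realized by an Eilenberg--Mac Lane $R$-module $HA$ built as an iterated cofiber of maps between free cell $R$-modules.

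Having established the level-zero DK-equivalence, I would use the pullback-over-fibrations formula~\eqref{eqsn} and Proposition~\ref{propfunct} to conclude that $\Sdot[n]\pi_0^\Gamma$ is a DK-embedding of connective spectral enrichments for every $n$. For essential surjectivity at level $n$, given an admissible sequence $A_1 \hookrightarrow \cdots \hookrightarrow A_n$ of injections in $\aE^{fg}_R$, I would inductively lift it to $\aP_0^0$: starting with $M_1 = HA_1$, and assuming $M_{i-1}$ realizes $A_{i-1}$, use the level-zero DK-equivalence to lift the injection $A_{i-1} \hookrightarrow A_i$ to a map $M_{i-1} \to HA_i$ in $\aP$, and convert it into a cofibration $M_{i-1} \hookrightarrow M_i$ in $\aP_0^0$ by the mapping cylinder construction inherited from the simplicially tensored ambient category $\aP$. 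The long exact sequence in homotopy combined with injectivity on $\pi_0$ ensures the cofiber lies in $\aP_0^0$.

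The conclusion now follows formally: Proposition~\ref{propfunctG} converts the levelwise DK-equivalences into DK-equivalences of connective spectral enrichments $\Sdot[n](\aP_0^0)^\Gamma \to \Sdot[n](\aE^{fg}_R)^\Gamma$; the basic invariance property of $THH$ listed after Definition~\ref{defwthhG} yields levelwise weak equivalences of cyclotomic spectra; geometric realization preserves these equivalences by Proposition~\ref{prop:realok}; and a final $\Omega$ gives the weak equivalence $WTHH^\Gamma(\aP_0^0) \to WTHH^\Gamma(\aE^{fg}_R)$, with the identification $WTHH^\Gamma(\aE^{fg}_R) = THH(\aE^{fg}_R)$ provided by Example~\ref{remexact}. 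The principal obstacle is the mapping space computation at level zero, where both the degree-concentration hypothesis on objects of $\aP_0^0$ and the Noetherianness of $\pi_0 R$ are essential; once that is in hand, the $\Sdot$-level arguments are formal consequences of the pullback formula and the mapping cylinder construction.
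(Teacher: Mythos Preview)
Your proposal is correct and follows essentially the same approach as the paper: the paper's proof is the terse assertion that $\pi_0$ is a simplicially enriched DK-equivalence inducing DK-equivalences $\Sdot[n]\aP_0^0 \to \Sdot[n]\aE^{fg}_R$ for all $n$, and you have supplied the details (the homotopy-discreteness of the mapping spaces via resolution/t-structure arguments, and the explicit lifting of filtrations for essential surjectivity at each $\Sdot[n]$ level). One small omission is that you do not explicitly verify that $\pi_0$ is exact (sends cofibrations in $\aP_0^0$ to admissible monomorphisms and preserves pushouts along cofibrations), but this is immediate from the long exact sequence once one recalls that cofibrations in $\aP_0^0$ by definition have cofiber in $\aP_0^0$.
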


Theorem~\ref{thm:dev} is an immediate consequence of the previous two
theorems, with the natural isomorphism coming from the natural zigzag of weak
equivalences of cyclotomic spectra
\[
THH(\aE^{fg}_{R})=WTHH^{\Gamma}(\aE^{fg}_{R})\from
WTHH^{\Gamma}(\aP_{0}^{0})\to WTHH^{\Gamma}(\aP).
\]
Thus, it remains to prove Theorem~\ref{thmbigthm}.

The proof of Theorem~\ref{thmbigthm} follows the same outline as the
parallel theorem \cite[1.2]{BlumbergMandell}.  As in the argument
there, we have the following two easy observations.  

\begin{prop}
The inclusion $\aP_{0}^{\infty}\to \aP$
induces an equivalence
\[
WTHH^{\Gamma}(\aP_{0}^{\infty}) \to WTHH^{\Gamma}(\aP).
\]
\end{prop}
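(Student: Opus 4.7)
The plan is to filter $\aP$ by the closed Waldhausen subcategories $\aP_{-k}^{\infty}$ for $k \geq 0$; since every object of $\aP$ has only finitely many nonzero homotopy groups, $\aP = \bigcup_{k \geq 0} \aP_{-k}^{\infty}$. I will show that each inclusion $\aP_{-k}^{\infty} \hookrightarrow \aP_{-k-1}^{\infty}$ induces a weak equivalence on $WTHH^{\Gamma}$, and then pass to the filtered colimit.

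For the inclusion $i \colon \aP_{-k}^{\infty} \hookrightarrow \aP_{-k-1}^{\infty}$, the key observation is that suspension shifts homotopy groups up by one, so $\Sigma$ restricts to an enriched exact functor $\Sigma \colon \aP_{-k-1}^{\infty} \to \aP_{-k}^{\infty}$. The two composites $i \circ \Sigma$ and $\Sigma \circ i$ coincide with the suspension endofunctor on $\aP_{-k-1}^{\infty}$ and on $\aP_{-k}^{\infty}$, respectively. Following the Sphere Theorem argument from Section~\ref{secspheretheorem}, I apply Corollary~\ref{coradditivity} to the cofiber sequence of enriched exact endofunctors $\id \to C \to \Sigma$, where $C$ denotes the cone. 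Since each $CX$ is contractible, $C$ is naturally weakly equivalent to the zero functor, and by Proposition~\ref{propnat} it induces the trivial map on $WTHH^{\Gamma}$; Additivity then gives $\id + \Sigma \htp 0$, so $\Sigma$ acts as $-\id$ on both $WTHH^{\Gamma}(\aP_{-k-1}^{\infty})$ and $WTHH^{\Gamma}(\aP_{-k}^{\infty})$. Both composites are therefore equivalences, and consequently so are $i$ and $\Sigma$ individually.

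To deduce the theorem from these equivalences, I use that $\aP$ is the filtered colimit (in fact, the directed union) of the simplicially enriched Waldhausen categories $\aP_{-k}^{\infty}$. Each constituent of the $WTHH^{\Gamma}$ construction --- the $\Sdot$ construction, the $\Gamma$-spectrum prolongation, the B\"okstedt-Dundas-McCarthy cyclic nerve model of $THH$, geometric realization, and $\Omega$ --- commutes with filtered colimits of Waldhausen subcategories, so $WTHH^{\Gamma}(\aP) \htp \hocolim_{k} WTHH^{\Gamma}(\aP_{-k}^{\infty})$. Composing the stepwise equivalences then yields the desired weak equivalence $WTHH^{\Gamma}(\aP_{0}^{\infty}) \overto{\sim} WTHH^{\Gamma}(\aP)$. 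The main delicate point is verifying the filtered-colimit compatibility, which is a routine but careful unpacking of each stage of the construction; the substantive homotopical input is the Additivity Theorem together with the cone-suspension argument already deployed in Section~\ref{secadd} and the proof of the Sphere Theorem.
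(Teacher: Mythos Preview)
Your argument is correct and is essentially the argument the paper has in mind: the paper does not spell out a proof but simply calls this an ``easy observation'' parallel to \cite[1.2]{BlumbergMandell}, where the same suspension-and-filtration trick is used for $K$-theory. Your filtration $\aP=\bigcup_{k\geq 0}\aP_{-k}^{\infty}$, the use of $\Sigma\colon \aP_{-k-1}^{\infty}\to \aP_{-k}^{\infty}$ as a two-sided homotopy inverse to the inclusion via the cone sequence and Additivity, and the passage to the filtered colimit are exactly the intended steps; the colimit compatibility is indeed routine here since $\Sdot[n]$, the $\Gamma$-enrichment, and the cyclic nerve are built from finite data and so commute with directed unions of full subcategories, while $\Omega$ commutes with homotopy colimits of spectra.
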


\begin{prop}
The cyclotomic spectrum $WTHH^{\Gamma}(\aP_{0}^{\infty})$ is weakly
equivalent to the 
telescope of the sequence of maps
\[
WTHH^{\Gamma}(\aP_{0}^{0})\to \dotsb \to WTHH^{\Gamma}(\aP_{0}^{n})\to
WTHH^{\Gamma}(\aP_{0}^{n+1})\to \dotsb.
\]
\end{prop}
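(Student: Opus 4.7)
The plan is to identify $\aP_{0}^{\infty}$ as the sequential colimit of the subcategories $\aP_{0}^{n}$ in an appropriate sense and to argue that each of the constructions used to build $WTHH^{\Gamma}$ commutes with such sequential colimits, up to weak equivalence.

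First I would observe that by definition every object $M$ of $\aP_{0}^{\infty}$ has only finitely many nonzero homotopy groups, all concentrated in nonnegative degrees, so $M$ lies in $\aP_{0}^{n}$ for all sufficiently large $n$. Since each $\aP_{0}^{n}$ is a full simplicially enriched subcategory of $\aP_{0}^{\infty}$, the mapping simplicial sets are unchanged, and hence $\aP_{0}^{\infty}=\colim_{n}\aP_{0}^{n}$ as a simplicially enriched Waldhausen category (in a pointwise sense). Next, I would apply the connective spectral enrichment: since a finite wedge of objects of $\aP_{0}^{n}$ again lies in $\aP_{0}^{n}$, the formula $\aC^{\Gamma}_{q}(x,y)=\aC(x,\bigvee_{q}y)$ shows $\aP_{0}^{\infty,\Gamma}=\colim_{n}\aP_{0}^{n,\Gamma}$ as spectral categories. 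Because $\Sdot[k]$ is built from finite diagrams in $\aC_{0}$, the analogous identification $\Sdot[k]\aP_{0}^{\infty,\Gamma}=\colim_{n}\Sdot[k]\aP_{0}^{n,\Gamma}$ holds at every simplicial level.

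The key homotopical step is that the B\"okstedt-style cyclic nerve defining $THH$ commutes with such sequential colimits of spectral categories up to weak equivalence. This is because each simplex of $THH(\aD^{\Gamma})$ at B\"okstedt level $\underline{m}$ is a wedge, over finite tuples of objects of $\aD$, of finite smash products of mapping spectra, and both wedges and smash products commute with filtered colimits of symmetric spectra. Combined with the cofinality argument that any such finite tuple of objects eventually lies in some $\aP_{0}^{n}$, this gives
\[
THH(\Sdot[k]\aP_{0}^{\infty,\Gamma})\htp \hocolim_{n}THH(\Sdot[k]\aP_{0}^{n,\Gamma})
\]
at each $k$. Since the simplicial cyclotomic spectra involved are spacewise proper (Proposition~\ref{prop:realok}), geometric realization commutes with this sequential hocolim, and then $\Omega$ commutes with sequential hocolims, giving the desired identification of $WTHH^{\Gamma}(\aP_{0}^{\infty})$ with the telescope. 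Finally I would replace the hocolim by the telescope by noting the maps in the system are inclusions of full spectral subcategories, hence levelwise cofibrations after passage to realizations, which brings the sequential hocolim into the strict telescope form.

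The main obstacle will be the careful verification that $THH$ of a filtered colimit of spectral categories (with varying object sets) is the filtered hocolim of the $THH$'s, and that this is compatible with the cyclotomic structure. Once one unwinds the B\"okstedt construction this reduces to the commutation of $\hocolim$ over a filtered category with finite wedges, finite smash products, and the cyclic bar indexing; no new ideas are required, but notation must be organized so that the $TR$- and $TC$-statements follow automatically from the cyclotomic weak equivalence on $THH$.
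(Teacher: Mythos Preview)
Your proposal is correct and supplies exactly the kind of argument the paper has in mind: the paper states this proposition without proof, calling it one of two ``easy observations'' parallel to the corresponding $K$-theory statement in \cite{BlumbergMandell}, and your argument---identifying $\aP_{0}^{\infty}$ as the filtered union $\bigcup_{n}\aP_{0}^{n}$ and then checking that each stage of the construction $\aC\mapsto \Omega|THH(\Sdot\aC^{\Gamma})|$ commutes with this filtered colimit---is precisely the expected verification. One small point: you need not separately argue that the hocolim agrees with the telescope, since for a sequential diagram the telescope is already a model for the homotopy colimit; and for the step involving $\Omega$, note you are working stably (these are cyclotomic spectra), so $\Omega$ commutes with filtered homotopy colimits automatically.
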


As in \cite{BlumbergMandell}, the proof of Theorem~\ref{thmbigthm}
will then be completed by showing that the maps 
\[
WTHH^{\Gamma}(\aP_{0}^{n})\to 
WTHH^{\Gamma}(\aP_{0}^{n+1})
\]
are weak equivalences for all $n\geq 0$.  Applying
Proposition~\ref{propcofiber} and Theorem~\ref{thmcofiber}, this is
equivalent to proving the following lemma.

\begin{lem}\label{lem:devmain}
$WTHH^{\Gamma}(\aP_{0}^{n}\to \aP_{0}^{n+1})\htp \Omega |THH(\Sdot
\Fdot(\aP_{0}^{n+1},\aP_{0}^{n})^{\Gamma})|$ is weakly contractible.
\end{lem}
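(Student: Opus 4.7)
The plan is first to use Theorem~\ref{thmcofiber} and Proposition~\ref{propcofiber} to reduce the contractibility statement to showing that the inclusion $\aP_0^n\hookrightarrow\aP_0^{n+1}$ induces a weak equivalence on $WTHH^\Gamma$, and then to establish this via a Postnikov d\'evissage carried out in the $\Spdot^M$ framework of Section~\ref{sec:spMdot}. The starting observation is a rigidity property: for any chain $Y=(x_0\rightarrowtail\cdots\rightarrowtail x_k)$ in $\Fdot[k](\aP_0^{n+1},\aP_0^n)$, each map induces an isomorphism on $\pi_{n+1}$, being injective (as a cofibration in $\aP_0^{n+1}$, forcing $\pi_{n+1}$-injectivity on cofibers in $\aP_0^{n+1}$) and surjective (the cofiber lies in $\aP_0^n$, so its $\pi_{n+1}$ vanishes). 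Thus $\pi_{n+1}$ is constant along every chain.

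Using this, I would construct a cofiber sequence of weakly exact endofunctors $\tau_{\ge n+1}\to\mathrm{id}\to\tau_{\le n}$ on a $\Spdot^M$-thickening of $\Fdot(\aP_0^{n+1},\aP_0^n)$: the truncation $\tau_{\le n}$ sends a chain to its levelwise Postnikov truncation, producing an object of $\Fdot(\aP_0^n,\aP_0^n)$ (since $\tau_{\le n}(x_{i+1}/x_i)=x_{i+1}/x_i$ when $x_{i+1}/x_i\in\aP_0^n$), while the cover $\tau_{\ge n+1}$ sends it to a chain of weak equivalences between objects of $\aP_{n+1}^{n+1}$ (since $\tau_{\ge n+1}(x_{i+1}/x_i)=*$). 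Applying a $\Spdot^M$-adapted version of the Additivity Theorem~\ref{thmadditivity} then identifies the identity map of $WTHH^\Gamma$ of the thickened category with the sum of the maps induced by $\tau_{\le n}$ and $\tau_{\ge n+1}$, so it suffices to show each of these maps is null-homotopic. The $\tau_{\le n}$-factor passes through $|WTHH^\Gamma(\Fdot(\aP_0^n,\aP_0^n))|$, which by Proposition~\ref{propcofiber} is the relative $WTHH^\Gamma$ for $\mathrm{id}_{\aP_0^n}$; since $\Sdot\,\mathrm{id}_{\aP_0^n}$ is the path object $P\Sdot\aP_0^n$ and carries the standard extra degeneracy, this factor is contractible. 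The $\tau_{\ge n+1}$-factor passes through the subcategory of chains of weak equivalences in $\aP_{n+1}^{n+1}$, where I would exhibit a canonical null-homotopy by combining the collapse of such chains to a single object with the outer $\Sdot$-direction's cone structure.

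The main obstacle is the analysis of the $\tau_{\ge n+1}$-factor: naively its image is DK-equivalent to $\aP_{n+1}^{n+1}$, and $WTHH^\Gamma(\aP_{n+1}^{n+1})$ is a shift of $THH(\aE^{fg}_{\pi_0 R})$ and hence nonzero, so a more delicate argument is required to see that the \emph{map} $(\tau_{\ge n+1})_*$---not the target category itself---is null. The resolution should exploit the interaction between the $\Spdot^M$ thickening (which permits $\tau_{\ge n+1}$ to be a weakly exact functor at all, since it turns cofibrations with $\aP_0^n$-cofiber into weak equivalences rather than cofibrations) and the outer $\Sdot$-direction, where the constancy of $\tau_{\ge n+1}$ along each chain furnishes an extra simplicial contraction. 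Verifying that the resulting splitting behaves well under the double-simplicial realization $\Omega\|THH(\Sdot\Fdot(-)^\Gamma)\|$, and that the Additivity argument of \cite{McCarthyAdditivity} genuinely survives passage to the $\Spdot^M$ setting, constitutes the technical core of the proof.
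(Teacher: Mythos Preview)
Your additivity reduction is fine: the cofiber sequence $\tau_{\ge n+1}\to\id\to\tau_{\le n}$ does give $\id\simeq(\tau_{\le n})_*+(\tau_{\ge n+1})_*$ on $|WTHH^\Gamma(\Fdot(\aP_0^{n+1},\aP_0^n))|$, and your argument that $(\tau_{\le n})_*$ is null (factoring through the contractible $|WTHH^\Gamma(\Fdot(\aP_0^n,\aP_0^n))|$) is correct. But this immediately forces $(\tau_{\ge n+1})_*\simeq\id$. So proving $(\tau_{\ge n+1})_*$ null is \emph{exactly} the original problem, and the Postnikov splitting has gained you nothing. Your proposed mechanism---that the image of $\tau_{\ge n+1}$ consists of chains of weak equivalences and so admits an ``extra simplicial contraction'' in the $\Fdot$-direction---does not work: the simplicial object $k\mapsto w_k^M\aP_{n+1}^{n+1}$ is levelwise DK-equivalent to the constant object $\aP_{n+1}^{n+1}$ (Proposition~\ref{propmoorecof}), so its realization is $WTHH^\Gamma(\aP_{n+1}^{n+1})$, which is nonzero. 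There is no extra degeneracy here, and the vague appeal to ``the outer $\Sdot$-direction's cone structure'' does not produce one.

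The paper's proof is structurally different and avoids this circularity. Rather than splitting the identity endofunctor, it constructs a chain of DK-equivalences (diagram~\eqref{eq:devmain}) carrying $(\Sdot[p]\Fdot[q](\aP_0^{n+1},\aP_0^n))^\Gamma$ to a purely algebraic bisimplicial spectral category $(u_q M_p Z)^\Gamma$ built from sequences of finitely generated $\pi_0R$-modules. The passage goes through the $u$-nerve (your $\pi_{n+1}$-isomorphism observation is exactly the definition of $u\aP$), then to an $\Ffdot$-model via homotopy fibers, then to $\aP_{n+1}^{n+1}$ via a bisimplicial comparison (Proposition~\ref{prop:post}), and finally to $M_\bullet Z$ via $\pi_{n+1}$. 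The contractibility is then established in $M_\bullet Z$ by an explicit simplicial contraction (Proposition~\ref{prop:uMcont}) using the last face map $\partial_p$, which replaces a sequence by the sequence of kernels of the maps to the last term and inserts $0$ at the end. This algebraic contraction has no direct homotopy-theoretic analogue on $\Fdot(\aP_0^{n+1},\aP_0^n)$, which is why the reduction to $Z$-modules is the essential content.
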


In \cite{BlumbergMandell} the proof of the parallel (unnumbered) lemma
consisted of several steps, each of which compared (multi)simplicial
sets; the following diagram outlines the comparisons as stated there.
\[
\xymatrix@C-3pc{%
w\subdot \Sdot \Fdot(\aP_{0}^{n+1},\aP_{0}^{n})\ar[dr]
&&u\subdot\Sdot \aP_{0}^{n+1}\ar[dl]\ar[rr]
&\hspace{3pc}
&u\subdot\Spdot \aP_{0}^{n+1}\ar@{..>}[rr]\ar@{..>}[d]
&&u\subdot M\subdot Z\\
&w\subdot u\subdot \Sdot \aP_{0}^{n+1}
&&&u\subdot \Ffdot[\ssdot-1]\aP^{n+1}_{0}\ar@{..>}[urr]
&\relax\hspace{3pc}
&u\subdot \Ffdot[\ssdot-1]\aP^{n+1}_{n+1}\ar@{..>}[u]\ar@{..>}[ll]
}
\]
We review these constructions as needed below. Here the solid arrows
are simplicial maps of diagonal simplicial sets and the dotted arrows
are maps that are simplicial only in one of the simplicial directions.
We correct a minor error in \cite{BlumbergMandell} below.  There we
claimed that the dotted arrow in the top row was a map of bisimplicial
sets; it is not.  The diagram for the corrected argument looks like this; it
commutes up to simplicial homotopy.
\[
\xymatrix@C-4pc{%
w\subdot \Sdot \Fdot(\aP_{0}^{n+1},\aP_{0}^{n})\ar[dr]
&&u\subdot\Sdot \aP_{0}^{n+1}\ar[dl]\ar[rr]
&\hspace{4.5pc}
&u\subdot\Spdot \aP_{0}^{n+1}\ar[rr]\ar@{..>}[d]
&\hspace{4.5pc}
&u\subdot S^{f}M\subdot Z
&\relax\hspace{4pc}
&u\subdot M\subdot Z\ar[ll]\\
&w\subdot u\subdot \Sdot \aP_{0}^{n+1}
&&&u\subdot \Ffdot[\ssdot-1]\aP^{n+1}_{0}\ar@{..>}[urrrr]
&&&&u\subdot \Ffdot[\ssdot-1]\aP^{n+1}_{n+1}
  \ar@{..>}[llll]\ar@{..>}[u]
}
\]
In the current context of $THH$, the line of reasoning and the diagram
simplifies slightly; we use the following diagram of
spectrally enriched functors, which commutes up to natural isomorphism.
\begin{equation}\label{eq:devmain}
\begin{gathered}
\hbox to 0pt{\hss
\xymatrix@C-1.1pc{%
(\Sdot[p] \Fdot[q](\aP_{0}^{n+1},\aP_{0}^{n}))^{\Gamma}\ar[r]^-{\sim}
&((u_{q}\Spdot[p])^{M}\aP_{0}^{n+1})^{\Gamma}\ar[r]\ar@{..>}[d]_-{\sim}
&(u_{q}S^{f}M_{p}Z)^{\Gamma}
&(u_{q}M_{p}Z)^{\Gamma}\ar[l]_-{\sim}\\
&((u_{q}\Ffdot[p-1])^{M}\aP_{0}^{n+1})^{\Gamma}\ar@{..>}[urr]
&&((u_{q}\Ffdot[p-1])^{M}\aP_{n+1}^{n+1})^{\Gamma}\ar@{..>}[u]_-{\sim}\ar@{..>}[ll]
}\hss}
\end{gathered}
\end{equation}
All of the spectral categories fit into simplicial spectral categories
(in the $q$ direction) and the ones on the top row fit into
bisimplicial spectral categories (in $p,q$).  The solid arrows are the
spectrally enriched functors that respect the bisimplicial structure;
the dotted arrows respect the simplicial structure in the $q$
direction.  The arrows marked ``$\sim$'' are DK-equivalences, as shown in
Propositions~\ref{prop:devdk1}, \ref{prop:devdk2},
\ref{prop:devdk3}, and \ref{prop:devdk4}.  The goal is to show that the composite functor $(\Sdot[p] \Fdot[q](\aP_{0}^{n+1},\aP_{0}^{n}))^{\Gamma}\to
(u_{q}S^{f}M_{p}Z)^{\Gamma}$ induces a weak equivalence 
\[
|THH((\Sdot \Fdot (\aP_{0}^{n+1},\aP_{0}^{n}))^{\Gamma})|\to
|THH((u\subdot S^{f}M\subdot Z)^{\Gamma})|
\htp
|THH((u\subdot M\subdot Z)^{\Gamma})|
\]
and then prove Lemma~\ref{lem:devmain} by showing that
$|THH((u\subdot M\subdot Z)^{\Gamma})|$ is contractible
(Proposition~\ref{prop:uMcont}). 

We now begin to review the categories and maps in
diagram~\eqref{eq:devmain}.  We use the following notation.

\begin{defn}
\index{uP@$u\aP$}\index{fP@$f\aP$}
Let $u\aP$ denote the subcategory of $\aP$ consisting of those maps
that induce an isomorphism on $\pi_{n+1}$ and an injection on
$\pi_{n}$.  Let $f\aP$ denote the subcategory of $\aP$ consisting of
those maps that induce an epimorphism on $\pi_{0}$.
\end{defn}

We write $u\subdot \aP$ for the nerve categories: An object of
$u_{q}\aP$ is a sequence of $q$ composable maps in $u\aP$ and a map in
$u_{q}\aP$ is a commuting diagram (of maps in $\aP$).  For consistency
with \cite[3.7]{BlumbergMandell}, we let $\Ffdot[p]\aP$ denote the nerve
category $f_{p}\aP$: An object is a sequence of $p$ composable maps
in $f\aP$ and a map is a commuting diagram (of maps in $\aP$).  We
extend the definition of $u\subdot$ in the obvious way to functor
categories: In diagram~\eqref{eq:devmain}, the category
$u_{q}\Sdot[p]\aP_{0}^{n+1}$ has as objects the sequences of
$q$-composable maps
\[
A_{0}\overto{\alpha_{1}} A_{1}\overto{\alpha_{2}} \dotsb \overto{\alpha_{q}} A_{q}
\]
between objects $A_{i}$ in $\Sdot[p]\aP_{0}^{n+1}$ where each
$\alpha_{i}$ is (objectwise) in $u\aP_{0}^{n+1}$; a map from
$\{\alpha_{i}\}$ to $\{\alpha'_{i}\}$ consists of a map $\phi_{i}\colon A_{i}\to
A'_{i}$ in $\Spdot[p]\aP_{0}^{n+1}$ for each $i$, making the diagram
\[
\xymatrix{%
A_{0}\ar[r]^{\alpha_{1}}\ar[d]_{\phi_{0}}
&A_{1}\ar[r]^{\alpha_{2}}\ar[d]_{\phi_{1}}
&\relax\dotsb \ar[r]^{\alpha_{q}}
&A_{q}\ar[d]^{\phi_{p}}\\
A'_{0}\ar[r]_{\alpha'_{1}}
&A'_{1}\ar[r]_{\alpha'_{2}}
&\relax\dotsb \ar[r]_{\alpha'_{q}}&A'_{q}
}
\]
in $\Spdot[p]\aP_{0}^{n+1}$ commute.  We define the
categories $u_{q}\Ffdot[p-1]\aP_{0}^{n+1}$ and
$u_{q}\Ffdot[p-1]\aP_{n+1}^{n+1}$ analogously in terms of composable
maps and diagrams in $\Ffdot[p-1]\aP_{0}^{n+1}$ and
$\Ffdot[p-1]\aP_{n+1}^{n+1}$.  We obtain the spectrally enriched
categories $((u_{q}\Spdot[p])^{M}\aP_{0}^{n+1})^{\Gamma}$,
$((u_{q}\Ffdot[p-1])^{M}\aP_{0}^{n+1})^{\Gamma}$, and
$((u_{q}\Ffdot[p-1])^{M}\aP_{n+1}^{n+1})^{\Gamma}$ using the Moore Tot
mapping spaces (Construction~\ref{cons:moore}) and the connective
spectral enrichment.   
The usual face and degeneracy maps in the nerve construction makes
$((u\subdot\Spdot )^{M}\aP_{0}^{n+1})^{\Gamma}$ into a bisimplicial
spectral category and make $((u\subdot
\Ffdot[p-1])^{M}\aP_{0}^{n+1})^{\Gamma}$ and $((u\subdot
\Ffdot[p-1])^{M}\aP_{n+1}^{n+1})^{\Gamma}$ into simplicial spectral
categories for each $p>0$. 

Next we review the canonical inclusion
\[
\Fdot[q](\aP_{0}^{n+1},\aP_{0}^{n})\to u_{q}\aP_{0}^{n+1}.
\]
We recall that an object of $\Fdot[q](\aP_{0}^{n+1},\aP_{0}^{n})$
consists of a sequence of $q$ composable cofibrations in
$\aP_{0}^{n+1}$
\[
\xymatrix@C-1pc{%
x_{0}\ar@{ >->}[r]&x_{1}\ar@{ >->}[r]&\dotsb\ar@{ >->}[r]&x_{q}
}
\]
such that each quotient $x_{i+1}/x_{i}$ is in $\aP_{0}^{n}$.  We
note that for a cofibration $j\colon a\to b$ in $\aP$ between objects of
$\aP_{0}^{n+1}$, the quotient $b/a$ is in $\aP_{0}^{n+1}$ if and only
if $j$ induces an isomorphism on $\pi_{n+1}$ and an injection on
$\pi_{n}$, that is, if and only if $j$ is in $u\aP$.  It follows that
$\Fdot[q](\aP_{0}^{n+1},\aP_{0}^{n})$ is the full subcategory of
$u_{q}\aP_{0}^{n+1}$ consisting of those objects whose structure maps
are cofibrations.  We then obtain the functors
\[
\Sdot[p] F_{q}(\aP_{0}^{n+1},\aP_{0}^{n})\to
u_{q}\Spdot[p]\aP_{0}^{n+1}
\]
as the corresponding inclusions of full subcategories.  When we look
at mapping spaces and use the Moore enrichment, we obtain a DK-embedding
\[
\Sdot[p] F_{q}(\aP_{0}^{n+1},\aP_{0}^{n})\to
(u_{q}\Spdot[p])^{M}\aP_{0}^{n+1}.
\]
This map is a DK-equivalence since the usual cylinder argument
replacing a map with a cofibration converts any diagram in
$u_{q}\Spdot[p]\aP_{0}^{n+1}$ to a weakly equivalent diagram in
$\Sdot[p]\Fdot[q](\aP_{0}^{n+1},\aP_{0}^{n})$.  Passing to the
connective spectral enrichments, we obtain the following proposition.

\begin{prop}\label{prop:devdk1}
The spectrally enriched functor 
\[
\Sdot[p]
F_{q}(\aP_{0}^{n+1},\aP_{0}^{n})^{\Gamma}\to 
((u_{q}\Spdot[p])^{M}\aP_{0}^{n+1})^{\Gamma}
\]
is a DK-equivalence.
\end{prop}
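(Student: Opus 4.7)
The plan is to verify two things separately: (i) the functor is a DK-embedding on the underlying simplicially enriched categories at each bisimplicial level, and (ii) every object of $(u_{q}\Spdot[p])^{M}\aP_{0}^{n+1}$ is weakly equivalent to one in the image. Once these are established, the fact that $\aP_{0}^{n+1}$ is enhanced (hence DK-compatible) lets me promote the result to the connective $\Gamma$-spectral enrichments via Proposition~\ref{propfunctG}.

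For (i), the paragraph preceding the statement identifies $\Sdot[p]\Fdot[q](\aP_{0}^{n+1},\aP_{0}^{n})$ as the full simplicially enriched subcategory of $u_{q}\Spdot[p]\aP_{0}^{n+1}$ on those diagrams whose $\Spdot[p]$-squares are strict pushouts and whose structure maps in the $u_{q}$-direction are genuine cofibrations. At length zero the Moore Tot defining the enriched mapping space in $(u_{q}\Spdot[p])^{M}\aP_{0}^{n+1}$ reduces to the strict end, which is exactly the simplicial set of natural transformations, matching the enriched mapping space in $\Sdot[p]\Fdot[q]$ on the nose. Applying $\myop\bigvee_{r}$ as in Definition~\ref{defn:vnerve} promotes this to an isomorphism of connective $\Gamma$-spectra, so fullness and faithfulness are both immediate.

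For (ii), given an object $A_{0}\overto{\alpha_{1}}\dotsb\overto{\alpha_{q}}A_{q}$ of $u_{q}\Spdot[p]\aP_{0}^{n+1}$, I would proceed in two stages. First, since $\aP_{0}^{n+1}$ admits functorial factorization of weak cofibrations, I replace each $A_{i}$ by a weakly equivalent object in $\Sdot[p]\aP_{0}^{n+1}$ (converting the weak cofibrations implicit in the $\Spdot[p]$-structure to genuine cofibrations, together with choices of strict pushouts). Second, inductively in $i$, I apply the standard mapping cylinder to factor $\alpha_{i}$ as a cofibration $A'_{i-1}\hookrightarrow M\alpha_{i}$ followed by a weak equivalence $M\alpha_{i}\overto{\sim}A'_{i}$, composing the latter into $\alpha_{i+1}$ before continuing. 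Because the conditions defining $u\aP$ (isomorphism on $\pi_{n+1}$, injection on $\pi_{n}$) and the condition of lying in $\aP_{0}^{n+1}$ are both manifestly invariant under weak equivalence, the replacement cofibration remains in $u\aP$, its cofiber lies in $\aP_{0}^{n+1}$ (by the observation in the setup), and $M\alpha_{i}$ itself stays in $\aP_{0}^{n+1}$; the resulting diagram therefore lies in $\Sdot[p]\Fdot[q](\aP_{0}^{n+1},\aP_{0}^{n})$.

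The main subtlety is coordinating the two replacements---promoting $\Spdot[p]$-data to $\Sdot[p]$-data and converting $u$-maps into cofibrations---without disturbing the homotopy type of the diagram, and this is precisely what FFWC combined with the Kan condition on mapping spaces from Definition~\ref{defsimpwaldcat} is designed to handle. Since the only ``hard'' homotopical input is the identification of $\Sdot[p]\Fdot[q]$ inside $u_{q}\Spdot[p]$ as the strict locus, and the essential surjectivity argument is purely formal given FFWC, no additional work beyond what is already in place should be required.
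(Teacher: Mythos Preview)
Your approach matches the paper's: identify the source as a full subcategory, argue DK-embedding on mapping spaces, then use mapping cylinders (FFWC) for essential surjectivity. The paper compresses all of this into two sentences preceding the proposition, but the content is the same.

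One imprecision in your part (i): the functor does \emph{not} induce an isomorphism of mapping spaces or spectra ``on the nose.'' The source mapping space is the strict end (natural transformations), while the target Moore Tot is strictly larger---it contains the length-zero part as a deformation retract, and the length-zero part is the ordinary Tot, not the strict end. What makes the inclusion a DK-embedding is that for objects in $\Sdot[p]\Fdot[q](\aP_{0}^{n+1},\aP_{0}^{n})$, the structure maps in both the $p$- and $q$-directions are cofibrations, so the relevant cosimplicial diagram of mapping spaces is Reedy fibrant and the strict end computes the homotopy end; compare the argument surrounding~\eqref{eqsn} and Proposition~\ref{prop:zeroinclude}. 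So ``fullness and faithfulness'' should read ``weak equivalence on mapping spectra,'' and this is where the cofibration hypothesis on the source actually does work. Your essential surjectivity argument is correct and slightly more detailed than the paper's one-line ``usual cylinder argument.''
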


Next we review the functor $\Spdot[p]\aP_{0}^{n+1}\to
\Ffdot[p-1]\aP_{0}^{n+1}$ of \cite[3.8]{BlumbergMandell}.  First note
that for an object $A=\{a_{i,j}\}$ in
$\Spdot[p]\aP_{0}^{n+1}$, the map $a_{i,p}\to a_{j,p}$ is the cofiber
of the map $a_{i,j}\to a_{i,p}$ and so we have a long exact sequence
of homotopy groups
\[
0\to \pi_{n+1}a_{i,j}\to \dotsb \to \pi_{0}a_{i,j}\to
\pi_{0}a_{i,p}\to \pi_{0}a_{j,p}\to 0.
\]
In particular, the map $a_{i,p}\to a_{j,p}$ is surjective on
$\pi_{0}$, that is, is a map in $f\aP_{0}^{n+1}$.  We therefore obtain
a functor $\Spdot[p]\aP_{0}^{n+1}\to \Ffdot[p-1]\aP_{0}^{n+1}$
by sending each object of $\Spdot[p]\aP_{0}^{n+1}$ to the object of
$\Ffdot[p-1]\aP_{0}^{n+1}$ defined by the sequence
\[
a_{0,p}\to a_{1,p}\to \dotsb \to a_{p-1,p}.
\]
In fact we have the following proposition.

\begin{prop}\label{prop:devdk2}
The spectrally enriched functor 
\[
((u_{q}\Spdot[p])^{M}\aP_{0}^{n+1})^{\Gamma}
\to ((u_{q}\Ffdot[p-1])^{M}\aP_{0}^{n+1})^{\Gamma}
\]
is a DK-equivalence.
\end{prop}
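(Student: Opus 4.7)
The plan is to exhibit an essentially inverse spectrally enriched functor using iterated (functorial) homotopy fibers, and then read off that both functors are DK-equivalences. Throughout, I will use the cylinder functor inherited from the enhanced simplicial Waldhausen category structure on $\aP_{0}^{n+1}\subset \aM^{c}$ in order to replace homotopy fibers by strictly functorial point-set fibers.

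First, for essential surjectivity on homotopy categories, given an object of $u_{q}\Ffdot[p-1]\aP_{0}^{n+1}$, namely a grid of maps $\{b^{r}_{i}\}_{0\leq i\leq p-1,\,0\leq r\leq q}$ whose horizontal structure maps $b^{r}_{i}\to b^{r}_{i+1}$ are in $f\aP$ and whose vertical maps $b^{r}_{i}\to b^{r+1}_{i}$ are in $u\aP$, I would set $b^{r}_{p}=*$ and define $a^{r}_{i,j}$ to be a functorial model for the homotopy fiber of $b^{r}_{i}\to b^{r}_{j}$ (with $a^{r}_{i,p}=b^{r}_{i}$). Because each $b^{r}_{i}\to b^{r}_{j}$ is surjective on $\pi_{0}$, the long exact sequence of homotopy groups shows $\pi_{*}a^{r}_{i,j}$ is concentrated in $[0,n+1]$, so $a^{r}_{i,j}\in \aP_{0}^{n+1}$. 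Functoriality of the construction guarantees that the vertical maps lie in $u\aP$ and that the squares appearing in the $\Spdot[p]$-conditions are homotopy cartesian, hence (by Proposition~\ref{propfibercofiber} and the stability of $\aP$) homotopy cocartesian. This produces an object of $u_{q}\Spdot[p]\aP_{0}^{n+1}$ whose image in $u_{q}\Ffdot[p-1]\aP_{0}^{n+1}$ is weakly equivalent to the original.

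Second, for the DK-embedding statement, I would identify the mapping spectra of $((u_{q}\Spdot[p])^{M}\aP_{0}^{n+1})^{\Gamma}$ and $((u_{q}\Ffdot[p-1])^{M}\aP_{0}^{n+1})^{\Gamma}$ both as homotopy limits built out of mapping $\Gamma$-spaces in $\aP^{\Gamma}$, as produced by the Moore Tot construction of~\ref{cons:moore}. For the $\Spdot[p]$ side the indexing diagram is $\Ar[p]$ (the maps among the $a^{r}_{i,j}$), while on the $\Ffdot[p-1]$ side it is only the chain of last columns $a^{r}_{0,p}\to\dotsb \to a^{r}_{p-1,p}$. The identification $a^{r}_{i,j}\simeq \operatorname{hofib}(a^{r}_{i,p}\to a^{r}_{j,p})$, together with the fiber sequence of Proposition~\ref{propfibercofiber} in the $\Gamma$-space enrichment, expresses the additional homotopy limit factors on the $\Spdot[p]$ side as iterated homotopy pullbacks of the last-column factors. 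Since all the target $\Gamma$-spaces involved are special (by the computations of Proposition~\ref{prop:conncover} that apply since we have landed inside $\aP_{0}^{n+1}$, where all the relevant mapping spectra are connective and the suspension-loop adjunction on mapping spaces is well-behaved), these iterated homotopy pullbacks collapse onto the last-column homotopy limit, yielding the required weak equivalence of mapping spectra.

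The key technical difficulty is verifying that this collapse of the homotopy limit is valid at the level of connective $\Gamma$-space enrichments rather than just non-connective symmetric spectrum enrichments; this is where the hypothesis $\aP_{0}^{n+1}$ (bounded-above Postnikov truncation) does the work, ensuring that the fiber sequences of Proposition~\ref{propfibercofiber} remain fiber sequences after passing to connective covers in the relevant range. Once this is in hand, Proposition~\ref{propfunctG} gives that the induced map on connective spectral enrichments is a DK-equivalence.
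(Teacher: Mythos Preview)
Your plan is correct and follows the same approach as the paper: essential surjectivity by filling out the $\Ar[p]$ diagram with homotopy fibers, and the DK-embedding by identifying both Moore mapping spaces with the same iterated homotopy pullback indexed on the last column $a_{0,p}\to\dotsb\to a_{p-1,p}$. The paper handles the embedding step more directly, though: it observes that since homotopy cocartesian equals homotopy cartesian for EKMM $R$-modules, the $\Spdot[p]$ condition can be read as saying each $a_{i,j}$ is the homotopy fiber of $a_{i,p}\to a_{j,p}$, and then the dual of formula~\eqref{eqsn} identifies $\SpMdot[p]\aP(A,B)$ directly with the iterated homotopy pullback
\[
\aP(a_{0,p},b_{0,p})\times^{h}_{\aP(a_{0,p},b_{1,p})}\dotsb\times^{h}_{\aP(a_{p-2,p},b_{p-1,p})}\aP(a_{p-1,p},b_{p-1,p})
\]
at the level of simplicial mapping spaces. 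Your ``key technical difficulty'' about connective $\Gamma$-enrichments then disappears: the $\Gamma$-enrichment is built objectwise from these mapping spaces (into objectwise wedges in $\aP$), so a DK-equivalence of the underlying Moore-enriched categories immediately yields one on $(-)^{\Gamma}$. You do not need Proposition~\ref{prop:conncover} or any statement about connective covers of fiber sequences; those detours can simply be dropped.
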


\begin{proof}
Although $\Spdot[p]\aP_{0}^{n+1}$ is defined in terms of homotopy
cocartesian squares,  it could
equally well be defined in terms of homotopy cartesian squares since
for EKMM $R$-modules a square is homotopy cartesian if and only if it
is homotopy cocartesian.  The description of the mapping space of
$\Sdot[p]\aP$ in~\eqref{eqsn} has an analogue in this context: The
canonical map from
$\SpMdot[p]\aP$ to the iterated homotopy pullback
\[
\aP(a_{0,p},b_{0,p})\times^{h}_{\aP(a_{0,p},b_{1,p})}
\dotsb
\times^{h}_{\aP(a_{p-2,p},b_{p-1,p})}
\aP(a_{p-1,p},b_{p-1,p}).
\]
is a weak equivalence.  This extends to $(u_{q}\Spdot[p])^{M}\aP$ and
from this it is easy to deduce that we have a 
DK-embedding.  It is a DK-equivalence because every object of
$u_{q}\Ffdot[p-1]\aP_{0}^{n+1}$ is weakly equivalent to the image of
an object in $u_{q}\Spdot[p]\aP_{0}^{n+1}$, filling out the diagram by
taking homotopy fibers. 
\end{proof}

The inclusion of $\aP_{n+1}^{n+1}$ as a subcategory of $\aP_{0}^{n+1}$
induces a spectrally enriched functor
$((u_{q}\Ffdot[p-1])^{M}\aP_{n+1}^{n+1})^{\Gamma}\to
((u_{q}\Ffdot[p-1])^{M}\aP_{0}^{n+1})^{\Gamma}$, which assembles to a
simplicial spectrally enriched functor in the $q$ direction.  Although
not a DK-equivalence at any level, the simplicial spectrally enriched
functor does induce a weak equivalence on $THH$.

\begin{prop}\label{prop:post}
The inclusion 
$((u\subdot\Ffdot[p-1])^{M}\aP_{n+1}^{n+1})^{\Gamma}\to
((u\subdot\Ffdot[p-1])^{M}\aP_{0}^{n+1})^{\Gamma}$
induces a weak equivalence 
\[
|THH(((u \subdot \Ffdot[p-1])^{M} \aP_{n+1}^{n+1})^{\Gamma})| \to
|THH(((u \subdot \Ffdot[p-1])^{M}\aP_{0}^{n+1})^{\Gamma})|. 
\]
\end{prop}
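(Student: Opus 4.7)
The plan is to prove the proposition by constructing a Postnikov truncation that is a homotopy inverse of the inclusion $\iota\colon \aP_{n+1}^{n+1}\to \aP_0^{n+1}$ after applying $|THH((u\subdot\Ffdot[p-1])^M(-))^\Gamma|$. First, using the model category structure on $R$-modules, I would construct a simplicially enriched exact functor $\tau=\tau_{\geq n+1}\colon \aP_0^{n+1}\to \aP_{n+1}^{n+1}$ (Postnikov truncation at level $n+1$), together with a natural transformation $\eta\colon \iota\tau\to \id_{\aP_0^{n+1}}$ whose components are $u$-maps: each $\eta_x\colon \iota\tau(x)\to x$ induces an isomorphism on $\pi_{n+1}$ and an (automatic) injection on $\pi_n$ since $\pi_n(\tau x)=0$. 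Concretely, $\tau(x)$ can be built as the cofiber of a cellular attaching map killing the homotopy groups $\pi_0,\dotsc,\pi_n$ of $x$; functoriality comes from the small object argument and the simplicial enrichment from the tensor structure on $R$-modules. The key compatibility is that $\tau$ takes $u$-maps to weak equivalences and $f$-maps to $f$-maps, so it induces a simplicial spectral functor
\[
\tau_*\colon ((u\subdot\Ffdot[p-1])^M\aP_0^{n+1})^\Gamma\to
((u\subdot\Ffdot[p-1])^M\aP_{n+1}^{n+1})^\Gamma.
\]

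Next, I would verify that $\tau_*$ and $\iota_*$ are mutually inverse up to $|THH|$-equivalence. For the composite $\tau_*\circ \iota_*$, note that for $x$ in $\aP_{n+1}^{n+1}$ the map $\eta_x\colon \tau(x)\to x$ is already a weak equivalence, so by Proposition~\ref{propnat} applied at each simplicial level the induced map on $|THH|$ equals the identity. For the other composite $\iota_*\circ \tau_*$, the natural transformation $\eta$ assembles, for each $X=(a_{i,j})$ in $u_q\Ffdot[p-1]\aP_0^{n+1}$, into a map $\iota\tau(X)\to X$ consisting of $u$-maps, hence defines an object of $u_{q+1}\Ffdot[p-1]\aP_0^{n+1}$. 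Using this extra $u$-coordinate exactly as in the proof of Proposition~\ref{propnat} (which relies on the Moore nerve Construction~\ref{consvone}), we obtain a simplicial homotopy between $\iota_*\circ \tau_*$ and the identity as simplicial spectral functors, so the induced maps on $|THH|$ agree in the stable category.

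Combining these two steps shows that $\iota_*$ is a weak equivalence on $|THH|$ after realization. Spacewise properness, needed to guarantee that the level-wise weak equivalences descend to realization, is supplied by Proposition~\ref{prop:realok} applied to the simplicial spectral categories in play.

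The main obstacle will be Step 1: genuinely upgrading Postnikov truncation $\tau_{\geq n+1}$ to a \emph{simplicially enriched} exact functor compatible with the Moore Tot mapping spaces, rather than a functor defined only up to homotopy. The standard construction via cell attachments is functorial but requires care to check enrichment and compatibility with tensors with $\Delta[n]$. If a strict enrichment proves too restrictive, one can fall back on replacing $\aP_0^{n+1}$ by a DK-equivalent simplicially enriched Waldhausen category on which a chosen truncation is strictly enriched, invoking Propositions~\ref{propapprox} and~\ref{propfuncthree} to preserve all relevant $THH$ constructions. The remaining steps are then essentially formal consequences of the structure of the Moore nerve and the natural-transformation-yields-simplicial-homotopy principle used in Proposition~\ref{propnat}.
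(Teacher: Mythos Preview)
Your approach is different from the paper's and in principle workable, but the paper's argument is set up precisely to avoid the obstacle you identify in your final paragraph.

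Rather than constructing a truncation functor, the paper introduces an auxiliary bisimplicial spectral category $V_{r,s}^{\Gamma}$ whose objects are sequences
\[
a_{0}\to \dotsb \to a_{r}\to b_{0}\to \dotsb \to b_{s}
\]
in $u_{r+s+1}\Ffdot[p-1]\aP_{0}^{n+1}$ with the $a_{i}$ in $\Ffdot[p-1]\aP_{n+1}^{n+1}$.  Dropping the $a_{i}$'s gives a spectral functor $V_{r,s}^{\Gamma}\to ((u_{s}\Ffdot[p-1])^{M}\aP_{0}^{n+1})^{\Gamma}$; the key computation is that for $x\in\aP_{n+1}^{n+1}$ and $y\in\aP_{0}^{n+1}$ the connective mapping spectrum is homotopy discrete with $\pi_{0}=\Hom_{\bZ}(\pi_{n+1}x,\pi_{n+1}y)$, so this functor is a DK-embedding, and essential surjectivity needs only a \emph{choice} of $n$-connected cover of $b_{0}$, not a functorial one.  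Dropping the $b_{j}$'s gives the other projection, which is a simplicial homotopy equivalence in the $s$-direction via the obvious ``repeat $a_{r}$'' splitting.  A final simplicial homotopy in the nerve direction compares the composite with the inclusion.

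Your strategy trades this zigzag for an explicit homotopy inverse $\tau$, which is conceptually cleaner but forces you to produce the connected cover as a genuinely simplicially enriched functor compatible with the Moore Tot mapping spaces and with the $\Gamma$-structure (your $\tau$ will typically not preserve finite coproducts on the nose, so you would also be pushed into the weakly-exact zigzag machinery of Section~\ref{sec:weaklyexact}).  None of this is impossible in the EKMM context, but it is exactly the work the paper's bisimplicial argument is designed to sidestep: by encoding the natural transformation $\eta$ as part of the \emph{object} data in $V_{r,s}$, one never needs $\tau$ to exist as a functor at all.  Your fallback of passing to a DK-equivalent model does not obviously help, since the issue is not the enrichment on the source or target but the enrichment of $\tau$ itself.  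A minor point: your description of $\tau(x)$ as a cofiber of an attaching map killing $\pi_{0},\dotsc,\pi_{n}$ produces a map $x\to\tau(x)$ rather than the map $\tau(x)\to x$ you need; you want the $n$-connected cover, i.e.\ the fiber of $x\to P_{n}x$.
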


\begin{proof}
Consider the bisimplicial spectral category $V\dsubdot^{\Gamma}$
defined as follows: in bidegree $r,s$, $V_{r,s}^{\Gamma}$ is the full
spectral subcategory of $((u_{r+s+1} \Ffdot[p-1])^{M}
\aP_{0}^{n+1})^{\Gamma}$ with objects the sequences of sequences of
the form
\[
a_{0}\to \dotsb \to a_{r}\to b_{0}\to \dotsb b_{s}
\]
such that the objects $a_i$ are in $\Ffdot[p-1] \aP_{n+1}^{n+1}$.
Dropping the objects $\{a_i\}$ and the objects $\{b_i\}$ respectively
induce bisimplicial spectrally enriched functors 
\[
((u_r \Ffdot[p-1])^{M} \aP_{n+1}^{n+1})^{\Gamma}
\from
V_{r,s}^{\Gamma}
\to
((u_s \Ffdot[p-1])^{M} \aP_{0}^{n+1})^{\Gamma},
\]
where we regard the targets as constant bisimplicial objects in the
appropriate direction.  Since the (connective) spectrum of maps from
an object $x$ of $\aP_{n+1}^{n+1}$ to an object $y$ of $\aP^{n+1}_{0}$
is homotopy discrete with $\pi_{0}=\Hom_{\bZ}(\pi_{n+1}x,\pi_{n+1}y)$,
we see that the map $V_{r,s}^{\Gamma} \to ((u_s \Ffdot[p-1])^{M}
\aP_{0}^{n+1})^{\Gamma}$ is a DK-embedding.  Furthermore, it is clear
that this functor is essentially surjective (choosing an $n$-connected
cover of $b_{0}$), and so is a DK-equivalence.

The usual arguments show that the map
$V_{r,s}^{\Gamma} \to
((u_r \Ffdot[p-1])^{M} \aP_{n+1}^{n+1})^{\Gamma}$ is a simplicial
homotopy equivalence in the $s$-direction, using the homotopy inverse
induced by
\[
(a_{0}\to \dotsb \to a_{r}) \qquad \mapsto\qquad 
(a_{0}\to \dotsb \to a_{r} = a_{r} = \dotsb = a_{r}). 
\]
Using this homotopy inverse, the composite map on (diagonal)
simplicial spectral categories
\[
((u_r \Ffdot[p-1])^{M} \aP_{n+1}^{n+1})^{\Gamma} \to
V_{r,r}^{\Gamma}\to
((u_r \Ffdot[p-1])^{M} \aP_{0}^{n+1})^{\Gamma}
\]
is induced by
\[
(a_{0}\to \dotsb \to a_{r}) \qquad \mapsto\qquad 
(a_{r} = a_{r} = \dotsb = a_{r}),
\]
and is easily seen to be simplicially homotopic to the inclusion map.
\end{proof}

For the categories $u_{q}M_{p}Z$, we copy the following definition
from \cite[3.9]{BlumbergMandell}.

\begin{defn}
\index{MpZ@$M_{p}Z$}
\index{uMpZ@$uM_{p}Z$}
Let $Z=\pi_{0}R$.  Let $M_{p}Z$ be the category whose objects are
sequences of $p-1$ composable maps of finitely generated left $Z$-modules
$x_{0}\to \dotsb \to x_{p-1}$ and whose morphisms are commutative
diagrams.  Let $uM_{p}Z$ be the subcategory of $M_{p}Z$ consisting of
all objects but only those maps $x\to y$ that are isomorphisms
$x_{i}\to y_{i}$ for all $0\leq i\leq p-1$.
\end{defn}

We understand $M_{0}Z$ to be the trivial category consisting of a
single object (the empty sequence of maps) with only the identity map.
As above, we let $u_{q}M_{p}Z$ denote the nerve category, which has as
its objects the composable sequences of $q$ maps in $uM_{p}Z$ (i.e.,
isomorphisms in $M_{p}Z$) and maps the commutative diagrams of maps in
$M_{p}Z$.  We regard $u_{q}M_{p}Z$ as simplicially enriched with
discrete mapping spaces and we obtain a connective spectral enrichment
$(u_{q}M_{p}Z)^{\Gamma}$ using objectwise direct sum of finitely
generated left $Z$-modules.

As above, $(u\subdot M_{p}Z)^{\Gamma}$ assembles into a simplicial
spectral category using the usual face and degeneracy maps for the
nerve.  We make $(u\subdot M\subdot Z)^{\Gamma}$ into a
bisimplicial spectral category as follows: For $0\leq i\leq p-1$, on
$x_{0}\to \dotsb \to x_{p-1}$, the face map $\partial_{i}\colon
u_{q}M_{p}Z\to u_{q}M_{p-1}Z$ is defined by dropping $x_{i}$ (and
composing) and the degeneracy map $s_{i}\colon M_{p-1}Z\to M_{p}Z$ is
defined by repeating $x_{i}$ (with the identity map).  The face map
$\partial_{p}\colon M_{p}Z\to M_{p-1}Z$ sends $x_{0}\to \dotsb \to
x_{p-1}$ to $k_{0}\to \dotsb \to k_{p-2}$, where $k_{i}\subset x_{i}$ is the kernel
of the composite map $x_{i}\to x_{p-1}$.  The last degeneracy
$s_{p-1}\colon M_{p-1}Z\to M_{p}Z$ puts $0$ in as the last object in
the sequence.  The fundamental property of $(u\subdot M\subdot
Z)^{\Gamma}$ that we need is the following.

\begin{prop}\label{prop:uMcont}
For each $q$, $|THH((u_{q}M\subdot Z)^{\Gamma})|$ is contractible.
\end{prop}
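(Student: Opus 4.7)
The plan is to exhibit an extra degeneracy on the simplicial spectral category $(u_q M\subdot Z)^{\Gamma}$, then apply $THH$ and invoke the standard extra degeneracy argument to conclude that the realization is contractible.

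First, I would define $s_{-1} \colon u_q M_{p-1} Z \to u_q M_p Z$ by prepending the zero $Z$-module to each sequence: $(x_0 \to \dotsb \to x_{p-2}) \mapsto (0 \to x_0 \to \dotsb \to x_{p-2})$, with $0 \to x_0$ the unique zero map. On the commuting diagrams of isomorphisms defining the morphisms of $u_q M_{p-1} Z$, I would extend by the identity on the new $0$ component. Since direct sum with $0$ is the identity operation, $s_{-1}$ preserves the objectwise coproduct used in the $\Gamma$-enrichment and hence upgrades to a spectral functor $(u_q M_{p-1} Z)^{\Gamma} \to (u_q M_p Z)^{\Gamma}$. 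Next, I would verify the simplicial identities $\partial_0 s_{-1} = \id$ and $\partial_{j} s_{-1} = s_{-1} \partial_{j-1}$ for $1 \leq j \leq p$, along with $s_i s_{-1} = s_{-1} s_{i-1}$ for $0 \leq i \leq p$. The identity $\partial_0 s_{-1} = \id$ is immediate, and the identities for the drop faces $\partial_j$ with $j \leq p-1$ hold because prepending and omitting act at disjoint positions. The only less-routine check is compatibility with the kernel face $\partial_p$: applied to $(0 \to x_0 \to \dotsb \to x_{p-2})$, it yields $(\ker(0 \to x_{p-2}) \to \ker(x_0 \to x_{p-2}) \to \dotsb \to \ker(x_{p-3} \to x_{p-2}))$, whose first entry is $0$, agreeing with $s_{-1} \partial_{p-1}$.

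Applying $THH$, the spectral functor $s_{-1}$ induces an extra degeneracy on the simplicial cyclotomic spectrum $THH((u_q M\subdot Z)^{\Gamma})$. The standard extra degeneracy argument then produces a simplicial contracting homotopy whose realization is a weak equivalence from $|THH((u_q M\subdot Z)^{\Gamma})|$ onto $THH((u_q M_0 Z)^{\Gamma})$. Since $u_q M_0 Z$ is the trivial category on the empty sequence with only the identity morphism, the $\Gamma$-space of the unique endomorphism object is a single point at every level; hence $THH((u_q M_0 Z)^{\Gamma})$ is contractible, giving the desired conclusion.

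The main obstacle I anticipate is verifying that the contracting homotopy respects the cyclotomic structure, but this is automatic: $s_{-1}$ is a spectral functor, so the induced map on $THH$ is a map of cyclotomic spectra, and the extra degeneracy argument then yields a contraction through cyclotomic maps. A secondary concern is the spacewise properness required to commute realization with loops elsewhere in the section; this holds for the present construction by Proposition~\ref{prop:realok}, since the degeneracies in the nerve direction are all obtained from inclusions of summands of $\Gamma$-space coproducts.
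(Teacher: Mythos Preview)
Your proof is correct and takes essentially the same approach as the paper: both exhibit a simplicial contraction on the simplicial spectral category $(u_q M\subdot Z)^\Gamma$ and then apply $THH$ functorially to obtain a contraction of the realization. The paper simply cites \cite{BlumbergMandell} for this contraction, whereas you construct it explicitly as a front extra degeneracy prepending $0$; your verification of the simplicial identities, including compatibility with the kernel face $\partial_p$, is correct.
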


\begin{proof}
The argument at the end of Section~3 of \cite{BlumbergMandell}
constructs a simplicial contraction on the simplicial spectral
category $(u_{q}M\subdot Z)^{\Gamma}$.  This simplicial contraction
induces a simplicial contraction on the simplicial spectrum
$THH((u_{q}M\subdot Z)^{\Gamma})$ and geometric realization converts
this to a contraction of $|THH((u_{q}M\subdot Z)^{\Gamma})|$.
\end{proof}

Applying $\pi_{n+1}$, we get a functor $u\Ffdot[p-1] \aP_{0}^{n+1}\to
uM_{p-1}$ and spectrally enriched functors 
\[
((u_{q}\Ffdot[p-1])^{M}\aP_{0}^{n+1})^{\Gamma}\to
(u_{q}M_{p}Z)^{\Gamma}
\quad \text{and}\quad 
((u_{q}\Ffdot[p-1])^{M}\aP_{n+1}^{n+1})^{\Gamma}\to
(u_{q}M_{p}Z)^{\Gamma}.
\]
Looking at the mapping spaces and mapping spectra, the following
proposition is clear.

\begin{prop}\label{prop:devdk3}
The spectrally enriched functor
\[
((u_{q}\Ffdot[p-1])^{M}\aP_{n+1}^{n+1})^{\Gamma}\to
(u_{q}M_{p}Z)^{\Gamma}
\]
is a DK-equivalence.
\end{prop}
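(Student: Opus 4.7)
The plan is to verify the two defining conditions of a DK-equivalence separately: that the functor is essentially surjective on $\pi_{0}$ of the spectral categories, and that it is a DK-embedding. Since the source and target have the same combinatorial description as nerves/Moore nerves of sequences of maps, and the functor is induced levelwise by $\pi_{n+1}$, both problems reduce to understanding what $\pi_{n+1}$ does on pairs of objects of $\aP_{n+1}^{n+1}$.

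First I would analyze the mapping symmetric spectra in $\aP^{\Gamma}$ between two objects $x,y$ of $\aP_{n+1}^{n+1}$. The subcategory $\aP_{n+1}^{n+1}$ is closed under finite coproducts (on $\pi_{n+1}$ they are direct sums), and because $x$ has homotopy concentrated in a single degree, the Segal maps $\aP(x,\bigvee_{q}y)\to \aP(x,y)^{q}$ are weak equivalences. Hence the $\Gamma$-space underlying $\aP^{\Gamma}(x,y)$ is very special. Its value at $1$ is $\aP(x,y)$, which is homotopy discrete with $\pi_{0}=\Hom_{Z}(\pi_{n+1}x,\pi_{n+1}y)$ because both spectra are shifted Eilenberg-Mac Lane modules in the same degree. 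Therefore $\aP^{\Gamma}(x,y)$ is equivalent to the Eilenberg-Mac Lane spectrum $H\Hom_{Z}(\pi_{n+1}x,\pi_{n+1}y)$. By construction, the mapping spectrum in $(u_{q}M_{p}Z)^{\Gamma}$ between the $\pi_{n+1}$-images is exactly the Eilenberg-Mac Lane spectrum on the discrete set of commutative diagrams of $Z$-module homomorphisms, so the functor $\pi_{n+1}$ induces the identification $\Hom_{\aP}(x,y)\to \Hom_{Z}(\pi_{n+1}x,\pi_{n+1}y)$ (an isomorphism for $x,y$ Eilenberg-Mac Lane) on $\pi_{0}$ and the zero map on higher homotopy (which vanishes on both sides).

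Next I would promote this to the Moore enrichment. The Moore Tot mapping spectrum of $(u_{q}\Ffdot[p-1])^{M}\aP_{n+1}^{n+1}$ between two diagrams $A,B$ computes a homotopy end of the pairwise spectra $\aP^{\Gamma}(a_{*},b_{*})$ indexed over chains in the composite nerve. Each term in this homotopy end is (by the previous step) equivalent to an Eilenberg-Mac Lane spectrum; the homotopy end of such discrete spectra is equivalent to the ordinary end, namely $H$ applied to the set of compatible families of $Z$-module homomorphisms. That set is precisely the set of natural transformations in $u_{q}M_{p}Z$ between $\pi_{n+1}A$ and $\pi_{n+1}B$, so the induced map on mapping spectra is a weak equivalence, giving the DK-embedding. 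For essential surjectivity, given any object $(M_{0}\to\dotsb\to M_{p-1})$ of $M_{p}Z$ iterated over $u_{q}$, choose a cofibrant model of $\Sigma^{n+1}HM_{i}$ in $\aP$ for each $i$; since mapping spaces between such Eilenberg-Mac Lane modules are homotopy discrete and detected on $\pi_{n+1}$, every composable sequence of maps in $u_{q}M_{p}Z$ admits a strict lift to $(u_{q}\Ffdot[p-1])^{M}\aP_{n+1}^{n+1}$ realizing it under $\pi_{n+1}$.

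The main obstacle I anticipate is the identification of the Moore Tot of Eilenberg-Mac Lane mapping spectra with the ordinary end in step two: one has to verify that the fibrancy/discreteness of the mapping spectra in $\aP_{n+1}^{n+1}$ is strong enough that the homotopy pullbacks defining the Moore Tot collapse to strict pullbacks of abelian groups (viewed as discrete spaces), after which everything is controlled by the elementary $\Hom_{Z}$ computation. The remaining steps are largely formal consequences of the fact that $\aP_{n+1}^{n+1}$ is the category of finitely generated $Z$-modules "in disguise."
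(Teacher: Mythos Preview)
Your proposal is correct and is essentially the approach the paper has in mind: the paper's own proof consists of the single sentence ``Looking at the mapping spaces and mapping spectra, the following proposition is clear,'' and what you have written is precisely the content behind that sentence. The key computation---that for $x,y\in\aP_{n+1}^{n+1}$ the mapping space $\aP(x,y)$ is homotopy discrete with $\pi_{0}=\Hom_{Z}(\pi_{n+1}x,\pi_{n+1}y)$---is the same fact the paper invokes (and states explicitly in the proof of Proposition~\ref{prop:post}), and your reduction of the Moore Tot to the strict end via discreteness, together with essential surjectivity via an Eilenberg--Mac~Lane lift, is the expected elaboration.
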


In \cite[\S3]{BlumbergMandell}, we claimed that the functors
$u\Spdot[p]\aP_{0}^{n+1}\to uM_{p} Z$ respected the simplicial
structure in the $p$ direction, which is untrue.  To fix this, we
introduce the category $uS^{f}M\subdot Z$.

\begin{defn}\label{def:usfM}
\index{SfMpZ@$S^{f}M_{p}Z$}
\index{uSfMpZ@$uS^{f}M_{p}Z$}
Let $S^{f}M_{p}Z$ be the category whose objects are functors
$A=a_{-,-}$ from $\Ar[p]$ to the category of finitely generated left
$Z$-modules such that:
\begin{enumerate}
\item $a_{i,i}=0$, and
\item $a_{i,j}\to a_{i,k}$ is an isomorphism onto the kernel of the
map $a_{i,k}\to a_{j,k}$
\end{enumerate}
for all $i\leq j\leq k$.  A map in $S^{f}M_{p}Z$ is a commutative
diagram.  The subcategory $uS^{f}M_{p}Z$ consists of those maps in
$S^{f}M_{p}$ that are isomorphisms.  
\end{defn}

We make $uS^{f}M\subdot Z$ a simplicial category using the usual face
and degeneracy operations on $\Ar[\ssdot]$. 
Basically $S^{f}M\subdot Z$ is the fibration version of the $\Sdot$
construction for the co-Waldhausen category (category with
\emph{fibrations} and weak equivalences) structure we get on the
category of finitely generated left $Z$-modules by taking the
fibrations to be all maps and the weak equivalences to be the
isomorphisms.  We have a forgetful functor $uS^{f}M_{p}Z\to uM_{p}Z$
which takes $A=\{a_{i,j}\}$ to the sequence
\[
a_{0,p}\to \dotsb \to a_{p-1,p}.
\]
This functor is an equivalence of categories, with the inverse functor
$uM_{p}Z\to uS^{f}M_{p}Z$ filling out the $\Ar[p]$ diagram from the
sequence with the kernels of the maps.  These functors then assemble
into a simplicial functor $uM\subdot Z\to uS^{f}M\subdot Z$.

Now $\pi_{n+1}$ defines a simplicial functor $u\Spdot\aP_{0}^{n+1}\to
uS^{f}M\subdot Z$.  The following theorem fixes the argument in
\cite{BlumbergMandell} by replacing Theorem~3.10.

\begin{thm}\label{thm:fixbm}
The simplicial functors $u\Spdot\aP_{0}^{n+1}\to
uS^{f}M\subdot Z\from uM\subdot Z$ induce weak equivalences on
nerves.
\end{thm}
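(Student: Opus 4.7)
The plan is to dispose of the two simplicial functors separately. For the right-hand functor $uM\subdot Z\to uS^{f}M\subdot Z$, the forgetful functor $uS^{f}M_{p}Z\to uM_{p}Z$ sending $\{a_{i,j}\}$ to the top row $a_{0,p}\to\dotsb\to a_{p-1,p}$ is an equivalence of categories at each simplicial level $p$, with quasi-inverse the kernel-filling construction of Definition~\ref{def:usfM}; since equivalences of categories induce weak equivalences on nerves, this gives a level-wise (hence realized) weak equivalence of bisimplicial sets.

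For $\pi_{n+1}\colon u\Spdot\aP_{0}^{n+1}\to uS^{f}M\subdot Z$, I would first check that it is a well-defined \emph{strict} simplicial functor. Given $A=\{a_{i,j}\}\in\Spdot[p]\aP_{0}^{n+1}$, the cofibration sequence $a_{i,j}\to a_{i,k}\to a_{j,k}$ together with $\pi_{n+2}a_{j,k}=0$ (since $a_{j,k}\in\aP_{0}^{n+1}$) yields $\pi_{n+1}a_{i,j}\iso\ker(\pi_{n+1}a_{i,k}\to\pi_{n+1}a_{j,k})$, placing $\pi_{n+1}A$ in $S^{f}M_{p}Z$. Both $\Spdot$ and $S^{f}M\subdot Z$ have face and degeneracy operations coming from functoriality on $\Ar[\ssdot]$ (rather than from extrinsic kernel constructions as in $M\subdot Z$), so $\pi_{n+1}$ is strictly simplicial---precisely the reason for introducing $S^{f}M\subdot Z$. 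I would then construct a level-wise homotopy inverse $L\colon uS^{f}M_{p}Z\to u\Spdot[p]\aP_{0}^{n+1}$ by declaring $LM$ to have top row the Eilenberg--Mac Lane spectra $LM_{i,p}=\Sigma^{n+1}Hm_{i,p}$ (with structure maps induced from $m_{i,p}\to m_{j,p}$), and $LM_{i,j}=\mathrm{hofib}(LM_{i,p}\to LM_{j,p})$ for $j<p$: the three-term fiber-of-a-composite identification for $LM_{i,p}\to LM_{j,p}\to LM_{k,p}$ produces the cofibration sequence $LM_{i,j}\to LM_{i,k}\to LM_{j,k}$ required for $LM\in\Spdot[p]\aP_{0}^{n+1}$, and a long exact sequence computation gives $\pi_{n+1}L=\mathrm{id}$ strictly.

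Since a level-wise weak equivalence of bisimplicial sets realizes to a weak equivalence, it remains to show that $\pi_{n+1}$ induces a weak equivalence on nerves at each $p$. For this I would produce a natural $u$-equivalence $\eta\colon L\pi_{n+1}\simeq\mathrm{id}$ on $u\Spdot[p]\aP_{0}^{n+1}$, which supplies the required simplicial homotopy on nerves. On the top row $\eta_{A}$ is the connective cover $\Sigma^{n+1}H\pi_{n+1}a_{i,p}\to a_{i,p}$, a $u$-map because $0=\pi_{n}$ injects into $\pi_{n}a_{i,p}$; on lower entries the map is determined by fiber-functoriality. The hard part will be establishing the naturality of $\eta$, because the lower entries $a_{i,j}$ of a $\Spdot$-diagram are only weakly equivalent---not literally equal---to the homotopy fibers of the top-row maps, so the comparison must be handled up to higher coherence. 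I would address this by replacing $A$ with a weakly equivalent object using the mapping cylinder and cofibrant replacement available in the simplicially tensored structure on $\aP_{0}^{n+1}$, following the auxiliary bisimplicial strategy of Proposition~\ref{prop:post}. Realizing in $p$ then yields the desired weak equivalence of nerves for both maps in the statement.
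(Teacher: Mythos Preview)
Your handling of the right-hand functor matches the paper's exactly: $uM_{p}Z\to uS^{f}M_{p}Z$ is a level-wise equivalence of categories.

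For the left-hand functor the paper takes a much shorter route. The category $S^{f}M\subdot Z$ was introduced solely to repair a \emph{simpliciality} error in \cite{BlumbergMandell}; the level-wise weak equivalence on nerves was never in doubt. So the paper simply notes that the argument in \cite[\S4]{BlumbergMandell} already correctly shows that $u\Sdot[p]\aP_{0}^{n+1}\to uM_{p}Z$ induces a weak equivalence on nerves for each fixed $p$, composes with the categorical equivalence $uM_{p}Z\simeq uS^{f}M_{p}Z$ just established, and observes that this composite is naturally isomorphic to the restriction of the $\pi_{n+1}$ functor from the statement. No new construction is carried out.

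Your direct construction of a section $L$ is in effect an attempt to reprove that cited level-wise result from scratch. The outline is sound and you correctly locate the hard step, but two places need more than you have written. First, to make $\eta$ natural on the last column you need an honest natural transformation $\Sigma^{n+1}H\pi_{n+1}(-)\to\mathrm{id}$ on $\aP_{0}^{n+1}$, which in practice means identifying your Eilenberg--Mac~Lane model with a functorial $(n{+}1)$-connective cover; this is doable but must be said. Second, ``fiber-functoriality'' only produces a map from $(L\pi_{n+1}A)_{i,j}$ to the \emph{literal} homotopy fiber of $a_{i,p}\to a_{j,p}$, whereas the canonical comparison with $a_{i,j}$ (via the structure square through $a_{j,j}\simeq *$) runs the other direction. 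Your replacement-plus-bisimplicial strategy can be made rigorous, but that is where essentially all the content lives, and the proposal defers it rather than executes it. The paper's route---reuse the existing level-wise argument and compose with the categorical equivalence---bypasses this entirely. (A minor correction: $\pi_{n+1}L$ is only naturally \emph{isomorphic} to the identity, not strictly equal, since on lower entries it returns $\ker(m_{i,p}\to m_{j,p})$ rather than $m_{i,j}$; this is harmless for the nerve argument.)
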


\begin{proof}
Fix $p$.  Since $uM_{p}Z\to uS^{f}M_{p}Z$ is an equivalence of
categories, it induces a weak equivalence on nerves.  The proof of
Theorem~3.10 in \cite[\S4]{BlumbergMandell} correctly proves that the
functor $u\Spdot[p]\aP_{0}^{n+1}\to uM_{p}Z$ induces a weak equivalence
on nerves, and the composite functor 
\[
u\Spdot[p]\aP_{0}^{n+1}\to uM_{p}Z\to uS^{f}M_{p}Z
\]
is naturally isomorphic to the functor $u\Spdot[p]\aP_{0}^{n+1}\to
uS^{f}M_{p}Z$ in the statement, so that functor also induces a
weak equivalence on nerves.
\end{proof}

We regard the categories $u_{q}S^{f}M_{p}$ as
simplicially enriched with discrete mapping spaces and we obtain a
connective spectral enrichment $(u_{q}S^{f}M_{p})^{\Gamma}$ using
objectwise direct sum.  Since the functor $u_{q}M_{p}Z\to
u_{q}S^{f}M_{p}Z$ is an equivalence of categories, we get a
DK-equivalence on the connective spectral enrichments.

\begin{prop}\label{prop:devdk4}
The spectral functor $(u_{q}M_{p}Z)^{\Gamma}\to
(u_{q}S^{f}M_{p}Z)^{\Gamma}$ is a DK-equivalence.
\end{prop}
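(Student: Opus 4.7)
The plan is to deduce this from the fact, already used in the proof of Theorem~\ref{thm:fixbm}, that the underlying unenriched functor $F\colon u_{q}M_{p}Z\to u_{q}S^{f}M_{p}Z$ (filling in the kernels of the composite maps) is an equivalence of categories, with quasi-inverse the forgetful functor. Since a DK-equivalence of spectral categories requires (a) weak equivalences on all mapping spectra, and (b) essential surjectivity on $\pi_{0}$, point (b) is immediate from essential surjectivity of $F$ as an unenriched functor, so the only task is to check (a).

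For (a), recall that both spectral categories are obtained by prolongation of $\Gamma$-spaces whose value at $q$ on objects $x,y$ is the \emph{discrete} simplicial set
\[
\mathfrak{C}_{q}(x,y)=\mathfrak{C}(x,\myop\bigvee_{q}y),
\]
where $\bigvee$ denotes the objectwise coproduct (direct sum of finitely generated left $Z$-modules). Here the mapping sets are taken in the underlying (unenriched) category. Because $F$ is an equivalence of categories between categories whose coproducts are computed by direct sums, $F$ preserves $\bigvee_{q}$ up to canonical isomorphism, and therefore induces a bijection
\[
u_{q}M_{p}Z(x,\myop\bigvee_{q}y)\to u_{q}S^{f}M_{p}Z(Fx,\myop\bigvee_{q}Fy)
\]
for every $q$. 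Hence the induced map of $\Gamma$-spaces is a levelwise isomorphism, and the map on prolonged symmetric spectra is then a levelwise weak equivalence of (connective) spectra.

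Combining (a) and (b) gives the DK-equivalence. There is essentially no obstacle here: the entire content was already extracted in the paragraph preceding the proposition, where the forgetful functor and its kernel-filling inverse were identified as an equivalence of categories; the present statement just transports that equivalence through the formal $\Gamma$-space construction, which is manifestly functorial in equivalences of additive categories.
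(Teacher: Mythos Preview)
Your proposal is correct and matches the paper's approach: the paper proves this in the single sentence preceding the proposition, observing that since $u_{q}M_{p}Z\to u_{q}S^{f}M_{p}Z$ is an equivalence of (discretely enriched) categories, it induces a DK-equivalence on the connective spectral enrichments. Your writeup simply unpacks that sentence by spelling out that full faithfulness plus preservation of objectwise direct sums gives levelwise bijections of the defining $\Gamma$-spaces, and that essential surjectivity handles $\pi_{0}$.
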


Finally, we have everything in place to prove Lemma~\ref{lem:devmain}.

\begin{proof}[Proof of Lemma~\ref{lem:devmain}]
Propositions~\ref{prop:devdk1}, \ref{prop:devdk2}, \ref{prop:post},
\ref{prop:devdk3}, and~\ref{prop:devdk4} imply that the bisimplicial map
\[
THH(\Sdot[p]\Fdot[q](\aP_{0}^{n+1},\aP_{0}^{n})^{\Gamma})\to
THH((u_{q}S^{f}M_{p} Z)^{\Gamma})
\]
is a weak equivalence for each fixed $p,q$.
Propositions~\ref{prop:uMcont} and~\ref{prop:devdk4} then imply that
\[
|THH(\Sdot \Fdot(\aP_{0}^{n+1},\aP_{0}^{n})^{\Gamma})|\htp
|THH((u\subdot S^{f}M\subdot Z)^{\Gamma})| \htp
|THH((uM\subdot Z)^{\Gamma})|
\]
is contractible. 
\end{proof}



%
%

\chapter{Generalization to {W}aldhausen categories with factorization}
\label{tw-gen}

In previous sections, we imposed stringent hypotheses on our
categories and functors.  In this chapter, we relax these hypotheses
and extend the theory.  We begin in the first section by generalizing
the maps we consider.  Often, a functor between Waldhausen categories
preserves the structure only ``up to homotopy''; in previous
work~\cite{BlumbergMandell}, we developed a theory of ``weakly exact''
functors to describe the associated functoriality of algebraic
$K$-theory.  In the first section (Section~\ref{sec:weaklyexact}), we
study the functoriality of $WTHH$ in weakly exact functors and show
that a weakly exact functor of simplicial Waldhausen categories
induces a zig-zag of spectra.  We use this additional generality in
the following sections to establish that our hypotheses introduced
previously are generic, in the following sense.  In
Section~\ref{futuresec}, we show that any reasonable Waldhausen
category (an ``HCLF Waldhausen category''; see Definition~\ref{def:hclfw})
is connected by a weakly exact DK-equivalence to an enhanced 
simplicially enriched Waldhausen category.  In the last section
(Section~\ref{sec:speccompare}), we show that if we start with a
suitable spectral category $\aC$, the two evident constructions of
$THH$ (namely, $THH$ applied to the category viewed as a ring spectrum
with many objects and $WTHH$ applied to the Waldhausen category of
finite-cell modules) are connected by a natural zig-zag of weak
equivalences.

The combination of Sections~\ref{sec:weaklyexact} and~\ref{futuresec}
allow us to regard $THH$ and $TC$ as functors from the homotopy
category of HCLF Waldhausen categories (and weakly exact functors) to the
homotopy categories of cyclotomic spectra and spectra, respectively;
see Theorem~\ref{thmwrcsv}.(i).  Ideally, we would like to say
something about coherence, perhaps showing that
$THH$ and $TC$ are $\infty$-functors to spectra.  The difficulty
arises in Section~\ref{sec:weaklyexact}, where a weakly exact
simplicially enriched functor of Waldhausen categories only yields a
zigzag of spectra instead of a map of spectra.  A composable sequence
of such functors yields a subdivided simplex of maps, a
multi-dimensional zigzag.  While it is clear that there should be a
clean and straightforward $\infty$-category interpretation of this
kind of generalized functor, we know of no good theory to plug it
into, and this monograph does not seem like the appropriate place to
develop one.

The work in Section~\ref{futuresec}, on the other hand, does have a
relatively straightforward to interpretation as an $\infty$-functor in
the context of quasi-categories (using the homotopy coherent nerve).
The functor $\aC\mapsto \tilde\aC$ from HCLF Waldhausen categories to
simplicially enriched Waldhausen categories is easily seen to be the
composite of a lax pseudofunctor with an op-lax pseudofunctor, where
we regard both categories of Waldhausen categories as strict
2-categories with 2-morphisms the natural weak equivalences.  A
(strictly unital) lax or op-lax pseudo-functor of strict 2-categories
then induces a map on homotopy coherent nerves of the topologically
enriched categories obtained by geometric realization of the nerve of
the morphism categories.  After strictifying the units, we
get a zigzag of $\infty$-functors.  We leave the details to a future
paper.

\section{Weakly exact functors}\label{sec:weaklyexact}

In this section, we still consider functors that preserve the
simplicial enrichment, but now we drop the hypothesis that the functor is
exact, and substitute the up to weak equivalence version of this
hypothesis that the functor is ``weakly exact''
\cite[\S2]{BlumbergMandellUW}.  For Waldhausen categories that admit
functorial factorization of weak cofibrations (FFWC), a weakly exact functor is the
minimum structure necessary to induce a map on $K$-theory.  
The purpose of this section is to explain the proof of the following
theorem, which provides the corresponding result in our setting.

\begin{thm}\label{thmwkexact}
Let $\aC$ and $\aD$ be simplicially enriched Waldhausen categories and
assume that the underlying Waldhausen category of $\aD$
admits FFWC.  Let $\phi \colon \aC\to \aD$ be a simplicially
enriched functor that restricts to a based weakly exact functor on the
underlying Waldhausen categories, then it induces a map 
\[
WTHH^{\Gamma}(\aC)\to
WTHH^{\Gamma}(\aD)
\]
in the
homotopy category of cyclotomic spectra.  This map is compatible with
the cyclotomic trace in that the following diagram commutes in the
stable category.
\[
\xymatrix{%
K\aC\ar[d]\ar[r]^-{\trc}&WTC^{\Gamma}(\aC)\ar[d]\ar[r]&WTHH(\aC)\ar[d]\\
K\aD\ar[r]_-{\trc}&WTC^{\Gamma}\aD\ar[r]&WTHH^{\Gamma}(\aD)
}
\]
\end{thm}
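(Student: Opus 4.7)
The plan is to factor the desired map through the Moore $\Spdot$ model of Section~\ref{sec:spMdot}, which is the natural home for weakly exact functors. First I would observe that because $\phi$ is simplicially enriched, the Moore Tot construction of Construction~\ref{cons:moore} automatically produces a simplicial spectrally enriched functor
\[
\phi_{*}\colon (w\subdot \Spdot)^{M}\aC^{\Gamma} \to (w\subdot \Spdot)^{M}\aD^{\Gamma}.
\]
The hypothesis that $\phi$ is based weakly exact is precisely what is needed here: it ensures that $\phi$ sends the defining diagram conditions of $\Spdot\aC$ (trivial initial objects, weak cofibrations, homotopy cocartesian squares) into those of $\Spdot\aD$, and that it sends weak equivalences in $\aC$ to weak equivalences in $\aD$. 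Note that none of this uses functoriality up to homotopy with respect to \emph{strict} cofibrations or pushouts, which is what allowed us to bypass the unavailability of $\Sdot\phi$.

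Applying $THH$ and geometric realization yields a map of cyclotomic spectra. To promote this into a map on $WTHH^{\Gamma}$ in the homotopy category, I would assemble the zigzag
\begin{multline*}
WTHH^{\Gamma}(\aC) \overto{\sim} \Omega|THH(w^{M}\subdot \Sdot\aC^{\Gamma})| \to \Omega|THH((w\subdot\Spdot)^{M}\aC^{\Gamma})| \\
\overto{\phi_{*}} \Omega|THH((w\subdot\Spdot)^{M}\aD^{\Gamma})| \overfrom{\sim} \Omega|THH(w^{M}\subdot \Sdot\aD^{\Gamma})| \overfrom{\sim} WTHH^{\Gamma}(\aD).
\end{multline*}
The outer weak equivalences on both sides are the comparisons $WTHH^{\Gamma}(-)\simeq WTHH^{\Gamma}(-|w)$ established in Section~\ref{secdefthh}. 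The inner rightmost equivalence uses Proposition~\ref{prop:zeroinclude} together with the FFWC hypothesis on $\aD$: the inclusion $w^{M}_{p}\Sdot[q]\aD \to (w_{p}\Spdot[q])^{M}\aD$ is then a DK-equivalence at each bisimplicial level. The corresponding inclusion on the $\aC$ side is only a DK-embedding, but this is harmless since we only need a map there; inverting the weak equivalences on the $\aD$ side then produces the desired morphism in the homotopy category of cyclotomic spectra.

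For compatibility with the cyclotomic trace, I would run the parallel argument using the iterated construction of Definition~\ref{deftildethh} (with iterated Moore $\Spdot$ in place of iterated $\Sdot$) to obtain the induced map in the stable category between the symmetric spectra $\wt{WTHH}^{\Gamma}\aC\to \wt{WTHH}^{\Gamma}\aD$ and $\wt{WTC}^{\Gamma}\aC \to \wt{WTC}^{\Gamma}\aD$. The cyclotomic trace is induced by the inclusion of objects, via the identification
\[
\Ob(w^{M}\subdot\Sdot^{(n)}\aC^{\Gamma}) = \Ob((w\subdot \Spndot)^{M}\aC^{\Gamma}),
\]
and this inclusion is strictly natural in $\phi$; combined with the naturality of every equivalence appearing in the zigzag above, the required diagram commutes in the stable category.

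The main obstacle will be verifying that the Moore $\Spdot$ constructions satisfy the non-degenerate basepoint and identity cofibration hypotheses needed to feed them into the $THH$, $TR$, $TC$ machinery of \cite{BlumbergMandellTHHLoc} that underlies Section~\ref{secdefthh}, and then carefully tracking these hypotheses through the iterated construction $\Spndot$. Once that is in place, the argument is structurally parallel to the $\Sdot$-versus-$\SpMdot$ comparison already carried out in Section~\ref{sec:spMdot}; the new content lies entirely in the observation that the $\SpMdot$ model is functorial in based weakly exact maps even when the original $\Sdot$ model is not.
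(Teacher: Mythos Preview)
There is a genuine gap. Your claim that the Moore Tot construction ``automatically produces a simplicial spectrally enriched functor''
\[
\phi_{*}\colon (w\subdot \Spdot)^{M}\aC^{\Gamma} \to (w\subdot \Spdot)^{M}\aD^{\Gamma}
\]
is not correct. The Moore Tot construction (Construction~\ref{cons:moore}) gives you a \emph{topologically} enriched functor on the underlying $(w\subdot\Spdot)^{M}$-categories, but the $\Gamma$-enrichment is built from coproducts: by definition
\[
(w\subdot\Spdot)^{M}\aC^{\Gamma}_{q}(X,Y)=(w\subdot\Spdot)^{M}\aC(X,\textstyle\bigvee_{q}Y),
\]
and to get a spectral functor into $(w\subdot\Spdot)^{M}\aD^{\Gamma}$ you need a map landing in $(w\subdot\Spdot)^{M}\aD(\phi X,\bigvee_{q}\phi Y)$. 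What $\phi$ actually gives you lands in $(w\subdot\Spdot)^{M}\aD(\phi X,\phi(\bigvee_{q}Y))$, and the comparison $\bigvee_{q}\phi(Y)\to \phi(\bigvee_{q}Y)$ is only a weak equivalence for a weakly exact functor, not an isomorphism. So there is no induced map of $\Gamma$-categories, and hence no induced spectral functor in the direction you wrote.

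The paper handles exactly this obstruction by inserting an additional zigzag. For each $\phi$ one forms the auxiliary spectral category $\phi^{*}\aD^{\Gamma}_{\aC}$ (objects of $\aC$, mapping $\Gamma$-spaces $\aD(\phi a,\phi(\bigvee_{q}b))$ using coproducts in $\aC$) and the spectral category $\phi^{*}\aD^{\Gamma}$ (same objects, mapping $\Gamma$-spaces $\aD(\phi a,\bigvee_{q}\phi b)$ using coproducts in $\aD$). The simplicial functor $\phi$ gives an honest spectral functor $\aC^{\Gamma}\to \phi^{*}\aD^{\Gamma}_{\aC}$, the weak-equivalence comparison of coproducts gives a DK-equivalence $\phi^{*}\aD^{\Gamma}\to \phi^{*}\aD^{\Gamma}_{\aC}$, and then $\phi^{*}\aD^{\Gamma}\to \aD^{\Gamma}$ is the obvious inclusion-on-objects functor. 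This zigzag (Theorem~\ref{thm:swefunct} and Theorem~\ref{thmwkzig}) is then combined with the $\Sdot$--$\SpMdot$ comparison you already identified. Your overall architecture (pass to $\SpMdot$, use FFWC on $\aD$ to invert the comparison there, track objects for the trace) is right; what is missing is this coproduct-correction zigzag, without which the middle arrow of your display simply does not exist.
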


In the case of enhanced simplicially enriched Waldhausen categories,
we have the following version of the previous theorem.

\begin{thm}\label{thmwkexactnc}
Let $\aA$ and $\aB$ be enhanced simplicially enriched Waldhausen
categories with ambient simplicially tensored Waldhausen categories
$\aC$ and $\aD$ respectively.  If $\phi \colon \aC\to \aD$ is a simplicially
enriched functor that sends $\aA$ into $\aB$ and restricts to a based weakly
exact functor on the underlying Waldhausen categories, then it induces
a map in the homotopy category of cyclotomic spectra 
\[
WTHH(\aA) \to WTHH(\aB)
\]
making the following diagram commute in the homotopy
category of cyclotomic spectra.
\[
\xymatrix{%
WTHH^{\Gamma}(\aA)\ar[r]\ar[d]&WTHH^{\Gamma}(\aB)\ar[d]\\
WTHH(\aA)\ar[r]&WTHH(\aB)
}
\]
\end{thm}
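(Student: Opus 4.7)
The plan is to mimic the strategy underlying Theorem~\ref{thmwkexact} but work systematically with the Moore $\Spdot$-construction, which is the natural home for weakly exact functors. Since $\aA$ and $\aB$ are enhanced, they admit FFWC, so Proposition~\ref{prop:zeroinclude} tells us that the maps
\[
w^{M}\subdot \Sdot \aA \to (w\subdot \Spdot)^{M}\aA
\qquad\text{and}\qquad
w^{M}\subdot \Sdot \aB \to (w\subdot \Spdot)^{M}\aB
\]
are DK-equivalences, and this persists after passing to the non-connective enrichments $(-)^{S}$. So I can equivalently compute $WTHH(\aA)$ and $WTHH(\aB)$ using the Moore $\Spdot$ models, and it suffices to produce an induced map there.

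The conditions defining $\Spdot$ only refer to weak equivalences, weak cofibrations, and homotopy cocartesian squares, all of which are preserved by a based weakly exact functor. Because $\phi$ is simplicially enriched, it therefore induces a simplicial functor $(w\subdot \Spdot)^{M}\aC \to (w\subdot \Spdot)^{M}\aD$ that restricts to a simplicial functor on the Moore-enriched subcategories arising from $\aA$ and $\aB$. For the non-connective enrichment, the simplicial enrichment of $\phi$ supplies canonical comparison maps $\phi(x)\otimes K\to \phi(x\otimes K)$ for finite simplicial sets $K$; since $\phi$ is weakly exact, passing to cofibers yields comparison maps $\Sigma^{n}\phi(x)\to \phi(\Sigma^{n}x)$ that are weak equivalences. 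These comparisons assemble $\phi$ into a spectrally enriched functor up to coherent weak equivalence, but not strictly.

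To rigidify this coherence into an honest zigzag in the stable category, I would introduce an auxiliary enhanced simplicially enriched Waldhausen category $\aE$ whose objects are triples $(x,y,\sigma)$ with $x\in\aA$, $y\in\aB$, and $\sigma\colon \phi(x)\to y$ a weak equivalence in $\aD$, equipped with the obvious simplicial pullback of mapping spaces and with $(-)\otimes K$ defined componentwise. There are enhanced simplicial functors $\aA\to \aE\to \aB$: the first sends $x$ to $(x,\phi(x),\id)$ and is strictly enhanced exact and spectrally enriched on both enrichments, and the second forgets $(x,\sigma)$ and is a DK-equivalence on the simplicially tensored ambient categories (as every $y\in\aB$ admits at least the trivial triple $(*,y,*\to y)$ replaced by a more honest representative after applying FFWC). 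Proposition~\ref{propapprox} then implies that $\aE\to \aB$ induces a weak equivalence on $WTHH$ (and on $WTHH^{\Gamma}$ by Proposition~\ref{propapproxone}), which can be inverted to produce the desired map $WTHH(\aA)\to WTHH(\aB)$; the compatibility square with $WTHH^{\Gamma}$ follows from the naturality of the comparison $\aA^{\Gamma}\to \aA^{S}$ applied along the entire zigzag.

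The main obstacle is the construction and verification of the auxiliary category $\aE$: one must arrange the ambient simplicially tensored Waldhausen structure so that the tensors, suspensions, and cofibrations in $\aE$ are compatible with the projection to $\aB$, and verify that the DK-equivalence hypothesis of Proposition~\ref{propapprox} actually holds both on $\aE\to \aB$ and on the ambient categories. In particular, handling the suspension coherently requires that the comparison maps $\Sigma\phi(x)\to \phi(\Sigma x)$ be built into the simplicial structure on $\aE$, likely through an iterated Moore-type cylinder construction analogous to Construction~\ref{consmoore}, so that the rigidification is genuinely functorial rather than merely existing pointwise.
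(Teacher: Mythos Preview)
Your first two paragraphs set up the problem correctly and identify precisely the issue the paper addresses: the comparison maps $\Sigma^{n}\phi(x)\to\phi(\Sigma^{n}x)$ are weak equivalences but not isomorphisms, so $\phi$ does not strictly induce a spectral functor on the non-connective enrichments. However, your proposed resolution via the auxiliary category $\aE$ has a genuine gap, and the paper takes a different and simpler route.

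The gap is in your claim that the projection $\aE\to\aB$ is a DK-equivalence. Your justification --- that every $y\in\aB$ admits the triple $(*,y,*\to y)$ --- fails because $*\to y$ is not a weak equivalence unless $y$ is contractible, so this triple is not in $\aE$. Essential surjectivity of $\aE\to\aB$ would require every object of $\aB$ to be weakly equivalent to $\phi(x)$ for some $x\in\aA$, and nothing in the hypotheses guarantees this: $\phi$ need not be essentially surjective. FFWC does not help here, since it factors maps rather than producing preimages. There is also a secondary problem you already flag: the componentwise tensor on $\aE$ requires a map $\phi(x\otimes K)\to y\otimes K$, but the simplicial enrichment only supplies $\phi(x)\otimes K\to\phi(x\otimes K)$, which points the wrong way.

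The paper avoids building any new Waldhausen category. Instead it constructs two auxiliary \emph{spectral} categories $\phi^{*}\aB^{S}_{\aC}$ and $\phi^{*}\aB^{S}$, both with object set $\Ob\aA$, and with mapping spectra
\[
\phi^{*}\aB^{S}_{\aC}(a,b)(n)=\aD(\phi(a),\phi(\Sigma^{n}b)),
\qquad
\phi^{*}\aB^{S}(a,b)(n)=\aD(\phi(a),\Sigma^{n}\phi(b)),
\]
where the suspension is taken in $\aC$ and in $\aD$ respectively. The weak equivalence $\Sigma^{n}\phi(b)\to\phi(\Sigma^{n}b)$ you already noted gives a DK-equivalence $\phi^{*}\aB^{S}\to\phi^{*}\aB^{S}_{\aC}$, and one obtains the zigzag
\[
\aA^{S}\to \phi^{*}\aB^{S}_{\aC}\overset{\sim}{\longleftarrow}\phi^{*}\aB^{S}\to \aB^{S}
\]
of spectral functors (Theorem~\ref{thm:swefunctS}), parallel to the connective zigzag in Theorem~\ref{thm:swefunct}. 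Extending this to the $(w\subdot\Spndot)^{M}$ level and taking $THH$ yields the map and the compatibility square directly. No tensors or cofibrations need be defined on the intermediaries, and no essential surjectivity is required: both $\phi^{*}$-categories already have the same objects as $\aA$.
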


We also have the following theorem for natural weak equivalences
between enriched weakly exact functors.

\begin{thm}\label{thmwknat}
Let $\phi$ and $\phi'$ be as in Theorem~\ref{thmwkexact} or
Theorem~\ref{thmwkexactnc} above.  If there is a natural weak
equivalence from $\phi$ to $\phi'$, then the induced maps from
$WTHH^{\Gamma}(\aC)$ to $WTHH^{\Gamma}(\aD)$ agree in the homotopy
category of cyclotomic spectra and (for Theorem~\ref{thmwkexactnc})
the induced maps from $WTHH(\aA)$ to $WTHH(\aB)$
agree in the homotopy category of cyclotomic spectra.
\end{thm}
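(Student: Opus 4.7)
The plan is to adapt the proof of Proposition~\ref{propnat} by replacing Waldhausen's $\Sdot$ construction with the Moore $\SpMdot$ construction of Section~\ref{sec:spMdot}, since $\SpMdot$ is precisely the vehicle through which a weakly exact $\phi$ induces a simplicial spectrally enriched functor $\SpMdot\aC^{\Gamma}\to\SpMdot\aD^{\Gamma}$. Indeed, the $\Spdot$ construction is built out of weak cofibrations and homotopy cocartesian squares, both preserved by a weakly exact functor, and the maps on $WTHH^{\Gamma}$, $WTR^{\Gamma}$, $WTC^{\Gamma}$ constructed in Theorem~\ref{thmwkexact} factor through this intermediate model together with the DK-equivalence of Proposition~\ref{prop:zeroinclude} available under FFWC. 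Similarly in the enhanced situation of Theorem~\ref{thmwkexactnc}, one uses $\SpMdot\aA^{S}$ and $\SpMdot\aB^{S}$.

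First I would form the intermediate simplicial spectrally enriched category $w^{M}_{1}\SpMdot\aD^{\Gamma}$ by applying Construction~\ref{consvone} levelwise to $\SpMdot\aD$, with $v\aD_{0}=w\aD_{0}$. A natural weak equivalence $h\colon \phi\to\phi'$ assigns to each object $X=(x_{i,j})$ of $\SpMdot_{n}\aC$ the objectwise map $\phi(X)\to\phi'(X)$, which is a weak equivalence in $\SpMdot_{n}\aD$ since weakly exact functors preserve weak equivalences. By naturality of $h$ in $\aC$, this assembles into a simplicial spectrally enriched functor $\SpMdot\aC^{\Gamma}\to w^{M}_{1}\SpMdot\aD^{\Gamma}$, factoring through the length-zero part $w_{1}\SpMdot\aD^{\Gamma}$. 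The two projections $w^{M}_{1}\SpMdot\aD^{\Gamma}\to\SpMdot\aD^{\Gamma}$ then recover, on composition with this functor, the maps induced by $\phi$ and by $\phi'$ respectively.

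Next I would verify the $\SpMdot$-analogue of Proposition~\ref{propmoorecof}.(i), namely that the length-zero inclusion $\SpMdot\aD^{\Gamma}\to w^{M}_{1}\SpMdot\aD^{\Gamma}$ is a DK-equivalence, using the same mapping-cylinder-style deformation retraction, now carried out through the Moore-Tot fiber products of Construction~\ref{cons:moore}; the Kan fibration hypothesis of Definition~\ref{defsimpwaldcat}.(iii) together with stability of Kan fibrations under Moore-Tot and under the arrow-diagram fiber products ensures the argument goes through unchanged. Both projections $w^{M}_{1}\SpMdot\aD^{\Gamma}\to\SpMdot\aD^{\Gamma}$ split this DK-equivalence, hence agree in the DK-localization of simplicial spectral categories, and therefore induce the same map on $THH$, $TR$, and $TC$ after realization and $\Omega$. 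This proves that the maps induced by $\phi$ and $\phi'$ agree on $WTHH^{\Gamma}$, $WTR^{\Gamma}$, $WTC^{\Gamma}$ (and $WTHH$, $WTR$, $WTC$ in the enhanced case).

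The main obstacle I anticipate is the bookkeeping required to combine the two Moore-type constructions into a single, coherently defined simplicial spectrally enriched category $w^{M}_{1}\SpMdot\aD^{\Gamma}$. The real-parameter Moore homotopy composition of Construction~\ref{consvone} must be composed compatibly with the prismatic McClure-Smith cup-pairing over $\Ar[n]$-diagrams used in Construction~\ref{cons:moore}, and one needs to check associativity, unitality, and compatibility with the simplicial face and degeneracy operations of $\SpMdot$ on the resulting mapping spaces. This is formally analogous to iterating the Moore Tot construction over the product diagram, and should pose no essential difficulty given the associativity of the prism decomposition $\psi^{n}_{r,s}$ already recorded in Section~\ref{sec:moore1}, but it does require writing out a careful description of the combined mapping spaces before the splitting and DK-equivalence claims can be verified cleanly.
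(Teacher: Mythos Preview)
Your overall strategy of mimicking Proposition~\ref{propnat} with $\SpMdot$ in place of $\Sdot$ is the right instinct, but there is a genuine gap in the second paragraph. You assert that the natural weak equivalence $h$ assembles into a simplicial \emph{spectrally} enriched functor $\SpMdot\aC^{\Gamma}\to w^{M}_{1}\SpMdot\aD^{\Gamma}$. This step fails for a merely weakly exact $\phi$: the connective enrichment $\aC^{\Gamma}$ is built using coproducts $\bigvee_{q} y$ in $\aC$, and a weakly exact functor need not send these to coproducts in $\aD$ on the nose (only up to weak equivalence). So $\phi$ does not induce a map of $\Gamma$-spaces $\aC^{\Gamma}(x,y)\to\aD^{\Gamma}(\phi x,\phi y)$, and hence no spectral functor $\SpMdot\aC^{\Gamma}\to\SpMdot\aD^{\Gamma}$, let alone into $w^{M}_{1}\SpMdot\aD^{\Gamma}$. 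This is exactly the obstruction that forced the introduction of the zigzag $\aC^{\Gamma}\to\phi^{*}\aD^{\Gamma}_{\aC}\overfrom{\sim}\phi^{*}\aD^{\Gamma}\to\aD^{\Gamma}$ in Theorem~\ref{thm:swefunct} and Theorem~\ref{thmwkzig}; you cannot bypass it here.

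The paper's proof repairs this by packaging $h$ as a functor $\Phi\colon\aC\to w_{1}\aD$ and then running the entire $\phi^{*}$ machinery of Theorems~\ref{thm:swefunct} and~\ref{thmwkzig} with $\Phi$ in place of $\phi$, obtaining the zigzag
\[
THH(\Sdot\aC^{\Gamma})\to THH(\Phi^{*}(\Spdot w_{1})^{M}\aD^{\Gamma}_{\aC})\overfrom{\sim}THH(\Phi^{*}(\Spdot w_{1})^{M}\aD^{\Gamma})\to THH((\Spdot w_{1})^{M}\aD^{\Gamma})\overfrom{\sim}THH(\SpMdot\aD^{\Gamma}).
\]
Here $(\Spdot w_{1})^{M}\aD$ plays the role of your $w^{M}_{1}\SpMdot\aD$, and the two face maps to $\SpMdot\aD$ recover the zigzags for $\phi$ and $\phi'$ separately. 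Your DK-equivalence and splitting argument is then applied at this last step. The same correction applies in the non-connective case, using $\phi^{*}\aB^{S}_{\aC}\overfrom{\sim}\phi^{*}\aB^{S}$ from Theorem~\ref{thm:swefunctS}, since weakly exact functors also do not preserve suspensions strictly. In short: your outline is salvageable, but the $\Phi^{*}$ zigzag is not optional bookkeeping---it is the mechanism that produces any spectral functor at all out of a weakly exact $\phi$.
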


The proof of these theorems requires the $\SpMdot$ construction from
Section~\ref{sec:spMdot}; a weakly exact functor is precisely a
functor that is compatible with that construction.  We begin with the
definition of weakly exact functor.

\begin{defn}[{\cite[2.1]{BlumbergMandellUW}}]
Let $\aC_{0}$ and $\aD_{0}$ be Waldhausen categories.  A functor $\phi
\colon \aC_{0}\to \aD_{0}$ is \term{weakly exact} if the initial map
$*\to \phi(*)$ in $\aD_{0}$ is a weak equivalence and $\phi$ preserves
weak equivalences, weak cofibrations, and homotopy cocartesian
squares.  We say that a weakly exact functor $\phi$ is
\indexterm{based}{based (functor)} if
the initial map $*\to \phi(*)$ is the identity.
\end{defn}

It follows that a functor that preserves weak equivalences will
preserve weak cofibrations and homotopy cocartesian squares if and
only if it takes cofibrations to weak cofibrations and takes pushouts
along cofibrations to homotopy cocartesian squares.

Let 
\begin{align*}
W'THH^{\Gamma}\aC&=\Omega |THH(\SpMdot \aC^{\Gamma})|\\
\wt{W'THH}^{\Gamma}\aC(n)&=|THH((w\subdot\Spndot)^{M}\aC^{\Gamma})|.
\end{align*}
If $\aA$ in an enhanced simplicially enriched Waldhausen category, let
\begin{align*}
W'THH\aA&=\Omega |THH(\SpMdot \aA^{S})|\\
\wt{W'THH}\aA(n)&=|THH((w\subdot\Spndot)^{M}\aA^{S})|.
\end{align*}
Proposition~\ref{prop:zeroinclude} now implies the following theorem.

\begin{thm}
Let $\aC$ be a simplicially enriched Waldhausen category that admits
FFWC.  The maps of cyclotomic spectra
\[
WTHH^{\Gamma}(\aC)\to W'THH^{\Gamma}(\aC)
\quad \text{and}\quad
\wt{WTHH}^{\Gamma}(\aC)\to \wt{W'THH}^{\Gamma}(\aC)
\]
are weak equivalences.
If $\aA$ is an enhanced simplicially enriched Waldhausen category,
then the maps of cyclotomic spectra
\[
WTHH(\aA)\to W'THH(\aA)
\quad \text{and}\quad
\wt{WTHH}(\aA)\to \wt{W'THH}(\aA)
\]
are weak equivalences.
\end{thm}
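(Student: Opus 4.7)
The plan is to establish each of the four weak equivalences by reducing to levelwise DK-equivalences of simplicial spectral categories, then invoking the two basic homotopical properties of $THH$ recalled at the start of Section~\ref{secdefthh}: that $THH$ carries DK-equivalences of spectral categories to weak equivalences of cyclotomic spectra, and that geometric realization of simplicial cyclotomic spectra preserves levelwise weak equivalences provided the simplicial objects are spacewise proper.

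For the first claim $WTHH^{\Gamma}(\aC) \to W'THH^{\Gamma}(\aC)$, I would factor the canonical simplicial map of spectral categories as
\[
\Sdot\aC^{\Gamma} \longrightarrow \SMdot\aC^{\Gamma} \longrightarrow \SpMdot\aC^{\Gamma}.
\]
The first map identifies $|\Sdot[n]\aC^{\Gamma}|$ with the length-zero subcategory and is a DK-equivalence in each simplicial degree by the unnumbered proposition immediately following Construction~\ref{cons:moore}. The second map is the case of Proposition~\ref{prop:zeroinclude} with trivial outer $w$ (equivalently, with $p=0$, where no $w$-homotopies are being added); FFWC is what makes this a DK-equivalence. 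For the second claim $\wt{WTHH}^{\Gamma}(\aC) \to \wt{W'THH}^{\Gamma}(\aC)$, Proposition~\ref{prop:zeroinclude} applies verbatim: under FFWC the inclusion
\[
w^{M}_{p}\Sndot[q_{1},\dotsc,q_{n}]\aC^{\Gamma} \longrightarrow (w_{p}\Spndot[q_{1},\dotsc,q_{n}])^{M}\aC^{\Gamma}
\]
is a DK-equivalence for all multi-indices, which is exactly the levelwise comparison needed after applying $THH$.

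To pass from levelwise weak equivalences of cyclotomic spectra to weak equivalences after geometric realization, I would invoke Proposition~\ref{prop:realok}. Verifying its hypotheses—nondegenerate basepoints and Hurewicz cofibration degeneracies on the mapping spectra—is the one nontrivial technical point, and so the main obstacle in writing the argument carefully. For the non-Moore simplicial categories $\Sdot\aC^{\Gamma}$ this follows from the fact that the $\Sdot$ degeneracies insert zero objects, yielding Hurewicz-cofibration inclusions of the $\Gamma$-space mapping spaces at each level; for the Moore constructions $\SpMdot\aC$ and $(w_{p}\Spndot)^{M}\aC$, the basepoint is given by the zero-length constant map, which is nondegenerately included into the half-line-indexed Moore mapping spaces of Constructions~\ref{consmoore} and~\ref{cons:moore}, and the simplicial structure respects these cofibrations. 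Once spacewise properness is in hand, realization preserves the levelwise weak equivalences and, after applying $\Omega$ where required, gives the desired weak equivalences on $WTHH^{\Gamma}$ and $\wt{WTHH}^{\Gamma}$.

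For the non-connective claims concerning $WTHH(\aA) \to W'THH(\aA)$ and $\wt{WTHH}(\aA) \to \wt{W'THH}(\aA)$, the argument is identical after replacing the connective enrichment $\aC^{\Gamma}$ by the non-connective one $\aA^{S}$ throughout. Here I would use that an enhanced simplicially enriched Waldhausen category $\aA\subset\aC$ automatically has ambient $\aC$ admitting FFWC (the proposition stated immediately before Proposition~\ref{propbm}), so Proposition~\ref{prop:zeroinclude} applies, and Proposition~\ref{propfunctwo} combined with the argument above shows the non-connective enrichments carry the necessary DK-equivalences to their target categories. The properness hypotheses of Proposition~\ref{prop:realok} are verified in the same way, since the suspensions used to build the non-connective mapping spectra are formed in the simplicially tensored $\aC$ and so inherit nondegenerate basepoints from the Moore structure.
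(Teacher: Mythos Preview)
Your proposal is correct and follows essentially the same approach as the paper, which simply states that Proposition~\ref{prop:zeroinclude} implies the theorem and leaves the details implicit. You have correctly identified that proposition as the key input and unpacked the argument carefully, including the factorization through $\SMdot\aC$ for the non-tilde case and the properness verification via Proposition~\ref{prop:realok}; the only minor quibble is that the inclusion $\SMdot[n]\aC\to\SpMdot[n]\aC$ is not literally the $p=0$ instance of Proposition~\ref{prop:zeroinclude} (the two Moore constructions differ slightly at $p=0$), but it is a DK-embedding of full Moore-Tot subcategories which becomes a DK-equivalence under FFWC by exactly the same mechanism (Proposition~\ref{propbm}), so the argument goes through.
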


Functoriality of $THH$ in weakly exact functors requires one more
twist.  Because an exact functor $\aC_{0}\to \aD_{0}$ preserves
coproducts, an enriched exact functor induces a functor on spectral
enrichments.  For a weakly exact functor $\phi$, the map 
\[
\sigma_{(n)} \colon \phi (c_{1})\vee \dotsb \vee \phi (c_{n})\to
\phi (c_{1}\vee \dotsb \vee c_{n})
\]
is generally not an isomorphism, though it is always a weak
equivalence.  To fix this problem,  we use a zigzag with the following
construction.

\begin{cons}
For simplicially enriched Waldhausen categories $\aC$ and $\aD$ and a
functor $\phi \colon \aC\to \aD$ that is simplicially
enriched and based weakly exact, let
$\phi^{*}(\SpMdot\aC)^{\Gamma}$ be the (simplicial)
topological $\Gamma$-category whose objects are the objects of
$\SpMdot\aC$ and whose $\Gamma$-space of maps
$\phi_{*}(\SpMdot\aC)^{\Gamma}_{q}(a,b)$ (for each fixed
$\ssdot=0,1,2,\ldots$) consists of maps
\[
f_{0}\in \SpMdot\aC(a, \myop\bigvee_{q}b),
\qquad
f_{1}\in \SpMdot\aD(\phi(a), \myop\bigvee_{q}\phi(b)),
\]
a non-negative real number $s$, and a homotopy $f_{0,1}$ of length $s$
from
$\phi(f_{0})$ to $\sigma_{(q)} \circ f_{1}$, which we topologize as a subset of
\[
\SpMdot\aC(a,\myop\bigvee_{q}b)\times
\SpMdot\aD(\phi(a),\myop\bigvee_{q}\phi(b)) \times \bR
\times \SpMdot\aD(\phi(a),\phi(\myop\bigvee_{q}b))^{I}
\]
as in Construction~\ref{consvone}.  Composition works as follows:
Given $(f_{0},f_{1},s,f_{0,1})\colon a\to b$ in level $q$ and
$(g_{0},g_{1},t,g_{0,1})\colon b\to c$ in level $r$, the composition is
$(g_{0}\circ f_{0},g_{1}\circ f_{1},s+t,h_{0,1})$ where $g_{0}\circ
f_{0}$ and $g_{1}\circ f_{1}$ are the compositions in
$(\SpMdot\aC)^{\Gamma}$ and $(\SpMdot\aD)^{\Gamma}$, and $h_{0,1}$ is
the homotopy that does 
\[
\phi (\myop\bigvee_{q} g_{0})\circ f_{0,1}
\]
on $[0,s]\subset [0,s+t]$ (composing as in
Definition~\ref{defn:vnerve}) and does
\[
\sigma_{(r)}\circ (\myop\bigvee_{q} g_{0,1})\circ f_{1}
\]
on the length $t$ part $[s,s+t]$ of $[0,s+t]$.  We define
$\phi_{*}(w\subdot\SpnMdot\aC)^{\Gamma}$ similarly.
\end{cons}

We have canonical (simplicial) spectral functors 
\[
\phi_{*}(\SpMdot\aC)^{\Gamma}\to \SpMdot\aC^{\Gamma}
\qquad \text{and}\qquad
\phi_{*}(\SpMdot\aC)^{\Gamma}\to \SpMdot\aD^{\Gamma}
\]
induced by projecting on to the relevant factors in the product
\[
\phi_{*}(\SpMdot\aC)^{\Gamma}_{q}(a,b)\subset 
\SpMdot\aC^{\Gamma}_{q}(a,b)\times
\SpMdot\aD^{\Gamma}_{q}(\phi(a),\phi(b)) \times \bR
\times \SpMdot\aD(\phi(a),\phi(\myop\bigvee_{q}b))^{I}.
\]
Because the Moore construction for
$\phi_{*}(\SpMdot\aC)^{\Gamma}_{q}(a,b)$ is homotopy equivalent to the
homotopy pullback and the maps $\sigma_{(n)}$ above are weak
equivalences, the projection 
\[
\phi_{*}(\SpMdot\aC)^{\Gamma}_{q}(a,b)\to
\SpMdot\aC^{\Gamma}_{q}(a,b)
\]
is always a weak homotopy equivalence.  We now have the following theorem.

\begin{thm}\label{thm:swefunct}
Let $\aC$ and $\aD$ be simplicially enriched Waldhausen categories and
let $\phi \colon \aC\to \aD$ be a functor that is simplicially
enriched and based weakly exact.  Then we have zigzags of
(simplicial or multisimplicial) spectrally enriched functors, with the leftward
arrow a DK-equivalence. 
\begin{gather*}
\SpMdot\aC^{\Gamma}\overfrom{\sim} 
\phi_{*}(\SpMdot\aC)^{\Gamma}_{\aC}\to
\SpMdot\aD^{\Gamma}\\
w\subdot\SpnMdot\aC^{\Gamma}\overfrom{\sim} 
\phi_{*}(w\subdot\SpnMdot\aC)^{\Gamma}_{\aC}\to
w\subdot\SpnMdot\aD^{\Gamma}
\end{gather*}
\end{thm}

Returning to $THH$, we define
\begin{gather*}
W'THH^{\Gamma}(\phi_{*}\aC)=\Omega |THH(\phi_{*}(\SpMdot\aC)^{\Gamma}|\\
\wt{W'THH}^{\Gamma}(\phi_{*}\aC)(n)=|THH(\phi_{*}(\SpnMdot\aC)^{\Gamma}|.
\end{gather*}
The fact that $\phi$ is based gives simplicial suspension maps,
imparting the structure of a symmetric spectrum (in the category of
cyclotomic spectra).  The symmetric spectrum structure is compatible
with the functors in the previous theorem, as summarized in the next
result. 

\begin{thm}\label{thmwkzig}
Let $\phi \colon \aC\to \aD$ be a simplicially enriched functor that
restricts to a based weakly exact functor $\aC_{0}\to \aD_{0}$.  Then
we have the following maps of cyclotomic spectra
\[
\xymatrix@C+1pc@R-1.25pc{%
WTHH^{\Gamma}(\aC)\ar[d]&
\wt{WTHH}^{\Gamma}(\aC)\ar[d]\\
W'THH^{\Gamma}(\aC)\ar@{<-}[d]_-{\sim}&
\wt{W'THH}^{\Gamma}(\aC)\ar@{<-}[d]_-{\sim}\\
W'THH^{\Gamma}(\phi_{*}\aC)\ar[d]&
\wt{W'THH}^{\Gamma}(\phi_{*}\aC)\ar[d]\\
W'THH^{\Gamma}(\aD)\ar@{<-}[d]&
\wt{W'THH}^{\Gamma}(\aD)\ar@{<-}[d]\\
WTHH^{\Gamma}(\aD)&
\wt{WTHH}^{\Gamma}(\aD)
}
\]
and the upward maps marked ``$\sim$'' are weak equivalences.  
If $\aD_0$ admits FFWC, then all upward maps are weak equivalences. 
\end{thm}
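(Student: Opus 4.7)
The plan is to realize each column of the diagram by applying $THH$ and geometric realization to the zigzag of multisimplicial spectrally enriched functors constructed just before the theorem statement,
\[
w\subdot\Sndot\aC^{\Gamma}\to
\phi^{*}(w\subdot\Spndot)^{M}\aD^{\Gamma}_{\aC}
\overfrom{\sim}
\phi^{*}(w\subdot\Spndot)^{M}\aD^{\Gamma}\to
(w\subdot\Spndot)^{M}\aD^{\Gamma},
\]
augmented by the length-zero inclusion $w\subdot\Sndot\aD^{\Gamma}\to (w\subdot\Spndot)^{M}\aD^{\Gamma}$.  Reading off the definitions identifies the resulting maps with the left column of the diagram, and the same construction applied to the iterated $\Sdot^{(n)}$ version produces the right column; compatibility with the suspension assembly maps relies on $\phi$ being based.

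The marked ``$\sim$'' upward map $W'THH^{\Gamma}(\phi^{*}\aD)\to W'THH^{\Gamma}_{\aC}(\phi^{*}\aD)$ is a weak equivalence by the multisimplicial analog of Theorem~\ref{thm:swefunct}: at each multisimplicial level $(p,q_{1},\dotsc,q_{n})$ and each pair of objects $(a,b)$, the comparison map is the prolongation of the $\Gamma$-space map arising from the universal property of the coproduct in $\aD$ together with the weak equivalences $\phi(a_{1})\vee\dotsb\vee \phi(a_{k})\to \phi(a_{1}\vee\dotsb\vee a_{k})$ supplied by weak exactness.  This gives a DK-equivalence of spectral categories at each multisimplicial level; since $THH$ sends DK-equivalences to weak equivalences of cyclotomic spectra, applying $THH$ produces a levelwise weak equivalence of multisimplicial cyclotomic spectra, and geometric realization delivers the weak equivalence on $W'THH$ (and similarly on $\wt{W'THH}$, working in each symmetric-spectrum level).

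The bottom upward map $WTHH^{\Gamma}(\aD)\to W'THH^{\Gamma}(\aD)$ is induced by the length-zero inclusion $w_{p}\Sndot\aD^{\Gamma}\to (w_{p}\Spndot)^{M}\aD^{\Gamma}$, which factors through $w_{p}^{M}\Sndot\aD^{\Gamma}$.  Proposition~\ref{propmoorecof} shows the first factor is always a DK-equivalence, and Proposition~\ref{prop:zeroinclude} shows the second is a DK-equivalence precisely when $\aD_{0}$ admits FFWC.  Under FFWC we therefore obtain DK-equivalences at each multisimplicial level; once more, $THH$ and realization convert these to weak equivalences of cyclotomic spectra.

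The only subtlety is verifying that geometric realization in the multisimplicial directions preserves levelwise weak equivalences of cyclotomic spectra.  This reduces via Proposition~\ref{prop:realok} to showing that the simplicial spectral categories in play have degeneracies that are Hurewicz cofibrations on the underlying non-degenerately based mapping spaces; this is routine for $\phi^{*}(w\subdot\Spndot)^{M}\aD^{\Gamma}_{\aC}$, $\phi^{*}(w\subdot\Spndot)^{M}\aD^{\Gamma}$, and $(w\subdot\Spndot)^{M}\aD^{\Gamma}$ because the McClure--Smith Moore Tot construction produces mapping spaces whose degeneracies are inclusions of subcomplexes of products, and because the $\Gamma$-space prolongation preserves non-degenerate basepoints.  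This bookkeeping is the main technical obstacle, but no new ideas beyond Proposition~\ref{prop:realok} are needed.
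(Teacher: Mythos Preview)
Your proposal is correct and follows essentially the same approach as the paper.  The paper's own argument is the paragraph immediately preceding the theorem statement: it extends the DK-equivalence of Theorem~\ref{thm:swefunct} to the multisimplicial level, assembles the zigzag $w\subdot\Sndot\aC^{\Gamma}\to \phi^{*}(w\subdot\Spndot)^{M}\aD^{\Gamma}_{\aC}\overfrom{\sim}\phi^{*}(w\subdot\Spndot)^{M}\aD^{\Gamma}\to (w\subdot\Spndot)^{M}\aD^{\Gamma}$, notes compatibility with the suspension maps via basedness, and concludes with ``this proves the following theorem''; your explicit invocations of Proposition~\ref{prop:zeroinclude} for the FFWC case and Proposition~\ref{prop:realok} for realization are exactly the ingredients the paper is tacitly relying on.
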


For Theorem~\ref{thmwkexact}, we have the cyclotomic trace induced by
the inclusion of objects, producing the commutative diagram
\[
\xymatrix@R-1.25pc{%
K(\aC)\ar[r]\ar[dr]\ar[ddr]\ar[ddd]&
\wt{WTC}^{\Gamma}(\aC)\ar[d]\ar[r]
&\wt{WTHH}^{\Gamma}(\aC)\ar[d]\\
&\wt{W'TC}^{\Gamma}(\aC)\ar@{<-}[d]_-{\sim}\ar[r]
&\wt{W'THH}^{\Gamma}(\aC)\ar@{<-}[d]_-{\sim}\\
&\wt{W'TC}^{\Gamma}(\phi_{*}\aC)\ar[d]\ar[r]
&\wt{W'THH}^{\Gamma}(\phi_{*}\aC)\ar[d]\\
K'(\aD)\ar[r]
&\wt{W'THH}^{\Gamma}(\aD)\ar@{<-}[d]_-{\sim}\ar[r]
&\wt{W'THH}^{\Gamma}(\aD)\ar@{<-}[d]_-{\sim}\\
K(\aD)\ar[r]\ar[u]^-{\sim}
&\wt{WTC}^{\Gamma}(\aD)\ar[r]
&\wt{WTHH}^{\Gamma}(\aD)
}
\]
where here $K'(\aD)$ denotes $K$-theory constructed from the $\Spdot$
construction.  
This completes the proof the Theorem~\ref{thmwkexact}.

Theorem~\ref{thmwkexactnc} is entirely similar, using the map 
\[
\tau_{n}\colon \Sigma^{n} \phi(c)\to \phi(\Sigma^{n}c)
\]
in place of $\sigma_{(n)}$ above.  We can see that $\tau_{1}$ is a
weak equivalence by writing the suspension as a homotopy pushout, and
$\tau_{n}$ is a weak equivalence since $\tau_{n}=\tau_{1}\circ
\cdots \circ \tau_{1}$.  Given enhanced simplicially enriched Waldhausen
categories $\aA$ and $\aB$ with ambient simplicially tensored Waldhausen categories
$\aC$ and $\aD$, respectively, and
$\phi \colon \aC\to \aD$ a functor that is simplicially
enriched, based weakly exact, and restricts to a functor $\aA\to \aB$,
we then define $\phi_{*}\aA^{S}$ as the spectrally enriched category
whose set of objects is the same as $\aA$ and whose spectrum of maps
$\phi_{*}\aA^{S}(a,b)$ is defined by letting
\[
\phi_{*}\aA^{S}(a,b)(n)\subset |\aA^{S}(a,b)(n)|\times |\aB^{S}(\phi(a),\phi(b))(n)|\times \bR\times |\aB(\phi(a),\phi(\Sigma^{n}b))|^{I}
\]
be the subspace of elements $(f_{0},f_{1},s,f_{0,1})$ where $f_{0,1}$
is a length $s\geq 0$ homotopy from $\phi(f_{0})$ to $\tau_{n}\circ
f_{1}$.  For a diagram $D$, we define $\phi_{*}(D^{M}\aA)^{S}$ likewise.
This gives us the non-connective analogue of
Theorem~\ref{thm:swefunct}. 

\begin{thm}\label{thm:swefunctS}
Let $\aA$ and $\aB$ be enhanced simplicially enriched Waldhausen
categories with ambient simplicially tensored Waldhausen categories
$\aC$ and $\aD$, respectively, and
let $\phi \colon \aC\to \aD$ be a functor that is simplicially
enriched, based weakly exact, and restricts to a functor $\aA\to \aB$.
Then we have a zigzag of spectrally 
enriched functors, with the leftward arrow a DK-equivalence.
\[
\aA^{S} \overfrom{\sim} \phi_{*}\aA^{S}
\to \aB^{S}
\]
\end{thm}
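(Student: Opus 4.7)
The plan is to follow exactly the template of Theorem~\ref{thm:swefunct}, substituting suspensions for coproducts throughout, and to fill in the two things the preceding paragraph gestures at: the construction of the natural transformation $\Sigma^{n}\phi(b)\to \phi(\Sigma^{n} b)$ compatible with the spectral category structure, and the verification that it is a level equivalence on mapping symmetric spectra.

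First, I would construct the three spectrally enriched functors. The leftmost functor $\aA^{S}\to \phi^{*}\aB^{S}_{\aC}$ is the identity on objects and, at level $n$, is the map on mapping spaces $\aC(a,\Sigma^{n}b)\to \aD(\phi(a),\phi(\Sigma^{n}b))$ induced by the simplicial enrichment of $\phi$; composition and symmetric group actions are preserved by naturality of $\phi$. The rightmost functor $\phi^{*}\aB^{S}\to \aB^{S}$ sends $a$ to $\phi(a)$ and is the identity on mapping spectra by definition. The middle functor $\phi^{*}\aB^{S}\to \phi^{*}\aB^{S}_{\aC}$ is the identity on objects and is induced on mapping spectra by postcomposition with a canonical natural map $\iota_{n}\colon \Sigma^{n}\phi(b)\to \phi(\Sigma^{n}b)$.

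Second, I would construct $\iota_{n}$. Since $\phi$ is simplicially enriched, for any object $x$ of $\aC$ and finite simplicial set $K$, the identity map on $\phi(x\otimes K)$ corresponds under the tensor adjunction to a simplicial map $K\to \aC(x,x\otimes K)$, which we postcompose with $\phi$ to produce $K\to \aD(\phi(x),\phi(x\otimes K))$ and hence a natural map $\phi(x)\otimes K\to \phi(x\otimes K)$ in $\aD$. Taking $K=\Delta[1]^{n}$ with its equivariant structure and using that $\phi$ is based (so $\phi$ sends the basepoint inclusions to basepoint inclusions) gives a $\Sigma_{n}$-equivariant map on cofibers $\iota_{n}\colon \Sigma^{n}\phi(b)\to \phi(\Sigma^{n}b)$. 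The compatibility of these maps under the prism decomposition $\Delta[1]^{m}\times \Delta[1]^{n}\iso \Delta[1]^{m+n}$ shows that the resulting levelwise map on mapping symmetric spectra commutes with the multiplication maps, and a similar diagram chase verifies compatibility with composition; this gives the middle spectrally enriched functor.

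Third, I would prove that $\iota_{n}$ induces a weak equivalence $\phi^{*}\aB^{S}(a,b)\to \phi^{*}\aB^{S}_{\aC}(a,b)$. Both symmetric spectra are semistable: for $\phi^{*}\aB^{S}$ this follows directly from Proposition~\ref{propsemistableS} applied to $\aB\subset \aD$; for $\phi^{*}\aB^{S}_{\aC}$ the argument of \cite[3.2]{SchwedeSymmHom} applies once we note that even permutations of $\Sigma^{n}b$ act through maps in $\aC$ homotopic to the identity (by Proposition~\ref{propcswprop}.(i) applied to $\aC$), hence apply $\phi$ to homotopies in $\aD$ and so act homotopically trivially on $\aD(\phi(a),\phi(\Sigma^{n}b))$. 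It therefore suffices to show that $\iota_{n}$ is a weak equivalence for each $n$, which reduces to proving that the canonical map $\phi(b)\otimes K\to \phi(b\otimes K)$ is a weak equivalence when $K$ is a finite based cell complex. This I would prove by induction on the cells, the inductive step being a pushout square along a cofibration $\phi(b)\otimes \partial\Delta[m]\to \phi(b)\otimes \Delta[m]$ whose image under $\phi$ is homotopy cocartesian (by weak exactness), together with the fact that $\phi(b)\otimes \Delta[m]$ is contractible and $\phi(b\otimes \Delta[m])$ is weakly equivalent to $\phi(b)$ because $b\otimes \Delta[m]\to b$ is a weak equivalence in $\aC$ (Proposition~\ref{propcswprop}.(ii)) and $\phi$ preserves weak equivalences.

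The main obstacle will be the third step, specifically the inductive verification that $\phi(b)\otimes K\to \phi(b\otimes K)$ is a weak equivalence: we must arrange the inductive pushout squares so that weak exactness of $\phi$ actually applies, which requires ensuring that the maps being pushed out are (weak) cofibrations in the ambient simplicially tensored category $\aC$ — this is guaranteed by the pushout-product axiom in Definition~\ref{deftenswaldcat}. Once that is in hand, the zigzag, its naturality, and the DK-equivalence claim follow as in the connective case.
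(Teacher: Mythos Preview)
Your proposal is correct and follows essentially the same approach as the paper: use the simplicial enrichment of $\phi$ to construct the comparison map $\Sigma^{n}\phi(b)\to\phi(\Sigma^{n}b)$ and then invoke weak exactness to show it is a weak equivalence. The paper's argument is a bit more direct---since $\Sigma b$ is the homotopy pushout of $*\leftarrow b\to *$ and $\phi$ is based and preserves homotopy cocartesian squares, one gets $\phi(\Sigma b)\simeq\Sigma\phi(b)$ immediately and iterates---and your semistability discussion is unnecessary, as a levelwise weak equivalence of symmetric spectra is already a stable equivalence.
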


Writing
\begin{align*}
W'THH(\phi_{*}\aA)&=\Omega |THH(\phi_{*}\SpMdot \aA^{S})|\\
\wt{W'THH}(\phi_{*}\aA)(n)&=|THH(\phi_{*}(w\Spndot)^{M}\aA^{S})|
\end{align*}
we obtain non-connective analogue of Theorem~\ref{thmwkzig}.

\begin{thm}
Let $\aA$ and $\aB$ be enhanced simplicially enriched Waldhausen
categories with ambient simplicially tensored Waldhausen categories
$\aC$ and $\aD$, respectively, and
let $\phi \colon \aC\to \aD$ be a functor that is simplicially
enriched, based weakly exact, and restricts to a functor $\aA\to \aB$.
Then we have the following maps of cyclotomic spectra
\[
\xymatrix@C+1pc@R-1.25pc{%
WTHH(\aA)\ar[d]&
\wt{WTHH}(\aA)\ar[d]\\
W'THH(\aA)\ar@{<-}[d]_-{\sim}&
\wt{W'THH}(\aA)\ar@{<-}[d]_-{\sim}\\
W'THH(\phi_{*}\aA)\ar[d]&
\wt{W'THH}(\phi_{*}\aA)\ar[d]\\
W'THH(\aB)\ar@{<-}_-{\sim}[d]&
\wt{W'THH}(\aB)\ar@{<-}_-{\sim}[d]\\
WTHH(\aB)&
\wt{WTHH}(\aB)
}
\]
and the upward maps are weak equivalences.  
\end{thm}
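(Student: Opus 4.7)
The plan is to mirror the argument for Theorem~\ref{thmwkzig} but carry it out with the non-connective enrichment, leveraging the fact that the $\SpMdot$ construction, together with the Moore enrichment, is compatible with weakly exact functors in exactly the same way as in the connective case.

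First, I would apply Theorem~\ref{thm:swefunctS} levelwise to the multisimplicial spectral categories $(w\subdot \Spndot)^M \aX^S$ for $\aX = \aA, \phi^*\aB_{\aA}, \phi^*\aB, \aB$. The key observation is that the objectwise suspensions $\Sigma^n$ used to define the non-connective enrichment are preserved by $\phi$ up to a weak equivalence $\Sigma^n \phi(b) \to \phi(\Sigma^n b)$ (here using that $\phi$ is enriched and preserves homotopy cocartesian squares, since each suspension is built as a homotopy pushout of the form $x \otimes \partial\Delta[1]\to x\otimes \Delta[1]$). This yields a zigzag of multisimplicial spectrally enriched functors
\[
w\subdot\Sndot\aA^{S}\to
\phi^{*}(w\subdot\Spndot)^{M}\aB^{S}_{\aA}
\overfrom{\sim}
\phi^{*}(w\subdot\Spndot)^{M}\aB^{S}\to
(w\subdot\Spndot)^{M}\aB^{S},
\]
in which the backwards arrow is a DK-equivalence at each multi-simplicial level. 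Because $\phi$ is based, these maps respect the suspension maps that assemble the $\wt{W'THH}$ constructions into symmetric spectra.

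Next, I would apply $THH$ and take geometric realization in all simplicial directions. Since $THH$ preserves DK-equivalences of spectral categories and geometric realization preserves levelwise weak equivalences of spacewise proper simplicial spectra (which is verified as in Proposition~\ref{prop:realok} applied to the Moore mapping spectra), the backward DK-equivalences induce the weak equivalences labelled ``$\sim$'' in the diagram. The remaining downward arrows in the diagram — the comparisons $WTHH(\aC)\to W'THH_{\aC}(\phi^*\aD)$ and $W'THH(\aD)\to WTHH(\aD)$, and their $\wt{\phantom{W}}$ analogues — are constructed from the objectwise comparison between $\Sdot$ and $\Spdot$ along with the Moore-enrichment comparison from Proposition~\ref{prop:zeroinclude}, all applied with the non-connective enrichment.

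The last step, and the only place where FFWC enters, is to upgrade the remaining downward maps to weak equivalences under that hypothesis. Here I would invoke the non-connective version of Proposition~\ref{prop:zeroinclude}: the inclusion of $w_p^M \Sndot\aA^S$ into $(w_p\Spndot)^M \aA^S$ is always a DK-embedding, and a DK-equivalence precisely when the ambient Waldhausen category admits FFWC (the enhanced simplicially enriched setting always does, which is Proposition~5.6 in the excerpt and handles $\aC$, while for $\aD$ the assumption is imposed explicitly). The main obstacle is a bookkeeping one rather than a conceptual one: one must check that the zigzag of functors assembles properly into a symmetric spectrum of cyclotomic spectra with respect to the suspension structure, since the non-connective enrichment is itself built from objectwise suspensions and one must not conflate the two roles of $\Sigma$. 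Once this compatibility is verified, the diagram follows from the levelwise results combined with the standard homotopy invariance of realization.
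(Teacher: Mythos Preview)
Your proposal is correct and follows essentially the same approach as the paper, which treats this theorem as the direct non-connective analogue of Theorem~\ref{thmwkzig}: one replaces the $\Gamma$-enrichment by the $S$-enrichment throughout, invokes Theorem~\ref{thm:swefunctS} in place of Theorem~\ref{thm:swefunct} to obtain the levelwise zigzag of spectral functors, and then applies $THH$ and geometric realization. One small remark: since $\aD$ is by hypothesis a simplicially tensored Waldhausen category, it is itself enhanced and therefore automatically admits FFWC, so the final clause of the theorem is in fact vacuous in this setting (the paper retains it only for formal parallelism with the connective statement).
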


Finally for Theorem~\ref{thmwknat}, choosing a natural weak
equivalence from $\phi$ to $\phi'$, we obtain a simplicially enriched
and weakly exact functor $\Phi$ from $\aC$ to $w_{1}\aD$.  We obtain
the zigzag 
\[
THH(\SpMdot\aC^{\Gamma})\overfrom{\sim}
THH(\Phi_{*}(\SpMdot\aC)^{\Gamma})\to 
THH(((\Spdot w_{1})^{M}\aD)^{\Gamma})
\overfrom{\sim}
THH(\SpMdot\aD^{\Gamma}),
\]
and a similar zigzag in the non-connective case (when it applies).

\section{Embedding in simplicially tensored {W}aldhausen
  categories}\label{futuresec}

In previous sections we worked under the stringent compatibility
hypotheses in our definition of a simplicially enriched Waldhausen
category.  In this section, we show how to produce a DK-compatible
simplicially enriched Waldhausen category from a Waldhausen
category satisfying a certain technical hypothesis.

\begin{defn}\label{def:hclfw}
An \term{HCLF Waldhausen category} is a Waldhausen category that
admits a homotopy calculus of left fractions (HCLF) as defined in
\cite[6.1.(ii)]{DKHammock}.
\end{defn}

We prove the following theorem.

\begin{thm}\label{thmwrcsv}
Let $\aC$ be an HCLF Waldhausen category.  Then there exists a
DK-compatible simplicially enriched Waldhausen category $\wrcsv$ and a
based weakly exact functor $\wri\colon \aC\to \wrcsv$ that is a
DK-equivalence (on simplicial localizations).  Moreover:
\begin{enumerate}
\item  $WTHH^{\Gamma}(\wrcsv)$ is a functor from the category of
HCLF Waldhausen categories and weakly exact maps to the homotopy
category of cyclotomic spectra.
\item As a map in the stable category, $K(\aC)\to K(\wrcsv)$ is
natural in exact functors of $\aC$.
\item As a map in the stable category, the cyclotomic trace $K(\wrcsv)\to
WTHH^{\Gamma}(\wrcsv)$ is natural in weakly exact functors of $\aC$.
\item $\wrcsv$ admits FFWC.
\item If $\aC$ is a DK-compatible simplicially enriched Waldhausen
category then $\wri$ is naturally 
weakly equivalent to a simplicially enriched functor $\wri'$, which is
also based weakly exact. 
\item If $\aC$ can be given the structure of an enhanced simplicially
enriched Waldhausen category, then $\wri'$ induces DK-equivalence
$\Sdot[n]\aC\to \SpMdot[n]\wrcsv$ for all $n$ and so induces a weak
equivalence on $WTHH^{\Gamma}$.
\end{enumerate}
\end{thm}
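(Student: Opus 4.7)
The plan is to construct $\wrcsv$ as a Waldhausen subcategory of a simplicial model category built from $\aC$. Following the philosophy of Section~\ref{secdefwald} and Example~\ref{exofinterest}, I would use the category of pointed simplicial presheaves on a small skeleton of $\aC$, equipped with a projective-type model structure Bousfield-localized so that the Yoneda image of a weak equivalence in $\aC$ becomes a weak equivalence. Taking $\wrcsv$ to be the closed Waldhausen subcategory of cofibrant objects weakly equivalent to cellular approximations of Yoneda images then gives a simplicially tensored Waldhausen category. By Theorem~\ref{thmdkdiag} this is DK-compatible, and it admits FFWC via the mapping cylinder from the simplicial tensor (establishing (iv)). The functor $\wri\colon \aC\to \wrcsv$ is the composite of Yoneda with a pointed cellular replacement.

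The most delicate step is verifying that $\wri$ is based weakly exact and a DK-equivalence on simplicial localizations. The hypothesis that $\aC$ admits a homotopy calculus of left fractions is what ensures the Yoneda image sends cofibrations to weak cofibrations and sends pushouts along cofibrations to homotopy cocartesian squares, making $\wri$ based weakly exact. For the DK-equivalence, I would use the concrete model of $L\aC(a,b)$ as the nerve of the category of words $\Wi\C$ available under homotopy calculus of left fractions, and compare it with the natural simplicial enrichment on $\wrcsv$ by an argument parallel to the proof of Theorem~\ref{thmdkdiag}: factoring through a bisimplicial set built from mapping spaces in $\wrcsv$ and weak equivalences, both inclusions into the diagonal are then seen to be weak equivalences.

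With these foundational properties in hand, statements (i)--(iii) follow by applying the weakly exact functoriality results of Section~\ref{sec:weaklyexact}. Any weakly exact functor $\phi\colon \aC\to \aC'$ induces a simplicially enriched weakly exact functor $\wrcsv\to \wrcsv[\aC']$ by left Kan extension of the associated presheaf functor, and Theorems~\ref{thmwkexact} and~\ref{thmwknat} give the induced maps on $WTHH^{\Gamma}$, their well-definedness up to natural weak equivalence, and compatibility with the cyclotomic trace through the zigzag of Theorem~\ref{thmwkzig}. Statement (v) requires, when $\aC$ is already DK-compatible simplicially enriched, producing a simplicially enriched variant $\wri'$ by framing objects using the existing enrichment rather than only the discrete mapping sets; DK-compatibility of both sides yields a natural zigzag of weak equivalences between $\wri$ and $\wri'$, and the weak exactness of $\wri'$ is inherited from $\wri$. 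For (vi), when $\aC$ is enhanced, $\wri'$ on objects extends to the $\Sdot$-construction; Proposition~\ref{prop:zeroinclude} identifies $|\Sdot[n]\aC|$ as the length-zero subcategory of $\SMdot[n]\aC$, and comparison with $\SpMdot[n]\wrcsv$ via the analogue of Proposition~\ref{propapprox} in the weakly exact setting (using $\wri'$ a DK-equivalence and FFWC on $\wrcsv$) then gives a DK-equivalence $\Sdot[n]\aC\to \SpMdot[n]\wrcsv$, which yields the weak equivalence on $WTHH^{\Gamma}$ after applying $THH$ and geometric realization.

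The main obstacle is carefully setting up the localized simplicial model structure on the presheaf category and checking the DK-equivalence, which hinges crucially on the homotopy calculus of left fractions hypothesis; without it, the Yoneda-style embedding fails to preserve homotopy pushouts along weak cofibrations, and the comparison between the simplicial mapping spaces and the hammock localization breaks down.
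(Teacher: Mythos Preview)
Your overall strategy matches the paper's: embed $\aC$ in a simplicial model category via a Yoneda-type functor, take a subcategory of representable objects, and use left Kan extension for naturality. But the specific model you choose has a genuine gap. You take cofibrant objects in a Bousfield-localized \emph{projective} model structure on presheaves over $\aC$. In such a model structure not every object is fibrant (fibrant means local and objectwise Kan), so the mapping simplicial sets between your cofibrant objects need not be Kan complexes. Condition~(iii) of Definition~\ref{defsimpwaldcat} then fails, and you cannot invoke Example~\ref{exofinterest} or Theorem~\ref{thmdkdiag} to conclude that $\wrcsv$ is a DK-compatible simplicially enriched Waldhausen category. Restricting further to bifibrant objects would repair the mapping-space issue but would destroy closure under pushouts along cofibrations and under finite tensors, so the Waldhausen and simplicially tensored structures would be lost.

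The paper resolves this with two choices you do not make. First, it works with simplicial functors on the hammock localization $L\aC$ rather than on $\aC$ itself, so no Bousfield localization is needed; the weak equivalences of $\aC$ are already inverted at the level of the indexing category, and the DK-equivalence of $\wri$ is essentially the enriched Yoneda lemma together with \cite[6.2]{BlumbergMandellUW} (which uses the HCLF hypothesis to show $L\aC(x,-)$ takes homotopy cocartesian squares to homotopy cartesian ones, giving weak exactness). Second, and more to the point of your gap, it uses the \emph{injective} model structure and then passes to the \emph{opposite} category $\csvop$: injectively every object is cofibrant, so in the opposite every object is fibrant, and $\wrcsv$ is then the cofibrant objects in $\csvop$ weakly equivalent to corepresentables. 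This is precisely the setting of Example~\ref{exofinterest}. The based version of Yoneda is obtained by quotienting out $L\aC(*,-)$, and the simplicially enriched variant $\wri'$ in part~(v) uses $\diag L\aC_{\bullet}(x,-)$ in place of $L\aC_0(x,-)$, with DK-compatibility supplying the comparison. Your sketches for parts~(i)--(iii), (v), and (vi) are otherwise in the right direction and line up with the paper's arguments once the underlying model is corrected.
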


In the context of part~(v), Theorem~\ref{thm:swefunct} gives a
zigzag of spectrally enriched functors relating $\aC^{\Gamma}$ and
$\wrcsv{}^{\Gamma}$, all of which are DK-equivalences in this case.

As we showed in \cite[\S5, App. A]{BlumbergMandellUW}, a Waldhausen
category that admits factorization (every map factors as a cofibration
followed by a weak equivalence) and any closed Waldhausen subcategory
of such a category in particular admits a homotopy calculus of left
fractions.  In this context, we can also produce an enhanced exact
Waldhausen category.

\begin{thm}\label{thmswrcsv}
Let $\aC$ be a Waldhausen category that admits factorization and let
$\aA$ be a closed Waldhausen  subcategory.  Let $\swrcsv$ be the full subcategory
of $\wrcsv$ of objects weakly equivalent to objects from $\aA$.  Then
$\wrcsv$ is a simplicially tensored Waldhausen category, $\swrcsv$ is a
closed Waldhausen subcategory, and the induced based weakly
exact functor $\swri\colon \aA\to \swrcsv$ is a  DK-equivalence.
Moreover:
\begin{enumerate}
\item  $WTHH(\swrcsv)$ is a functor from the category of
pairs (Waldhausen category, closed Waldhausen subcategory) and weakly
exact maps to the homotopy category of 
cyclotomic spectra.
\item There is a based weakly exact functor $\swrj\colon \swrcsv\to \wrcsv[\aA]$
such that $\swrj\circ \swri$ is naturally weakly equivalent to
$\wri$. (In particular, $\swrj$ is a DK-equivalence.)
\item The map  of 
cyclotomic spectra $WTHH^{\Gamma}(\swrcsv)\to
WTHH^{\Gamma}(\wrcsv[\aA])$ induced by $\swrj$ is a weak equivalence and
natural in the homotopy category of cyclotomic spectra.
\end{enumerate}
\end{thm}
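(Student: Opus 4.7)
The plan is to bootstrap from Theorem~\ref{thmwrcsv}, applied both to $\aC$ and to $\aA$ separately, and then build the comparison functor $\swrj$ by hand using the factorization hypothesis. Since $\aC$ admits factorization, the results of \cite[\S5, App.~A]{BlumbergMandellUW} give that $\aC$ (and also $\aA$, which is closed) admits a homotopy calculus of left fractions, so Theorem~\ref{thmwrcsv} yields the DK-compatible simplicially enriched Waldhausen categories $\wrcsv$ and $\wrcsv[\aA]$ with the based weakly exact DK-equivalences $\wri$ and $\wri[\aA]$, each admitting FFWC. Under the factorization hypothesis on $\aC$, the construction of $\wrcsv$ indicated in the discussion of the introduction produces in addition a simplicial model structure with all objects fibrant, so $\wrcsv$ is simplicially tensored; $\swrcsv$ is then by definition closed under weak equivalences in $\wrcsv$, hence is an enhanced simplicially enriched Waldhausen category in the sense of Definition~\ref{defusuwaldcat}.

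Next I would check that $\swri \colon \aA \to \swrcsv$ is a based weakly exact DK-equivalence. Exactness up to weak equivalence and the DK-embedding property are inherited from $\wri$ by restricting to the full subcategory of objects weakly equivalent to images of $\aA$-objects; essential surjectivity onto $\pi_{0}$ is automatic by the definition of $\swrcsv$. Claim~(i), functoriality of $WTHH(\swrcsv)$ on pairs and weakly exact maps, then follows by combining the functoriality of the assignment $\aC \mapsto \wrcsv$ (part~(i) of Theorem~\ref{thmwrcsv}) with the observation that a weakly exact functor of pairs preserves the condition of being weakly equivalent to an object of the chosen subcategory, hence induces an enhanced weakly exact functor on the $\swrcsv$ constructions. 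Theorem~\ref{thmwkexactnc} then upgrades this to the homotopy category of cyclotomic spectra for the non-connective $WTHH$.

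The heart of the argument, and the main obstacle, is constructing $\swrj \colon \swrcsv \to \wrcsv[\aA]$ as claimed in~(ii). My plan is to use the functorial factorization in $\wrcsv$: given an object $x$ of $\swrcsv$, there exists by definition an object $a_{x}$ of $\aA$ and a weak equivalence $a_{x} \to x$; using that $\wri$ is a DK-equivalence together with the homotopy calculus of left fractions, such a weak equivalence can be represented by a zigzag in $\aA$ plus a weak equivalence in $\wrcsv$, and after applying functorial factorization (FFWC, part~(iv) of Theorem~\ref{thmwrcsv}) one obtains a functorial cofibrant replacement landing in $\aA$. Precomposing with $\wri[\aA]$ then defines $\swrj(x)$; for $x = \swri(a)$ already coming from $\aA$, the replacement is canonically weakly equivalent to $a$ itself, giving the natural weak equivalence $\swrj \circ \swri \simeq \wri[\aA]$ on $\aA$. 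Verifying that this construction gives an honest based weakly exact functor (not merely a pseudofunctor), with all coherences provided by the functorial factorizations, is the most delicate step; part~(v) of Theorem~\ref{thmwrcsv} is exactly designed to finesse the remaining naturality issue by allowing us to work up to a natural weak equivalence.

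Finally, claim~(iii) follows formally: by construction $\swrj$ is a DK-equivalence of ambient simplicially tensored Waldhausen categories and a DK-equivalence $\swrcsv \to \wrcsv[\aA]$, so Proposition~\ref{propapprox} shows that $\swrj$ induces a weak equivalence on $WTHH^{\Gamma}$. Naturality in the homotopy category of cyclotomic spectra is obtained by combining the naturality statement of Theorem~\ref{thmwrcsv} with Theorem~\ref{thmwknat}: the latter ensures that the map induced on $WTHH^{\Gamma}$ by $\swrj$ is determined up to canonical isomorphism in the stable category by any natural weak equivalence between $\swrj \circ \swri$ and $\wri[\aA]$, and hence transports cleanly across weakly exact maps of pairs.
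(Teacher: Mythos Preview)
Your proposal has a genuine gap in the construction of $\swrj$, which is the heart of the theorem.  You propose to define $\swrj(x)$ by first \emph{choosing} an object $a_{x}\in\aA$ with a weak equivalence $a_{x}\to x$, then applying functorial factorization, then applying $\wri_{\aA}$.  But the choice of $a_{x}$ is not canonical and cannot be made into a functor of $x$; FFWC factors a \emph{given} morphism, it does not select an approximating object out of thin air.  Your own parenthetical (``the most delicate step'') flags exactly this, but nothing in the plan resolves it, and invoking part~(v) of Theorem~\ref{thmwrcsv} does not help: that statement produces a simplicial enrichment of a functor you already have, it does not manufacture a functor from pointwise choices.

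The paper avoids this entirely by exploiting the explicit presheaf model of $\wrcsv$.  By Definitions~\ref{defcosheaves}--\ref{defcosheaves2}, an object of $\wrcsv$ is (the opposite of) a simplicial functor $L\aC\to \simps$ in the injective model structure.  The functor $\swrj$ is then defined as the composite $I^{f}_{\aA}\circ R^{\aA}_{\aC}$, where $R^{\aA}_{\aC}$ is restriction of diagrams along $L\aA\subset L\aC$ and $I^{f}_{\aA}$ is the simplicial fibrant replacement of Lemma~\ref{lem:cosheaves}.  This is manifestly a simplicial functor, no choices required.  The natural weak equivalence $\swrj\circ\swri\simeq \wri$ then boils down to the concrete statement that the map $Y'_{\aA}\to R^{\aA}_{\aC}Y'_{\aC}$ (comparing $L\aA(a,-)$ with $L\aC(a,-)|_{L\aA}$) is a weak equivalence, which is exactly what the homotopy calculus of left fractions provides.

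A smaller issue: for part~(iii) you cite Proposition~\ref{propapprox}, but that proposition concerns the non-connective $WTHH$ and requires an enhanced exact functor on the ambient categories, whereas the claim is about $WTHH^{\Gamma}$ and $\swrj$ is only based weakly exact.  The correct route is through the $\SpMdot$ machinery of Section~\ref{sec:spMdot}: since $\swrj$ is a simplicially enriched based weakly exact DK-equivalence between categories admitting FFWC, the argument of Proposition~\ref{propfuncthree} (adapted via Theorem~\ref{thmwkzig}) shows it induces DK-equivalences on each $\SpMdot[n]$ and hence a weak equivalence on $WTHH^{\Gamma}$.
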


In the context of the previous theorem, when $\aC$ is a simplicially
tensored Waldhausen category, $\aA$ is an enhanced simplicially
enriched Waldhausen category, and part~(v) of
Theorem~\ref{thmwrcsv} gives us a based weakly exact simplicially
enriched functor $\swri'\colon \aA\to \swrcsv$, weakly equivalent to
$\swri$; namely, $\swri'$ is the restriction to $\aA$ of $\wri'\colon \aC\to
\wrcsv$.  Theorem~\ref{thm:swefunctS} then produces a zigzag of
spectrally enriched functors between $\aA^{S}$ and $\swrcsv{}^{S}$,
all of which are DK-equivalences in this case.

The proof of the previous theorems works by embedding $\aC$ in a
simplicial model category in which all objects are fibrant.  We do
this using a variant of a presheaf construction in To\"en and Vezzosi
\cite{ToenVezzosi} to define the $K$-theory of a simplicial category.
In the following discussion, let $L\aC$ denote the simplicial category
obtained as the Dwyer-Kan hammock simplicial localization of $\aC$
with respect to the weak equivalences in the given Waldhausen
structure.

\begin{defn}\label{defcosheaves}
Let $\csv[\aC]$ denote the category of simplicial functors from $L\aC$ to
based simplicial sets taking values in a fixed but sufficiently large
cardinal depending on $\aC$.  We regard $\csv[\aC]$ as a
simplicial model category using the injective model structure
\cite{Heller}, where cofibrations and weak equivalences are defined
objectwise and fibrations are defined by the right-lifting property
with respect to the acyclic cofibrations; in this model structure, all
objects are cofibrant.  The opposite category $\csvop[\aC]$ then has the
opposite simplicial model structure and all objects are fibrant.
\end{defn}

Since the cofibrations in $\csv[\aC]$ are the injections, it is clear
that $\csv[\aC]$ satisfies the pushout-product axiom, which is one of
the equivalent forms of Quillen's SM7; in other words, $\csv[\aC]$ is
a simplicial model category.  It follows that $\csvop[\aC]$ is
likewise a simplicial model category.  Heller \cite[\S4]{Heller} shows
that the injective model structure has functorial factorizations, and
in particular, we have a fibrant replacement functor in $\csv[\aC]$.
In $\csvop[\aC]$, this gives functorial factorization and a cofibrant
approximation functor.  It will be useful for us to have these as
simplicial functors and to preserve the zero object $*$.  We prove the
following lemma at the end of the section.

\begin{lem}\label{lem:cosheaves}
The category $\csv$ admits simplicial endo-functors $P^{c}$ and
$I^{f}$ such  that $P^{c}$ is a cofibrant approximation functor for
the projective model structure, $I^{f}$ is a fibrant approximation
functor for the injective model structure, and $P^{c}(*)=*=I^{f}(*)$.
\end{lem}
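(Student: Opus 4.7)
The plan is to construct $P^{c}$ and $I^{f}$ by enriched small object arguments in the projective and injective model structures on $\csv$ respectively, using in each case a set of generating (acyclic) cofibrations that is closed under simplicial tensors; preservation of the zero object will then be automatic. Since $*$ is the initial object of $\csv$, it is cofibrant, so the small object argument applied to the identity $* \to *$ attaches no cells, yielding $P^{c}(*) = *$. Since $*$ is also the terminal object, every lifting problem $A \to * \leftarrow B$ has the unique solution $B \to *$, so the small object argument attaches no cells in constructing a fibrant replacement of $*$ and we obtain $I^{f}(*) = *$ as well.

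For the projective model structure (objectwise weak equivalences and fibrations of pointed simplicial sets), a set of generating cofibrations consists of the maps $F_{a} \sma (\partial \Delta[n]_{+} \hookrightarrow \Delta[n]_{+})$, where for each $a \in L\aC$ the representable $F_{a}$ is the simplicial functor $b \mapsto L\aC(a,b)$ (based at the zero map through~$*$); generating acyclic cofibrations are obtained analogously from horn inclusions. The enriched small object argument produces the desired functorial cofibrant replacement $P^{c}$, and because the generating cofibrations are built from simplicial tensors and the pushout attachments at each stage commute with simplicial tensors, $P^{c}$ is naturally a simplicial functor.

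For the injective model structure, the cofibrations are all monomorphisms of simplicial functors, and a set of generating acyclic cofibrations exists either by Heller's existence theorem or by the general theory of combinatorial model categories (since $\csv$ is locally presentable and, by construction, a simplicial model category). We enlarge this set if necessary to be closed under tensors with the finite based simplicial sets $\Delta[n]_{+}$; this enlargement still consists of monomorphisms (tensoring preserves monomorphisms) and of weak equivalences (by SM7 for the injective model structure), and it generates the same class of injective fibrations. The enriched small object argument against this enlarged set produces the desired simplicial fibrant replacement functor $I^{f}$. The main technical obstacle is precisely this step: ensuring that the small object argument can be carried out enrichedly so that $P^{c}$ and $I^{f}$ are actual simplicial endo-functors rather than merely ordinary ones. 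This is a standard feature of enriched combinatorial model categories, but it does require that the generating sets be chosen (or enlarged) to be closed under simplicial tensors as above, so that the transfinite colimits defining the factorizations commute with the simplicial structure.
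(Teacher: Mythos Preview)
Your overall strategy is sound and close to the paper's, but your stated reason for $P^{c}(*)=*=I^{f}(*)$ is wrong. The small object argument does not check whether lifts exist and refrain from attaching; it always glues on cells indexed by the collection of commuting squares. What actually makes the base-point preservation work is that you are in a \emph{pointed} enriched setting: in the enriched small object argument one attaches $a\sma D_{i}\to b\sma D_{i}$ where $D_{i}$ is the \emph{based} simplicial set of commuting squares, and when the map being factored is $*\to *$ each $D_{i}$ is the zero object, so $a\sma D_{i}=*=b\sma D_{i}$ and nothing is attached. The paper makes exactly this point. Your enlargement of the injective generating set to be closed under tensors is a red herring: closure under tensors does not by itself make the ordinary small object argument simplicially functorial, and the enriched version does not need it (SM7 already ensures that $a\sma D\to b\sma D$ is an acyclic cofibration whenever $a\to b$ is).

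Where your approach genuinely differs from the paper is in the construction of $I^{f}$. You run the enriched small object argument against the abstractly existing injective generating acyclic cofibrations; this is correct but relies on an existence theorem which produces a set one cannot describe. The paper instead exploits the adjoint triple $J_{P}\dashv J^{*}\dashv J^{I}$ between $\simpD$ and $\simpDob=\prod_{\Ob\aD}\simps$: one factors $J^{*}X\to J^{*}Z_{n}$ objectwise using the explicit enriched factorization already built for based simplicial sets, pulls back along $J^{I}$, and iterates. This dual construction never needs to know the injective generators and yields a completely explicit formula. Your approach is more portable (it applies in any pointed combinatorial simplicial model category), while the paper's is more concrete and self-contained.
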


The full subcategory of cofibrant objects in $\csvop[\aC]$ inherits
the structure of a Waldhausen category.

\begin{defn}\label{defcosheaves2}
Let $\wrcsv[\aC]$ be the full subcategory of $\csvop[\aC]$ consisting of 
cofibrant objects weakly equivalent to the opposite of a
corepresentable in the image of $\aC$, i.e., weakly equivalent to a
functor of the form $L\aC(x,-)$, where $x$ is an object of $\aC$.
When $\aA$ is a closed Waldhausen subcategory of $\aC$,
let $\swrcsv$ be the full subcategory of $\csvop[\aC]$
consisting of cofibrant objects weakly equivalent to the opposite of a
corepresentable of an object $\aA$.  
\end{defn}

As observed in Example~\ref{exofinterest}, $\wrcsv$ becomes a
DK-compatible simplicially enriched Waldhausen category when given the
Waldhausen structure induced by the model structure.  The Yoneda
embedding 
\[
Y_{\aC}\colon x \mapsto L\aC(x,-)
\]
gives us a functor $Y_{\aC}$ from $\aC$ to $\csvop[\aC]$ that we can compose
with $I^{f}$ to obtain a functor $\aC\to \wrcsv$.
We showed in
\cite[6.2]{BlumbergMandellUW} that under the hypothesis of homotopy
calculus of left fractions, the simplicial localization mapping spaces
take homotopy cocartesian squares to homotopy cartesian squares, and
hence to homotopy cocartesian squares in $\csvop[\aC]$. It follows
that $I^{f}Y_{\aC}$ is a weakly exact functor $\aC$ to $\wrcsv$ and a
DK-equivalence.  It is not, however, a based weakly exact functor as 
the zero object of $\aC$ is generally not a zero
object in $L\aC$.  On the other hand, $L\aC(*,-)\to L\aC(x,-)$ is an
objectwise injection (as it is split by the map $L\aC(x,-)\to
L\aC(*,-)$), and so the based functor
\[
Y'_{\aC}\colon x \mapsto L\aC(x,-)/L\aC(*,-)
\]
is weakly equivalent to $Y_{\aC}$ and hence is a based weakly exact
functor and DK-equivalence.  This proves the following proposition.

\begin{prop}\label{propcowald}
Let $\aC$ be a Waldhausen category that admits a homotopy calculus of
left fractions.  Then the functor $\wri=I^{f}Y'_{\aC}\colon
\aC\to\wrcsv$  is a based weakly exact functor and a 
DK-equivalence. 
\end{prop}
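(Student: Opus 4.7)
The plan is to verify in turn basedness, weak exactness (three conditions), and the DK-equivalence property of $\wri$, building on the discussion immediately preceding the proposition. Basedness is essentially immediate: $Y'_{\aC}(*) = L\aC(*,-)/L\aC(*,-) = *$, and Lemma~\ref{lem:cosheaves} ensures $I^{f}(*) = *$, so $\wri(*) = *$.

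For weak exactness, preservation of weak equivalences is routine, since $L\aC$ sends them to objectwise equivalences by construction and both the quotient by $L\aC(*,-)$ and the simplicial fibrant replacement $I^{f}$ respect weak equivalences. For homotopy cocartesian squares, the main input is \cite[6.2]{BlumbergMandellUW}: the homotopy calculus of left fractions hypothesis ensures that $L\aC$ converts homotopy cocartesian squares in $\aC$ into homotopy cartesian squares of mapping spaces. Objectwise, this is exactly the statement that $Y_{\aC}$ carries homotopy cocartesian squares in $\aC$ to objectwise homotopy cartesian squares in $\csv[\aC]$, equivalently to homotopy cocartesian squares in $\csvop[\aC]$; neither the quotient by the constant $L\aC(*,-)$ nor $I^{f}$ destroys this. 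Preservation of weak cofibrations is then automatic, since $\wrcsv$ sits inside the simplicial model category $\csvop[\aC]$ and is closed under weak equivalence, so any map factors as a cofibration followed by a weak equivalence.

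For the DK-equivalence, the strategy is to invoke the enriched simplicial Yoneda lemma. I would first establish that the projection $Y_{\aC}(x)\to Y'_{\aC}(x)$ is a natural weak equivalence of simplicial functors, using the splitting that decomposes $Y_{\aC}(x)$ as the wedge of $L\aC(*,-)$ and $Y'_{\aC}(x)$. It then suffices to identify
\[
\csv[\aC](L\aC(y,-),\, I^{f}L\aC(x,-)) \;\cong\; I^{f}L\aC(x,-)(y) \;\simeq\; L\aC(x,y),
\]
using that $L\aC(y,-)$ is injectively cofibrant and $I^{f}L\aC(x,-)$ is injectively fibrant. Translating back to $\csvop[\aC]$ identifies the mapping space $\wrcsv(\wri(x),\wri(y))$ with $L\aC(x,y)$ up to weak equivalence, giving the DK-embedding. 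Essential surjectivity on $\pi_{0}$ is immediate from the definition of $\wrcsv$ as the closure under weak equivalence of the image of $Y_{\aC}$. The main obstacle I anticipate is the natural weak equivalence $Y_{\aC}\simeq Y'_{\aC}$: it requires $L\aC(*,-)$ to be objectwise contractible, a property of the hammock localization of a Waldhausen category with zero object that should follow from the essential uniqueness of maps out of the initial object but likely warrants separate verification.
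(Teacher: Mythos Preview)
Your proof is correct and takes essentially the same approach as the paper, which likewise derives the result from the discussion immediately preceding the proposition via \cite[6.2]{BlumbergMandellUW} and the weak equivalence $Y_{\aC}\simeq Y'_{\aC}$. You are simply more explicit than the paper on two points it leaves implicit: the enriched Yoneda computation behind the DK-equivalence, and the contractibility of $L\aC(*,-)$ (which holds because $*$ is a zero object and hence homotopy initial in $L\aC$).
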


When $\aC$ is a simplicially enriched  Waldhausen
category, $\wri$ is a simplicial functor $L\aC\to \wrcsv$ but
generally not a simplicial functor $\aC\subdot\to \wrcsv$.
We can regard the functor
\[
x\mapsto \diag L\aC\subdot(x,-)/L\aC\subdot(*,-)
\]
as a simplicial functor from $\aC$ to $\csvop[\aC]$.  Composing with
$I^{f}$, we get a simplicial functor $\wri'\colon \aC\to \wrcsv$.  The
inclusion of $L\aC_{0}$ in $L\aC\subdot$ induces a natural
transformation $\wri\to \wri'$, which is a natural weak equivalence
when $\aC$ is DK-compatible (by definition).  This proves the
following proposition. 

\begin{prop}
If $\aC$ is a DK-compatible simplicially enriched Waldhausen category,
then $\wri$ is weakly equivalent to a simplicial functor, which is also
a  based weakly exact DK-equivalence.
\end{prop}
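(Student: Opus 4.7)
The plan is to construct the simplicial functor $\wri'$ explicitly using the diagonal hammock-localization construction, then deduce the desired properties from the corresponding properties of $\wri$ via the naturality built into Definition~\ref{defdkcompat}.

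First, I would make the construction of $\wri'$ precise. The assignment
\[
x \longmapsto \diag L\aC\subdot(x,-)/L\aC\subdot(*,-)
\]
produces, for each object $x$ of $\aC$, a functor $L\aC_0 \to$ based simplicial sets: objectwise on $y \in L\aC_0$ we form the diagonal of the bisimplicial set $L\aC\subdot(x,y)$ and quotient by $L\aC\subdot(*,y)$. Since the simplicial enrichment on $\aC$ makes the assignment $x \mapsto L\aC\subdot(x,-)$ simplicially functorial in $x$ (the simplicial mapping sets of $\aC$ act on the hammock localization in the $x$-variable), passage to the diagonal turns this into a simplicial functor from $\aC$ into $\csvop$; following with $I^{f}$ from Lemma~\ref{lem:cosheaves} lands us in $\csvop$ with cofibrant target, and the basing ensures $\wri'$ is based. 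I would call this $\wri'\colon \aC \to \wrcsv$, once I verify below that the target lies in $\wrcsv$.

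Next I would compare $\wri$ and $\wri'$. The inclusion $L\aC_0 \hookrightarrow L\aC\subdot$ (as the $0$-simplices) induces for each $x$ a natural map of based simplicial sets
\[
L\aC_0(x,-)/L\aC_0(*,-) \longrightarrow \diag L\aC\subdot(x,-)/L\aC\subdot(*,-),
\]
which is natural in $x$ (in the underlying category) and lies under the basepoint. Applying $I^{f}$ (which is a simplicial endo-functor and preserves $*$) yields a natural transformation $\wri \to \wri'$ in $\wrcsv$. By the DK-compatibility of $\aC$ (Definition~\ref{defdkcompat}), the maps $L\aC_0(x,y) \to \diag L\aC\subdot(x,y)$ are weak equivalences of simplicial sets; since the quotient on each side is by the split inclusion of $L\aC(*,-)$, the quotient map is also a weak equivalence, and $I^{f}$ preserves weak equivalences. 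Hence $\wri \to \wri'$ is a natural weak equivalence. In particular, every object in the image of $\wri'$ is weakly equivalent to one in the image of $\wri$, so by Proposition~\ref{propcowald} the image of $\wri'$ lies in $\wrcsv$.

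Finally I would transfer the remaining properties from $\wri$ to $\wri'$. Since $\wri'$ is naturally weakly equivalent to $\wri$ and $\wri$ is based weakly exact (Proposition~\ref{propcowald}), $\wri'$ preserves weak equivalences, weak cofibrations, and homotopy cocartesian squares (all these notions are invariant under natural weak equivalence in the target), and $\wri'(*)=*$ by construction of the diagonal and of $I^{f}$; thus $\wri'$ is a based weakly exact functor. Similarly, $\wri$ being a DK-equivalence implies $\wri'$ is a DK-equivalence, since on simplicial localizations naturally weakly equivalent functors induce the same map up to homotopy. I do not expect any serious obstacle here: the only mild subtlety is making sure that ``simplicial functoriality on $\aC$'' really holds for the diagonal construction, but this is immediate from the fact that the simplicial mapping sets $\aC(x,x')$ act degreewise on $\aC\subdot(x,y) \subset L\aC\subdot(x,y)$, commuting with the face and degeneracy operators in the hammock direction.
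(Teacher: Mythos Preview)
Your proposal is correct and follows essentially the same approach as the paper: construct $\wri'$ as $I^{f}$ applied to $x\mapsto \diag L\aC\subdot(x,-)/L\aC\subdot(*,-)$, and use the inclusion $L\aC_{0}\hookrightarrow L\aC\subdot$ together with DK-compatibility to obtain the natural weak equivalence $\wri\to\wri'$. The paper's argument is terse and leaves implicit the verifications you spell out (that the image lands in $\wrcsv$, that $\wri'$ is based weakly exact, and that it is a DK-equivalence), but the construction and the key step are identical.
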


When $\aC$ is a DK-compatible simplicially enriched Waldhausen
category, just as in Proposition~\ref{propfunct}, looking at the
formula for mapping spectra in $\Sdot[n]\aC$ and $\SpMdot[n]\wrcsv$,
we see that $\wri'$ induces a DK-embedding $\Sdot[n]\aC\to
\SpMdot[n]\wrcsv$.  If we assume the hypothesis of part~(vi), then
$\aC$ admits tensors with $\Delta[1]$, and for weak cofibration $x\to
y$, the map $x\vee y \to (x\otimes \Delta[1])\cup_{x\otimes \{1\}}y$
is a cofibration, i.e., $\aC$ has functorial mapping cylinders for
weak cofibrations in the terminology of \cite[2.6]{BlumbergMandellUW}.
Since in any simplicially enriched Waldhausen category, weak
equivalences are closed under retracts, we can apply
\cite[6.1]{BlumbergMandellUW} to characterize the weak cofibrations in
$\aC$ as precisely those maps whose images in $\wrcsv$ are weak
cofibrations.  Moreover, tensors with generalized intervals exist in
$\aC$, and arguing as in the proof of Proposition~\ref{propfuncthree},
we see that every object of $\Spdot[n]\wrcsv$ is weakly equivalent to
the image of an object of $\Spdot[n]\aC$, i.e., that the DK-embedding
is a DK-equivalence.  The induced map (from Theorem~\ref{thmwkexact})
\[
WTHH^{\Gamma}(\aC)\to 
WTHH^{\Gamma}(\wrcsv)
\]
is then a weak equivalence.

Now drop the assumption that $\aC$ is simplicially enriched, and
assume instead that $\aC$ admits factorization.  Then Waldhausen
\cite[p.~357]{WaldhausenKT} shows that we can form homotopy colimits
in $\aC$ over diagrams in finite partially ordered sets as iterated
pushouts over cofibrations.  Since any finite simplicial set is weakly
equivalent to the nerve of a finite partially ordered set, it follows
that for any weakly corepresentable $C$ and any finite simplicial set
$X$, the simplicial functor $C^{X}$ is also weakly corepresentable.
This proves the following proposition.

\begin{prop}
If $\aC$ admits factorization then $\wrcsv$ is a simplicially tensored
Waldhausen category.  
\end{prop}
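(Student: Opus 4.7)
The plan is to verify the two requirements of Definition~\ref{deftenswaldcat}: that $\wrcsv$ admits tensors with finite simplicial sets, and that the pushout-product axiom is satisfied. The pushout-product axiom will be essentially free, inherited from the surrounding simplicial model structure on $\csvop$; the real work is showing that the tensor of an object of $\wrcsv$ with a finite simplicial set can be taken to lie in $\wrcsv$ (i.e.\ is weakly equivalent to the opposite of a corepresentable on an object of $\aC$).

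First I would unpack what tensors look like. Since $\csv$ is a simplicial model category (with the injective structure), it has both tensors and cotensors over simplicial sets. Passing to the opposite, the tensor in $\csvop$ by a finite simplicial set $X$ is computed as the cotensor $F^X$ in $\csv$. For a corepresentable $F=L\aC(x,-)$ we have $F^X(y)=\Map(X,L\aC(x,y))$, so the question reduces to whether this functor of $y$ is, up to weak equivalence, again corepresented by an object of $\aC$.

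To answer this, I would use the fact (which is exactly the content invoked just before the proposition statement) that any finite simplicial set $X$ is weakly equivalent to the nerve $N(P)$ of a finite partially ordered set $P$, for example by passing to its poset of nondegenerate simplices or using the last-vertex map. Under factorization, Waldhausen's construction on p.~357 of~\cite{WaldhausenKT} realizes the homotopy colimit of any diagram on a finite poset as an iterated pushout along cofibrations, which remains an object of $\aC$. Applied to the constant diagram on $x$ indexed by $P$, this produces an object $x\otimes_{\aC} N(P)\in \aC$; the universal property of the hammock localization \cite[6.2]{BlumbergMandellUW} (which is applicable because admitting factorization yields a homotopy calculus of left fractions) gives a natural weak equivalence
\[
L\aC(x\otimes_{\aC} N(P),\,-)\htp \Map(N(P),L\aC(x,-))\htp F^{X}.
\]
Composing with cofibrant replacement $P^{c}$ from Lemma~\ref{lem:cosheaves} (to land in the Waldhausen subcategory) then produces the required tensor $F\otimes X$ inside $\wrcsv$, and the weak equivalence displayed above shows it is weakly corepresented by an object of $\aC$.

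The pushout-product axiom is then immediate: a cofibration in $\wrcsv$ is a cofibration in $\csvop$ between objects of $\wrcsv$, a cofibration of finite simplicial sets is a monomorphism, and SM7 in $\csvop$ guarantees that the pushout-product is a cofibration; by construction both the source and target of the pushout-product lie in $\wrcsv$, so the map is a cofibration in $\wrcsv$ as well. The only place where one needs to exercise any care is in the identification of the tensor with a corepresentable after cofibrant replacement, and here the main obstacle is simply keeping track of the variance between $\csv$ and $\csvop$ and checking that the two replacements (cofibrant in $\csvop$ and the iterated pushout in $\aC$) are compatible; both are routine once Waldhausen's finite homotopy colimit construction is in hand.
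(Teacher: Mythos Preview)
Your approach matches the paper's: replace the finite simplicial set by the nerve of a finite poset, invoke Waldhausen's iterated-pushout construction of finite homotopy colimits in $\aC$, and use \cite[6.2]{BlumbergMandellUW} to see that the cotensor $F^{X}$ in $\csv$ is again weakly corepresentable; the pushout-product axiom is inherited from SM7 in the ambient model category.

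One correction: the replacement step is both unnecessary and misnamed. If $F\in\wrcsv$, then $F$ is cofibrant in $\csvop$, i.e.\ injective-fibrant in $\csv$, and SM7 immediately gives that $F^{X}$ is again injective-fibrant for any simplicial set $X$; hence $F^{X}$ already lies in $\wrcsv$ and \emph{is} the tensor $F\otimes X$. No replacement is required, and if one were, the relevant functor would be $I^{f}$ (injective-fibrant replacement, which gives cofibrant replacement in $\csvop$), not $P^{c}$ (projective-cofibrant replacement in $\csv$, which plays no role here). Defining the tensor via a replacement would also create an extra burden of checking that the resulting object still has the correct enriched universal property in $\wrcsv$; using $F^{X}$ directly avoids this.
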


We also have the corresponding proposition for closed Waldhausen
subcategories. 

\begin{prop}
If $\aA$ is a closed Waldhausen subcategory of
$\aC$, then $\swrcsv\subset \wrcsv$ is an enhanced simplicial
Waldhausen category and 
$\swri\colon \aA\to \swrcsv$ is a based weakly exact functor and a
DK-equivalence on simplicial localizations. 
\end{prop}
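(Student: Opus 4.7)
The plan is to verify separately the three assertions: that $\swrcsv \subset \wrcsv$ constitutes an enhanced simplicially enriched Waldhausen category, that $\swri$ is based weakly exact, and that $\swri$ is a DK-equivalence on simplicial localizations.

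For the enhanced structure, by the preceding proposition $\wrcsv$ is a simplicially tensored Waldhausen category, and $\swrcsv$ is full in $\wrcsv$ by construction. The category $\swrcsv_0$ contains the zero object (since $* \in \aA$) and is closed under weak equivalences tautologically from the definition ``weakly equivalent to $Y'_\aC(a)$ for some $a \in \aA$''. To see that it becomes a closed Waldhausen subcategory, I would verify that cofibrations in $\wrcsv$ between objects of $\swrcsv$ with cofiber in $\swrcsv$ are stable under pushout; the key trick is to use the DK-equivalence $\wri$ to translate each such configuration back to a weak cofibration in $\aC$ between objects of $\aA$ with cofiber in $\aA$, which is precisely the data of a weak cofibration in $\aA$ (since $\aA$ is closed in $\aC$). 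Since $\aC$ admits factorization, homotopy pushouts along weak cofibrations exist in $\aC$ and, by closedness, stay in $\aA$; transporting back via $\wri$ shows that the pushout in $\wrcsv$ lies in $\swrcsv$.

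For based weakly exactness, $\swri$ is by construction just the restriction of $\wri$ along $\aA \hookrightarrow \aC$. Since $\wri$ is based weakly exact, since weak equivalences, weak cofibrations, and homotopy cocartesian squares in $\aA$ are detected in $\aC$ (using closedness of $\aA$ in $\aC$), and since we have just verified that their images under $\swri$ lie in $\swrcsv$, the restriction $\swri$ is again based weakly exact.

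For the DK-equivalence assertion, I would argue that $\swri$ is a DK-embedding: the map on simplicial localizations $L\aA(a,b) \to L\wrcsv(\wri a, \wri b)$ factors as $L\aA(a,b) \to L\aC(a,b) \to L\wrcsv(\wri a, \wri b)$, where the first map is a weak equivalence because $\aA$ is a closed Waldhausen subcategory of $\aC$ admitting its own homotopy calculus of left fractions (as in \cite[\S5, App.~A]{BlumbergMandellUW}) and the second is a weak equivalence because $\wri$ is a DK-equivalence. Essential surjectivity of $\pi_0 L\aA \to \pi_0 L\swrcsv$ is immediate from the definition of $\swrcsv$ as the full subcategory on objects weakly equivalent to $\wri(a)$ with $a \in \aA$. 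The main obstacle is the first step: correctly identifying cofibrations in $\swrcsv$ with weak cofibrations in $\aA$ and checking that pushouts formed in $\wrcsv$ genuinely correspond, up to weak equivalence, to homotopy pushouts formed in $\aC$, so that closedness of $\aA$ can be brought to bear.
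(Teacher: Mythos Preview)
The paper states this proposition without proof, presenting it as immediate from the preceding discussion: Proposition~\ref{propcowald} supplies the based weak exactness and DK-equivalence of $\wri$, and the paragraph before the preceding proposition explains (via Waldhausen's iterated-pushout construction of homotopy colimits) why homotopy pushouts of corepresentables remain corepresentable, which is exactly what one needs to see that $\swrcsv$ is closed under the relevant pushouts. Your outline correctly unpacks that implicit argument and uses the same ingredients.

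One small point in your DK-equivalence step: strictly speaking the target is $L\swrcsv(\swri a,\swri b)$ rather than $L\wrcsv(\wri a,\wri b)$, so your factorization tacitly also uses that $L\swrcsv\to L\wrcsv$ is a DK-embedding. But once the enhanced structure is established in your first step, this follows either from Theorem~\ref{thmdkdiag} (which makes $\swrcsv$ DK-compatible, and its simplicial mapping spaces coincide with those of $\wrcsv$ by fullness) or from the very same left-fractions argument you invoke for $L\aA\to L\aC$, now applied to the closed Waldhausen subcategory $\swrcsv\subset\wrcsv$.
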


We obtain the functor $\swrj\colon \swrcsv\to \wrcsv[\aA]$ as the
restriction to $\swrcsv$ of the functor
$I^{f}_{\aA}\circ R_{\aC}^{\aA}$, where $R_{\aC}^{\aA}$ denotes the
functor $\csvop[\aC]\to \csvop[\aA]$ obtained by restricting an $L\aC$
diagram to $L\aA$ and $I^{f}_{\aA}$ denotes the
endo-functor $I^{f}$ in $\wrcsv[\aA]$.  Writing $Y'_{\aA}$ and
$Y'_{\aC}$ for the modified Yoneda embeddings as above, then 
\[
\swrj\circ \swri=I^{f}_{\aA}R_{\aC}^{\aA}I^{f}_{\aC}Y'_{\aC}
\qquad \text{and}\qquad
\wri=I^{f}_{\aA}Y'_{\aA}.
\]
Under the hypothesis of homotopy calculus of left fractions, the
natural map $Y'_{\aA}\to R_{\aC}^{\aA}Y'_{\aC}$ in $\csv[\aA]$ is a weak
equivalence; 
combining this with the canonical weak equivalence  $\Id \to
I^{f}_{\aC}$ in $\csv[\aC]$ and reversing arrows to work in
$\csvop[\aA]$ gives natural weak equivalences 
\[
\swrj\circ \swri
=
I^{f}_{\aA}R_{\aC}^{\aA}I^{f}_{\aC}Y'_{\aC}
\to
I^{f}_{\aA}R_{\aC}^{\aA}Y'_{\aC}
\to
I^{f}_{\aA}Y'_{\aA}
=
\wri
\]
in $\wrcsv[\aA]$.

The previous observations, propositions, and definitions cover all of
the statements in Theorems~\ref{thmwrcsv} and~\ref{thmswrcsv} except
for the naturality statements.   The next result begins the study of
naturality. 

\begin{thm}\label{thmfunctoriality}
Let $\aC$ and $\aC'$ be Waldhausen categories that admit homotopy
calculi of left fractions, and let $\phi \colon \aC\to\aC'$ be a
weakly exact functor.  Then there exists a simplicial functor
$\wrcsv[\phi]\colon \wrcsv\to \wrcsv'$ that restricts to a based
weakly exact functor of the underlying Waldhausen categories and makes
the diagram of functors
\[
\xymatrix{%
\aC\ar[r]\ar[d]_{\phi}&\wrcsv\ar[d]^{\wrcsv[\phi]}\\
\aC'\ar[r]&\wrcsv'
}
\]
commute up to a zigzag of natural weak equivalences.  

If $\aA$ and
$\aA'$ are closed Waldhausen subcategories of $\aC$ and $\aC'$
(respectively) and $\phi$ restricts to a functor from $\aA$ to $\aA'$,
then the functor $\wrcsv[\phi]$ restricts to a functor
$\swrcsv[\phi]\colon \swrcsv\to \swrcsv'$ making the diagram of
functors 
\[
\xymatrix{%
\aA\ar[r]\ar[d]_{\phi}&\swrcsv\ar[d]^{\swrcsv[\phi]}\\
\aA'\ar[r]&\swrcsv'
}
\]
commute up to a zigzag of natural weak equivalences.  
\end{thm}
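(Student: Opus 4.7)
The plan is to construct $\wrcsv[\phi]$ as a simplicial model for the derived left Kan extension along the Dwyer--Kan localization of $\phi$. Since $\phi$ is weakly exact, it preserves weak equivalences, and so induces a simplicial functor $L\phi\colon L\aC\to L\aC'$ on hammock localizations. This gives a simplicial adjunction between $\csv$ and $\csv[\aC']$: precomposition with $L\phi$ defines $(L\phi)^{*}\colon \csv[\aC']\to\csv$ with left adjoint $(L\phi)_{!}\colon \csv\to\csv[\aC']$, the simplicial left Kan extension, satisfying the key formula $(L\phi)_{!}(L\aC(x,-))\iso L\aC'(L\phi(x),-)$ on corepresentables.

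Reversing arrows, $(L\phi)_{!}$ defines a simplicial functor $\csvop\to\csvop[\aC']$, and composing with the simplicial, basepoint-preserving cofibrant approximation functor $I^{f}$ of Lemma~\ref{lem:cosheaves} gives a candidate simplicial functor landing in $\wrcsv'$. To ensure this is homotopically well-behaved on the weakly corepresentable objects of $\wrcsv$, I would first insert a natural cofibrant replacement in the projective model structure on $\csv$; with this modification $(L\phi)_{!}$ becomes a model for its total left derived functor on the relevant subcategory, so it preserves weak equivalences and sends corepresentables (up to weak equivalence) to corepresentables. The resulting composite simplicial functor $\wrcsv[\phi]$ therefore sends $\wrcsv$ into $\wrcsv'$, and the formula $(L\phi)_{!}(L\aC(x,-))\iso L\aC'(\phi(x),-)$ combined with the functoriality of the approximations provides the required zigzag of natural weak equivalences between $\wrcsv[\phi]\circ\wri$ and $\wri'\circ\phi$.

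To verify that $\wrcsv[\phi]$ is a based weakly exact functor, preservation of the zero object follows because $(L\phi)_{!}$ is a left adjoint and $I^{f}$ is normalized as in Lemma~\ref{lem:cosheaves}; preservation of weak equivalences follows from the derived character of the construction; preservation of weak cofibrations and homotopy cocartesian squares reduces to the fact that $(L\phi)_{!}$ commutes with colimits (as a left adjoint) and that $L\phi$ preserves homotopy cocartesian squares, as in the argument of \cite[6.2]{BlumbergMandellUW}, since $\phi$ does. For the closed Waldhausen subcategory case, the hypothesis $\phi(\aA)\subset\aA'$ combined with the corepresentable formula shows that $\wrcsv[\phi]$ takes objects weakly equivalent to corepresentables on $\aA$ to objects weakly equivalent to corepresentables on $\aA'$, so it restricts to $\swrcsv[\phi]\colon \swrcsv\to\swrcsv'$, and the corresponding diagram commutes up to the restricted zigzag.

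The principal technical hurdle is verifying that the naive simplicial left Kan extension behaves correctly on the full subcategory $\wrcsv$, whose objects are cofibrant in the injective model structure on $\csvop$ but generally not in the projective one, so that $(L\phi)_{!}$ need not be exact on them without further preparation. Interposing a projective cofibrant replacement (functorial in both $\aC$ and $\phi$), or equivalently using a bar-construction model for the homotopy left Kan extension, resolves this and yields $\wrcsv[\phi]$ as a functor defined up to a zigzag of natural weak equivalences, which is exactly what the theorem requires.
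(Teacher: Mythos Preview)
Your proposal is correct and takes essentially the same approach as the paper: the paper defines $\wrcsv[\phi]$ as the composite $I^{f}\circ \Lan_{\phi}\circ P^{c}$, where $P^{c}$ is precisely the simplicial projective cofibrant replacement of Lemma~\ref{lem:cosheaves} you anticipated needing, $\Lan_{\phi}$ is the left Kan extension along $L\phi$, and $I^{f}$ is the injective fibrant replacement. Your discussion in fact supplies more detail than the paper's terse proof, which simply writes down this composite and asserts that by construction it preserves weak equivalences and comes with the required zigzag.
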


We prove this theorem below, but first state the following corollary.

\begin{cor}\label{cor:invariance}
Let $\aC$ and $\aC'$ be Waldhausen categories that admit homotopy
calculi of left fractions, and let $\phi \colon \aC \to \aC'$ be a
weakly exact functor.  If $\phi$ induces a DK-equivalence on
passage to simplicial localizations, then the functor
$\wrcsv[\phi]\colon \wrcsv\to \wrcsv'$ is a DK-equivalence.  Moreover,
$\wrcsv[\phi]$ and  (when appropriate) $\swrcsv[\phi]$ 
induce an equivalence of cyclotomic spectra on $WTHH^{\Gamma}$ and 
$WTHH$, respectively. 
\end{cor}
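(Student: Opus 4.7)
The plan is to deduce the DK-equivalence by two-out-of-three on the natural square produced by Theorem~\ref{thmfunctoriality}, then transfer the result to $WTHH^{\Gamma}$ and $WTHH$ via the weakly exact functoriality machinery of Section~\ref{sec:weaklyexact}.

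First I would apply Theorem~\ref{thmfunctoriality} to obtain the commutative-up-to-natural-weak-equivalence square of based weakly exact functors whose horizontal arrows are $\wri$ and $\wri'$. By Proposition~\ref{propcowald}, both $\wri$ and $\wri'$ are DK-equivalences on simplicial localizations, and by hypothesis $\phi$ is a DK-equivalence on simplicial localizations. Two-out-of-three for DK-equivalences then forces $\wrcsv[\phi]$ to be a DK-equivalence on simplicial localizations. Since $\wrcsv$ and $\wrcsv'$ are instances of the type of category in Example~\ref{exofinterest} and hence DK-compatible by Theorem~\ref{thmdkdiag}, Proposition~\ref{propequiv}(ii) upgrades this to a DK-equivalence of simplicially enriched categories.

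For the $WTHH^{\Gamma}$ statement I would invoke the zigzag of Theorem~\ref{thmwkzig} applied to the simplicial and based weakly exact functor $\wrcsv[\phi]$. Since both $\wrcsv$ and $\wrcsv'$ admit FFWC by Theorem~\ref{thmwrcsv}(iv), every upward arrow in that zigzag is a weak equivalence, and the statement reduces to checking that each level-wise spectrally enriched functor $\SpMdot[p]\wrcsv[\phi]$ is a DK-equivalence. The DK-embedding aspect is immediate from the homotopy-end description of mapping spectra in $\SpMdot[p]$ (Construction~\ref{cons:moore}) together with the fact that $\wrcsv[\phi]$ is a levelwise weak equivalence. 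For essential surjectivity I would follow the inductive pattern of the proof of Proposition~\ref{propfuncthree}: given a sequence of weak cofibrations in $\wrcsv'$, successively lift its cofibers to $\wrcsv$ using the DK-equivalence $\wrcsv[\phi]$ and rigidify the resulting homotopy commutative squares to strict pushouts via the functorial mapping cylinders supplied by FFWC, producing a weakly equivalent object in the image. The non-connective statement for $WTHH(\swrcsv[\phi])$ follows by the parallel argument using Theorem~\ref{thm:swefunctS} and the non-connective $\SpMdot$ enrichment; here one uses that the ambient $\wrcsv$ and $\wrcsv'$ are simplicially tensored (Theorem~\ref{thmswrcsv}) so that suspensions exist, and that the implicit hypothesis needed to make $\swrcsv[\phi]$ a DK-equivalence (namely, that $\phi$ restricts to a DK-equivalence $L\aA\to L\aA'$ of simplicial localizations) allows the first paragraph's argument to apply to the enhanced setting.

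The hard part will be the essential surjectivity step for the $\SpMdot[p]$ enrichment: producing weakly equivalent lifts of entire filtered diagrams, rather than merely of objects, requires a careful inductive construction combining the characterization of weak cofibrations in $\wrcsv$ from \cite[6.1]{BlumbergMandellUW} with the strict pushout rectification afforded by FFWC. Once this rectification is in place the remainder of the argument is formal.
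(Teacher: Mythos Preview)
Your proposal is correct and follows essentially the same approach as the paper: deduce the DK-equivalence of $\wrcsv[\phi]$ from the square in Theorem~\ref{thmfunctoriality} via two-out-of-three, then reduce the $WTHH^{\Gamma}$ statement to showing that the induced functor on each $\SpMdot[n]$ (equivalently $\Sdot[n]\wrcsv\to \SpMdot[n]\wrcsv'$) is a DK-equivalence, and handle that by adapting the inductive lifting argument of Proposition~\ref{propfuncthree}. The paper's own proof is terser but cites exactly the same ingredients, and your identification of the essential surjectivity step as the substantive point matches what the paper defers to the Proposition~\ref{propfuncthree} argument.
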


The proof of Theorem~\ref{thmfunctoriality} combines the simplicially
enriched cofibrant and fibrant approximation functors with left Kan
extension. Fix the functor $\phi \colon \aC \to \aC'$.  Left Kan
extension gives rise to a functor $\Lan_\phi \colon \csv[\aC] \to
\csv[\aC']$ and we let $\wrcsv[\phi]\colon \wrcsv \to \wrcsv'$ be the
composite functor  
\[
\xymatrix{
\csv[\aC] \ar[r]^{P^{c}} & \csv[\aC] \ar[r]^{\Lan_\phi} & \csv[\aC']
\ar[r]^{I^{f}} & \csv[\aC'].
}
\]
By construction $\wrcsv[\phi]$ preserves weak equivalences and is equipped
with a zig-zag of natural weak equivalences connecting $\wri \circ \wrcsv[\phi]$
to $\phi \circ \wri$.  This completes the proof of
Theorem~\ref{thmfunctoriality} .

Most of Corollary~\ref{cor:invariance} follows immediately from
Theorem~\ref{thmfunctoriality}.  To see that $\wrcsv[\phi]$ induces a
weak equivalence on $WTHH^{\Gamma}$, we need to see that the induced
functor $\Sdot[n]\wrcsv \to \SpMdot[n]\wrcsv'$ is a DK-equivalence.
The argument for Proposition~\ref{propfuncthree} adapts to the current
context to complete the proof.

The proof of the naturality statements in Theorems~\ref{thmwrcsv}
and~\ref{thmswrcsv} now follow from an easy check that functors
$\wrcsv[\phi]$ compose as expected up to a zigzag of natural weak
equivalences.  Somewhat more work shows that this construction
actually preserves composition up to coherent homotopy; we defer this
to a future paper.

Finally, we need to prove Lemma~\ref{lem:cosheaves}.  The specifics of
the simplicial category $L\aC$ play no role: the lemma holds for the
category of simplicial functors from any small simplicial
category $\aD$ to based simplicial sets, and we argue in this context.
We prove the following lemma, of which Lemma~\ref{lem:cosheaves} is a
special case.

\begin{lem}\label{lem:funcfact}
Let $\aD$ be a small simplicial category and let $\simpD$ denote the
category of simplicial functors from $\aD$ to based simplicial sets.
Then the projective and injective model structures both admit
factorization functors that are simplicial functors and that send the
identity on $*$ to the factorization $*=*=*$.
\end{lem}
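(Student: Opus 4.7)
The strategy is to produce $P^{c}$ and $I^{f}$ via simplicially enriched small object arguments for the two model structures, in a form that preserves the zero object. For the projective model structure on $\simpD$, the generating (acyclic) cofibrations are the familiar set $\{\aD(d,-)_{+}\wedge i\}$, where $i$ ranges over the generating (acyclic) cofibrations of based simplicial sets. For the injective model structure, invoke Heller's original construction in \cite{Heller} (or equivalently J.~Smith's theorem for combinatorial model categories) to produce a small set of generating acyclic cofibrations, bounded by an appropriate cardinal. A transfinite simplicially enriched small object argument (as in Hovey or Hirschhorn), with the enrichment obtained by replacing the discrete coproduct of cells at each stage with the tensor with the simplicial set of attaching maps, then yields simplicial functorial factorizations, from which one extracts simplicial cofibrant and fibrant approximation functors.

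To guarantee $P^{c}(*)=*=I^{f}(*)$, I would use the \emph{selective} variant of the small object argument: at each stage of the transfinite construction, attach cells only for those lifting problems that do not already admit a diagonal filler. In the pointed category $\simpD$, every lifting problem
\[
\xymatrix@-1pc{
A\ar[r]\ar[d]&*\ar[d]\\
B\ar[r]&*
}
\]
over the zero object admits the zero map $B\to *$ as a filler, so no cells are ever attached. Hence the selective construction applied to cofibrant approximation (starting from the initial object $*$) gives $P^{c}(*)=*$ immediately, and applied to fibrant approximation (starting from $X=*$) gives $I^{f}(*)=*$. Functoriality of the selective variant is maintained by the following observation: for $f\colon X\to Y$, an unsolved problem at stage $k$ over $X$ composed with $f$ yields a problem over $Y$ that is either solved (in which case the cell of $X_{k+1}$ maps to $Y_{k}\subset Y_{k+1}$ via the chosen filler) or unsolved (in which case the cell of $X_{k+1}$ maps identically to the corresponding cell of $Y_{k+1}$). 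Simpliciality is preserved because all the constructions are enriched pushouts and filtered colimits.

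The main obstacle will be the careful bookkeeping needed to make the selective variant genuinely functorial and simplicially enriched; the standard (``full'') small object argument avoids this by attaching cells for every lifting problem, but then fails to preserve $*$. An alternative, technically cleaner strategy is to run the standard construction to obtain $P^{c}_{0}$ and $I^{f}_{0}$ (which generally satisfy $P^{c}_{0}(*)\neq *\neq I^{f}_{0}(*)$) and then post-compose with a pushout modification $\wt{F}(X)=F(X)\cup_{F(*)} *$ using the canonical map $F(*)\to *$ to the terminal object. For $P^{c}$ this modification succeeds cleanly: $P^{c}_{0}(*)\to P^{c}_{0}(X)$ is a cofibration (by the SOA construction, since it arises from the cofibration $*\to X$ between cofibrant objects), and $P^{c}_{0}(*)\to *$ is a weak equivalence, so left-properness of the projective model structure ensures that $\wt{P}^{c}_{0}(X)\to X$ is a weak equivalence and that $\wt{P}^{c}_{0}(X)$ remains cofibrant. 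For $I^{f}$ the dual pushout argument can lose fibrancy, and one must either iterate transfinitely to restore it or employ a right-proper pullback variant $I^{f}_{0}(X)\times_{I^{f}_{0}(*)}*$ exploiting the acyclic cofibration $*\to I^{f}_{0}(*)$ inherent in the pointed setting.
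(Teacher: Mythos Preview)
Your enriched small object argument is the right idea, and in fact the paper does exactly this for the base case of based simplicial sets: at each stage one attaches $a\sma D_{i}\to b\sma D_{i}$, where $D_{i}$ is the simplicial set of commuting squares. The point you are missing is that, working in the \emph{based} category, $D_{i}$ is canonically a \emph{based} simplicial set (the trivial square is the basepoint) and the attachment is via the \emph{smash} product. When $x=y=*$, every $D_{i}$ is the one-point based simplicial set, so $a\sma D_{i}=*$ and literally nothing gets attached. Thus $*$-preservation is automatic---no selective variant and no post-hoc modification is needed. Your selective variant is genuinely problematic: the functoriality sketch fails because for a composite $X\to Y\to Z$, a problem unsolved over $X$ but solved over both $Y$ and $Z$ requires the chosen filler in $Z$ to be the image of the chosen filler in $Y$, and there is no mechanism forcing compatible choices.

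For the injective factorizations the paper takes a route quite different from yours. Rather than invoke an abstract small set of generating acyclic cofibrations (which would be large and inexplicit, and for which the smash trick above would not obviously apply), the paper follows Heller and works \emph{dually}: one uses the forgetful functor $J^{*}\colon\simpD\to\simpDob$ and its right adjoint $J^{I}$, factors objectwise using the base case, and then pulls back along $J^{I}$ of the factorization, iterating as an inverse limit. The dual argument shows this yields the required injective factorization, simplicially and with $*$ preserved. Your pullback modification $I^{f}_{0}(X)\times_{I^{f}_{0}(*)}*$ is close in spirit, but as you yourself note, it is unclear that this preserves injective fibrancy; the paper's construction builds fibrancy in from the start and avoids this difficulty entirely.
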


The most basic case is when $\aD$ is the trivial category and $\simpD$
is the category of based simplicial sets.  Let $C$ denote the
set of generating cofibrations ($\partial \Delta[n]\to \Delta[n]$,
$n=0,1,2,\ldots$) and let
$A$ denote the set of generating acyclic cofibrations
($\Lambda_{j}[n]\to \Delta[n]$, $n=0,1,2,\ldots$).  Then the usual
construction of 
the factorization functors uses the small objects argument as
follows.  Given $f\colon x\to y$, the factorization of $f$ as an
acyclic cofibration $x\to x'$ followed by a fibration $x'\to y$ is
constructed as $x'=\colim x'_{n}$, where $x'_{0}=x$ and inductively
$x'_{n+1}$ is constructed as the pushout 
\[
x'_{n+1}=x'_{n}\cup_{\coprod a}(\coprod b) 
\]
where the coproduct is over commutative diagrams
\[
\xymatrix@-1pc{%
a\ar[d]\ar[r]&b\ar[d]\\
x'_{n}\ar[r]&y
}
\]
with $i\colon a\to b$ ranging over the elements of $A$.  The version we need
for Lemma~\ref{lem:funcfact} instead uses the based simplicial set of
maps in place of the set of maps above:  We construct $x'_{n}$
inductively as the pushout
\[
x'_{n+1}=x'_{n}\cup_{\coprod a\sma D_{i}}(\coprod b\sma D_{i})
\]
where the coproduct is over the elements $i\colon a\to b$ in $A$ and 
\[
D_{i}=\simps(a,x'_{n})\times_{\simps(a,y)}\simps(b,y)
\]
is the based simplicial set of commutative diagrams of the form
\[
\xymatrix@-1pc{%
a\ar[d]\ar[r]&b\ar[d]\\
x'_{n}\ar[r]&y.
}
\]
The induced map $x'_{n}\to x'_{n+1}$ and the colimit map $x\to x'$ is
an injection and weak equivalence and the map $x'\to y$ is a
fibration.  Moreover, this functor is clearly a simplicial functor
into the appropriate diagram category.  The analogous construction
using $C$ instead of $A$ constructs the other factorization.  When
applied to the identity map on the trivial based simplicial set $*$,
each $D_{i}$ is the trivial based simplicial set $*$, and so we get
that each map $*\iso x_{n}\to x'_{n+1}$ and $x_{n+1}\to y=*$ is an
isomorphism.  Thus, (replacing the factorization functors with
naturally isomorphic functors if necessary), we have that the
factorization of $*=*$ is $*=*=*$.

A slight modification of the factorization functors in Heller
\cite{Heller} constructs the factorizations in the general case.  Let
$\simpDob$ denote the simplicial category $\prod_{\Ob\aD}\simps$, and
(following the notation in \cite{Heller}), let $J^{*}$ denote the
forgetful functor from $\simpD$ to $\simpDob$ that remembers just the
objects in the diagram (values of the functor) and forgets the maps.
Let $J_{P}$ be its left adjoint; since we are working in based
simplicial sets,  $J_{P}X$ is the simplicial functor
\[
J_{P}X = \bigvee_{c\in\Ob\aD} X(c)\sma \aD(c,-)_{+}.
\]
Likewise, let $J^{I}$ be the right adjoint of $J^{*}$, 
\[
J^{I}X = \prod_{c\in\Ob\aD} X(c)^{\aD(-,c)},
\]
where $X(c)^{\aD(-,c)}$ denotes the based simplicial set of unbased
simplicial maps from $\aD(-,c)$ to $X(c)$.  We note that for any $X$,
$J_{P}X$ is cofibrant in the projective model structure and more
generally, $J_{P}$ sends (objectwise) cofibrations and acyclic
cofibrations in $\simpDob$ to cofibrations and acyclic cofibrations in
the projective model structure on $\simpD$.  Likewise $J^{F}$ sends
(objectwise) fibrations and acyclic fibrations to fibrations and
acyclic fibrations in the injective model structure on $\simpD$.

The factorization functors for the projective model structure are
constructed as follows.  For $f\colon X\to Y$, let $Z_{0}=X$ and
construct $Z_{n+1}$ inductively as follows.  First factor
$J^{*}Z_{n}\to J^{*}Y$ objectwise 
\[
J^{*}Z_{n}\to W_{n}\to J^{*}Y
\]
using the simplicial factorization functor (for the appropriate
factorization) on based simplicial sets constructed above, and let
$Z_{n+1}$ be the pushout
\[
Z_{n+1}=Z_{n}\cup_{J_{P}J^{*}Z_{n}}J_{P}W_{n},
\]
with the factorization $Z_{n+1}\to Y$ induced by the map
$J_{P}W_{n}\to Y$.  Letting $Z=\colim Z_{n}$, we get a factorization
$X\to Z\to Y$, with the map $X\to Z$ a cofibration or acyclic
cofibration (as appropriate) in the projective model structure.  We
note that the underlying map in $\simpDob$ from $J^{*}Z_{n}$ to
$J^{*}Z_{n+1}$ factors through $W_{n}$.  It follows that we can
identify $J^{*}Z$ as $\colim W_{n}$ and the underlying map $J^{*}Z\to
J^{*}Y$ in $\simpDob$ as the colimit of the maps $W_{n}\to J^{*}Y$.
Since by construction, these maps are objectwise acyclic fibrations or
fibrations of simplicial sets, the map $j^{*}Z \to J^{*}Y$ is an
objectwise acyclic fibration or fibration as required.  We note that
when $X=*=Y$, by construction each $W_{n}$ is $*$ and $J_{P}W_{n}$ is
isomorphic to $*$, and so we end up with both factorizations of $*=*$
as $*=*=*$.

The factorization functors on the injective model structure are
precisely dual.  We start with $Z_{0}=Y$, and inductively construct
$Z_{n+1}$ as follows.  Using the appropriate objectwise factorization
functor, we factor  $J^{*}X\to J^{*}Z_{n}$ in $\simpDob$ as
\[
J^{*}X\to W_{n}\to J^{*}Z_{n},
\]
and we define $Z_{n+1}$ as the pullback
\[
Z_{n+1}=Z_{n}\times_{J^{I}J^{*}Z_{n}}J^{I}W_{n}.
\]
We let $Z=\lim Z_{n}$ and get a factorization $X\to Z\to Y$ with $Z\to
Y$ by construction a fibration or acyclic fibration (as appropriate)
in the injective model structure.  Again looking at the underlying map
in $\simpDob$, we see that the map $X\to Z$ is an objectwise acyclic
cofibration or cofibration as appropriate.  Again, the factorization
of $*=*$ becomes $*=*=*$.  This completes the proof of
Lemma~\ref{lem:funcfact}. 

\section{Spectral categories and Waldhausen categories}\label{sec:speccompare}

The work of the previous section showed how to associate a spectral
category to any well-behaved Waldhausen category.  On the other hand,
given a spectral category $\aC$, we can produce a simplicially tensored
Waldhausen category by passage to the Waldhausen category $\fincell$
of ``finite cell right $\aC$-modules'' described below.  In this section we
show that when $\aC$ is pretriangulated (Definition~\ref{deftriang}),
the spectral category associated to $\fincell$ in 
Definition~\ref{defcosheaves2} recovers the original spectral category
$\aC$ up to DK-equivalence.

As a general principal, it does not matter which modern category
of spectra we use as a model when discussing small spectral
categories.  The monoidal Quillen equivalences relating the
various categories of diagram spectra and EKMM $S$-modules
\cite{MM,MMSS,SchwedeEKMM} allow us to convert a spectral category on
any of these models to one on any other.  In particular, the following
theorem is an easy consequence of the work of \cite{SSMonoidalEq}
(extended by the techniques of \cite{EKMM} for dealing with
non-cofibrant units that arise there). 

\begin{thm}\label{jthm:compare}
Fix a set of objects $O$.  For $\aS$ a modern category of spectra from
\cite{MMSS} or \cite{EKMM}, let $\aS{O}$-Cat denote the category of
$\aS$-enriched categories with object set $O$ and functors that are
the identity on the object set $O$.  Then:
\begin{enumerate}
\item The category $\aS{O}$-Cat forms a closed model category where
the weak equivalences and fibrations are the functors that induce a
weak equivalence or positive fibration, respectively, on mapping spectra.
\item The monoidal Quillen equivalences from
\cite{MM,MMSS,SchwedeEKMM} induce Quillen equivalences between the
categories  $\aS{O}$-Cat for the various $\aS$.
\end{enumerate}
\end{thm}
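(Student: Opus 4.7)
\medskip

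The plan is to prove both parts by lifting known structures from the underlying diagram category $\aS^{O\times O}$ (with its positive stable model structure on each factor) through the forgetful functor $U\colon \aS{O}\textrm{-Cat}\to \aS^{O\times O}$.  For part~(i), I would define the weak equivalences and fibrations as in the statement, cofibrations by left lifting against acyclic fibrations, and produce the required factorizations by the small object argument applied to the free spectral category functor $F$ left adjoint to $U$ applied to the generating (acyclic) cofibrations of the positive stable model structure on $\aS^{O\times O}$.  The key technical hypothesis that makes this lifting work is the monoid axiom of Schwede–Shipley, together with the fact that smashing a cofibration with an arbitrary object remains a cofibration after transfinite composition through pushouts; for the diagram spectra of \cite{MMSS} with the positive stable model structure, this is standard, while for EKMM $S$-modules the non-cofibrancy of the unit requires the Chapter~VII variant using cell module cofibrations (\emph{q}-cofibrations), which is exactly the refinement alluded to in the statement.

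For part~(ii), I would use that each of the monoidal Quillen equivalences in \cite{MM,MMSS,SchwedeEKMM} is a strong (or weak) monoidal adjunction $(L,R)$ for which $R$ preserves the classes just used to build the lifted structure.  Applying $L$ and $R$ spectrum-by-spectrum on mapping spectra (with $L$ adjusted via its lax monoidal comparison to preserve the composition, which is essentially Schwede–Shipley's ``algebra prolongation'' applied here to categories rather than to associative ring spectra) gives adjoint functors between the relevant $\aS{O}$-Cat categories that are identity on objects.  Right adjointness of $R$ on the underlying diagrams, together with the definition of weak equivalence and fibration, immediately gives a Quillen adjunction; for Quillen equivalence one checks on a cofibrant spectral category $\aC$ that the unit $\aC\to RL\aC$ is a level equivalence of mapping spectra, using that the cellular filtration inherited from the small object argument makes $U\aC$ sufficiently cofibrant in $\aS^{O\times O}$ for the underlying Quillen equivalence to apply.

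The genuine obstacle is the EKMM case: both the non-cofibrancy of the unit $S$ and the asymmetry between the symmetric monoidal category of $S$-modules and the symmetric spectra categories complicate both the lifting in~(i) and the comparison in~(ii).  I would handle this by invoking the Schwede comparison \cite{SchwedeEKMM} at the level of modules combined with the EKMM technology for replacing ring spectra (and here, spectral categories) by cell objects that are cofibrant in the appropriate enriched sense, following the template by which \cite{EKMM} produces a model category of $A_\infty$- and $E_\infty$-ring spectra despite the unit issue.  Once this is set up, verifying the pushout-product/monoid axioms for the positive stable structure in each model, and the preservation of the appropriate cofibrant cellular structure under $L$ and $R$, are routine (though tedious) adaptations of the corresponding results for modules and for ring spectra in \cite{SSMonoidalEq,SchwedeEKMM}.
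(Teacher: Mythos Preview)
Your proposal is correct and matches the paper's approach exactly: the paper does not give a detailed proof but simply states that the theorem is ``an easy consequence of the work of \cite{SSMonoidalEq} (extended by the techniques of \cite{EKMM} for dealing with non-cofibrant units that arise there),'' and your outline is precisely a fleshing-out of that citation---transferring the model structure along the free/forgetful adjunction using the monoid axiom, lifting the monoidal Quillen equivalences to the categories of enriched categories via the Schwede--Shipley machinery, and handling the EKMM unit issue by the cell-replacement techniques of \cite{EKMM}.
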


Because of this theorem, without loss of generality, we can assume
that our spectral category $\aC$ comes enriched in EKMM $S$-modules,
which have the technical advantage that every object is fibrant.
On the other hand, since our goal is to compare with the
non-connective enrichment of a simplicially tensored Waldhausen
category, our comparison must be between spectral categories enriched
in symmetric spectra.  Again, we use the previous theorem.  Spectral
categories enriched in EKMM $S$-modules are always fibrant in the
model structure of the previous theorem, so the associated spectral
category enriched in symmetric spectra has (the same object set and)
mapping spectra $\Phi \aC(x,y)$, where $\Phi $ is the lax symmetric monoidal
right adjoint functor from EKMM $S$-modules to symmetric spectra
defined in \cite{SchwedeEKMM}.  Specifically, for an EKMM $S$-module
$X$, 
\[
\Phi X(n)=\aM_{S}((S_{S}^{-1})^{(n)},X).
\]
Here $\aM_{S}$ denotes the mapping spaces (in simplicial sets) for the
category of EKMM $S$-modules and $S_{S}^{-1}$ denotes the canonical
cell $(-1)$-sphere $S$-module \cite[III.2]{EKMM}; $\Phi X$ is always a
positive $\Omega$-spectrum and when $X$ is a mapping spectrum,
$\Phi X$ often turns out to be an $\Omega$-spectrum (for example, this
happens for $X=\fincell(x,y)$ where $\fincell$ is the spectral category
defined below).  The lax monoidal natural transformation is induced by
\begin{multline*}
\Phi X(m) \sma \Phi Y(n) =
\aM_{S}((S_{S}^{-1})^{(m)},X) \sma \aM_{S}((S_{S}^{-1})^{(n)},Y)\\
\to
\aM_{S}((S_{S}^{-1})^{(m+n)},X \sma Y)
= \Phi (X\sma Y)(m+n)
\end{multline*}
and the map $S\to \Phi S$ is induced by the map 
$S^{0}\to \aM_{S}(S,S)$ sending the non-base point to the identity
element.  

\begin{notn}
For $\aC$ a spectral category in EKMM $S$-modules, write $\Phi \aC$
for the associated spectral category in symmetric spectra described above.
\end{notn}

Now given $\aC$ a spectral category in EKMM $S$-modules we associate a
Waldhausen category to $\aC$ as follows.  Let $\Mod$ denote the
category of (right) $\aC$-modules, the category of enriched functors from
$\aC^{\op}$ to the category of EKMM $S$-modules.  We make $\Mod$ into a
model category with the projective model structure: The weak
equivalences and fibrations are the objectwise weak equivalences and
fibrations.  The cofibrations in this model structure are the retracts
of relative cell inclusions, where a cell is of the form
\[
\aC(-,x)\sma S_{S}^{q}\sma S^{n-1}_{+}\to \aC(-,x)\sma S_{S}^{q}\sma D^{n}_{+}
\]
for some object $x$ in $\aC$, $q\in \bZ$, $n\geq 0$, where $S^{n-1}\to
D^{n}$ is the standard 
$n$-cell in topology.  We then have a subcategory of finite
cell $\aC$-modules, having objects the $\aC$-modules built from $*$ by
attaching finitely many cells.  If we insist on using canonical
pushouts in building these complexes (or restrict to a skeleton), then
the resulting subcategory we get is small.

\begin{notn}
For $\aC$ a small spectral category in EKMM $S$-modules, let
$\fincell$ be the small subcategory of $\Mod$ of finite cell 
$\aC$-modules.  
\end{notn}

We have a spectrally enriched functor $\aC\to \fincell$ sending $x$ to
$\aC(-,x)\sma S^{0}_{S}$.  By the Yoneda lemma
\[
\fincell(\aC(-,x)\sma S^{0}_{S},\aC(-,y)\sma S^{0}_{S})\iso
F_{S}(S^{0}_{S},\aC(x,y)\sma S^{0}_{S})
\]
(where $F_{S}$ denotes the function $S$-module) 
and the map 
\[
\aC(x,y)\to F_{S}(S^{0}_{S},\aC(x,y)\sma S^{0}_{S})
\]
is a weak equivalence.  The following theorem is now clear from the
construction of $\fincell$.

\begin{thm}
For $\aC$ a small spectral category in EKMM $S$-modules,
the spectrally enriched functor $\aC\to \fincell$ is a DK-embedding,
and $\pi_{0}\fincell$ is the thick subcategory of $\pi_{0}\Mod$
generated by the image of $\aC$. In particular, $\aC\to\fincell$ is a
DK-equivalence if and only if $\aC$ is pretriangulated.
\end{thm}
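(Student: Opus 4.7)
The plan is to split the theorem into three assertions — the DK-embedding property, the identification of $\pi_{0}\fincell$ as a thick subcategory, and the pretriangulated consequence — and handle each in turn, with most of the work in the second.

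For the DK-embedding, the computation already displayed in the excerpt suffices: by the enriched Yoneda lemma applied to the free $\aC$-modules, the functor induces on mapping objects in EKMM $S$-modules the unit map $\aC(x,y)\to F_{S}(S^{0}_{S},\aC(x,y)\sma S^{0}_{S})$, which is a weak equivalence. After applying $\Phi$ to pass to symmetric spectra one checks that this remains a weak equivalence (this is essentially the content of Schwede's comparison in \cite{SchwedeEKMM}; in particular, as noted in Proposition~\ref{prop:corEKMM}, $\Phi$ of such endomorphism $S$-modules recovers the usual symmetric ring/category of spectra). Thus $\aC\to \fincell$ is spectrally fully faithful, i.e., a DK-embedding.

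For the thick subcategory claim, I would show both containments in $\pi_{0}\Mod$. For the easy direction, every finite cell $\aC$-module is built in finitely many steps by attaching cells of the form $\aC(-,x)\sma S_{S}^{q}$; each cell attachment is a pushout along a cofibration, hence gives a distinguished triangle in $\pi_{0}\Mod$, and each $\aC(-,x)\sma S_{S}^{q}$ is a $q$-fold (de)suspension of $\aC(-,x)\sma S^{0}_{S}$, which lies in the image of $\aC$. So $\pi_{0}\fincell$ sits inside the thick subcategory generated by the image. For the reverse direction, the image objects $\aC(-,x)\sma S^{0}_{S}$ are obviously finite cell modules; $\pi_{0}\fincell$ is closed under (de)suspension (use $(-)\sma S^{\pm 1}$) and under cofibers (given a map $f\colon M\to N$ of finite cell modules, replace it by a cofibration via the mapping cylinder $M\to M\sma I_{+}\cup_{M}N$, which is again a finite cell module, and take the honest cofiber). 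The delicate step is closure under retracts: one must argue that any retract in $\pi_{0}\Mod$ of a finite cell module is equivalent to a finite cell module. This is the main obstacle in the proof; I expect to handle it by the standard argument identifying finite cell $\aC$-modules (up to equivalence) with the compact objects in the triangulated category $\pi_{0}\Mod$ generated by the free modules, appealing to the Neeman/Schwede-Shipley-style generator theorem: the free $\aC$-modules $\aC(-,x)\sma S_{S}^{q}$ are a set of compact generators, so the thick subcategory they generate coincides with the compact objects, and in the EKMM setting every compact object is equivalent to a finite cell module (the usual ``finiteness obstruction vanishes on the nose after adding a contractible complement'' argument).

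Finally, for the equivalence with pretriangulation: $\aC\to\fincell$ is a DK-embedding by the first step, so it is a DK-equivalence precisely when $\pi_{0}\aC\to\pi_{0}\fincell$ is essentially surjective. By the thick subcategory identification, this is equivalent to saying that the image of $\pi_{0}\aC$ already exhausts the thick subcategory it generates in $\pi_{0}\Mod$, i.e., $\pi_{0}\aC$ is closed under (de)suspensions, cofibers, and retracts; by the definition of pretriangulated given in Corollary~\ref{cor:pretriang} together with Proposition~\ref{propfibercofiber}, this is precisely the pretriangulated hypothesis on $\aC$.
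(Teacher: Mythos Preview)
The paper does not actually prove this theorem; it declares it ``clear from the construction of $\fincell$'' immediately after displaying the Yoneda computation. Your treatment of the DK-embedding, and of both inclusions identifying $\pi_{0}\fincell$ with the \emph{triangulated} subcategory generated by the image of $\aC$, is correct and matches what the paper evidently has in mind.

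The gap is in your handling of retracts. Your proposed argument is circular: you want to show that a retract (in $\pi_{0}\Mod$) of a finite cell $\aC$-module is equivalent to a finite cell $\aC$-module, and you propose to do this by identifying the thick subcategory generated by the free modules with the compact objects and then asserting that ``in the EKMM setting every compact object is equivalent to a finite cell module.'' But that last assertion is exactly the claim in question, and it fails in general: for $\aC$ a one-object spectral category with endomorphism ring $HR$, a non--stably-free finitely generated projective $R$-module gives a compact $HR$-module not equivalent to any finite cell $HR$-module. The word ``thick'' in the statement should therefore be read as ``triangulated'' (the smallest triangulated subcategory containing the image), which is indeed clear from the cell construction; with the standard meaning of thick (closed under summands) the claim is not literally correct for arbitrary $\aC$.

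This also affects your final paragraph. You assert that pretriangulated, as defined via Corollary~\ref{cor:pretriang}, includes closure under retracts; it does not --- that corollary only asks that $\pi_{0}\aC^{S}$ be triangulated. With ``thick'' read as ``triangulated,'' the ``in particular'' follows cleanly: the DK-embedding $\aC\to\fincell$ is a DK-equivalence iff it is essentially surjective on $\pi_{0}$, iff the image of $\pi_{0}\aC$ already contains all (de)suspensions and cofibers of its objects, which is exactly the pretriangulated condition. Drop the retract clause from your last step and the argument is complete.
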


Since $\fincell$ is a subcategory of cofibrant objects in a simplicial
model category with all objects fibrant, it fits into the context of
Example~\ref{exofinterest}, and is canonically a simplicially enriched
Waldhausen category.  In fact, it is easy to see that the tensor in
$\Mod$ of an object of $\fincell$ with a finite simplicial set is
isomorphic to an object of $\fincell$, so $\fincell$ is a simplicially
tensored Waldhausen category.   The following is the main theorem of
this section; combined with the previous theorem, it gives the zigzag
of DK-equivalence of spectral categories $\Phi \aC\htp \wrfincell^{S}$
when $\aC$ is pretriangulated.

\begin{thm}\label{thm:compare}
For $\aC$ a small spectral category in EKMM
$S$-modules, there are zigzags of DK-equivalences of spectral categories (in
symmetric spectra in simplicial sets)
\[
\Phi \fincell \htp \fincell^{S} \htp \wrfincell^{S},
\]
where $\wrfincell$ denotes the simplicially tensored Waldhausen
category constructed from $\fincell$ by Definition~\ref{defcosheaves2}.
\end{thm}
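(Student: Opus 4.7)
The strategy is to establish the two zigzags separately. All three spectral categories share the object set of finite cell $\aC$-modules, so the work reduces to producing natural weak equivalences between the mapping symmetric spectra that are compatible with composition, structure maps, and $\Sigma_n$-actions.

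For the first zigzag $\Phi\fincell \htp \fincell^{S}$, we observe that
\[
\Phi\fincell(M,N)(n) = \aM_S((S_S^{-1})^{(n)}, F_\aC(M,N))
\quad \text{while} \quad
\fincell^{S}(M,N)(n) = \aM_\aC(M, \Sigma^n N),
\]
where $\Sigma$ is the simplicial suspension in the simplicially tensored Waldhausen structure on $\fincell$ inherited from the EKMM simplicial model structure (Example~\ref{exofinterest}.(i)). Since the tensor $N \otimes \Delta[k]$ coincides with $N \sma \Delta[k]_+$ in EKMM, collapsing the boundary produces a canonical $\Sigma_n$-equivariant weak equivalence $\Sigma^n N \overto{\sim} N \sma S^n$. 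Combining with the EKMM adjunction $\aM_\aC(X \sma M, N) \iso \aM_\aC(M, F_S(X, N))$ and the duality $F_S((S_S^{-1})^{(n)}, N) \htp N \sma S^n$ identifies both mapping symmetric spectra with a common intermediate spectral enrichment whose $n$-th level is $\aM_\aC(M, F_S((S_S^{-1})^{(n)}, N))$, carrying the $\Sigma_n$-action, structure maps, and composition inherited directly from the smash-product structure on EKMM $S$-modules. The plan is then to realize both $\Phi\fincell$ and $\fincell^{S}$ as spectrally enriched DK-equivalent to this intermediate via levelwise weak equivalences that respect the symmetric spectrum structure.

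The second zigzag $\fincell^{S} \htp \wrfincell^{S}$ follows by direct application of Section~\ref{futuresec}. Since $\fincell$ is a simplicially tensored Waldhausen category admitting factorization via the standard mapping cylinder, we may apply Theorem~\ref{thmswrcsv} in the degenerate case $\aA = \aC = \fincell$ (so that $\swrcsv = \wrfincell$). Theorem~\ref{thmwrcsv}.(v)--(vi) provides a simplicially enriched, based weakly exact DK-equivalence $\wri' \colon \fincell \to \wrfincell$, and the discussion following Theorem~\ref{thmswrcsv} notes that Theorem~\ref{thm:swefunctS} then yields a zigzag of spectrally enriched functors
\[
\fincell^{S} \to (\wri')^{*}\wrfincell^{S}_{\fincell} \overfrom{\sim} (\wri')^{*}\wrfincell^{S} \to \wrfincell^{S}
\]
all of which are DK-equivalences. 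Concatenating this with the first zigzag completes the proof.

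The main technical obstacle is the first zigzag: while the levelwise weak equivalences between the three mapping spectra are immediate from the EKMM adjunction, verifying that the natural transformations assemble into bona fide spectrally enriched functors, compatible with $\Sigma_n$-equivariance and the associativity of composition, requires careful bookkeeping. The intermediate spectral category described above is designed precisely to control this compatibility, since its symmetric spectrum structure and composition both descend directly from the underlying smash product in EKMM $S$-modules rather than from auxiliary constructions on either side.
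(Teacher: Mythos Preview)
Your treatment of the second zigzag $\fincell^{S}\htp\wrfincell^{S}$ is exactly what the paper does: apply Theorem~\ref{thm:swefunctS} to the simplicially enriched based weakly exact DK-equivalence $\wri'$ from Theorem~\ref{thmwrcsv}.(v).

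For the first zigzag, however, your proposed intermediate does not do the work you claim for it. By the very adjunction you invoke, $\aM_{\aC}(M,F_{S}((S_{S}^{-1})^{(n)},N))\iso\aM_{S}((S_{S}^{-1})^{(n)},F_{\fincell}(M,N))=\Phi\fincell(M,N)(n)$, so your ``intermediate'' is canonically isomorphic to $\Phi\fincell$ itself. What you have really written down is a candidate \emph{direct} spectral functor $\fincell^{S}\to\Phi\fincell$, induced on mapping spectra by $\Sigma^{n}N\to N\sma S^{n}\to F_{S}((S_{S}^{-1})^{(n)},N)$. The levelwise weak equivalence is fine, but the delicate point---which you flag but do not address---is compatibility with composition. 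The composition in $\fincell^{S}$ passes through the suspension functor $\aC(y,\Sigma^{m}z)\to\aC(\Sigma^{n}y,\Sigma^{m+n}z)$, while composition in $\Phi\fincell$ passes through the external smash of the $(S_{S}^{-1})^{(n)}$ factors together with composition of mapping spectra $F_{\fincell}(y,z)\sma F_{\fincell}(x,y)\to F_{\fincell}(x,z)$. These are two genuinely different implementations of ``shift by $n$'', and it is not evident that your levelwise map intertwines them.

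The paper resolves this (following the one-object case in the remarks after Corollary~\ref{cor:ekmm}) by constructing a genuine intermediate $\Phi'\fincell$ with $n$-th space
\[
\Phi'\fincell(x,y)(n)=\aM_{S}((S_{S}^{-1}\sma S^{1})^{(n)},F_{\fincell}(x,\Sigma^{n}y)),
\]
whose composition uses \emph{both} the smash-product structure on the $(S_{S}^{-1}\sma S^{1})^{(n)}$ factors \emph{and} the suspension-then-compose structure on $F_{\fincell}(x,\Sigma^{n}y)$. This lets the paper define honest spectral functors $\Phi\fincell\to\Phi'\fincell$ (via $n$-fold suspension $F_{\fincell}(x,y)\to\Omega^{n}F_{\fincell}(x,\Sigma^{n}y)$ and the $S^{1}$-adjunction) and $\fincell^{S}\to\Phi'\fincell$ (via the collapse map $S_{S}^{-1}\sma S^{1}\to S$), each of which is multiplicative essentially by inspection. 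Your plan becomes correct once you replace your intermediate with this $\Phi'\fincell$.
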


The zigzag of DK-equivalences $\fincell^{S} \htp \wrfincell^{S}$ is
the one obtained from applying Theorem~\ref{thm:swefunctS} to the
simplicially enriched based weakly exact functor $\wri'\colon
\fincell\to \wrfincell$ in part~(v) of Theorem~\ref{thmwrcsv}.  That
leaves us with constructing the zigzag of DK-equivalences $\Phi
\fincell \htp \fincell^{S}$, which is just the generalization of
Proposition~\ref{prop:corEKMM} to rings with many objects.  The proof
is essentially identical: Let $\Phi'\fincell$ denote the spectral
category (in symmetric spectra in simplicial sets) with the same
objects as $\fincell$, but with mapping spectra $\Phi'\fincell(x,y)$
defined by
\[
\Phi'\fincell(x,y)(n)=\aM_{S}((S^{-1}_{S}\sma
S^{1})^{(n)},F_{\fincell}(x,\Sigma^{n}y)),
\]
where we have written $F_{\fincell}$ for the mapping spectrum in
$\fincell$ to avoid confusion with the mapping space (simplicial
set) $\fincell(x,y)$.  For $y=x$, we have the unit $S\to \Phi'\fincell(x,x)$ induced
by the unit for $\fincell(x,x)$ and the canonical isomorphism 
\[
\aM_{S}((S^{-1}_{S}\sma S^{1})^{(0)},
F_{\fincell}(x,\Sigma^{0}y))
= \aM_{S}(S,F_{\fincell}(x,y)) \iso \fincell(x,y).
\]
Composition is induced by the smash product map
\begin{multline*}
\aM_{S}((S^{-1}_{S}\sma S^{1})^{(m)},F_{\fincell}(y,\Sigma^{m}z))\sma 
\aM_{S}((S^{-1}_{S}\sma S^{1})^{(n)},F_{\fincell}(x,\Sigma^{n}y))
\\
\to 
\aM_{S}((S^{-1}_{S}\sma S^{1})^{(m+n)},
F_{\fincell}(y,\Sigma^{m}z) \sma
F_{\fincell}(x,\Sigma^{n}y))
\end{multline*}
and the composition map 
\begin{multline*}
F_{\fincell}(y,\Sigma^{m}z) \sma
F_{\fincell}(x,\Sigma^{n}y)
\to F_{\fincell}(\Sigma^{n}y,\Sigma^{m+n}z) \sma
F_{\fincell}(x,\Sigma^{n}y)\\
\to F_{\fincell}(x,\Sigma^{m+n}z)
\end{multline*}
analogous to the one in Definition~\ref{defenrich}.
We then have spectral functors
\[
\Phi\fincell \to \Phi'\fincell \from \fincell^{S}
\]
defined as follows.  The functor $\Phi \fincell\to \Phi'\fincell$ is
the map
\[
\aM_{S}((S^{-1}_{S})^{(n)},F_{\fincell}(x,y))
\to
\aM_{S}((S^{-1}_{S}\sma
S^{1})^{(n)},F_{\fincell}(x,\Sigma^{n}y))
\]
induced by $n$-fold suspension
\[
F_{\fincell}(x,y)\to F_{\fincell}(\Sigma^{n}x,\Sigma^{n}y)
\iso \Omega^{n}F_{\fincell}(x,\Sigma^{n}y)
\]
and the adjunction
\[
\aM_{S}((S^{-1}_{S})^{(n)},\Omega^{n}F_{\fincell}(x,\Sigma^{n}y))
\iso 
\aM_{S}((S^{-1}_{S})^{(n)}\sma S^{n},F_{\fincell}(x,\Sigma^{n}y)).
\]
The functor $\fincell^{S}\to \Phi'$ is induced by the map
\[
\fincell(x,\Sigma^{n}y)=
\aM_{S}(S,F_{\fincell}(x,\Sigma^{n}y))\to 
\aM_{S}((S^{-1}_{S}\sma
S^{1})^{(n)},F_{\fincell}(x,\Sigma^{n}y))
\]
induced by the canonical collapse map $S^{-1}_{S}\sma S^{1}\to S$.  On
mapping spaces, both these functors are weak equivalences (in fact,
level equivalences) of symmetric spectra, and so the functors are
DK-equivalences.  This completes the proof of
Theorem~\ref{thm:compare}.


\index{spectral enrichment!connective|seeonly{connective spectral enrichment}}
\index{spectral enrichment!non-connective|seeonly{non-connective spectral enrichment}}
\index{Waldhausen category!simplicially enriched|seeonly{simplicially enriched Waldhausen category}}
\index{Waldhausen category!enhanced|seeonly{enhanced simplicially enriched Waldhausen category}}
\index{Waldhausen category!simplicially tensored|seeonly{simplicially tensored Waldhausen category}}
\index{four term Puppe sequence|seeonly{Puppe sequence}}
\index{built from|seeonly{finitely cellularly built from}}
\index{pointwise ---|seeonly{---, pointwise}}

\index{degenerately based|see{non-degenerately based}}


\backmatter
\bibliographystyle{amsplain}
\bibliography{thhtc}
\printindex

\end{document}